\numberwithin{equation}{section}
\numberwithin{figure}{section}
\newtheorem{theorem}{Theorem}[section]
\newtheorem{corollary}[theorem]{Corollary}
\newtheorem{proposition}[theorem]{Proposition}
\newtheorem{lemma}[theorem]{Lemma}
\newtheorem{OQ}{Open Question}
\theoremstyle{definition}
\newtheorem{definition}[theorem]{Definition}
\newtheorem{remark}{Remark}
\newcommand{\md}[1]{\ (\text{mod}\ #1)}
\newcommand*{\N}{\ensuremath{\mathbb{N}}}
\newcommand*{\Z}{\ensuremath{\mathbb{Z}}}
\newcommand*{\R}{\ensuremath{\mathbb{R}}}
\newcommand*{\Zd}{\ensuremath{\mathbb{Z}^d}}
\newcommand*{\Rd}{\ensuremath{\mathbb{R}^d}}
\newcommand{\eps}{\varepsilon}
\renewcommand*{\tilde}{\widetilde}
\renewcommand*{\hat}{\widehat}
\renewcommand{\S}{\mathbf{S}}
\newcommand{\E}{\mathbb{E}}
\DeclareSymbolFont{boldoperators}{OT1}{cmr}{bx}{n}
\edef\bar{\unexpanded{\protect\mathaccentV{bar}}\number\symboldoperators16}
\definecolor{darkgreen}{rgb}{0,0.6,0.05}
\definecolor{labelkey}{rgb}{0,0,1}
\newcommand{\indc}{{\boldsymbol{1}}}
\renewcommand{\P}{\mathbb{P}}
\def\Bb#1#2{{\def\md{\bigm| }#1\bigl[#2\bigr]}}
\def\Pb{\Bb\P}
\def\<#1{\langle #1\rangle}
\def\bi{\begin{itemize}}  
\def\ei{\end{itemize}}
\def\bnum{\begin{enumerate}} 
\def\enum{\end{enumerate}}
\def\ni{\noindent}
\def\Vor{\mathrm{Vor}}
\def\calE{\mathcal{E}}
\def\calF{\mathcal{F}}
\def\calG{\mathcal{G}}
\def\p{{\partial}}
\newcommand{\addperiod}[1]{#1.}
\titleformat*{\subsection}{\bfseries}
\titleformat{\subsubsection}[runin]
  {\normalfont\bfseries}
  {\thesubsubsection.}
  {0.5em}
  {\addperiod}
\titleformat*{\subsubsection}{\bfseries}
\titleformat*{\paragraph}{\bfseries}
\titleformat*{\subparagraph}{\large\bfseries}
\title{Phase transitions for the $XY$ model in non-uniformly elliptic and Poisson-Voronoi environments}
\author{Paul Dario
\thanks{LAMA, Universit\'e Paris-Est Cr\'eteil, CNRS UMR 8050, and CNRS, Cr\'eteil, France.
{\footnotesize paul.dario@u-pec.fr.}
}
\and 
Christophe Garban
\thanks{Universit\'e Claude Bernard Lyon 1, CNRS UMR 5208, Institut Camille Jordan, 69622 Villeurbanne, France, Institut Universitaire de France (IUF)
{\footnotesize garban@math.univ-lyon1.fr.}
}
}
\date{ }
\begin{document}

\maketitle

\begin{abstract}

The goal of this paper is to analyze how the celebrated phase transitions of the $XY$ model are affected by the presence of a non-elliptic quenched disorder.

In dimension $d=2$, we prove that if one considers an $XY$ model on the infinite cluster of a supercritical percolation configuration,  the Berezinskii-Kosterlitz-Thouless (BKT) phase transition still occurs despite the presence of quenched disorder. The proof works for all $p>p_c$ (site or edge). We also show  that the $XY$ model defined on a planar Poisson-Voronoi graph also undergoes a BKT phase transition. 

When $d\geq 3$, we show in a similar fashion that the continuous symmetry breaking of the $XY$ model at low enough temperature is not affected by the presence of  quenched disorder such as supercritical percolation (in $\mathbb{Z}^d$) or Poisson-Voronoi (in $\mathbb{R}^d$).

Adapting either Fr\"{o}hlich-Spencer's proof of existence of a BKT phase transition \cite{frohlich1981kosterlitz} or the more recent proofs \cite{lammers2022height, van2023elementary,  aizenman2021depinning, van2023duality} to such non-uniformly elliptic disorders appears to be non-trivial. Instead, our proofs rely on Wells' correlation inequality~\cite{Wellsthesis}. \end{abstract}

\section{Introduction}

\subsection{Context.}  We consider the classical $XY$ (or rotator) model on the hypercubic lattice $\Zd$ with $d \geq 2$. The model is formally defined as follows: given a finite subset $\Lambda \subseteq \Zd$, and an inverse temperature $\beta \geq 0$, we define the probability measure $\mu_{\Lambda, \beta}$ on the set of configurations $[0 , 2\pi)^{\Lambda} := \left\{ \theta : \Lambda \to [0 , 2 \pi) \right\}$ according to the formula
\begin{equation} \label{eq:XYmodel}
    \mu_{\Lambda , \beta}(d \theta) := \frac{1}{Z_{\Lambda, \beta}} \exp \left( \beta \sum_{\substack{x , y \in \Lambda \\ x \sim y}} \cos(\theta_x - \theta_y )\right) \prod_{x \in \Lambda} d \theta_x,
\end{equation}
where $d \theta_x$ denotes the uniform measure on the interval $[0 , 2\pi)$, $Z_{\Lambda, \beta}$ is the normalizing constant which ensures that $\mu_{\Lambda , \beta}$ is a probability measure and the notation $x \sim y$ means that $x$ and $y$ are nearest neighbour in $\Zd$. By setting $S := e^{i \theta}$, the model can be equivalently considered as a spin system valued in the circle $\S^1$. The existence of a thermodynamic limit for the $XY$ model (i.e., an infinite-volume limit as $\Lambda \to \infty$) is guaranteed by compactness arguments (as the spins take values in the compact space $\S^1$) and the Ginibre correlation inequality \cite{ginibre1970general} (N.B. the present setup corresponds to the so-called free boundary conditions. A limit also exists for Dirichlet boundary conditions. Furthermore, when $d=2$,  both limits are known to agree 
 as shown in~\cite{MMP}).
 We will denote the free infinite volume limit by $\mu_\beta$ (see~\cite{MMP}).
These results imply in particular the convergence of the two-point function (and in fact the Ginibre correlation inequality~\cite{ginibre1970general} implies its monotonicity in the domain): for each $x , y \in \Zd$,
\begin{equation*}
    \left\langle \cos(\theta_x - \theta_y) \right\rangle_{\mu_{\Lambda, \beta}} \underset{\Lambda \uparrow \Zd}{\longrightarrow} \left\langle \cos(\theta_x - \theta_y) \right\rangle_{\mu_\beta}.
\end{equation*}

The $XY$ model is known to undergo phase transitions which are of different nature depending on the dimension. In dimension $d \geq 3$, the model exhibits an order/disorder phase transition characterised by long-range order at low temperature, i.e., $\left\langle \cos(\theta_x - \theta_y) \right\rangle_{\mu_{\beta}} \geq c > 0$ for $\beta \gg 1$, and exponential decay of the two-point function at high temperature, i.e., $\left\langle \cos(\theta_x - \theta_y) \right\rangle_{\mu_{\beta}} \leq \exp(-c |x - y|)$ for $\beta \ll 1$ (N.B. This phase transition is conjectured to be sharp in the sense that there should exist a critical inverse temperature $\beta_c(d) > 0$ below which the model exhibits exponential decay and above which the model exhibits long-range order). The existence of this phase transition was originally established by Fr\"{o}hlich, Simon and Spencer~\cite{frohlich1976infrared}, and we also mention the alternative successful approaches to the question of Fr\"{o}hlich and Spencer~\cite{frohlich1982massless}, Kennedy and King~\cite{kennedy1986spontaneous}, and the second author and Spencer~\cite{garban2022continuous}.

In two dimensions, the phenomenology of the model is different and the Mermin-Wagner theorem~\cite{mermin1966absence} rules out the existence of long-range order at any positive temperature. Moreover, McBryan and Spencer~\cite{mcbryan1977decay} showed that the two-point function decays at least polynomially fast at any positive temperature (see also \cite{shlosman1978decrease}). The model still undergoes a phase transition known as the Berezinskii-Kosterlitz-Thouless (BKT) phase transition and characterised by a different asymptotic behaviour of the two-point function: this function decays polynomially fast in the distance between the two points at low temperature and exponentially fast at high temperature. The existence of this phase transition was established in a celebrated work by Fr\"{o}hlich and Spencer~\cite{frohlich1981kosterlitz}, and has been the subject of recent new developments by Lammers~\cite{lammers2022height, lammers2022dichotomy, lammers2023bijecting}, van Engelenburg and Lis~\cite{van2023duality, van2023elementary}, and Aizenman, Harel, Peled and Shapiro~\cite{aizenman2021depinning} (see also the survey \cite{kharash2017fr}). All these proofs rely on a duality argument and require to establish the delocalization of a model of integer-valued random height function in the high temperature regime. In this line of research, we mention the recent important contribution of Bauerschmidt, Park and Rodriguez~\cite{bauerschmidt2022discrete, bauerschmidt2022discreteII} who identified the scaling limit of the integer-valued Gaussian free field (in the high temperature regime).

\begin{figure}[!htp]
\begin{center}
\includegraphics[width=0.48\textwidth]{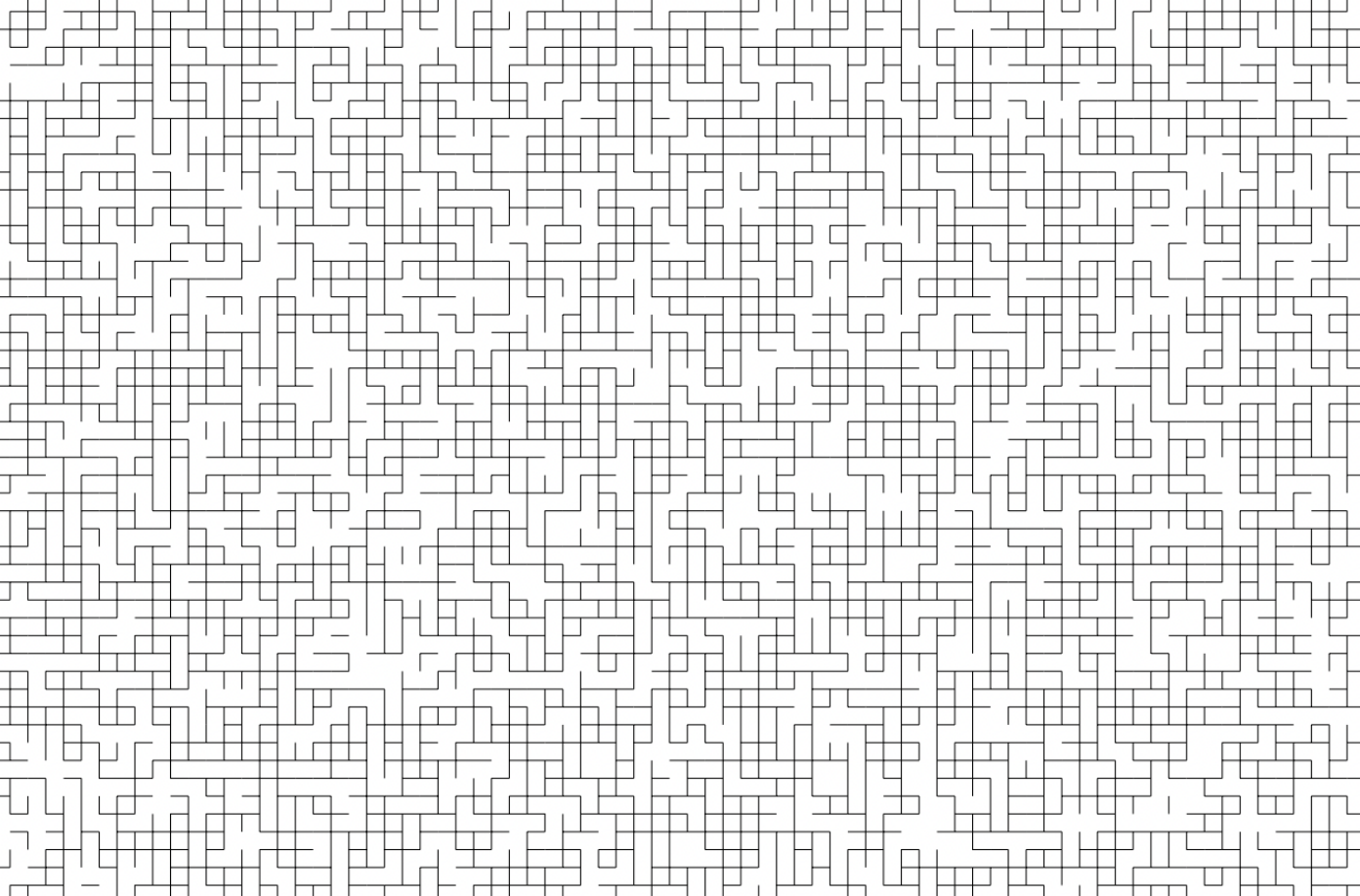}
\hskip 0.04\textwidth
\includegraphics[width=0.44\textwidth]{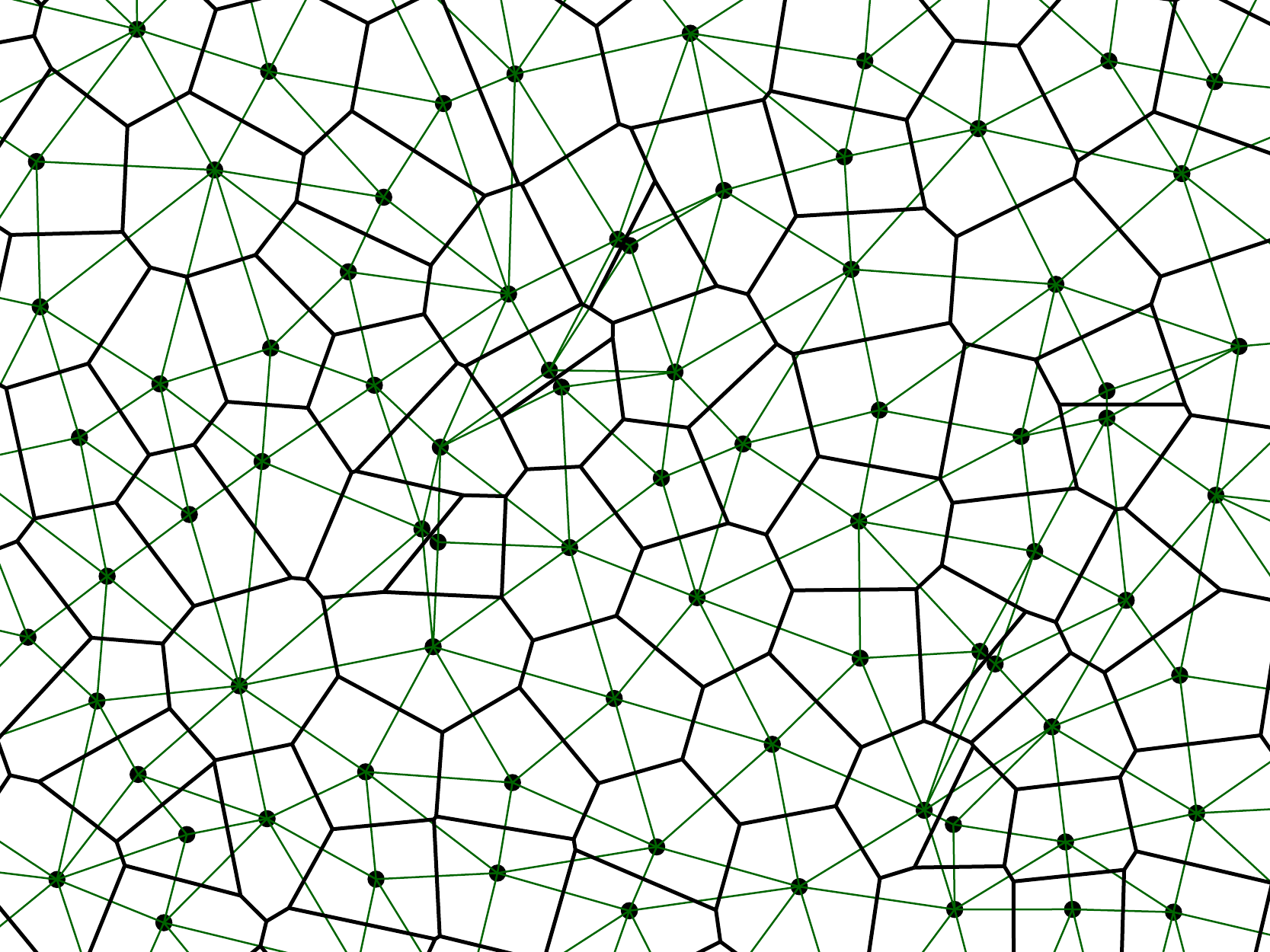}
\end{center}
\caption{The two types of quenched disorders for which we prove that the BKT phase is non-trivial: on the left is pictured a supercritical edge-percolation configuration $\omega_p$ (with $p=0.66> p_{c, \mathrm{edge}}(2)=\tfrac 1 2$).  On the right, the sites represent a Poisson point process $P$ in the plane $\R^2$. Each point $x\in P$ carries a spin $\sigma_x\in \S^1$. Two such spins interact if and only if their Voronoi cells (pictured with black edges) intersect. Or equivalently, if  they are connected by an edge in the dual Delaunay triangulation (pictured in green). }\label{}
\end{figure}

In this article, we are interested in the existence of these phase transitions for the $XY$ model in the presence of a random quenched disorder in the underlying graph.
When $d\geq 2$, the stability of the existence of such phase transitions under small random perturbations of the graph is a natural question on its own. We also have a second motivation in the specific $d=2$ case, namely its relevance for the analysis of the $2d$ classical {\em Heisenberg model}. This model  is similar to the $XY$ model except it assigns spins $\sigma_x\in \S^2$ to each $x\in \Z^2$.
Polyakov conjectured in 1975 (\cite{polyakov1975interaction}) that this model should never enter a BKT phase at large $\beta$, as opposed to the case of the $XY$ model. 
It is not hard to see that the Heisenberg model ($\S^2$ valued) and the $XY$ model ($\S^1$ valued) are related to each other via the following observation:  by conditioning on the vertical directions of the $\S^2$-valued spin field, one ends up with a $XY$ model on $\Z^2$ with random conductances $\{J_e\}_e$. One may then rephrase Polyakov's celebrated  conjecture by saying that this quenched disorder (which is $\beta$-dependent) will destroy the power-law decay of the induced $XY$ model at all $\beta>0$. We refer to the work \cite{aru2022percolation} for a detailed discussion and links to the literature. The work \cite{aru2022percolation} describes locally what this field of random conductances is and produces laws of environments which are in some sense consistent with Polyakov's conjecture. Of course the true field $\{J_e\}_e$ of such induced random conductances is far from being an i.i.d field of Bernoulli variables as considered in the present work. Yet, viewed from this angle,  we believe it is an important question  to know whether the BKT phase is robust or not under such simpler models of quenched disorder. 

\medskip

We will consider in this paper two natural examples of quenched disorder in $d$-dimension on which we will analyse the $XY$ model: 
\bnum
\item  The infinite cluster of a supercritical  i.i.d Bernoulli percolation disorder (either site or edge) on the lattice $\Z^d$.
\item The Poisson-Voronoi graph induced by a Poisson Point Process $P$ of intensity 1 on $\R^d$.
\enum
We now describe both  settings. More details, definitions will be given in Section \ref{Section2}.

\medskip

\ni
\textbf{Percolation disorder setup.}  We consider the $XY$ model in a random environment given by a Bernoulli site percolation and defined as follows. We let $(r_x)_{x \in \mathbb{Z}^d}$ be a collection of i.i.d. Bernoulli random variables of probability $p \in (0,1)$ indexed on the vertices of $\Zd$. We refer to the collection $(r_x)_{x \in \mathbb{Z}^d}$ as the environment and denote by $\E_p$ the corresponding expectation. For each realization of the environment $(r_x)_{x \in \mathbb{Z}^d}$, we consider the finite-volume Gibbs measure defined by the formula
\begin{equation} \label{eq:18221905}
    \mu_{\Lambda, \beta,  r}(d S) := \frac{1}{Z_{\Lambda, \beta, r}} \exp \left( \beta \sum_{\substack{x , y \in \Lambda \\ x \sim y}} r_{x} r_y \cos(\theta_x - \theta_y) \right) \prod_{x \in \Lambda} d \theta_x.
\end{equation}
We denote by $ \mu_{\beta,  r}$ the thermodynamic limit obtained by taking the limit $\Lambda \uparrow \Zd$ (whose existence and uniqueness is once again guaranteed by compactness arguments and the Ginibre inequality).
We prove that, when the probability $p$ is larger than $p_{c , \mathrm{site}}(d)$, where $p_{c , \mathrm{site}}(d)$ is the critical site percolation threshold on $\Zd$, and the inverse temperature $\beta$ is sufficiently high (depending on $p$), then the expectation of the two-point function with respect to the environment decays polynomially fast in two dimensions. In dimension $d \geq 3$, we show that the model exhibits long-range order when  $p > p_{c , \mathrm{site}}(d)$ and $\beta$ is sufficiently large (depending on $p$ and $d$). Similar results hold when the underlying environment is given by a Bernoulli edge percolation (the model is formally defined by replacing the product $r_x r_y$ by $\omega_{xy}$ where $\omega \in \{ 0,1 \}^{E(\Zd)}$ is an edge percolation configuration in the definition~\eqref{eq:18221905}).

\medskip
\ni
\textbf{Poisson-Voronoi setup.} Let $P$ be a Poisson Point Process of unit intensity on $\R^d$. To each point $x\in P$, we associate its Voronoi cell:
\begin{align}\label{e.Vorx}
\Vor(x)=\Vor(x,P):=\{ z\in \R^d, |z-x| \leq \min_{y\in P\setminus \{x\}}  |z-y| \}\,.
\end{align}
It is easy to check that a.s. $\bigcup_{x\in P} \Vor(x,P)$ induces a covering of $\R^d$ with random compact sets (which may intersect along their boundaries).  

Furthemore, on may also check that this covering induces the following a.s. locally finite graph $G=(V,E)$ which we shall call the {\em Voronoi graph of $P$}: 
\bi
\item Its vertex set $V$ is given by the points of the Poisson point process $P$.
\item For the edge set $E$, we draw an edge $e=\{x,y\}$ between two distinct points if and only if $\Vor(x,P)\cap \Vor(y,P) \neq \emptyset$. 
\ei

 Statistical physics models on the Voronoi graph of a Poisson point process have been the focus of many important works in particular in the context of ``Voronoi percolation'', see for example \cite{benjamini1998conformal,bollobas2006critical,tassion2016crossing,ahlberg2016quenched,vanneuville2019annealed,vanneuville2021annealed} and references therein. 
 In this work we consider the $XY$ model on the Voronoi graph $G$ of a Poisson point process $P$ in $\R^d$.  More precisely, for any finite square domain $\Lambda \subset \R^d$, we consider the finite-volume Gibbs measure on $(\S^1)^{\Lambda \cap P}$ defined by 
\begin{equation} \label{e.VorF}
    \mu_{\Lambda, \beta,  P}(d S) := \frac{1}{Z_{\Lambda, \beta, P}} \exp \left( \beta \sum_{\substack{x \neq y \in P \cap \Lambda  \\ \Vor(x,P)\cap \Vor(y,P)\neq \emptyset }} F(\Vor(x,P),\Vor(y,P)) \cos(\theta_x - \theta_y) \right) \prod_{x \in \Lambda} d \theta_x,
\end{equation}
where we allow the interaction strength between neighbouring cells  $x$ and $y$ to depend on the joint shape of $\Vor(x,P)$ and $\Vor(y,P)$.
We will analyse the following three cases\footnote{Our techniques would cover many other natural choices as well}:
\begin{align}\label{e.choices}
\begin{cases}
F_1(V,\tilde V) = 1 & \text{ i.e. the interaction strength does not depend on the geometry} \\
F_2(V,\tilde V) =  f(|V|) f(|V'|)  & \text{ for some continuous strictly increasing function $f: \R_+ \to \R_+$} \\
F_3(V, \tilde V) = \lambda^{d-1}(V \cap \tilde V) &\text{ where $\lambda^{d-1}$ denotes the volume of hyperplanes.}
\end{cases}
\end{align}
The third interaction models, say, a {\em friction} between two neighbouring cells which would be proportional to the surface separating the two cells. 
Notice that our choice of finite-volume Gibbs measure corresponds here to {\em free boundary conditions} and is built in such a way that as $\Lambda$ ``increases'', new points and new edges arise in an increasing manner. As such, Ginibre inequality can be used again. It gives us  the monotony which implies the existence of an infinite volume limit Gibbs measure as $\Lambda \nearrow \R^d$ denoted by $\mu_{\beta, P}$.

\medskip

Our main results are collected below and come in two flavours. Theorem~\ref{mainthm} and Corollary~\ref{proofKTbonds} state the existence of the phase transitions in the case of a highly supercritical percolation (for the site and edge percolation models). These results assume that the percolation probability is very close to $1$ but allow for (relatively) small values of the inverse temperature $\beta$. Theorem~\ref{proofKTsitegeneral} states the existence of the phase transitions for the $XY$ model for a supercritical percolation cluster. These results only assume that the percolation probability is larger than the site (resp. edge) percolation threshold but require the inverse temperature $\beta$ to be large.  In the case of Poisson-Voronoi, our main result is Theorem \ref{th.Vor} which states that the $XY$ model undergoes the same phase transition on the Voronoi graph as on $\Z^d$. Note that we do not have two types of statements in this case, as one does not have a parameter $p>p_c$ to play with. (The intensity is set to be 1 as law of the induced combinatorial graph is insensitive to rescaling space).

\subsection{Main results.}

Before stating the results, we introduce some definitions and notation. In two dimensions, we denote by $\beta_{BKT} > 0$ the critical inverse temperature for the BKT phase transition, i.e.
\begin{equation*}
    \beta_{BKT} := \sup \left\{ \beta \geq 0 \, : \, \left\langle \cos(\theta_0 - \theta_x) \right\rangle_{\mu_\beta} ~\mbox{decays exponentially fast}\right\}.
\end{equation*}
The results of~\cite{frohlich1981kosterlitz, aizenman2021depinning, van2023elementary} imply that $\beta_{BKT} < \infty$, and the results of~\cite[Section 6]{van2023elementary} imply that for any $\beta \geq \beta_{BKT}$, the two-point function decays polynomially fast.

In dimension $d \geq 3$, we denote by $\beta_c(d)$ the critical point associated with the order/disorder phase transition, and formally define it according to the formula
\begin{equation*}
    \beta_{c}(d) := \sup \left\{ \beta \geq 0 \, : \, \left\langle \cos(\theta_0 - \theta_x) \right\rangle_{\mu_\beta} \underset{|x| \to \infty}{\longrightarrow}  0\right\}.
\end{equation*}
The results of~\cite{frohlich1976infrared, frohlich1982massless, kennedy1986spontaneous, garban2022continuous} imply that $\beta_{c}(d) < \infty$ and the Messager-Miracle-Sole inequality~\cite{messager1977correlation} implies that for any $\beta > \beta_c(d)$, the two-point function remains bounded away from $0$ (i.e., we have $\inf_{x \in \Zd} \left\langle \cos(\theta_0 - \theta_x) \right\rangle_{\mu_\beta} >0$).

\begin{theorem}[Phase transitions for the $XY$ model on a high density Bernoulli site percolation cluster] \label{mainthm}
    For the $XY$ model in a random environment given by a Bernoulli site percolation, the following hold:
    \begin{itemize}
        \item In dimension $d = 2$, for any inverse temperature $\beta \geq 4 \beta_{BKT}$ and any probability $p \geq \frac 1 {1+e^{-16 \beta_{BKT}}}$, the function $x \mapsto \mathbb{E}_p [  \left\langle \cos(\theta_0 - \theta_x) \right\rangle_{\mu_{\beta, r}}]$ decays polynomially fast in $|x|$.
        \item In dimension $d \geq  3$, for any inverse temperature $\beta > 4 \beta_{c}(d)$ and any probability $ p >  \frac 1 {1+e^{-8d \beta_{c}(d)}}  $, the function $x \mapsto \mathbb{E}_p [  \left\langle \cos(\theta_0 - \theta_x) \right\rangle_{\mu_{\beta, r}}]$ is bounded away from $0$.
    \end{itemize}
\end{theorem}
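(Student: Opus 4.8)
The plan is to reduce, in both dimensions, the quenched-averaged two-point function $x \mapsto \E_p[\langle\cos(\theta_0-\theta_x)\rangle_{\mu_{\beta,r}}]$ to the two-point function of a \emph{clean} (non-disordered) $XY$ model at a fixed comparison inverse temperature $\beta'$ lying strictly inside the relevant low-temperature phase. Concretely, I would aim to establish a lower bound of the form
\begin{equation*}
\E_p[\langle\cos(\theta_0-\theta_x)\rangle_{\mu_{\beta,r}}] \;\geq\; \langle\cos(\theta_0-\theta_x)\rangle_{\mu_{\beta'}},
\end{equation*}
with $\beta' \geq \beta_{BKT}$ when $d=2$ and $\beta' > \beta_c(d)$ when $d\geq 3$. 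Once this is in hand the two bullet points follow from known facts about the clean model quoted in the introduction: in $d=2$, \cite[Section~6]{van2023elementary} gives that the right-hand side decays only polynomially as soon as $\beta'\geq\beta_{BKT}$, while in $d\geq 3$ the Messager--Miracle-Sole inequality \cite{messager1977correlation} keeps it bounded away from $0$ whenever $\beta'>\beta_c(d)$. The whole difficulty is therefore concentrated in the displayed comparison.

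The first, elementary, reduction is monotonicity in temperature. By the Ginibre inequality \cite{ginibre1970general} the disordered two-point function is nondecreasing in each coupling, so for $\beta\geq\beta'$ one has $\langle\cos(\theta_0-\theta_x)\rangle_{\mu_{\beta,r}}\geq\langle\cos(\theta_0-\theta_x)\rangle_{\mu_{\beta',r}}$ environment by environment, hence also after applying $\E_p$. This lets me fix once and for all the comparison temperature $\beta':=4\beta_*$, where $\beta_*=\beta_{BKT}$ (resp. $\beta_c(d)$), and reduces the task to showing that the \emph{disorder-averaged} model at the single inverse temperature $\beta'$ dominates the clean model at $\beta'$. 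The choices $\beta\geq 4\beta_*$ (resp. $\beta>4\beta_*$) are exactly what make this Ginibre step legitimate, and $\beta'=4\beta_*>\beta_*$ sits inside the phase. Crucially, naive monotonicity is useless for the remaining step: deleting the bonds incident to absent sites can only \emph{decrease} correlations, so comparing the disordered and clean models requires an inequality in the direction \emph{opposite} to Ginibre.

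This is where Wells' correlation inequality \cite{Wellsthesis} enters, and it is the heart of the argument. The main obstacle is that the normalisation $Z_{\Lambda,\beta',r}$ depends on the environment, so one cannot simply interchange $\E_p$ with the Gibbs bracket and treat $(r,\theta)$ as a single annealed spin system; doing so would reweight the i.i.d. Bernoulli law by an uncontrolled partition-function factor. The plan is instead to use Wells' inequality to produce, environment by environment, a lower bound on $\langle\cos(\theta_0-\theta_x)\rangle_{\mu_{\beta',r}}$ whose dependence on the occupation variables is monotone and of product/convex type, so that the Bernoulli average over $\E_p$ telescopes into the \emph{clean} partition function rather than a reweighting of it. Heuristically, reinstating a single absent site (flipping $r_x$ from $0$ to $1$) reconnects its $2d$ incident bonds, changing the energy by at most $2d\beta'$, while it costs the entropic factor $\tfrac{1-p}{p}$ in the environment law; the repair is favourable precisely when $\tfrac{1-p}{p}<e^{-2d\beta'}$, that is
\begin{equation*}
p \;>\; \frac{1}{1+e^{-2d\beta'}} \;=\; \frac{1}{1+e^{-8d\beta_*}},
\end{equation*}
which is exactly the stated threshold and specialises to $\tfrac{1}{1+e^{-16\beta_{BKT}}}$ when $d=2$ (where $2d=4$). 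Carrying this repair through Wells' inequality \emph{simultaneously} for all absent sites, while keeping the environment average under control, is the step I expect to be the most delicate: one must check that the pointwise bound furnished by Wells really has the algebraic form needed for the Bernoulli average to collapse onto the clean model, and that the few missing sites (rare since $p$ is close to $1$) do not accumulate errors.

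Finally I would assemble the pieces: the Ginibre step fixes $\beta'=4\beta_*$; Wells' inequality together with the threshold on $p$ upgrades the disorder-averaged model at $\beta'$ to the clean model at $\beta'$; and the known behaviour of the clean $XY$ model at $\beta'\geq\beta_{BKT}$ (resp. $\beta'>\beta_c(d)$) delivers polynomial decay (resp. long-range order). The two dimensional regimes are handled by one and the same comparison, differing only in the final citation, so a single argument proves both bullet points at once.
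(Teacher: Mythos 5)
Your high-level architecture --- Wells' inequality, Ginibre monotonicity, then quoting the known behaviour of the clean model --- is indeed the paper's, and your energy-versus-entropy heuristic for the threshold is in substance the correct single-site computation. But your execution reduces the theorem to a false statement. After the Ginibre step you fix $\beta' = 4\beta_*$ and assert that what remains is to show that the disorder-averaged model at $\beta'$ dominates the clean model \emph{at the same temperature} $\beta'$, i.e.
\begin{equation*}
\E_p\left[\left\langle\cos(\theta_0-\theta_x)\right\rangle_{\mu_{\beta',r}}\right] \;\geq\; \left\langle\cos(\theta_0-\theta_x)\right\rangle_{\mu_{\beta'}}.
\end{equation*}
This is impossible for $p<1$: by the Ginibre inequality, for \emph{every} environment $r$ one has $\langle\cos(\theta_0-\theta_x)\rangle_{\mu_{\beta',r}} \leq \langle\cos(\theta_0-\theta_x)\rangle_{\mu_{\beta'}}$ (the clean model has all couplings present), and this inequality survives the average over $\E_p$ --- it points in the direction opposite to what you need. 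Relatedly, Wells' inequality is not an ``environment by environment'' bound that one can average over $\E_p$ so that things ``telescope into the clean partition function''; it is intrinsically an \emph{annealed} statement, and it exacts a price in the temperature that your proposal never accounts for.

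The missing mechanism is the factor $a^2$. In the paper (Proposition~\ref{prop.Wells} applied with $\kappa = \frac12\delta_0+\frac12\delta_1$, hence $a=\frac12$), Wells compares the clean model at $a^2\beta = \beta/4$ --- not at $\beta$ --- with the annealed model at $\beta$, whose environment marginal is exactly the Bernoulli law reweighted by the partition function, i.e. the measure $\nu'(\{r\}) \propto \P_{\Lambda,1/2}(\{r\})\, Z_{\Lambda,\beta,r}$ that you hoped to avoid. The paper then \emph{controls} this reweighting rather than avoiding it (Proposition~\ref{prop4.1}): the single-site bound $\nu'\left( r_x = 1 \, | \, r = r_1 ~\mbox{in}~ \Lambda\setminus\{x\}\right) \leq p_0 = \frac{1}{1+e^{-2d\beta}}$ --- this is where your flip-one-site heuristic actually lives --- gives $\nu' \preceq \P_{\Lambda,p_0}$ via Lemma~\ref{Lemma1.5}, and Ginibre monotonicity of $r\mapsto\langle\cos(m\theta)\rangle_{\mu_{\Lambda,\beta,r}}$ then yields
\begin{equation*}
\left\langle\cos(\theta_0-\theta_x)\right\rangle_{\mu_{\beta/4}} \;\leq\; \E_{p_0}\left[\left\langle\cos(\theta_0-\theta_x)\right\rangle_{\mu_{\beta,r}}\right].
\end{equation*}
Consequently the hypothesis $\beta\geq 4\beta_{BKT}$ is not there so that ``$4\beta_*$ sits inside the phase''; it is there so that $\beta/4\geq\beta_{BKT}$. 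This also changes the endgame in ways your plan cannot see: in $d=2$ the clean comparison lands \emph{exactly at} $\beta_{BKT}$, so one needs the result of~\cite{van2023elementary} that polynomial decay holds at the critical point itself; in $d\geq 3$, where long-range order may fail at $\beta_c(d)$, one must run the argument at $\beta=4(\beta_c(d)+\eps)$ and let $\eps\to 0$, which is why the theorem's inequalities are strict there. Finally, the $d=2$ statement also requires a polynomial \emph{upper} bound, which you do not address; it follows from Ginibre (raise all $r_x$ to $1$) together with McBryan--Spencer~\cite{mcbryan1977decay}.
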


\begin{remark} \label{remark1.2}
Let us make a few remarks about the previous theorem:
\begin{itemize}
    \item Using the results of~\cite{van2023elementary}, the following lower bound can be deduced from the proof: there exists a (universal) constant $c > 0$ such that
    \begin{equation*}
        \mathbb{E}_p [  \left\langle \cos(\theta_0 - \theta_x) \right\rangle_{\mu_{\beta, r}}] \geq \frac{c}{|x|} ~\mbox{in}~ d=2.
    \end{equation*}
    \item The Ginibre correlation inequality~\cite{ginibre1970general} implies that for any $\beta < \beta_{BKT}$ and any percolation configuration $(r_x)_{x \in \Z^2} \in \{0 , 1\}^{\Z^2}$, the two-point function $x \mapsto  \left\langle \cos(\theta_0 - \theta_x) \right\rangle_{\mu_{\beta, r}}$ decays exponentially fast in two dimension. Similarly, for any $\beta < \beta_{c}(d)$ and any percolation configuration $(r_x)_{x \in \Zd} \in \{0 , 1\}^{\Zd}$, the two-point function $x \mapsto \left\langle \cos(\theta_0 - \theta_x) \right\rangle_{\mu_{\beta, r}}$ converges to $0$ in dimension $d \geq 3$.
    \item 
    It is not the first result about the $XY$ model in non-uniformly elliptic environment: 
    in \cite{bricmont1985some,dunlop1985correlation}, it is shown that the BKT transition still holds for anisotropic versions of the $2d$ classical Heisenberg model (at low enough temperature). Their proof would not apply to the present percolation quenched disorder. Still they relied on Wells' correlation inequality and therefore their work was a source of inspiration for us.
    \item We believe that the assumption that the inverse temperature is larger than two times the critical parameters is not intrinsic to the problem and is a limitation of the proof (see Open Question 4).
    
\item 
For any $d\geq 2$, and for any $p\in (0,1)$, if one wants to obtain the similar result for  i.i.d edge percolation rather than site percolation, one can easily do it using Theorem \ref{mainthm}. To see why, notice that if $u\in (0,1)$ is fixed and at each site $x\in \Z^d$, we sample independent $u$-Bernoulli variables $\{Z_{x,y}\}_{y\sim x}$, then 
\bnum
\item the edge percolation $\omega=\{\omega_{xy}\}_{x\sim y}$ defined by $\omega_{x,y}:= Z_{x,y} Z_{y,x}$ is an i.i.d percolation with parameter $p= u^2$. 
\item the site percolation $r=\{r_x\}_{x\in \Z^d}$ defined by $r_x:=\prod_{y\sim x} Z_{x,y}$ is an i.i.d site percolation with parameter $\bar p:= u^{2d} \leq p$.
\item The edge percolation clusters of $\omega$ stochastically dominate the site percolation clusters of $r$ (i.e., $\omega_{xy} \geq r_x r_y$ for each pair of neighbouring vertices $x , y$). An application of the Ginibre correlation inequality thus implies the following corollary. 
\enum

\end{itemize}
\end{remark}

\begin{corollary}[Phase transitions for the $XY$ model on a high density Bernoulli edge percolation cluster]\label{proofKTbonds}
For the $XY$ model in a random environment given by a Bernoulli edge percolation $\omega$ of parameter~$p$, the following hold:
    \begin{itemize}
        \item In dimension $d = 2$, for any inverse temperature $\beta \geq 4 \beta_{BKT}$ and any probability $p\geq \big( \frac 1 {1+ e^{-16 \beta_{BKT}}} \big)^{1/2}$ the function $x \mapsto \mathbb{E}_p [  \left\langle \cos  ( \theta_0 - \theta_x ) \right\rangle_{\beta, \mathrm{edge}, \omega}]$ decays polynomially fast in $|x|$.
        \item In dimension $d \geq  3$, for any inverse temperature $\beta > 4 \beta_{c}(d)$
        and any probability $p > \big( \frac 1 {1+ e^{-8d \beta_{c}(d)}} \big)^{1/d}$ the function $x \mapsto \mathbb{E}_p [  \left\langle \cos  ( \theta_0 - \theta_x ) \right\rangle_{\beta, \mathrm{edge}, \omega}]$ is bounded away from $0$.
    \end{itemize}
\end{corollary}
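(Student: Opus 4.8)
The plan is to derive the corollary from Theorem~\ref{mainthm} by the coupling argument outlined in the final item of Remark~\ref{remark1.2}, using the Ginibre correlation inequality~\cite{ginibre1970general} to transfer the relevant lower bounds from the site model to the edge model. Given the edge parameter $p$, I set $u := \sqrt p$ and attach to every \emph{ordered} pair of nearest neighbours $(x,y)$ an independent Bernoulli variable $Z_{x,y}$ of parameter $u$, all defined on one probability space with expectation $\E$. I then set $\omega_{xy} := Z_{x,y} Z_{y,x}$ and $r_x := \prod_{y \sim x} Z_{x,y}$. Because distinct undirected edges use disjoint ordered pairs, $\omega = (\omega_{xy})$ is an i.i.d.\ edge percolation of parameter $u^2 = p$; because distinct vertices use disjoint ordered pairs, $r = (r_x)$ is an i.i.d.\ site percolation of parameter $\bar p := u^{2d} = p^d$. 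These two environments are not independent, but on the common probability space one has the pointwise domination $r_x r_y \le Z_{x,y} Z_{y,x} = \omega_{xy}$ for every edge $\{x,y\}$, since $r_x r_y$ is a product of factors in $\{0,1\}$ that includes $Z_{x,y}$ and $Z_{y,x}$.

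Next I check that the hypotheses of Theorem~\ref{mainthm} hold for the site environment $r$. In $d=2$ the assumption $p \ge \big(\tfrac{1}{1+e^{-16\beta_{BKT}}}\big)^{1/2}$ gives $\bar p = p^2 \ge \tfrac{1}{1+e^{-16\beta_{BKT}}}$, and in $d \ge 3$ the assumption $p > \big(\tfrac{1}{1+e^{-8d\beta_c(d)}}\big)^{1/d}$ gives $\bar p = p^d > \tfrac{1}{1+e^{-8d\beta_c(d)}}$; the temperature assumptions $\beta \ge 4\beta_{BKT}$ and $\beta > 4\beta_c(d)$ are identical to those in Theorem~\ref{mainthm}. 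Thus the $XY$ model in the site environment $r$ is in its low-temperature phase. Now I invoke Ginibre: since $\beta\omega_{xy} \ge \beta r_x r_y \ge 0$ for every edge, the edge Hamiltonian has pointwise larger ferromagnetic couplings than the site Hamiltonian, so the two-point function is larger. Applying this in a finite box $\Lambda$ and letting $\Lambda \uparrow \Zd$ (the limits existing by the compactness-plus-Ginibre argument recalled in the introduction), one gets, for every realization of $(Z_{x,y})$ and every $x$,
\begin{equation*}
\left\langle \cos(\theta_0 - \theta_x) \right\rangle_{\beta, \mathrm{edge}, \omega} \ge \left\langle \cos(\theta_0 - \theta_x) \right\rangle_{\mu_{\beta, r}} \ge 0 .
\end{equation*}
Taking $\E$ over the coupling and using that the marginals of $\omega$ and $r$ are edge percolation of parameter $p$ and site percolation of parameter $\bar p$ respectively, one obtains $\E_p[\langle\cos(\theta_0-\theta_x)\rangle_{\beta,\mathrm{edge},\omega}] \ge \E_{\bar p}[\langle\cos(\theta_0-\theta_x)\rangle_{\mu_{\beta,r}}]$. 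By Theorem~\ref{mainthm} (and the quantitative lower bound of Remark~\ref{remark1.2}) the right-hand side is $\ge c/|x|$ in $d=2$ and bounded away from $0$ in $d \ge 3$, and these lower bounds pass to the left-hand side, which is exactly the two claimed statements.

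I do not expect a genuine analytic obstacle here: all the difficulty is already contained in Theorem~\ref{mainthm}, and the corollary is a soft consequence. The only points that require care are bookkeeping ones: indexing the Bernoulli variables by ordered pairs so that $\omega$ and $r$ simultaneously carry the correct i.i.d.\ marginals on a single probability space; verifying that the direction of the Ginibre inequality is the one we need (we want lower bounds, and it is the edge model that has the larger couplings, so the inequality goes the right way); and justifying the interchange of the infinite-volume limit with the environment expectation, which is immediate from $|\cos|\le 1$ and dominated convergence together with the monotone convergence of the finite-volume two-point functions. Finally, if one wishes to read ``decays polynomially fast'' in $d=2$ as a genuine two-sided estimate, the matching polynomial \emph{upper} bound is not produced by this argument but follows from a McBryan--Spencer-type bound, which is insensitive to the bounded quenched disorder; the BKT content being transferred here is the power-law lower bound.
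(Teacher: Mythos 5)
Your proposal is correct and follows essentially the same route as the paper: the authors prove this corollary exactly via the coupling in the last item of Remark~\ref{remark1.2} (directed Bernoulli variables $Z_{x,y}$ of parameter $u=\sqrt{p}$, giving $\omega_{xy}=Z_{x,y}Z_{y,x}$, $r_x=\prod_{y\sim x}Z_{x,y}$, the pointwise bound $\omega_{xy}\ge r_x r_y$, and then Ginibre monotonicity plus Theorem~\ref{mainthm} applied at site parameter $\bar p=p^d$). Your parameter bookkeeping ($\bar p=p^2$ in $d=2$, $\bar p=p^d$ in $d\ge 3$) matches the thresholds in the corollary, and your remarks on the infinite-volume limit and the McBryan--Spencer upper bound are consistent with how the paper handles those points.
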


Theorem~\ref{mainthm} and Corollary~\ref{proofKTbonds} are quantitative over the parameter $\beta$ but only apply to large values of the probability~$p$. In our second main result, we build upon Theorem~\ref{mainthm} and a renormalization argument, and show that, for any percolation probability $p > p_{c, \mathrm{site}}(d)$, the BKT phase transition occurs in dimension $d = 2$ and the model exhibits long-range order for sufficiently large $\beta$ in dimension $d \geq 3$.

\begin{theorem}[Phase transitions for the $XY$ model on a supercritical Bernoulli percolation cluster] \label{proofKTsitegeneral}
For the $XY$ model in a random environment given by a Bernoulli site percolation of parameter $p$, the following hold:
    \begin{itemize}
        \item In dimension $d = 2$, for any $p > p_{c , \mathrm{site}}(2)$, there exists an inverse temperature $\beta_{BKT}(p) < \infty$, such that, for any $\beta \geq \beta_{BKT}(p)$, the function $x \mapsto \mathbb{E}_p [  \left\langle \cos  ( \theta_0 - \theta_x ) \right\rangle_{\beta, r}]$ decays polynomially fast in $|x|$.
        \item In dimension $d \geq  3$, for any $p > p_{c , \mathrm{site}}(d)$, there exists an inverse temperature $\beta_{c}(d, p) < \infty$, such that, for any $\beta \geq \beta_{c}(d, p)$, the function $x \mapsto \mathbb{E}_p [  \left\langle \cos  ( \theta_0 - \theta_x ) \right\rangle_{\beta, r}]$ is bounded away from $0$.
    \end{itemize}
Furthermore, the same result holds for supercritical edge percolation clusters, i.e.  for any $p> p_{c, \mathrm{edge}}(d)$ (where $p_{c, \mathrm{edge}}(d)$ is the edge percolation threshold on $\Zd$) and for sufficiently large inverse temperature, the model enters either the BKT or the long-range-order phase.   
\end{theorem}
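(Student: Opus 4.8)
The plan is to derive Theorem~\ref{proofKTsitegeneral} from Theorem~\ref{mainthm} by a static renormalization which trades a percolation density $p$ merely above $p_{c,\mathrm{site}}(d)$ for a density arbitrarily close to $1$ on a coarse-grained lattice, at the cost of lowering the effective inverse temperature. Fix $p > p_{c,\mathrm{site}}(d)$ and partition $\Zd$ into boxes $B_i$ of side length $N$ indexed by a rescaled copy of $\Zd$. Following the standard renormalization of supercritical percolation (Pisztora, Antal--Pisztora), declare a box \emph{good} if, in a bounded neighbourhood of $B_i$, the configuration contains a unique crossing cluster $\mathcal C_i$ spanning all directions and such that the crossing clusters of any two adjacent good boxes are connected inside their union. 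The good-box indicator is a finitely dependent function of the environment whose marginal tends to $1$ as $N \to \infty$; hence for any prescribed $q < 1$ one may fix $N = N(p,q)$ so that the field of good boxes stochastically dominates an i.i.d.\ Bernoulli$(q)$ site percolation on the coarse lattice $\Zd$. We take $q$ above the density threshold appearing in Theorem~\ref{mainthm}.

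The second step compares the $XY$ model on the fine infinite cluster with an effective $XY$ model on the coarse lattice carrying the good-box disorder. Fix a representative vertex in each crossing cluster $\mathcal C_i$; by construction the crossing clusters of neighbouring good boxes are connected, so between the representatives of two adjacent good boxes there is a path inside the fine cluster of length at most $CN$. By the Ginibre inequality, erasing all couplings except those along these connecting paths only decreases the two-point function, and integrating out the intermediate spins along each path yields an even, ferromagnetic interaction between the coarse representative spins whose nonnegative Fourier coefficients can be compared, via a Ginibre/Wells argument, to those of a genuine $\cos$-interaction with an effective inverse temperature $\beta'$. Since each connecting path has length $\les N$, the effective per-edge coupling is of order $\beta/N$; consequently, taking $\beta$ large (depending on $N$, hence on $p$) forces $\beta' \geq 4\beta_{BKT}$ in $d = 2$ and $\beta' > 4\beta_c(d)$ in $d \geq 3$.

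It then suffices to invoke Theorem~\ref{mainthm} on the coarse lattice. Opening additional sites only adds nonnegative couplings, so Ginibre's inequality makes the coarse two-point function a monotone increasing function of the good-box environment; combined with the domination of the good-box field by Bernoulli$(q)$, the annealed coarse two-point function is bounded below by its Bernoulli$(q)$ counterpart at inverse temperature $\beta'$. As $q$ and $\beta'$ now both lie above the thresholds of Theorem~\ref{mainthm}, we obtain a polynomial lower bound (in $d=2$) or a strictly positive lower bound (in $d \geq 3$) for the coarse annealed two-point function, and transferring back through the bounded ratio $|x|/N$ between coarse and fine distances yields the asserted behaviour of $\E_p[\left\langle \cos(\theta_0 - \theta_x)\right\rangle_{\mu_{\beta,r}}]$. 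The edge-percolation statement then follows either by repeating the renormalization verbatim or, exactly as in Remark~\ref{remark1.2} and Corollary~\ref{proofKTbonds}, by a stochastic-domination comparison between edge and site clusters.

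The main obstacle is the effective-coupling step of the second paragraph: one must collapse the multi-spin crossing cluster of each good box into a single coarse spin and certify that the induced coarse interaction---obtained after integrating out a bounded but geometry-dependent number of intermediate spins---is bounded below, in the sense required by the correlation inequalities, by a ferromagnetic $\cos$-interaction whose strength diverges as $\beta \to \infty$. Making this comparison uniform over the random geometry of the good boxes, and verifying the evenness and positivity of Fourier modes needed to apply Ginibre and Wells, is the delicate point; by contrast the renormalization input and the final application of Theorem~\ref{mainthm} are comparatively routine.
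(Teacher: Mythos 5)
Your renormalization frame (Pisztora good boxes, finite dependence, Liggett--Schonmann--Stacey domination of the good-box field by a Bernoulli$(q)$ field, Ginibre reduction to connecting paths between representatives of adjacent good boxes) coincides with the paper's Sections~\ref{subsec5.2.1}--\ref{subsec5.2.2}, and your final monotonicity transfer matches Section~\ref{subsec5.2.3} in spirit. The genuine gap is the step you yourself flag as ``delicate'': integrating out the path spins and certifying that the induced interaction between coarse representatives dominates, ``via a Ginibre/Wells argument'', a genuine $\cos$-interaction at an effective inverse temperature $\beta'\sim\beta/N$, so that Theorem~\ref{mainthm} can be invoked on the coarse lattice. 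No such comparison is available, and this is not a technicality one can defer. Ginibre monotonicity compares Hamiltonians of the form $\sum_m J(m)\cos(m\theta)$ with coefficient-wise ordered \emph{nonnegative} coefficients; to run it here you would need the logarithm of the integrated-out kernel $W_N(\psi)\propto\sum_{k\in\Z} I_k(\beta)^N e^{ik\psi}$ to be itself a nonnegative combination of cosines whose first coefficient exceeds $\beta'$. That positivity fails in exactly this regime: $W_N$ is Villain-like, and for the circle heat kernel the Jacobi triple product gives
\begin{equation*}
\log\Big(\sum_{k\in\Z} q^{k^2}e^{ik\psi}\Big) = \mathrm{const} + \sum_{j\geq 1}\frac{2(-1)^{j+1}}{j}\,\frac{q^{j}}{1-q^{2j}}\,\cos(j\psi),
\end{equation*}
whose even-frequency coefficients are strictly negative. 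So the interpolation between your effective Hamiltonian and $\beta'\cos\psi$ leaves the Ginibre class. Wells' inequality does not help either: it compares an annealed radial disorder $\kappa$ with a point mass $\delta_a$ for a \emph{fixed} angular interaction, never two different angular interaction shapes. (Nonnegativity of the Fourier coefficients of the weight $W_N$ itself is not enough; it yields Ginibre-type inequalities \emph{within} the effective model, not domination of a different model.)

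This obstruction is precisely what dictates the paper's actual route, so repairing your argument amounts to redoing what the paper does. Instead of collapsing each path to an effective coarse coupling, the paper keeps the intermediate spins and recasts the coarse system as an $XY$ model on the extended lattice $\Zd_n$ with \emph{two} temperatures (Definition~\ref{def.heterogeneousXY}): $\beta_1$ on the edges touching the box centers, $\beta_2$ on the interior path edges. The BKT/LRO statement for that model is not deduced from Theorem~\ref{mainthm}; it is reproved from scratch in Appendix~\ref{AppendixA} (adapting van Engelenburg--Lis via dual height functions and Lammers' delocalization criterion in $d=2$, and Garban--Spencer via Nishimori disorder and unpredictable paths in $d\geq 3$). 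Crucially, Proposition~\ref{prop.phasetransitionextended} allows $\beta_1$ to be chosen independently of $n$, and Proposition~\ref{prop4.1extended} makes the Wells/stochastic-domination parameter $p_0$ depend only on $\beta_1$; this is what closes the loop between the renormalization scale $L_0$, the path length $n$, and the temperatures. Your quantifier order appears consistent only because you assumed the coarse model is literally a $\cos$ model, so that the $\beta$-independent density threshold of Theorem~\ref{mainthm} applies; once the effective interaction is not a $\cos$ model, there is nothing to cite. The authors' own remark after Open Question 1 (subdivision/limiting procedures combine poorly with Wells' inequality) records the same difficulty. A further, minor, omission: your ``transfer back'' step glosses over wiring the fine endpoints $0$ and $x$ into the coarse structure; the paper handles this with the conditioning event $E_{L,x}$ and the FKG bound $\P_p\left[E_{L,x}\right]\geq\theta(p)^2$.
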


\begin{remark} \label{remark1.4}
Let us make a few remarks about the previous theorem:
\begin{itemize}
    \item The proofs written below (based on the argument of~\cite{van2023elementary, garban2022continuous}) gives the lower bounds: there exists a constant $c := c(p) > 0$,
    \begin{equation*}
        \mathbb{E}_p [  \left\langle \cos(\theta_0 - \theta_x) \right\rangle_{\mu_{\beta, r}}] \geq \frac{c}{|x|} ~\mbox{in}~ d=2 ~~\mbox{and}~~ \inf_{x \in \Zd} \mathbb{E}_p [  \left\langle \cos(\theta_0 - \theta_x) \right\rangle_{\mu_{\beta, r}}] \underset{\beta \to \infty}{\longrightarrow} 1 ~\mbox{in}~ d\geq 3.
    \end{equation*}
    \item Regarding the dependency of the parameters $\beta_{BKT}(p)$ and $\beta_c(d, p)$ in the variable $p$, an investigation of the proof shows that an explicit lower bound depending on the values of the constants in Proposition~\ref{proppre-good} could be obtained.
    
\item Note that this theorem (as well as Theorem \ref{mainthm}) is not an almost sure statement (in the disorder, i.e. in the percolation configuration  $r$) about the existence of a BKT phase transition (or the  existence of a  long-range order in $d\geq 3$). The next corollary provides such an almost sure statement. As explained in Section~\ref{subsection5.3}, it follows readily from the above theorem and the pointwise ergodic theorem.   
\end{itemize}
\end{remark}

\begin{corollary}\label{c.AlmostSure}
With the same setting as in Theorem \ref{proofKTsitegeneral}:
    \begin{itemize}
        \item In dimension $d = 2$, for any $p > p_{c , \mathrm{site}}(2)$, there exists an inverse temperature $\beta_{BKT}(p) < \infty$ such that, for any $\beta > \beta_{BKT}(p)$, there exists $c=c(p)>0$ such that  the following holds for almost every percolation configuration $r\in \{0,1\}^{\Z^2}\sim \P_p$:

\begin{align*}\label{}
\text{for every $m\geq 1$, }\liminf_{L\to \infty}  
\left\{ \frac 1 {L^2 m^2} \sum_{x\in \Lambda_L} \sum_{y\in x + \Lambda_m} 
\<{\cos(\theta_x - \theta_y)}_{\mu_{\beta, r}}  \right\} \geq \frac c m \,.
\end{align*}
        
        \item In dimension $d \geq  3$,  for any $p > p_{c , \mathrm{site}}(d)$, there exists an inverse temperature $\beta_{c}(d, p) < \infty$, such that, for any $\beta > \beta_{c}(d, p)$, there exists a constant $c=c(p,\beta,d) > 0$ such that  the following holds for almost every percolation configuration $r\in \{0,1\}^{\Z^d}\sim \P_p$: 
\begin{align*}\label{}
\text{for every $m\geq 1$, }\liminf_{L\to \infty}  
\left\{ \frac 1 {L^d m^d}  \sum_{x\in \Lambda_L} \sum_{y\in x + \Lambda_m}
\<{\cos(\theta_x - \theta_y)}_{\mu_{\beta, r}}  \right\} \geq c >0  \,.
\end{align*}     
\end{itemize}
\end{corollary}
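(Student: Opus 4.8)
The corollary is a \emph{quenched} (almost sure in the environment) consequence of the \emph{annealed} bounds of Theorem~\ref{proofKTsitegeneral}, obtained through the multidimensional pointwise ergodic theorem. The plan is to exploit the translation covariance of the quenched infinite-volume measure to convert the spatial average over $x \in \Lambda_L$ into an ergodic average, which then converges almost surely to the annealed two-point function. Write $T_x$ for the shift on environments, $(T_x r)_z := r_{z+x}$, and set $\Phi_z(r) := \langle \cos(\theta_0 - \theta_z) \rangle_{\mu_{\beta, r}} \in [-1,1]$. The finite-volume two-point function on a box centered at $x$ equals, after relabelling sites $w \mapsto w - x$, the origin-to-$z$ function on the box centered at $0$ in the shifted environment $T_x r$; since both exhausting sequences of boxes increase to $\Zd$ and the (monotone, free boundary) limit is independent of the exhaustion, one obtains the covariance
\[
\langle \cos(\theta_x - \theta_{x+z}) \rangle_{\mu_{\beta, r}} = \Phi_z(T_x r), \qquad x, z \in \Zd.
\]
Substituting $y = x+z$, the spatial average in the statement becomes, for each fixed $m$,
\[
\frac{1}{L^d m^d}\sum_{x \in \Lambda_L}\sum_{y \in x + \Lambda_m}\langle \cos(\theta_x - \theta_y) \rangle_{\mu_{\beta,r}} = \frac{1}{m^d}\sum_{z \in \Lambda_m}\Bigl(\frac{1}{L^d}\sum_{x \in \Lambda_L}\Phi_z(T_x r)\Bigr).
\]

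\textbf{Ergodic averaging.} The product measure $\P_p$ is ergodic (indeed mixing) under the $\Zd$-action $\{T_x\}$, and each $\Phi_z$ is bounded, hence in $L^1(\P_p)$. Wiener's multidimensional pointwise ergodic theorem therefore gives, for each fixed $z$,
\[
\frac{1}{L^d}\sum_{x\in\Lambda_L}\Phi_z(T_x r) \xrightarrow[L\to\infty]{} \E_p[\Phi_z] = \E_p\bigl[\langle\cos(\theta_0 - \theta_z)\rangle_{\mu_{\beta,r}}\bigr]
\]
for $\P_p$-almost every $r$. Because the index set $\{(m,z) : m\geq 1,\ z\in\Lambda_m\}$ is countable, I intersect the associated full-measure sets and fix a single full-measure set of environments on which all these limits hold; this is precisely what allows the quantifier ``for every $m\geq1$'' to sit inside the almost-sure statement. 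On this set the $\liminf_L$ is in fact a genuine limit, equal to $\tfrac{1}{m^d}\sum_{z\in\Lambda_m}\E_p[\langle\cos(\theta_0-\theta_z)\rangle_{\mu_{\beta,r}}]$.

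\textbf{Concluding lower bounds.} It remains to bound this annealed average below, for $\beta$ above the thresholds of Theorem~\ref{proofKTsitegeneral}. In dimension $d\geq 3$ the annealed two-point function is bounded below by some $c_0 = c_0(p,\beta,d) > 0$ uniformly in $z$, so each summand is $\geq c_0$ and the average is $\geq c_0$, as claimed. In dimension $d=2$ I use instead the quantitative bound of Remark~\ref{remark1.4}, namely $\E_p[\langle\cos(\theta_0-\theta_z)\rangle_{\mu_{\beta,r}}] \geq c/|z|$, together with the elementary estimate $\sum_{z\in\Lambda_m, z\neq 0} |z|^{-1} \asymp m$ (the number of lattice points at distance $\asymp \rho$ is $\asymp \rho$, so the sum behaves like $\sum_{\rho\leq m} 1 \asymp m$). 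This yields $\tfrac{1}{m^2}\sum_{z\in\Lambda_m}\E_p[\langle\cos(\theta_0-\theta_z)\rangle_{\mu_{\beta,r}}] \geq c'/m$, matching the stated bound.

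\textbf{Main obstacle.} The genuine content of the corollary lies entirely in the annealed Theorem~\ref{proofKTsitegeneral}; the remaining work is soft. The only points requiring care are the translation covariance $\langle\cos(\theta_x-\theta_{x+z})\rangle_{\mu_{\beta,r}}=\Phi_z(T_x r)$, which must be read off from the construction of the infinite-volume limit and its independence of the exhausting sequence, and the bookkeeping producing a single null set valid simultaneously for all $m$ (handled by the countability of $(m,z)$). Neither is a serious obstruction, so I expect this argument to go through cleanly.
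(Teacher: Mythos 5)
Your proposal is correct and follows essentially the same route as the paper's proof: translation covariance of the quenched two-point function, the multiparameter pointwise ergodic theorem for the ergodic product measure $\P_p$, a countable intersection of full-measure sets to handle all $m$ simultaneously, and the annealed lower bounds of Theorem~\ref{proofKTsitegeneral} and Remark~\ref{remark1.4} (including the $\sum_{z \in \Lambda_m} |z|^{-1} \asymp m$ estimate in $d=2$). The only cosmetic difference is that the paper applies the ergodic theorem once per $m$ to the aggregated observable $X(r) = \sum_{y \in \Lambda_m} \langle \cos(\theta_0 - \theta_y)\rangle_{\mu_{\beta,r}}$, whereas you apply it to each $\Phi_z$ separately and then sum over $z \in \Lambda_m$; the two are interchangeable.
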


As mentioned above, we obtain similar results when the underlying quenched disorder is generated by a Poisson Point process $P$ of unit intensity in $\R^d$. The following result establishes that if the inverse temperature $\beta$ is sufficiently large, then the two-point function decays polynomially fast in two dimensions and remains bounded away from~$0$ in dimension $d \geq 3$.

\begin{theorem}[Phase transitions for the $XY$ model on  Poisson-Voronoi]\label{th.Vor}
For the $XY$ model defined on the random Voronoi graph $G$ induced by a Poisson Point Process $P$ and with interactions strength $F\in \{F_1,F_2,F_3\}$ from~\eqref{e.VorF}, the following holds:
    \begin{itemize}
        \item In dimension $d = 2$, there exists an inverse temperature $\beta_{BKT,Vor}(F) < \infty$, such that, for any $\beta > \beta_{BKT,Vor}(F)$, the function $x \mapsto \mathbb{E} [  \left\langle \cos  ( \theta_{\hat 0_P} - \theta_{\hat x_P} ) \right\rangle_{\mu_{\beta, P}}]$ decays polynomially fast in~$|x|$, where $\hat 0_P$ (resp. $\hat x_P$)  denote the closest point in $P$ to $0$  (resp. to $x$) for the Euclidean norm.
        \item In dimension $d \geq  3$, there exists an inverse temperature $\beta_{c,Vor}(d,F) < \infty$, such that, for any $\beta > \beta_{c,Vor}(d,F)$, the function $x \mapsto \mathbb{E} [  \left\langle \cos  ( \theta_{\hat 0_P} - \theta_{\hat x_P} ) \right\rangle_{\mu_{\beta, P}}]$ is bounded away from $0$.
    \end{itemize}
\end{theorem}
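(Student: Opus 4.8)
The plan is to reduce the analysis of the $XY$ model on the random Poisson--Voronoi graph to the analysis of a standard $XY$ model on a highly supercritical site percolation of a rescaled copy of $\Z^d$, and then to invoke Theorem~\ref{mainthm}. Conditionally on the point process $P$, the measure $\mu_{\beta,P}$ is an ordinary ferromagnetic $XY$ model on a fixed (but random) graph, so all the correlation inequalities available on $\Z^d$---in particular the Ginibre inequality \cite{ginibre1970general} and Wells' inequality \cite{Wellsthesis}---remain at our disposal. The randomness of $P$ is then handled by a renormalisation scheme, while Wells' inequality provides the crucial comparison, valid conditionally on a good environment, between the Voronoi two-point function and the two-point function of an explicit standard $XY$ model on the coarse lattice.

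First I would set up the renormalisation. Partition $\R^d$ into cubes $(Q_z)_{z \in \Z^d}$ of side length $R$, and declare $Q_z$ \emph{good} if the restriction of $P$ to a fixed neighbourhood of $Q_z$ satisfies a list of quantitative geometric conditions: the number of Poisson points in each neighbouring cube lies in $[c_1 R^d, c_2 R^d]$; no point of $P$ has an anomalously large Voronoi cell, so that the degrees in $G$ and the shared surface areas $\lambda^{d-1}(\Vor(x)\cap\Vor(y))$ relevant to $F_3$, as well as the cell volumes relevant to $F_2$, are bounded above and below by constants depending only on $R$; and there is a connected ``highway'' in the Delaunay graph crossing $Q_z$ and meeting each of its faces. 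Since each of these events depends only on $P$ inside a bounded neighbourhood of $Q_z$, the field $(\mathbf{1}\{Q_z \text{ good}\})_{z\in\Z^d}$ is finite-range dependent, and standard Poisson concentration estimates give $\P[Q_z \text{ good}] \to 1$ as $R \to \infty$. By the stochastic-domination theorem of Liggett, Schonmann and Stacey for finite-range dependent fields, it therefore dominates an i.i.d. Bernoulli site percolation on $\Z^d$ of parameter $p'=p'(R)$ with $p'(R) \to 1$ as $R \to \infty$.

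The heart of the proof is the comparison step. On the infinite cluster of good cubes, choose in each good cube $Q_z$ an anchor $a_z \in P \cap Q_z$. Using the highway structure, any two anchors $a_z, a_{z'}$ of adjacent good cubes are joined by a path of uniformly bounded length in $G$ along which the interaction strengths $F \in \{F_1,F_2,F_3\}$ are bounded below by a constant $c_0 = c_0(R)>0$; this is where the geometric control built into the definition of a good cube is used, and it is the only place where the three choices of $F$ enter. Integrating out all spins except the anchors and applying Wells' inequality \cite{Wellsthesis} conditionally on $P$, I would bound the anchor--anchor two-point functions of $\mu_{\beta,P}$ from below by those of a \emph{standard} $XY$ model on $\Z^d$ whose couplings are ferromagnetic, are supported on edges between adjacent good cubes, and are bounded below by an effective inverse temperature $\beta_{\mathrm{eff}} \ge c\,\beta$. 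Combined with the stochastic domination of the previous step and a further use of Ginibre's inequality \cite{ginibre1970general} to pass to the dominated i.i.d. cluster, this realises the anchor model as dominating, from below, a standard $XY$ model on a highly supercritical Bernoulli site percolation of $\Z^d$ at inverse temperature $\ge c\beta$. Choosing $R$ large so that $p'(R)$ exceeds the fixed threshold of Theorem~\ref{mainthm}, and then $\beta$ large enough that $c\beta$ exceeds $4\beta_{BKT}$ (resp. $4\beta_c(d)$), Theorem~\ref{mainthm} yields polynomial decay in $d=2$ and a uniform lower bound in $d\ge 3$ for the anchor two-point function. Finally, since $\hat 0_P$ (resp. $\hat x_P$) lies in a good cube near $0$ (resp. near $x$) with probability tending to $1$, a short additional Wells/Ginibre comparison transfers the estimate from the anchors to $\hat 0_P,\hat x_P$, and the contribution of the rare bad endpoint cubes is absorbed using $\langle\cos\rangle \le 1$ together with the exponential smallness of $\P[\text{endpoint bad}]$.

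The main obstacle I expect is this comparison step, and specifically making Wells' inequality produce a \emph{genuine} standard $XY$ model on $\Z^d$ as the lower-comparison object while controlling its couplings uniformly over good environments. Two points require care: (i) the Voronoi graph has random, unbounded (though tightly concentrated) degree and, for $F_2$ and $F_3$, random geometry-dependent couplings, so the lower bound $\beta_{\mathrm{eff}} \ge c\beta$ on the effective anchor--anchor couplings must be shown to be deterministic and uniform over all good configurations; (ii) the measure obtained after integrating out the non-anchor spins is not itself an $XY$ model, and it is precisely Wells' inequality that is needed to dominate it from below by one, so its hypotheses must be verified in the present non-uniformly elliptic, random-geometry setting. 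Establishing the geometric inputs---bounded cell volumes and shared surface areas, and the existence of bounded-length highways---that feed these uniform lower bounds is the most technical part of the argument.
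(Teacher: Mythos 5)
Your renormalisation setup is essentially the paper's own: good cubes defined by point-count, cell-geometry and connectivity events, finite-range dependence of the good-cube field, and Liggett--Schonmann--Stacey domination (Theorem~\ref{th.Liggett}) of a high-density i.i.d.\ site percolation on the coarse lattice. The gap is in your comparison step. You propose to integrate out all non-anchor spins and then use Wells' inequality to dominate the resulting measure from below by a \emph{standard} $XY$ model on $\Zd$ with couplings $\beta_{\mathrm{eff}} \geq c\beta$, so that Theorem~\ref{mainthm} applies. Wells' inequality cannot deliver this: in the form of Proposition~\ref{prop.Wells} it compares the $XY$ model with \emph{annealed site-dilution} disorder (couplings $J(m,A)\,r_A$ with $r_x \sim \kappa_x$) to the same model with the radial variables frozen at deterministic values $a_x$; it says nothing about replacing the non-cosine effective interaction obtained by tracing out the spins along a path of length $n(R)$ by a genuine cosine interaction. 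The weight obtained by integrating a chain of $n$ edges has Fourier coefficients $\propto I_k(\beta')^n$, but its logarithm is not (demonstrably) a nonnegative combination of cosines, so neither Ginibre monotonicity nor Wells applies to it, and no inequality in the paper or in the literature it relies on converts it into an $XY$ edge at inverse temperature $\geq c(R)\beta$. This missing step is exactly the obstruction the whole paper is organised around: instead of collapsing paths, the authors keep all intermediate spins, use Ginibre-based surgery (lengthening paths, splitting intersections, adding lines of spins) to straighten the connecting paths into the \emph{extended lattice} $\Zd_n$, and then need the separate, substantial result of Appendix~\ref{AppendixA} (Proposition~\ref{prop.phasetransitionextended}): the BKT transition, resp.\ long-range order, for the two-temperature $XY$ model on $\Zd_n$, proved by redoing van Engelenburg--Lis in $d=2$ and the Nishimori-disorder argument of Garban--Spencer in $d\geq 3$. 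The two-temperature structure, with the anchor temperature $\beta_1$ independent of $n$, is also what keeps the scheme non-circular ($p_0$ depends only on $\beta_1$; then $L_0$, then $n$, then $\beta_2(n)$ are chosen); your single effective temperature appears to avoid this issue only because the unproved collapse step conceals it. In the paper, Wells' inequality enters only afterwards, to remove the Bernoulli dilution on the renormalised extended lattice (Proposition~\ref{prop4.1extended}), not to build the comparison model.

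A secondary point: your treatment of the endpoints does not work as stated. Since you are proving a \emph{lower} bound, you cannot ``absorb'' the event that $\hat 0_P$ or $\hat x_P$ sits in a bad cube by using $\langle \cos \rangle \leq 1$ and the smallness of that event --- on the bad event the two-point function could vanish, which is harmless, but on its complement you still need the conditioned environment to behave, and the conditioning itself distorts the law of the good-cube field. The paper handles this by conditioning from the outset on a local endpoint-regularity event ($E_{L,x}$, resp.\ $F_{L,x}$, of probability bounded below by a constant), keeping the good-cube field finite-range dependent under this conditioning, and then undoing the conditioning at constant cost via nonnegativity of the two-point function (Ginibre), as in~\eqref{eq:4.17.5}.
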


\subsection{Technique of proof and comparison with previous works.}

\ni
\textbf{Known proofs of BKT phase transition and quenched disorder.}
Let us first emphasise why the current proofs of BKT phase transitions do not seem to work (without very substantial effort at least) with the types of disorders we are interested in. 
We will discuss the approaches of Fr\"{o}hlich-Spencer as well as Lammers, van Engelenburg-Lis and Aizenman-Harel-Peled-Shapiro:
\bnum
\item {\em Coulomb gas expansion} from \cite{frohlich1981kosterlitz}. 

\item {\em Delocalisation and loop expansions} from \cite{lammers2022height, van2023elementary,  aizenman2021depinning, van2023duality}. 
\enum

Both arguments are based on the important observation that the existence of the BKT phase transition can be proved by establishing the existence of a \emph{roughening} phase transition for a dual model of integer-valued height functions defined on the plaquettes of $\Z^2$. The first approach~\cite{frohlich1981kosterlitz} establishes the roughening phase transition through a remarkable and delicate multiscale analysis. The second approach relies on a recent breakthrough by Lammers~\cite{lammers2022height} and is based on a noncoexistence theorem for planar percolation from Sheffield~\cite{sheffield2005random} (see also~\cite{duminil2019sharp} for a simpler proof).

In order to adapt these techniques to the $XY$ model in a random environment, one needs to establish the existence of the roughening phase transition for the dual integer-valued height function in a dual random environment (several plaquettes of $\Z^2$ are conditioned to carry the same value). The addition of this disorder makes the adaptation of the multiscale analysis from~\cite{frohlich1981kosterlitz} difficult. Specifically when a large cluster of plaquettes are conditioned to carry the same value, they create a vertex of large degree in the resulting graph which seems to be an obstacle to the extension of the argument.

The second approach of Lammers~\cite{lammers2022height} makes important use of the noncoexistence result of Sheffield~\cite{sheffield2005random} which seems difficult to extend when the underlying environment is not translation-invariant (which is the case for almost every realization of the disorder).

\begin{remark}\label{r.EIT}
When $d\geq 3$, it would of course be impossible to adapt reflection positivity techniques (\cite{frohlich1976infrared}) to the presence of quenched disorder. Yet, let us mention another possible way to prove Theorem \ref{mainthm} in the case $d\geq 3$: the work \cite{garban2022continuous} provides a new proof of long-range-order of the $XY$ model in any $d\geq 2+\eps$ which requires the so-called {\em Exponential Intersection Property (EIT) property} (see \cite{benjamini1998unpredictable} or Theorem~\ref{thmunpredic} below). As pointed out to us by Yuval Peres, it can be shown that the EIT property a.s. still holds on a sufficiently supercritical percolation cluster in $\Z^d, d\geq 3$. 
This gives a different way to proceed for Theorem \ref{mainthm}. Yet in order to identify LRO for all  $p>p_{c, \mathrm{edge}}(d)$, one would still need the same renormalization argument as in the proof of Theorem \ref{proofKTsitegeneral}. 

Also, in $d=2$, none of these ideas apply: the EIT property is not true on $\Z^2$ and Well's inequality seems to be the only way out. 
\end{remark}

\ni
\textbf{Proof idea.}
The main idea in this work is to transfer the a priori difficult analysis of the $XY$ model on a lattice with {\em quenched} disorder to an {\em annealed} version of the $XY$ model using a remarkable inequality called Wells' inequality~\cite{Wellsthesis}. This inequality was recently advertised in the work of Madrid, Simon and Wells~\cite{madrid2022comparison} (see also~\cite{tale3coauthors}), but does not seem to have received much attention before that (except in \cite{aizenman1980comparison, bricmont1985some,dunlop1985correlation}). 

The specific form of Wells' inequality can be found in Section~\ref{sectinoWellsineq}. Its proof relies on techniques which are similar to the proof of the Ginibre correlation inequality~\cite{ginibre1970general} (use of duplicated variables, expansion of the exponentials etc.), and, as it is the case for the Ginibre inequality, it applies to a general class of $XY$ models. The proof is reproduced in Section~\ref{sectinoWellsineq} below in the same level of generality.

In the context of the nearest neighbour $XY$ model in a random environment considered in this article, an almost immediate consequence of Wells' inequality is the following result: given a finite set $\Lambda \subseteq \Zd$, and an inverse temperature $\beta$, there exists an explicit probability distribution $\nu$ on the set of percolation configurations $\{ 0,1\}^\Lambda$ such that, for any pair of vertices $x , y \in \Lambda$,
\begin{equation} \label{ineq.Wellsintro}
    \left\langle \cos(\theta_x - \theta_y) \right\rangle_{\mu_{\Lambda, \beta/2}} \leq \E_\nu \left[ \left\langle \cos(\theta_x - \theta_y) \right\rangle_{\mu_{\Lambda, \beta,r}} \right].
\end{equation}
The exact formula for the probability measure $\nu$ is obtained by multiplying the i.i.d. Bernoulli percolation measure of parameter $p = 1/2$ by the partition function $Z_{\Lambda, \beta, r}$ (see~\eqref{eq:18221905}) for the $XY$ model in a disordered environment (see~\eqref{measurenu.eq} for the explicit formula). Despite the apparent complexity of this definition, two observations can be deduced from the inequality~\eqref{ineq.Wellsintro}:
\begin{itemize}
    \item If the inverse temperature $\beta$ is selected larger than $2 \beta_{BKT}$ (resp. $2 \beta_c(d)$ for $d \geq 3$), then the term in the left-hand side of~\eqref{ineq.Wellsintro} decays polynomially fast with respect to the distance between $x$ and $y$ (resp. remains bounded away from $0$ for $d \geq 3$). 
    \item Using the explicit formula for the measure $\nu$ and a standard criterion for stochastic domination by product measures (see Lemma~\ref{Lemma1.5} below), one can prove that the measure $\nu$ is stochastically dominated by an i.i.d. Bernoulli percolation measure of probability $p_0 := p_0(\beta) < 1$ (but close to $1$). Combining this observation with the Ginibre correlation inequality (which implies that the function $r \mapsto \left\langle \cos(\theta_x - \theta_y) \right\rangle_{\mu_{\Lambda, \beta,r}}$ is increasing in $r$), we obtain
    \begin{equation*}
         \E_\nu \left[ \left\langle \cos(\theta_x - \theta_y) \right\rangle_{\mu_{\Lambda, \beta,r}} \right] \leq \E_{p_0} \left[ \left\langle \cos(\theta_x - \theta_y) \right\rangle_{\mu_{\Lambda, \beta,r}} \right].
    \end{equation*}
\end{itemize}
A combination of these two results implies the existence of the BKT phase transition for the $XY$ model on a high density Bernoulli site percolation cluster (resp. the order/disorder phase transition for $d \geq 3$). Optimizing over the inverse temperature yields Theorem~\ref{mainthm}. The details of the argument can be found in Section~\ref{Section4.1}. 

In order to extend the result to every supercritical percolation probability $p > p_{c , \mathrm{site}}(d),$ we rely on a renormalization argument. To this end, we make use of the notion of good boxes introduced by Penrose and Pisztora~\cite{penrose-pisztora-1996, pisztora-percolation} (see Section~\ref{sectionrenormalization} for formal definitions). These good boxes satisfy the following properties:
\begin{itemize}
    \item The probability of a box of sidelength $L$ to be good converges to $1$ as $L$ goes to infinity.
    \item Two intersecting good boxes of the same sidelength are connected by a cluster which lies inside their union.
\end{itemize}
On a heuristic level, it is thus possible to select an integer $L_0$ sufficiently large that the probability for a box of sidelength $L_0$ to be good is larger than the probability $p_0 := p_0(\beta)$ and to partition the set $\Zd$ into boxes of sidelength $L_0$ to reduce the problem of the BKT transition for the $XY$ model on a supercritical percolation cluster to the one on a high density percolation cluster. This latter problem can be solved using Wells' inequality. While the overall strategy is relatively straightforward, many technicalities need to be addressed along the way. An extended version of the graph $\Zd$ is introduced as well as a specific version of the $XY$ model on this new graph (see Definitions~\ref{def.Zdn} and~\ref{def.heterogeneousXY}). The existence of the BKT phase transition for this version of the $XY$ model on the extended graph then needs to be proved in a quantitative way (see Appendix~\ref{AppendixA}, where we rely on \cite{van2023elementary} for $d=2$ and \cite{garban2022continuous} for $d\geq 3$). The Ginibre correlation inequality is applied multiple times in the renormalization arguments (see Section~\ref{subsection5.2}). The details of the proof can be found in Section~\ref{section4.2} and Appendix~\ref{AppendixA}.
\smallskip

Finally, we mention another fruitful application of Wells' inequality: it was noticed by Dunlop~\cite{dunlop1985correlation} that it implies the existence of a BKT phase transition for the two-component $\Phi^4$ model on $\Z^2$. We provide the details of his observation in Appendix~\ref{a.phi4}, which may be of independent interest.

\subsection{Open questions.} 
We now list some open questions left by this work: 

\begin{OQ}
Show that the BKT  phase transition of the {\em Villain model} is also  robust to the presence of non-uniformly elliptic quenched disorder.
\end{OQ}

For most properties, if a result is shown for the $XY$ model, it can then be extended to the so-called Villain model by noticing that the Villain model is a limit of the $XY$ model when breaking each edge of the graph into $N\gg 1$ edges with high inverse temperature ($\beta_N=N \beta$).  This procedure works very well for example with Ginibre inequality as it was observed in \cite{frohlich1981kosterlitz}. See also \cite{aizenman2021depinning, dubedat2022random}.  
It appears that the same limiting procedure is less useful when coupled with Wells' inequality. Whence the above open question. (Notice that when  $d\geq 3$, Remark \ref{r.EIT} and the use of EIT property would allow us to conclude for the Villain model as well).

\begin{OQ}
Consider the faces of $\Z^2$ and glue two faces $f\sim f'$ together, independently of the other neighbouring faces with probability $1-p$, with $p>p_{c,\mathrm{edge}}(2)=\tfrac 1 2 $.  The resulting graph is the dual planar graph to the graph of the infinite cluster ($p>\tfrac 1 2$). 
Show that if the temperature is high enough, the {\em integer-valued GFF} (also called {\em discrete Gaussian Chain}) as well as the height function model dual to $XY$ a.s. delocalises on this graph with quenched disorder. 
\end{OQ}

Proving such a delocalisation in the presence of quenched disorder was our initial strategy to analyse the $XY$ model on supercritical percolation clusters. The results from \cite{lammers2023bijecting} may be useful to extract such a result from our Theorems \ref{mainthm} and \ref{proofKTsitegeneral}. (N.B. the work \cite{garban2023statistical} established quantitative delocalisation in the presence of a different kind of quenched disorder).

\begin{OQ}
On the same graph as in the above question, show that the {\em Coulomb gas} still has a low temperature phase without {\em Debye screening} despite the presence of quenched disorder. (See   \cite{garban2023quantitative}). 
\end{OQ}

\begin{OQ}
Prove that for all $\beta > \beta_{BKT}$, there exists $p=p(\beta)<1$ such that the BKT phase transition still holds on a supercritical cluster of intensity $p$ (site or edge). 
\end{OQ}
Indeed, Theorem \ref{mainthm} implies this property only for all  $\beta \geq 2 \beta_{BKT}$. The factor 2 looks artificial to us, but we did not manage to get rid of it while relying on Well's inequality.

\section{Definitions and preliminaries} \label{Section2}

\subsection{General definitions} \label{sectiongeneraldef}

We first introduce a few standard definitions and notations which will be used in this article. 

\subsubsection{Lattice}

We consider the lattice $\Zd$ with $d \geq 2$, denote by $E(\Zd)$ the set of undirected edges of $\Zd$, and by $\vec{E}(\Zd)$ the set of directed edges. Given a subset $\Lambda \subseteq \Zd$, we denote by $E(\Lambda)$ the set of undirected edges of $\Lambda$ (i.e., the edges of $\Zd$ whose endpoints are both in $\Lambda$). Similarly, we denote by $\vec{E}(\Lambda)$ the set of directed edges of $\Lambda$.

We say that two vertices $x , y \in \Zd$ are neighbours, and denote it by $x\sim y$, if $\{ x , y\} \in E(\Zd)$. We denote by $|\cdot|$ and $|\cdot |_1$ the Euclidean and $1$-norm on $\Zd$ (or $\Rd$), i.e., $|x| := (\sum_{i=1}^d |x_i|^2)^{1/2}$ and $| x |_1 = \sum_{i=1}^d |x_i|$ for any $x = (x_1 , \ldots, x_d) \in \Zd$.

A path from $x$ to $y$ is a finite collection of distinct vertices $ \{ x_1 , x_2 , \ldots, x_n \} \subseteq \Zd$ such that $x_1 = x$, $x_n = y$, and $x_{k+1} \sim x_k$, for any $k \in \{1 , \ldots, n - 1\}$. We call the integer $n$ the length of the path.

A box $ \Lambda \subseteq \Zd$ is a subset of the form $x + \{ - L , \ldots, L\}^d$ with $x \in \Zd$ and $L \in \N$. We refer to $x$ as the center of the box $\Lambda$ and to the integer $(2L+1)$ as its sidelength. We may scale boxes by defining, for any integer $k \in \N$, $k \Lambda := x + \{ - k L , \ldots, k L\}^d$ (note that this operation does not modify the center of the box and only acts on its sidelength). When the center of the box is the vertex $0 \in \Zd$, we will simply write $\Lambda_L := \{ - L , \ldots, L\}^d$. The cardinality of a box $\Lambda$ is denoted by $\left| \Lambda \right|$ (in particular $\left| \Lambda_L \right| = (2L+1)^d$). The (inner) boundary of subset $\Lambda \subseteq \Zd$ is the collection of the vertices $x \in \Lambda$ which are neighbours to a vertex outside $\Lambda$. It is denoted by $\partial \Lambda$.

\subsubsection{Spin configurations}

Given a set $\Lambda \subseteq \Zd$, a finitely supported function $m \in \Z^\Lambda$ and a spin configuration $\theta \in [0 , 2\pi)^\Lambda$, we denote by $m \theta := \sum_{x \in \Lambda} m_x \theta_x.$ For any pair of functions $A \in \Z^\Lambda$ and $r \in \R^\Lambda$ with $A$ finitely supported, we denote by $r_A = \prod_{x \in \Lambda} r_x^{A_x}$. For $x \in \Zd$, we denote by $\textbf{1}_x \in \Z^\Lambda$ the function equal to $1$ at $x$ and $0$ everywhere else. 

\subsubsection{Functions defined on subsets of $\Zd$} \label{sec2.1.3.def}

A function $f$ defined on the finite subsets of $\Zd$ and valued in $\R$ is said to be increasing if, for any pair of finite subsets $\Lambda \subseteq \Lambda' \subseteq \Zd$, one has $f(\Lambda) \leq f(\Lambda')$. A typical example of such function is the two-point function $f(\Lambda) = \left\langle \cos(\theta_x - \theta_y) \right\rangle_{\mu_{\Lambda, \beta}}$ where $\mu_{\Lambda, \beta}$ is the measure introduced in~\eqref{eq:XYmodel} (and we use the convention $f(\Lambda) = 0$ if either $x \notin \Lambda$ or $y \notin \Lambda$). 

We say that a function $f$ converges as $\Lambda \uparrow \Zd$ if the sequence $(f(\Lambda_n))_{n \in \N}$ converges as $n \to \infty$ for any sequence of finite sets $(\Lambda_n)_{n \in \N}$ such that $\Lambda_n \subseteq \Lambda_{n+1}$ and $\cup_{n \in \N} \Lambda_n = \Zd$ and if the limit does not depend on the specific sequence of sets chosen. In particular, any bounded and increasing function $f$ converges as $\Lambda \uparrow \Zd$, and the limit is equal to $\sup f(\Lambda),$ where the supremum is considered over all the finite subsets of $\Zd$.

\subsubsection{Probability spaces}

We denote by $\mathcal{P}(\R)$ the set of probability distributions on $(\R, \mathcal{B}(\R))$, where $\mathcal{B}(\R)$ is the Borel $\sigma$-algebra on $\R$. For $a \in \R$, we denote by $\delta_a$ the Dirac measure at $a$.

\subsection{The $XY$ model and the Ginibre inequality} \label{sectiondef.XY}

In this section, we introduce a general version of the $XY$ model (extending the spin space and allowing more general coupling constants) which will be used to state and prove Wells' inequality in Section~\ref{sectinoWellsineq}. We additionally state the Ginibre inequality in Theorem~\ref{theoremGinibreineq} below.

\begin{definition}[$XY$ model with general coupling constants] \label{generalXY}
Let $\Lambda \subseteq \Zd$ be a finite set, $\mathcal{M} \subseteq \Z^\Lambda$ be a finite collection of integer-valued functions, and let $\left\{ J(m) \, : \, m \in \mathcal{M} \right\} \subseteq [0, \infty)^{\mathcal{M} } $ be a collection of non-negative coupling constants. We define the finite-volume Gibbs measure on the space $[0 , 2\pi)^{\Lambda}$ according to the formula
    \begin{equation} \label{eq:17131905}
        \mu_{\Lambda, J} (d\theta) := \frac{1}{Z_{\Lambda, J}}\exp \left( \sum_{m \in \mathcal{M}} J(m ) \cos ( m \theta) \right) \prod_{x \in \Lambda} d \theta_x,
    \end{equation}
where $Z_{\Lambda, J}$ is the normalizing constant. We denote by $\left\langle \cdot \right\rangle_{\mu_{\Lambda, J}}$ the expectation with respect to the measure~$\mu_{\Lambda, J}$.
\end{definition}

\begin{definition}[$XY$ model with annealed disorder] \label{def.finite-volHamiltonian}
Let $\Lambda \subseteq \Zd$ be a finite set, $\mathcal{M} \in \Z^\Lambda$  and $\mathcal{A} \in \N^\Lambda$ two finite collections of integer-valued functions, and let $\left\{ J(m,A) \, : \, m \in \mathcal{M}, A \in \mathcal{A} \right\} \subseteq [0, \infty)^{\mathcal{M} \times \mathcal{A}} $ be a collection of non-negative coupling constants. We let $(\kappa_x)_{x \in \Lambda} \in (\mathcal{P}(\R))^\Lambda$ be a collection of compactly supported probability distributions on $[0 , \infty)$ and define the finite-volume Gibbs measure on the space $( [0, \infty) \times [0 , 2\pi])^{\Lambda}$ according to the formula
    \begin{equation} \label{eq:17141905}
        \mu_{\Lambda, J, \kappa} (dr d\theta) := \frac{1}{Z_{\Lambda, \kappa}}\exp \left( \sum_{\substack{m \in \mathcal{M} \\ A \in \mathcal{A}}} J(m , A) r_A \cos ( m \theta) \right) \prod_{x \in \Lambda} d \kappa_x(r_x) d \theta_x,
    \end{equation}
where $Z_{\Lambda, J, \kappa}$ is the normalizing constant. We denote by~$\left\langle \cdot \right\rangle_{\mu_{\Lambda, J, \kappa}}$ the expectation with respect to the measure $\mu_{\Lambda, J, \kappa}$.
\end{definition}

\begin{remark}
Let us make a few remarks about the previous definitions:
\begin{itemize}
    \item The definitions~\eqref{eq:17131905} and~\eqref{eq:17141905} are quite general (and in particular, the results of this article are established for the much more restrictive class of nearest-neighbour interactions). The reason motivating these definitions is twofold: first the Wells and Ginibre inequalities apply to this level of generality (without any modification of the argument), and second these definitions are sufficiently general to contain almost all the models which appear in the article (and in particular, the $XY$ model defined on the extended graph with heterogeneous temperatures introduced in Definition~\ref{def.heterogeneousXY} below). The only exception is the $XY$ model in a Nishimori disorder introduced in Appendix~\ref{AppendixA}.
    \item For the application to nearest-neighbour $XY$  model, we may relax the condition that $\kappa$ is compactly supported to the condition that $\kappa$ has sub-Gaussian tails, i.e. $\int_{\R_+} e^{\lambda t^2} d\kappa(t) <\infty$ for all $\lambda \geq 0$. This will be used in Appendix~\ref{a.phi4} to define and study a related model: the two-component $\Phi^4$ model.
    \item The $XY$ model with general coupling constants~\eqref{eq:17131905} can be obtained from the model~\eqref{eq:17141905} by considering the specific measure $\kappa = \delta_1$ and the coupling constants $J(m , A) = 0$ if $A \neq 0$ and $J(m,A) = J(m)$ if $A = 0$.
    \item The model~\eqref{eq:XYmodel} is obtained from the one introduced in~\eqref{generalXY} by considering the coupling constants $J(m) = \beta$ if $m = \indc_x - \indc_y$ for a pair of neighbouring vertices $x , y \in \Lambda$ and $J(m) = 0$ otherwise.
\end{itemize}
\end{remark}

\begin{figure} 
\begin{center}
\includegraphics[width=9cm]{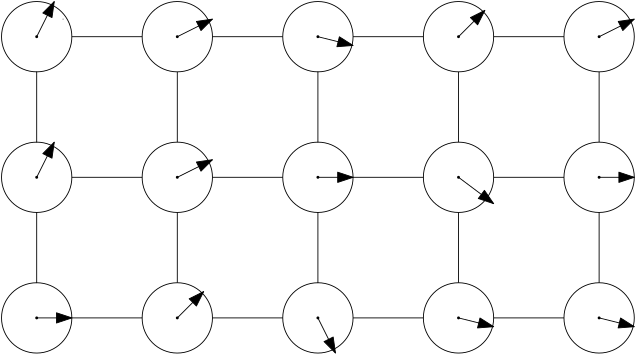}
\caption{A realization of the $XY$ model on $\Z^2$} \label{XYmodel.picture}
\end{center}
\end{figure}

We next state the Ginibre correlation inequality~\cite{ginibre1970general} which implies the monotonicity of the two-point function in the coupling constants. We specifically state two results from~\cite{ginibre1970general}, the first inequality~\eqref{theoremGinibreineq} is an important ingredient in the proof of the monotonicity~\eqref{eq:Ginibreineq}, and will also be used to establish the Wells' inequality in Section~\ref{sectinoWellsineq}.

\begin{theorem}[Ginibre inequality~\cite{ginibre1970general}] \label{theoremGinibreineq}
 One has the following inequalities:
 \begin{itemize}
     \item For any integer $k \in \mathbb{N}$, any collection of functions $m_1 , \ldots, m_k \in \mathbb{Z}^\Lambda$ and any sequence of plus and minus signs,
 \begin{equation*}
     \int \prod_{i = 1}^k \left( \cos ( m_i \theta) \pm  \cos ( m_i \theta') \right) \prod_{x \in \Lambda} d\theta_x d \theta'_x \geq 0.
 \end{equation*}
    \item In the setting of Definition~\ref{generalXY}, for any function $m \in \Z^{\Lambda}$,
\begin{equation} \label{eq:Ginibreineq}
    \left\langle \cos(m \theta) \right\rangle_{\mu_{\Lambda, J}} \geq 0 \hspace{5mm} \mbox{and} \hspace{5mm} \frac{\partial}{\partial J_{xy}} \left\langle \cos(m \theta) \right\rangle_{\mu_{\Lambda, J}} \geq 0.
\end{equation}
 \end{itemize}
\end{theorem}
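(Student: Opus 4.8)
The plan is to follow Ginibre's duplicated-variable method, noting at the outset that the two bullets are of very unequal difficulty. The positivity $\left\langle \cos(m\theta)\right\rangle_{\mu_{\Lambda,J}}\geq 0$ is essentially elementary, and the monotonicity statement, once we differentiate, reduces to the first (doubled) inequality applied to carefully chosen sign patterns. So the real content is the first bullet, which concerns only the free product measure on $([0,2\pi)^\Lambda)^2$.

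\emph{Positivity of the one-point average.} Since every $J(m')\geq 0$, expanding the Gibbs weight $\exp\!\big(\sum_{m'}J(m')\cos(m'\theta)\big)$ as a power series writes it as a combination, with nonnegative coefficients, of monomials $\prod_j \cos(m_j\theta)$. Hence it suffices to prove $\int_{[0,2\pi)^\Lambda}\prod_j\cos(m_j\theta)\,d\theta\geq 0$. Writing $\cos(m_j\theta)=\tfrac12(e^{im_j\theta}+e^{-im_j\theta})$ and integrating term by term, only the frequencies with $\sum_j\sigma_j m_j=0$ (for $\sigma\in\{\pm1\}^k$) survive, and each contributes $+(2\pi)^{|\Lambda|}2^{-k}$; the integral is thus a nonnegative multiple of the number of such sign patterns, and the denominator $Z_{\Lambda,J}$ is positive.

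\emph{The first (doubled) inequality.} I would introduce sum/difference variables $\phi=(\theta+\theta')/2$ and $\psi=(\theta-\theta')/2$, under which $\cos(m_i\theta)+\cos(m_i\theta')=2\cos(m_i\phi)\cos(m_i\psi)$ and $\cos(m_i\theta)-\cos(m_i\theta')=-2\sin(m_i\phi)\sin(m_i\psi)$. If $n_-$ denotes the number of minus signs among the $k$ factors, the integrand factorizes as $2^k(-1)^{n_-}P(\phi)P(\psi)$, where $P:=\prod_{i:\,+}\cos(m_i\,\cdot)\prod_{i:\,-}\sin(m_i\,\cdot)$. The linear map $(\phi,\psi)\mapsto(\phi+\psi,\phi-\psi)$ descends to a measure-preserving finite-to-one self-covering of the torus $([0,2\pi)^\Lambda)^2$ (the constant Jacobian and the covering degree cancel), so the integral equals $2^k(-1)^{n_-}\big(\int P\big)^2$. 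Expanding $P$ into exponentials, $\int P$ equals $i^{-n_-}$ times a real integer $S$ (a signed count of patterns $\sigma$ with $\sum_i\sigma_i m_i=0$); therefore $\big(\int P\big)^2=(-1)^{n_-}\cdot(\text{const}\geq 0)\cdot S^2$, and the two factors $(-1)^{n_-}$ cancel, leaving a nonnegative multiple of $S^2$.

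\emph{Monotonicity.} Differentiating the Gibbs average yields
\begin{equation*}
\frac{\partial}{\partial J(m')}\left\langle \cos(m\theta)\right\rangle_{\mu_{\Lambda,J}}
=\left\langle \cos(m\theta)\cos(m'\theta)\right\rangle_{\mu_{\Lambda,J}}
-\left\langle \cos(m\theta)\right\rangle_{\mu_{\Lambda,J}}\left\langle \cos(m'\theta)\right\rangle_{\mu_{\Lambda,J}},
\end{equation*}
which the standard duplicated-variable identity rewrites as $\tfrac12$ times the average, under $\mu_{\Lambda,J}\otimes\mu_{\Lambda,J}$ in variables $(\theta,\theta')$, of $\big(\cos(m\theta)-\cos(m\theta')\big)\big(\cos(m'\theta)-\cos(m'\theta')\big)$. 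Expanding the doubled weight $\exp(H(\theta)+H(\theta'))$ factor by factor via $e^{J(m'')[\cos(m''\theta)+\cos(m''\theta')]}=\sum_{n}\frac{J(m'')^n}{n!}\big(\cos(m''\theta)+\cos(m''\theta')\big)^n$ expresses it as a nonnegative combination of all-plus products $\prod_j\big(\cos(M_j\theta)+\cos(M_j\theta')\big)$. Multiplying by the two minus-sign factors above, every resulting term is exactly of the form covered by the first bullet (two minus signs, the rest plus), hence its integral is nonnegative; so the covariance is nonnegative.

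\emph{Main obstacle.} The one genuinely delicate point is the sign bookkeeping in the first bullet: justifying the torus change of variables despite the half-integer frequencies it introduces, and checking that the parasitic sign $(-1)^{n_-}$ is precisely cancelled by the $i^{-n_-}$ produced when the sine factors are written in exponential form. Once this cancellation is verified, the remainder is routine power-series expansion together with the elementary covariance identity.
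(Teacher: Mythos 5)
Your proof is correct. The paper never proves this theorem — it quotes it directly from Ginibre's article — and your argument is exactly the classical Ginibre proof: duplicated variables, the sum/difference substitution $\theta=\phi+\psi$, $\theta'=\phi-\psi$ realized as a measure-preserving self-covering of the torus, factorization of the integrand into $2^k(-1)^{n_-}P(\phi)P(\psi)$, and power-series expansion of the doubled Gibbs weight to reduce the covariance to the first bullet; this is also the same technique the paper itself adapts in Section~\ref{sectinoWellsineq} to prove Wells' inequality, with your first bullet used there as an input. One small streamlining of your ``main obstacle'': when the number $n_-$ of minus signs is odd, $P$ is an odd function of its argument, so $\int P=0$ and the whole integral vanishes, while for even $n_-$ the sign $(-1)^{n_-}$ equals $+1$; this disposes of the sign bookkeeping without ever tracking the factor $i^{-n_-}$.
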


\begin{remark}
    Consider a collection of coupling constants of the form $J := \left\{ J_{xy} \, : \, x, y \in \Zd, x \sim y  \right\}$. For any finite set $\Lambda \subseteq \Zd$, if we denote by $J_{\Lambda} := \left\{ J_{xy} \, : \, x, y \in \Lambda, x \sim y  \right\}$, then the inequality~\eqref{eq:Ginibreineq} implies that the function $\Lambda \mapsto \left\langle \cos(m \theta) \right\rangle_{\mu_{\Lambda, J_{\Lambda}}}$ is increasing, and thus converges as $\Lambda \uparrow \Zd$.
\end{remark}

\subsection{Percolation} \label{subsecpercolation}

In this section, we introduce some definitions and notations about percolation configurations and stochastic domination which are used in the proofs of the main theorems in Section~\ref{Section4.1} and Section~\ref{section4.2}.

 \begin{definition}[Percolation configuration on $\Zd$]
    A site percolation configuration on a subset $\Lambda \subseteq \Z^d$ is a function $r \in \{ 0 , 1\}^\Lambda$. We say that a site is open in the percolation configuration $r$ if $r_x = 1$ and closed if $r_x = 0$.
\end{definition}

\begin{definition}[Clusters and paths]
     Given a set $\Lambda \subseteq \Zd$ and a percolation configuration $r \in \{ 0 , 1\}^{\Lambda}$, we say that a subset $\mathcal{C} \subseteq \Lambda$ is a cluster for the percolation configuration $r$ if the set $\mathcal{C}$ is connected in $\Zd$ and if, for any $x \in \mathcal{C}$, $r_x = 1$. The diameter of a finite cluster $\mathcal{C}$ is the maximal distance between any pair of vertices in the cluster.

     Given a percolation configuration $r \in \{0 , 1\}^\Lambda$, an open path is a path whose vertices are all open in the percolation configuration $r$.
\end{definition}

\begin{definition}[Bernoulli site percolation measure] \label{DEf.bernoulliperc}
Given a set $\Lambda \subseteq \Zd$ and a probability $p \in [0 , 1]$, we equip the space of percolation configurations $\{ 0 , 1 \}^{\Lambda}$ with the $\sigma$-algebra generated by the projections, and denote by $\P_{\Lambda, p}$ the i.i.d. site percolation measure of probability $p$ on $\Lambda$. 

In the case when $\Lambda = \Zd$, the set of percolation configurations is $\{ 0 , 1 \}^{\Zd}$. For any subset $\Lambda \subseteq \Zd$, we denote by $\mathcal{F}(\Lambda)$ the smallest $\sigma$-algebra that makes the maps $r \mapsto r_x$ measurable for any $x \in \Lambda$. We write $\mathcal{F}:= \mathcal{F}(\Zd)$ and $\P_{p} := \P_{\Zd , p}$.
\end{definition}

We next introduce the notion of stochastic domination, and will make use of the following notation: given a subset $\Lambda \subseteq \Zd$ and a probability measure $\nu$ on $\{ 0 , 1 \}^{\Lambda}$, we denote by $\E_\nu$ the expectation with respect to $\nu$. We simply write $\E_p$ instead of $\E_{\P_{\Lambda,p}}$ or $\E_{\P_p}$.

\begin{definition}[Partial order, increasing functions and stochastic domination]
We introduce the three following definitions:
\begin{itemize}
\item[(i)] \textit{Partial order:} Given a subset $\Lambda \subseteq \Zd$, we define a partial order on the space $\{ 0 , 1 \}^{\Lambda}$ by writing, for any pair $r , r' \in \{ 0 , 1 \}^{\Lambda}$, $r \preceq r'$ if and only if, for all $x \in \Lambda, \, r_x \leq r_x'.$
\item[(ii)] \textit{Increasing function:} A function $f : \{ 0 , 1 \}^\Lambda \to \R$ is called increasing if $f(r) \leq f(r')$ for all pairs of configurations $r , r' \in \{ 0 , 1 \}^{\Lambda}$ with $r \preceq r'$.
\item[(iii)] \textit{Stochastic domination:} Given two probability measures $\nu , \nu'$ on $\{ 0 , 1 \}^{\Lambda}$, we say that $\nu$ stochastically dominates $\nu'$, and denote it by $\nu \preceq \nu'$, if $\E_\nu \left[  f \right] \leq \E_{\nu'} \left[  f \right]$ for any increasing function $f : \{ 0 , 1\}^{\Lambda} \to [0, \infty)$.
\end{itemize}
\end{definition}

\begin{remark}
    Let us make a few remarks about the previous definition:
\begin{itemize}
    \item For any $p , p' \in (0,1)$ with $p \leq p'$, the measure $\P_p$ is stochastically dominated by the measure $\P_{p'}$.
    \item The Ginibre inequality implies that, for any finite set $\Lambda \subset \Zd$, the function $r \in \{ 0 , 1 \}^\Lambda \mapsto \left\langle \cos (m \theta) \right\rangle_{\mu_{\Lambda, \beta,  r}}$ is increasing (using the Definition~\ref{eq:18221905}).
    \item We may extend all these definitions and properties to edge percolation instead of site percolation by replacing the set $\Lambda$ by the edges of $\Lambda$.
\end{itemize} 
\end{remark}

The following lemma provides a convenient criterion to prove stochastic domination. The statement and proof of the result can be found in~\cite[Lemma 1.1]{liggett1997domination} or in~\cite[Lemma 1.5]{duminil2019lectures} (specified to the case where the measure $\nu$ is the i.i.d. Bernoulli percolation measure $\P_{\Lambda,p}$ in the latter case). We say that a probability measure $\nu$ on $\{ 0,1\}^\Lambda$ is strictly positive if $\nu( r) > 0$ for any $r \in \{0,1\}^\Lambda$.

\begin{lemma}[Lemma 1.1 of~\cite{liggett1997domination} or Lemma 1.5 of~\cite{duminil2019lectures}] \label{Lemma1.5}
        Let $\Lambda \subseteq \Zd$ be a finite set and let $\nu$ be a strictly positive probability measure on $\left\{ 0 , 1 \right\}^{\Lambda}$. We assume that for any vertex $x \in \Lambda$ and any configuration $r_1 \in \Lambda \setminus \{ x \}$, one has the inequality
        \begin{equation*}
            \nu \left( r_x = 1 \, | \, r = r_1 ~ \mbox{in} ~ \Lambda \setminus \{ x \} \right) \leq p.
        \end{equation*}
        Then $\P_{\Lambda,p}$ stochastically dominates $\nu$.
\end{lemma}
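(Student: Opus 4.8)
The plan is to establish the conclusion by exhibiting an explicit monotone coupling of the two measures. Concretely, I will construct on a common probability space a pair of random configurations $(R, R') \in \{0,1\}^\Lambda \times \{0,1\}^\Lambda$ such that $R$ has law $\nu$, $R'$ has law $\P_{\Lambda, p}$, and $R_x \le R'_x$ for every $x \in \Lambda$ almost surely. Once such a coupling is available, for any increasing $f : \{0,1\}^\Lambda \to [0,\infty)$ one has $f(R) \le f(R')$ pointwise, hence $\E_\nu[f] = \E[f(R)] \le \E[f(R')] = \E_p[f]$, which is exactly the asserted fact that $\P_{\Lambda,p}$ stochastically dominates $\nu$ (this is one direction of Strassen's theorem, realised here by producing the coupling by hand).

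To build the coupling, I would enumerate the finitely many vertices of $\Lambda$ as $x_1, \dots, x_n$ and reveal the two configurations one site at a time, driven by a single sequence $U_1, \dots, U_n$ of i.i.d.\ uniform variables on $[0,1]$. At step $k$, having already produced $R_{x_1}, \dots, R_{x_{k-1}}$, let
\[
q_k := \nu\bigl( r_{x_k} = 1 \,\big|\, r = R \text{ on } \{x_1, \dots, x_{k-1}\}\bigr)
\]
be the $\nu$-conditional probability of opening $x_k$ given the revealed past (well defined because $\nu$ is strictly positive, so every partial configuration has positive mass), and set $R_{x_k} := \indc_{\{U_k \le q_k\}}$ and $R'_{x_k} := \indc_{\{U_k \le p\}}$. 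By the chain rule for conditional probabilities, $R$ has law $\nu$; and since each $R'_{x_k}$ is an independent Bernoulli$(p)$ variable, $R'$ has law $\P_{\Lambda, p}$.

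It remains to check that the coupling is monotone, i.e.\ $R_{x_k} \le R'_{x_k}$ for every $k$, and this reduces to the single inequality $q_k \le p$. This is the crux of the argument and the only genuinely delicate point. The hypothesis controls the conditional probability of $r_{x_k}$ given \emph{all} other sites, whereas $q_k$ conditions only on the already-revealed past; to bridge the two I would use the tower property. Writing $T := \Lambda \setminus \{x_k\}$ and noting that the revealed past is a subset of $T$, the random variable $q_k$ equals the $\nu$-conditional expectation, given the past, of the full-complement conditional probability $\nu(r_{x_k} = 1 \mid r \text{ on } T)$. By assumption the latter is bounded above by $p$ for every value of the conditioning configuration, so its conditional expectation is $\le p$ as well, giving $q_k \le p$ and hence $R_{x_k} = \indc_{\{U_k \le q_k\}} \le \indc_{\{U_k \le p\}} = R'_{x_k}$. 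This completes the coupling and the proof; everything else (the sequential construction, the Bernoulli coupling through a common uniform, and the concluding monotonicity of $f$) is routine, and strict positivity of $\nu$ is used precisely to guarantee that all the conditional probabilities above are well defined.
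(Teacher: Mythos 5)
Your proof is correct: the sequential coupling driven by common uniforms, together with the tower-property step reducing the revealed-past conditional probability $q_k$ to an average of the full-complement conditional probabilities (each $\leq p$ by hypothesis, with strict positivity guaranteeing all conditionings are well defined), is a complete argument. The paper itself does not prove this lemma but quotes it from~\cite{liggett1997domination} and~\cite{duminil2019lectures}, and your argument is essentially the standard proof given in those references, so there is nothing to bridge or correct.
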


We will need the following key result which is an important particular case of Theorem 0.0 in \cite{liggett1997domination}.
For any $k\geq 1$, we will say that a field of random variables $\{X_i\}_{i\in \Lambda}$ indexed by a subset $\Lambda \subseteq \Z^d$ is \textbf{$k$-dependent} if and only if for any subsets $A,B \subset \Lambda$ s.t. $\mathrm{dist}(A,B)>k$ $\sigma(X_i, i\in A)$ is independent of $\sigma(X_j, j\in B)$ (where $\mathrm{dist}$ is the graph distance on $\Z^d$).
 
\begin{theorem}[Theorem 0.0 from \cite{liggett1997domination}]\label{th.Liggett}
For any $d\geq 1$, any $k\geq 1$ and any $p\in (0,1)$, there exists 
$\hat p=\hat p(d,k, p) < 1$ such that the following hold. For any subset $\Lambda \subseteq \Zd$, if $\{X_i\}_{i\in \Lambda}$ is a $k$-dependent field of Bernoulli variables satisfying $\Pb{X_i=1}\geq \hat p$, for each $ i \in \Lambda$, then if $\{Y_i\}_{i\in \Lambda}$ is an i.i.d field of Bernoulli variables with parameter $p$, one has 
\begin{align*}\label{}
\{X_i\}_{i\in \Lambda}  \text{ Stoch. dominates } \{Y_i\}_{i\in \Lambda}\,.
\end{align*}
\end{theorem}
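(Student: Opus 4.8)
This is a special case of the Liggett--Schonmann--Stacey domination theorem, so the plan is to deduce it from \cite{liggett1997domination} after a short reduction that recasts our $k$-dependence hypothesis into the bounded-degree ``dependency graph'' framework in which that result is most cleanly stated. First I would introduce the auxiliary graph $G_k$ on $\Lambda$ in which two sites $i,j$ are joined whenever $1\le \mathrm{dist}(i,j)\le k$. Its maximal degree is bounded by a constant $\Delta=\Delta(d,k)$ (e.g. $\Delta\le (2k+1)^d$), depending only on $d$ and $k$. The $k$-dependence assumption is exactly the statement that whenever two sets $A,B\subseteq\Lambda$ have no $G_k$-edge between them, equivalently $\mathrm{dist}(A,B)>k$, the families $\{X_i\}_{i\in A}$ and $\{X_j\}_{j\in B}$ are independent; in other words $G_k$ is a dependency graph of bounded degree $\Delta$ for the field $\{X_i\}$, whose marginals satisfy $\P[X_i=1]\ge \hat p$.

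I would then invoke the product-domination theorem of \cite{liggett1997domination} in the following form: a family of $\{0,1\}$-valued random variables admitting a dependency graph of maximal degree $\Delta$ with marginals $\P[X_i=1]\ge\rho$ can be coupled with an i.i.d. Bernoulli$(p_\ast)$ field $\{Y_i\}$ so that $Y_i\le X_i$ almost surely, where $p_\ast=p_\ast(\rho,\Delta)\to 1$ as $\rho\to 1$ for fixed $\Delta$. Since $\Delta$ depends only on $(d,k)$, it then suffices, for a given target $p\in(0,1)$, to choose $\hat p=\hat p(d,k,p)<1$ close enough to $1$ that $p_\ast(\hat p,\Delta)\ge p$. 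Because $p\le p_\ast$, an i.i.d. Bernoulli$(p)$ field is in turn dominated by the i.i.d. Bernoulli$(p_\ast)$ field (monotonicity of product measures in the parameter), and composing the two couplings yields $Y_i\le X_i$ for an i.i.d. Bernoulli$(p)$ field $\{Y_i\}$, which is the asserted stochastic domination. The conclusion is uniform over $\Lambda\subseteq\Zd$ precisely because $\Delta$ and the marginal bound $\hat p$ are.

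The only genuinely nontrivial input is the domination theorem used in the second step, and I expect that to be the crux. One cannot simply verify the single-site criterion of Lemma~\ref{Lemma1.5}: conditioning on a cluster of nearby sites all equal to $0$ can force an additional $0$ with conditional probability arbitrarily close to (indeed, equal to) $1$, even when every marginal is arbitrarily close to $1$, so the naive sequential coupling in a fixed order breaks down. The content of \cite{liggett1997domination} is exactly an explicit coupling circumventing this obstruction: it exploits that such unfavourable local configurations are rare, since sites at distance $>k$ are independent and the ``bad'' events are controlled by a Lov\'asz-local-lemma-type estimate, and builds the dominating product field through an iterative rather than site-by-site construction. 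In a self-contained write-up I would treat that construction as a black box and only check, as above, that its hypotheses hold with constants depending solely on $d$ and $k$.
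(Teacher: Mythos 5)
Your proposal is correct and takes essentially the same route as the paper: the paper offers no proof of this statement at all, but simply quotes it as a particular case of Theorem 0.0 of \cite{liggett1997domination}, which is exactly the black-box use you make of that result. Your surrounding verifications — that $k$-dependence gives a dependency graph of degree $\Delta\le(2k+1)^d$ depending only on $(d,k)$, that domination is transitive in the Bernoulli parameter, and that the single-site criterion of Lemma~\ref{Lemma1.5} cannot replace the Liggett--Schonmann--Stacey coupling — are sound, but they amount to checking hypotheses rather than supplying any argument beyond the citation.
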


\subsection{Renormalization for the supercritical Bernoulli percolation cluster} \label{sectionrenormalization}

Theorem~\ref{proofKTsitegeneral} is obtained by combining the results of Theorem~\ref{mainthm} and Corollary~\ref{proofKTbonds} (for highly supercritical percolation) with renormalization arguments for the supercritical percolation cluster. Specifically, we rely on the results and techniques developed by Penrose and Pisztora~\cite{penrose-pisztora-1996, pisztora-percolation}, and, following these articles, we first introduce a notion of good boxes. The definition is stated in two steps: we first introduce the notion of pre-good box in Definition~\ref{pregoodbox}, and enhance it in the definition of good box in Definition~\ref{def.goodbox} below.

\begin{definition}[Pre-good box] \label{pregoodbox}
Let $d \geq 2$. For any box $\Lambda \subseteq \mathbb{Z}^d$ and any site percolation $r \in \{0 , 1\}^\Lambda$, we say that the box $\Lambda$ is pre-good if and only if:
\begin{itemize}
    \item There exists a unique cluster in the percolation configuration $r$ which touches the $2d$ faces of the box $\Lambda$. We denote this cluster by $\mathcal{C}(\Lambda).$
    \item The diameter of all the other clusters is smaller than $\mathrm{diam}(\Lambda) / 100.$
\end{itemize}
\end{definition}

\begin{figure}[!ht] 
\centering
\includegraphics[width=8cm]{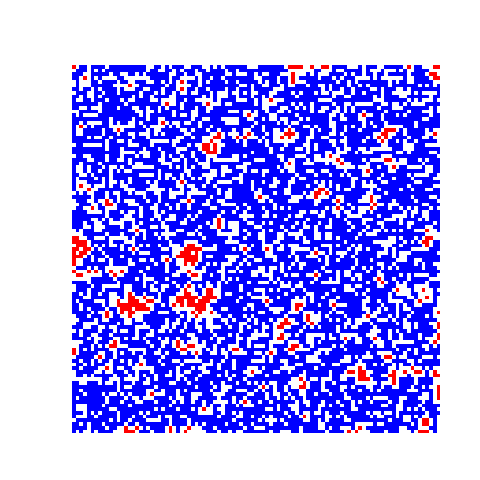}
\includegraphics[width=12cm]{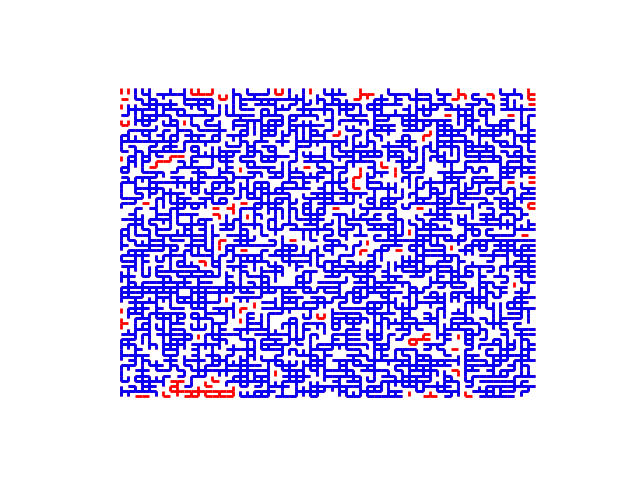}
\caption{An example of a pre-good box for site (top) and edge (bottom) percolation. In both cases, the cluster $\mathcal{C}(\Lambda)$ is drawn in blue. \label{goodboxes.caption}}
\end{figure}

Figure~\ref{goodboxes.caption} gives a graphical representation of a pre-good box. We next record the result of~\cite{penrose-pisztora-1996, pisztora-percolation} which asserts that, for a supercritical site percolation, the probability of a box to be pre-good is exponentially close to $1$ (in the sidelength of the box).

\begin{proposition}[\cite{penrose-pisztora-1996, pisztora-percolation}] \label{proppre-good}
For any $p > p_{c, \mathrm{site}}(d)$, there exist a constant $C := C(p , d) < \infty$ and an exponent $c := c(d , p) > 0$ such that for any box $\Lambda \subseteq \mathbb{Z}^d$ of sidelength $L$,
\begin{equation*}
    \mathbb{P}_p \left[\,  \Lambda \mbox{ is pre-good} \,  \right] \geq 1 - C \exp (-c L).
\end{equation*}
\end{proposition}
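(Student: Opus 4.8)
The plan is to reduce the statement to two fundamental features of supercritical Bernoulli percolation, both consequences of the Grimmett--Marstrand slab theorem and valid for every fixed $p > p_{c,\mathrm{site}}(d)$. The first is the \emph{exponential decay of finite clusters}: there are $C,c>0$ such that $\mathbb{P}_p[\mathrm{diam}(\mathcal{C}_0) \geq n,\ \mathcal{C}_0 \text{ finite}] \leq Ce^{-cn}$, where $\mathcal{C}_0$ is the cluster of the origin in $\Zd$. The second is the \emph{exponential crossing estimate}: a box of sidelength $\ell$ is crossed between each pair of opposite faces by an open cluster with probability at least $1-Ce^{-c\ell}$. I would take these two facts as black boxes, since they are exactly what Grimmett--Marstrand provides and constitute the substance behind the cited references of Penrose--Pisztora.

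\textbf{Coarse-graining.} Next I would fix a large constant scale $\ell$ and tile the box $\Lambda$ (of sidelength $L\gg\ell$) by sub-boxes of sidelength $\ell$, together with shifted copies so that adjacent sub-boxes overlap in a region of width $\sim \ell/2$. Call a sub-box \emph{white} if (a) it is crossed in all $d$ directions by an open cluster, (b) this crossing cluster is the unique cluster of diameter $\geq \ell/10$ inside it, and (c) in each overlap region with a neighbouring sub-box it meets that neighbour's crossing cluster. Using the crossing estimate and the exponential decay of finite clusters, together with a union bound over the $O(\ell^d)$ sites, one obtains $\mathbb{P}[\text{sub-box is white}] \geq 1-Ce^{-c\ell}$. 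The white/non-white labelling is a finite-range dependent field, so by Theorem~\ref{th.Liggett} it stochastically dominates i.i.d. site percolation of parameter $q=q(\ell)$ with $q\to 1$ as $\ell\to\infty$; choosing $\ell$ large makes $q$ as close to $1$ as desired.

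\textbf{Existence and uniqueness of the crossing cluster.} By construction, the crossing clusters of two adjacent white sub-boxes are connected inside $\Lambda$, so all white sub-boxes in one $*$-connected component carry a single merged open cluster. Since at $q$ close to $1$ the white boxes cross $\Lambda$ in every direction with probability $1-Ce^{-cL}$ (a standard contour bound on the non-white boxes, which form exponentially small clusters), this merged cluster touches all $2d$ faces, giving the cluster $\mathcal{C}(\Lambda)$ of the first bullet. For uniqueness and for the diameter bound of the second bullet, observe that any cluster disjoint from $\mathcal{C}(\Lambda)$ of diameter $\geq \mathrm{diam}(\Lambda)/100$, or any second cluster touching all $2d$ faces, would have to avoid the crossing cluster of every white box it passes through, hence be supported on a $*$-path of $\gtrsim L/\ell$ non-white boxes; the probability of such a macroscopic chain is at most $Ce^{-cL}$ by the same contour estimate. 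Summing the three bad-event probabilities (no crossing cluster, a second crossing cluster, a large extra cluster) yields $\mathbb{P}_p[\Lambda \text{ is pre-good}] \geq 1-Ce^{-cL}$.

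\textbf{Main obstacle.} The delicate point is the design of the white-box event, and in particular condition (c): one must guarantee that the crossing clusters of neighbouring white boxes genuinely merge in their overlap, so that percolation of whiteness entails a single macroscopic open cluster rather than many parallel crossing strands. This is precisely the technical core of Pisztora's coarse-graining, and it relies on the uniqueness of the large cluster \emph{inside each sub-box} (condition (b)), itself a quantitative avatar of the uniqueness of the infinite cluster. In dimension $d=2$ everything simplifies considerably: the crossing estimates follow from Russo--Seymour--Welsh theory and uniqueness from planar duality, so the coarse-graining can largely be bypassed.
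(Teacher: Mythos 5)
The paper itself gives no proof of Proposition~\ref{proppre-good}: it is imported as a black box from the cited works of Penrose--Pisztora and Pisztora. What you propose is a reconstruction of the coarse-graining argument behind those citations, and its architecture (white sub-boxes, stochastic domination via Theorem~\ref{th.Liggett}, Peierls estimate on chains of non-white boxes, merging of crossing clusters through overlaps) is indeed the standard one from those references.

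There is, however, a genuine gap, and it sits exactly where you place your ``main obstacle'': the claim that $\mathbb{P}[\mbox{sub-box is white}] \geq 1 - Ce^{-c\ell}$ follows from your two black boxes by a union bound. Condition (b) --- that the crossing cluster is the \emph{unique} cluster of diameter $\geq \ell/10$ in the sub-box --- is not controlled by the exponential decay of finite clusters. A competing cluster of diameter $\geq \ell/10$ inside the sub-box need not be a finite cluster of $\Zd$: it can be a piece of the (unique) infinite cluster that enters the sub-box but is disconnected, \emph{inside the sub-box}, from the crossing cluster, the two pieces being reconnected only far away. The bound $\mathbb{P}_p[\mathrm{diam}(\mathcal{C}_0) \geq n,\ \mathcal{C}_0 \mbox{ finite}] \leq Ce^{-cn}$ says nothing about such configurations, and neither does the crossing estimate; the same issue afflicts condition (c). What is actually needed is a \emph{local uniqueness} statement: with probability $\geq 1 - e^{-c\ell}$, any two clusters of diameter $\geq \ell/10$ in a box of side $\ell$ are connected within a slightly larger concentric box. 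For $d \geq 3$ and general $p > p_{c,\mathrm{site}}(d)$ this is not a soft consequence of Grimmett--Marstrand plus union bounds; it is the technical core of Pisztora's coarse-graining (his Theorem 3.1) and of Penrose--Pisztora, proved by a separate slab/block renormalization with sprinkling. As written, your proposal is therefore caught between two readings: either local uniqueness is added as a third black box --- at which point the white-box event essentially \emph{is} the statement being proved --- or the union-bound step fails. Your closing paragraph concedes that (b)--(c) are ``precisely the technical core,'' but this contradicts the earlier claim that they follow from a union bound; that unresolved tension is the gap. (Your remark about $d=2$ is correct: there, duality and exponential decay of subcritical dual $*$-clusters do yield uniqueness cheaply.)
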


In order to implement renormalization arguments, it is convenient that good boxes satisfy the following property: if two boxes $\Lambda \subseteq \mathbb{Z}^d$ and $\Lambda' \subseteq \mathbb{Z}^d$ have the same sidelength, are both good and next to each other (i.e., the distance between their centers is equal to their common sidelength), then the two clusters $\mathcal{C}(\Lambda)$ and $\mathcal{C}(\Lambda')$ are connected. This property is not necessarily satisfied with the definition of pre-good boxes stated above. We thus enhance this definition as follows.

\begin{definition}[Good box] \label{def.goodbox}
A box $\Lambda \subseteq \mathbb{Z}^d$ of sidelength $L$ is said to be \emph{good} if and only if:
\begin{itemize}
    \item[(i)] The box $\Lambda$ is pre-good;
    \item[(ii)] Any box $\Lambda'$ whose sidelength is between $L/10$ and $L/2$ and such that $\Lambda \cap \Lambda' \neq \emptyset$ is also pre-good.
\end{itemize}
\end{definition}

\begin{remark} \label{remark2.17}
Let us make a few remarks about the previous definition:
\begin{itemize}
\item This definition ensures that, if two good boxes $\Lambda$ and $\Lambda'$ are next to each other and have the same sidelength, then the clusters $\mathcal{C}(\Lambda)$ and $\mathcal{C}(\Lambda')$ are connected.
\item For any box $\Lambda$ of sidelength $L$, the number of boxes of $\Lambda'$ which satisfy Condition (ii) is polynomial in $L$. Applying Proposition~\ref{proppre-good} and a union bound, we obtain the estimate, for any $p > p_{c, \mathrm{site}}(d)$,
\begin{equation} \label{eqpdecaygoodbox}
    \mathbb{P}_p \left[  \Lambda \mbox{ is good}  \right] \geq 1 - C \exp (-c L).
\end{equation}
\item The event ``$\Lambda$ is a good box" is not $\mathcal{F}(\Lambda)$-measurable (contrary to the event ``$\Lambda$ is a pre-good box"), but it is $\mathcal{F}(2\Lambda)$-measurable.
\item All the definitions and results stated in this section applies to the Bernoulli edge percolation. The only difference is that the site percolation threshold $p_{c, \mathrm{site}}(d)$ in Proposition~\ref{proppre-good} has to be replaced by the edge percolation threshold $p_{c, \mathrm{edge}}(d)$.
\end{itemize}
\end{remark}

\subsection{The extended lattice $\mathbb{Z}^d_n$}

In order to implement a renormalization argument to prove
Theorem~\ref{proofKTsitegeneral}, 
we need to define an extension of the lattice $\Zd$ (essentially obtained by adding vertices on each edge, see Figure~\ref{ZdetZdn}). We also introduce a definition of a percolation configuration and an $XY$ model on the extended lattice, and finally state (and prove in Appendix~\ref{AppendixA}), that the $XY$ model defined on the extended lattice exhibits the same phase transitions as the $XY$ model on $\Zd$.

\subsubsection{General definition} \label{sec:generaldefinition}

\begin{definition}[Extended lattice $\mathbb{Z}^d_n$] \label{def.Zdn}
Given an integer $n \in \mathbb{N}$, we define the extended lattice $\mathbb{Z}^d_n$ to be the graph $\Zd$ to which $n$ vertices are added on each edge. This graph is represented in Figure~\ref{ZdetZdn}.
\end{definition}

We extend all the definitions of Section~\ref{sectiongeneraldef} to the extended lattice $\Zd_n$. In particular, we define a box of $\Zd_n$ to be a box of $\Zd$ to which $n$ vertices have been added on each edge. Figure~\ref{ZdetZdn} shows a box on~$\Z^2$ next to a box on $\Z^2_3$. We denote by $\Lambda^n_R \subseteq \Zd_n$ the box $\Lambda_R \subseteq \Zd$ to which $n$ vertices have been added on each edge.

\begin{figure} 
\begin{center}
\includegraphics[width=10cm]{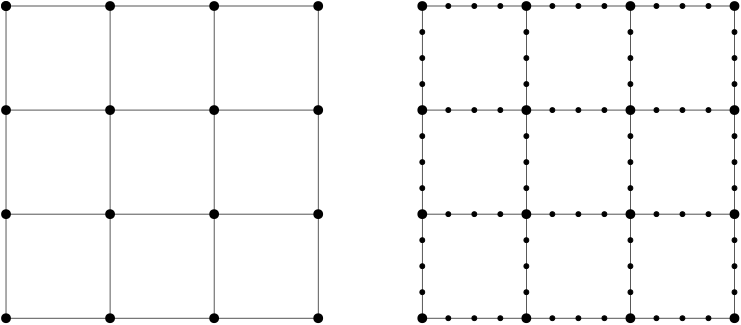}
\caption{The graph $\Zd$ and the extended graph $\Zd_n$ (with $n = 3$).} \label{ZdetZdn}
\end{center}
\end{figure}

\begin{remark} \label{remark2.7}
    Let us make three remarks about the previous definition:
    \begin{itemize}
     \item In the rest of this article, we will identify the vertices of $\Z^d$ with the corresponding vertices of the extended lattice $\Z^d_n$. However, we emphasize that two vertices $x , y \in \mathbb{Z}^d$ which are neighbours on $\Zd$ are \emph{not} neighbour on $\Z^d_n$.
     \item We write $x \sim_n y$ to refer to pair of vertices which are neighbours in $\Zd_n$.
     \item There is no isomorphism of graphs between $\Z^d$ and $\mathbb{Z}^d_n$. However, there exists an isomorphism of graphs between $\mathbb{Z}_n^d$ and the following subgraph of $\mathbb{Z}^d$ 
    \begin{equation} \label{subgraphZd}
        \left\{ (x_1 , \ldots, x_d) \in \mathbb{Z}^d \, : \, \exists i \in \{ 1 , \ldots, d\}, \, n \, | \, x_i \right\} \subseteq \mathbb{Z}^d.
    \end{equation}
     \end{itemize}
\end{remark}

\begin{figure} 
\begin{center}
\includegraphics[width=10cm]{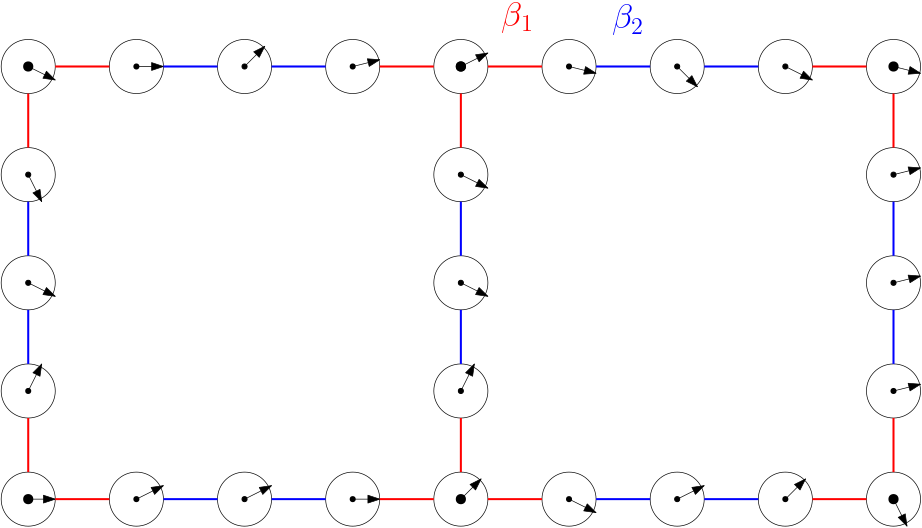}
\caption{A realization of the $XY$ model on the extended lattice $\Zd_n$ (with $d = 2$ and $n = 3$) with heterogeneous temperatures. The two inverse temperatures $\beta_1$ and $\beta_2$ are displayed in red and blue respectively} \label{XYextended}
\end{center}
\end{figure}

\subsubsection{Phase transitions for the $XY$ model on the extended lattice $\Zd_n$}
In this section, we prove the existence of the BKT transition in $d = 2$, and long-range order in dimension $d \geq 3$ for a version of the $XY$ model defined on the extended lattice $\Z^d_n$. Specifically, the model we will use involves two temperatures and is introduced below.

\begin{definition}[$XY$ model on the extended lattice $\Z^d_n$ with heterogeneous temperatures] \label{def.heterogeneousXY}
    Fix an integer $n \in \N$. For any finite set $\Lambda \subseteq \Zd_n$ and any pair of inverse temperatures $\beta_1 , \beta_2 > 0$, we define the Hamiltonian
    \begin{equation*}
    H_{\Lambda, \beta_1 , \beta_2}(\theta) := - \beta_1 \sum_{\substack{x \sim_n y \\ x,y \in \Lambda \\ \{ x , y \} \cap \Zd \neq \emptyset}} \cos (\theta_x - \theta_y) - \beta_2 \sum_{\substack{x \sim_n y \\ x,y \in \Lambda \\ \{ x , y \} \cap \Zd = \emptyset}} \cos (\theta_x - \theta_y),
\end{equation*}
as well as the probability measure 
\begin{equation*}
        \mu_{\Lambda, \beta_1 , \beta_2}(d\theta) := \frac{1}{Z_{\Lambda, \beta_1 , \beta_2 }}\exp \left( - H_{\Lambda, \beta_1 , \beta_2}(\theta)  \right) \prod_{x \in \Lambda} d \theta_x.
\end{equation*}
\end{definition}

\begin{remark}

Let us make two remarks about the previous definition:
\begin{itemize}
    \item This specific class of models fits into the general framework of Definition~\ref{generalXY}. This observation implies that the Ginibre correlation inequality applies to this model. As a consequence, the two-point function $\left\langle \cos(\theta_x - \theta_y) \right\rangle_{\mu_{\Lambda, \beta_1 , \beta_2}}$ is increasing in the domain $\Lambda$ and in the inverse temperatures $\beta_1, \beta_2$.
    \item As it was the case for the model~\eqref{eq:XYmodel}, compactness arguments and the Ginibre inequality imply that the sequence of measures $\mu_{\Lambda, \beta_1 , \beta_2}$ converges as $\Lambda \uparrow \Zd_n$ to an infinite-volume measure which will be denoted by $\mu_{\beta_1 , \beta_2}.$ This implies that the two-point function $\left\langle \cos(\theta_x - \theta_y) \right\rangle_{\mu_{\Lambda, \beta_1 , \beta_2}}$ converges as $\Lambda \uparrow \Zd_n$ to $\left\langle \cos(\theta_x - \theta_y) \right\rangle_{\mu_{\beta_1 , \beta_2}}$.
\end{itemize}
\end{remark}

The main result of this section is stated below and its proof, which is an adaptation of the proofs of the proofs of~\cite{van2023elementary, garban2022continuous} on $\Z^d$, can be found in Appendix~\ref{AppendixA}.

\begin{proposition}[Phase transitions for the $XY$ model on the extended lattice $\Zd_n$] \label{prop.phasetransitionextended}
The following hold true:
\begin{itemize}
    \item In dimension $d = 2$, there exists an inverse temperature $\beta_{1, BKT} < \infty$ such that, for every $n \in \N$, there exists an inverse temperature $\beta_{2, BKT}(n) < \infty$ such that, for any $\beta_1 \geq \beta_{1, BKT}$ and any $\beta_2 \geq \beta_{2, BKT}(n)$,
    \begin{equation*}
        \left\langle \cos(\theta_0 - \theta_x) \right\rangle_{\mu_{\beta_1 , \beta_2 }} ~\mbox{decays polynomially fast as } |x| \to \infty. 
    \end{equation*}
    \item In dimension $d \geq 3$, there exists an inverse temperature $\beta_{1, c}(d) < \infty$ such that, for every $n \in \N$, there exists an inverse temperature $\beta_{2, c}(d , n) < \infty$ such that, for any $\beta_1 \geq \beta_{1, c}(d)$ and any $\beta_2 \geq \beta_{2, c}(n , d)$,
    \begin{equation*}
        \left\langle \cos(\theta_0 - \theta_x) \right\rangle_{\mu_{\beta_1 , \beta_2 }} ~\mbox{remains bounded away from } 0 \mbox{ as } |x| \to \infty.
    \end{equation*}
\end{itemize}
\end{proposition}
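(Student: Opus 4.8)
The plan is to run the two existing direct proofs of the phase transition---van Engelenburg--Lis~\cite{van2023elementary} for $d=2$ and Garban--Spencer~\cite{garban2022continuous} for $d\geq 3$---on the graph $\Z^d_n$ in place of $\Z^d$, checking that the two couplings $\beta_1,\beta_2$ can be made large enough for the relevant lower bounds to survive. The guiding heuristic is the following: each edge of $\Z^d$ has been replaced by a path of $n+1$ edges, the two extremal ones (touching $\Z^d$) carrying inverse temperature $\beta_1$ and the $n-1$ interior ones carrying inverse temperature $\beta_2$. Integrating out the interior spins of such a path produces an effective ferromagnetic interaction between its endpoints in $\Z^d$; viewing couplings as conductances of springs in series, the two soft $\beta_1$-edges dominate the effective resistance while the $\beta_2$-edges become rigid as $\beta_2\to\infty$, so the effective nearest-neighbour model on $\Z^d$ has inverse temperature converging to $\beta_1/2$, \emph{independently of $n$}. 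This explains the shape of the statement: the threshold in $\beta_1$ can be taken uniformly in $n$ (one needs $\beta_1$ above twice the $\Z^d$ critical parameter), whereas $\beta_{2,\bullet}(n)$ only has to be large enough, depending on $n$, to render the interior edges effectively rigid. To make this rigorous I would not literally integrate out the interior spins (which yields higher harmonics $\sum_k J_k\cos(k(\theta_x-\theta_y))$ rather than a pure $\beta_{\mathrm{eff}}\cos$, cf.\ Definition~\ref{generalXY}), but re-run the direct arguments, using that $\Z^d_n$ is roughly isometric to $\Z^d$: it has bounded degree, is invariant under translations of the sublattice $\Z^d\subseteq\Z^d_n$, and replaces each edge by a path of bounded length $n+1$, so every geometric, random-walk, or duality estimate of~\cite{van2023elementary,garban2022continuous} transfers with $n$-dependent constants.

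For $d\geq 3$ I would follow~\cite{garban2022continuous}: the lower bound on $\left\langle\cos(\theta_0-\theta_x)\right\rangle_{\mu_{\beta_1,\beta_2}}$ is produced by a random-walk representation in which $\beta_1,\beta_2$ play the role of edge conductances, driven by the exponential-intersection-tail (EIT) property and Green-function bounds for the associated walk. Both survive on $\Z^d_n$: a conductance walk on $\Z^d_n$ projects, after a bounded time change, to a uniformly elliptic walk on $\Z^d$ (traversing each super-edge in finite expected time), so transience and EIT transfer from $\Z^d$ for $d\geq 3$. Taking $\beta_1>\beta_{1,c}(d)$ makes the bottleneck conductances uniformly elliptic at the scale of super-edges, and then taking $\beta_2\geq\beta_{2,c}(d,n)$ makes the interior conductances large enough that the effective network is close to the $\Z^d$ network at inverse temperature $\gtrsim\beta_1/2>\beta_c(d)$, yielding $\inf_x\left\langle\cos(\theta_0-\theta_x)\right\rangle>0$.

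For $d=2$ I would follow~\cite{van2023elementary}: pass to the planar dual of $\Z^2_n$ and its dual integer-valued height function, and observe that the BKT lower bound $\left\langle\cos(\theta_0-\theta_x)\right\rangle\gtrsim c(n)/|x|$ is equivalent to delocalization of this height function at the corresponding high dual temperature. The delocalization proof rests on a non-coexistence and monotone-coupling input of Sheffield type, which requires only planarity and translation-covariance under a lattice of periods---both available for $\Z^2_n$, whose period lattice is $\Z^2$. Here $\beta_1\geq\beta_{1,BKT}$ (uniform in $n$) and $\beta_2\geq\beta_{2,BKT}(n)$ guarantee that the dual model is hot enough to delocalize, the two temperatures entering the dual model as the corresponding dual weights across, respectively, the $\beta_1$- and $\beta_2$-edges.

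The main obstacle is the quantitative point common to both cases: verifying that the EIT/delocalization inputs hold on the non-transitive, two-temperature graph $\Z^d_n$ and, crucially, that the threshold in the \emph{bottleneck} variable $\beta_1$ can be taken \emph{independent of $n$}, with all $n$-dependence confined to $\beta_2$. Concretely, one must show that the effective inverse temperature felt across a subdivided edge is governed by the two $\beta_1$-edges and tends to $\beta_1/2$ as $\beta_2\to\infty$ uniformly in $n$, so that a single finite $\beta_{2,\bullet}(n)$ suffices; controlling this convergence rate---equivalently, bounding the residual effect of the $n-1$ rigid interior edges---is the technical heart of the adaptation.
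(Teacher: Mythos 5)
Your high-level plan (adapt van Engelenburg--Lis in $d=2$ and Garban--Spencer in $d\geq 3$ to $\Z^d_n$, with the $\beta_1$-threshold uniform in $n$ and all $n$-dependence pushed onto $\beta_2$) is exactly the paper's route, but both halves of your execution have concrete gaps. In $d=2$ you propose to run the delocalization argument on the planar dual of $\Z^2_n$ directly, asserting that the Sheffield-type input ``requires only planarity and translation-covariance''. The quantitative criterion that \cite{van2023elementary} (and the paper) actually use is Lammers' condition $V(\pm 1)\leq V(0)+\ln 2$ from \cite{lammers2022height}, which is proved for \emph{trivalent} planar graphs; the dual of $\Z^2_n$ is $4$-regular, so the criterion does not apply to it. This is precisely why the paper first uses Ginibre to dominate the model on $\Z^2_n$ at $(\beta_1,\beta_2)$ by an $XY$ model on a triangulation $\Gamma_n$ at $(\beta_1/2,\beta_2/2)$ (Lemma~\ref{LemmaA6}), whose dual $\Gamma_n^*$ is cubic. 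Moreover, the $n$-uniformity of the $\beta_1$-threshold --- which you defer to an ``effective coupling $\to\beta_1/2$'' estimate --- is not obtained that way at all: it falls out of the explicit Bessel-function form of the dual potentials (Definition~\ref{dualheightfct}), $-2\ln I_k(2\beta_1)-(n-1)\ln I_k(2\beta_2)$ on type 1 edges and $-4\ln I_k(2\beta_1)-(2n-2)\ln I_k(2\beta_2)$ on type 2 edges, so that Lammers' $\ln 2$ budget splits into a universal condition $I_0(2\beta_1)/I_1(2\beta_1)\leq 2^{1/8}$ and an $n$-dependent one $I_0(2\beta_2)/I_1(2\beta_2)\leq e^{1/(4(n-1))}$. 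Finally, delocalization is not ``equivalent'' to the $c/|x|$ bound: it only gives non-summability of correlations along a line, and one needs the Lieb--Rivasseau/Messager--Miracle-Sole dichotomy (Proposition~\ref{prop.dichotomy}) to upgrade this to the polynomial lower bound.

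In $d\geq 3$ you correctly name EIT as the driver, but you mischaracterize \cite{garban2022continuous} as a random-walk/Green-function representation and propose an ``effective network'' comparison; neither appears in that proof nor in the paper's adaptation. The actual mechanism is a Nishimori gauge computation: one places independent disorder $\omega_{xy}$ with density $\propto e^{\beta\cos\omega}$ on the edges, uses gauge invariance (Lemma~\ref{lemmaA17}) to get an exact identity for the disorder-averaged correlation weighted along a path, and then averages over the unpredictable-path measure of \cite{benjamini1998unpredictable, abbe2018group}, whose intersection tails give the concentration (Proposition~\ref{propA16}). The $n$-uniformity in $\beta_1$ comes from the bookkeeping that an increasing path of length $dk$ in $\Zd$, viewed in $\Zd_n$, crosses $2dk$ edges at $\beta_1$ and $(n-1)dk$ edges at $\beta_2$, so the path weight is $\lambda_n(\beta_1,\beta_2)^{dk}$ with $\lambda_n=\E_{\rho_{\beta_1}}[\cos\omega]^2\,\E_{\rho_{\beta_2}}[\cos\omega]^{n-1}=1-1/\beta_1-(n-1)/(2\beta_2)+o(\cdot)$; one then needs the Messager--Miracle-Sole--Pfister inequality (Proposition~\ref{propMMSPineq}) to remove the disorder and the MMS inequality to pass from diagonal points to general $x$. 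Your proposed ``bounded time change / projected uniformly elliptic walk'' does not feed into any of these steps, and the step you single out as the technical heart --- controlling the rate at which an effective coupling converges to $\beta_1/2$ uniformly in $n$ --- is never needed (and would be delicate, for the higher-harmonics reason you yourself note): both adaptations bypass effective couplings entirely.
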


\begin{remark} \label{remark2.24}
Let us make two remarks about the previous proposition:
\begin{itemize}
    \item An important feature of the previous statement is that the temperature $\beta_1$ can be chosen independently of the integer $n$. This observation plays a fundamental role in the proof of Theorem~\ref{proofKTsitegeneral}.
    \item The following lower bounds can be deduced from the argument: in dimension $d = 2$, there exists $c := c(n) > 0$ such that
\begin{equation*}
    \langle \cos \left( \theta_0 - \theta_x \right) \rangle_{\mu_{\beta_1, \beta_2}} \geq \frac{c}{|x|}.
\end{equation*}
    In dimension $d \geq 3,$ there exists a constant $C := C(d) < \infty$ such that
    \begin{equation*}
    \langle \cos \left( \theta_0 - \theta_x \right) \rangle_{\mu_{\beta_1, \beta_2}} \geq 1 - C \sqrt{ \frac{1}{\beta_1} + \frac{n}{\beta_2}}.
\end{equation*}
\end{itemize}
\end{remark}

\subsubsection{Percolation on the extended lattice $\Zd_n$}

In this section, we extend the definitions introduced in Section~\ref{subsecpercolation} to the extended lattice $\Zd_n$.

\begin{definition}[Site percolation on the extended lattice $\Z^d_n$]
A site percolation configuration on a subset $\Lambda \subseteq \Z^d_n$ is a function $r \in \{ 0 , 1\}^\Lambda$. We say that a site is open in the percolation configuration $r$ if $r_x = 1$ and closed if $r_x = 0$.
\end{definition}

\begin{figure}
\begin{center}
\includegraphics[width=5cm]{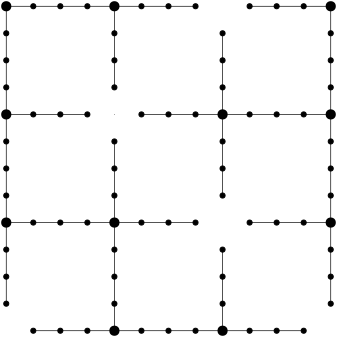}
\caption{A realization of a site percolation on the extended lattice $\Zd_n$ (with $d = 2$ and $n = 3$) sampled according to the measure $\P_p$. Only the sites of $\Zd \subseteq \Zd_n$ are allowed to be closed.}
\end{center}
\end{figure}

\begin{definition}[Percolation measures $\P_{\Lambda,p}$ and $\P_{p}$ on the extended lattice $\Zd_n$] \label{DEf.bernoullipercextend}
Given a subset $\Lambda \subseteq \Zd_n$, denote by $\P_{\Lambda,p}$ the independent probability measure on  $\left\{ 0 ,1  \right\}^\Lambda$ characterized by the identities
\begin{equation*}
    \P_{\Lambda,p} ( \{ r_x = 1 \} ) = p ~\mbox{for}~ x \in \Lambda \cap \Zd ~~\mbox{and}~~ \P_{\Lambda,p} ( \{ r_x = 1 \} ) = 1  ~\mbox{for}~ x \in \Lambda \setminus \Zd.
\end{equation*}
We simply denote by $\P_p := \P_{\Zd_n , p}$ and by $\mathbb{E}_p$ the expectation with respect to the measures $\P_{p}$ or $\P_{\Lambda,p}.$
\end{definition}

\begin{remark} Let us make two remarks about the previous definition:
    \begin{itemize}
    \item This notation is consistent with the one introduced in Section~\ref{subsecpercolation}, as, for each subset $\Lambda \subseteq \Zd_n$, the marginal of the measure $\P_{\Lambda,p}$ on the set $\{0,1\}^{\Lambda \cap \Zd}$ is equal to the i.i.d. Bernoulli site percolation measure of probability $p$.
    \item With this definition, all the sites of $\Zd_n \setminus \Zd$ are open almost surely. As a consequence, percolation configurations sampled according to $\P_p$ on the lattice $\Zd$ and on the extended lattice $\Zd_n$ have essentially the same properties.
    \end{itemize}
\end{remark}

We also state the version of Lemma~\ref{Lemma1.5} for the percolation measure $\P_{\Lambda,p}$ on the extended lattice $\Zd_n$ which will be used in the proof of Theorem~\ref{proofKTsitegeneral}.

\begin{lemma} \label{Lemma1.5ext}
        Let $\Lambda \subseteq \Zd_n$ be a finite set and let $\nu \in \mathcal{P} ( \left\{ 0 , 1 \right\}^{\Lambda})$ be probability measure satisfying the three following assumptions:
        \begin{enumerate}
        \item[(i)] For any $x \in \Zd_n \setminus \Zd,$ $\nu \left( r_x = 1 \right) = 1$;
        \item[(ii)] The marginal of $\nu$ on the set $\{0 , 1\}^{\Lambda \cap \Zd}$ is strictly positive;
        \item[(iii)] For any vertex $x \in \Lambda \cap \Zd$ and any configuration $r_1 \in \Lambda \setminus \{ x \}$ which is equal to $1$ on $\Lambda\setminus \Zd$, one has the inequality
        \begin{equation*}
            \nu \left( r_x = 1 \, | \, r = r_1 ~ \mbox{in} ~ \Lambda \setminus \{ x \} \right) \leq p.
        \end{equation*}
        \end{enumerate}
        Then $\P_{\Lambda,p}$ stochastically dominates $\nu$.
\end{lemma}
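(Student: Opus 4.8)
The plan is to reduce Lemma~\ref{Lemma1.5ext} to the already-established Lemma~\ref{Lemma1.5} by passing to the marginal on the ``real'' sites. First I would observe that because assumption (i) forces $\nu(r_x = 1) = 1$ for every $x \in \Lambda \setminus \Zd$, the measure $\nu$ is supported on configurations that equal $1$ on all the added vertices; consequently, the coordinates indexed by $\Lambda \setminus \Zd$ carry no randomness and the entire probabilistic content of $\nu$ is encoded in its marginal $\bar\nu$ on $\{0,1\}^{\Lambda \cap \Zd}$. I would set up this marginal explicitly and note that for any increasing function $f : \{0,1\}^\Lambda \to [0,\infty)$, its expectation under $\nu$ equals the expectation under $\bar\nu$ of the function $\bar f$ obtained by fixing the extra coordinates to $1$; moreover $\bar f$ is increasing on $\{0,1\}^{\Lambda \cap \Zd}$.

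Next I would verify that $\bar\nu$ satisfies the hypotheses of the plain Lemma~\ref{Lemma1.5} on the finite set $\Lambda \cap \Zd \subseteq \Zd$. Assumption (ii) is exactly the strict positivity required there. For the conditional bound, I would take any $x \in \Lambda \cap \Zd$ and any configuration $r_1$ on $(\Lambda \cap \Zd) \setminus \{x\}$, and relate the conditional probability $\bar\nu(r_x = 1 \mid r = r_1 \text{ on } (\Lambda\cap\Zd)\setminus\{x\})$ to the conditional probability appearing in assumption (iii). The key point is that conditioning on the marginal configuration $r_1$ over $(\Lambda\cap\Zd)\setminus\{x\}$ is, under $\nu$, the same as conditioning on the full configuration over $\Lambda \setminus \{x\}$ that extends $r_1$ by $1$ on $\Lambda \setminus \Zd$ — precisely because $\nu$-almost-surely those added coordinates equal $1$. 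Hence assumption (iii) gives $\bar\nu(r_x = 1 \mid \cdots) \leq p$, which is the hypothesis of Lemma~\ref{Lemma1.5}.

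Applying Lemma~\ref{Lemma1.5} then yields that the i.i.d. Bernoulli measure $\P_{\Lambda\cap\Zd, p}$ on $\{0,1\}^{\Lambda\cap\Zd}$ stochastically dominates $\bar\nu$, i.e. $\E_{\bar\nu}[\bar f] \le \E_{\P_{\Lambda\cap\Zd,p}}[\bar f]$ for every increasing nonnegative $\bar f$. To finish, I would transport this back to $\{0,1\}^\Lambda$: since $\P_{\Lambda,p}$ on the extended lattice also assigns value $1$ almost surely to every $x \in \Lambda \setminus \Zd$ (by Definition~\ref{DEf.bernoullipercextend}) and is i.i.d.\ Bernoulli$(p)$ on $\Lambda \cap \Zd$, its marginal on $\{0,1\}^{\Lambda\cap\Zd}$ is exactly $\P_{\Lambda\cap\Zd,p}$ and it agrees with $\nu$ on the added coordinates. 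Therefore for any increasing $f : \{0,1\}^\Lambda \to [0,\infty)$ we get $\E_\nu[f] = \E_{\bar\nu}[\bar f] \le \E_{\P_{\Lambda\cap\Zd,p}}[\bar f] = \E_{\P_{\Lambda,p}}[f]$, which is the claimed domination $\P_{\Lambda,p} \preceq \nu$.

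I expect the only real subtlety — and the single step worth writing carefully — to be the bookkeeping in the conditioning argument: making precise that, under a measure giving the extra sites value $1$ almost surely, conditioning on a partial configuration over $(\Lambda\cap\Zd)\setminus\{x\}$ coincides with conditioning on the extended configuration over $\Lambda\setminus\{x\}$ that is forced to be $1$ off $\Zd$, so that hypothesis (iii) applies verbatim. Everything else is a routine marginalization and a direct invocation of Lemma~\ref{Lemma1.5}, so this is genuinely a short reduction rather than a new argument.
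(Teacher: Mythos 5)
Your proposal is correct and takes essentially the same route as the paper: the paper's (much terser) proof likewise applies Lemma~\ref{Lemma1.5} to the marginal of $\nu$ on $\{0,1\}^{\Lambda \cap \Zd}$ and then lifts the resulting domination by $\P_{\Lambda \cap \Zd, p}$ back to $\Lambda$ using that both $\nu$ and $\P_{\Lambda,p}$ assign value $1$ to every site of $\Lambda \setminus \Zd$ almost surely. Your write-up simply makes explicit the conditioning bookkeeping (hypothesis (iii) versus conditioning under the marginal) that the paper leaves implicit.
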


\begin{proof}
    The assumptions on the measure $\nu$ implies that one can apply Lemma~\ref{Lemma1.5} to the marginal of $\nu$ on the set $\{0,1\}^{\Lambda \cap \Zd}$. This marginal is thus stochastically dominated by the measure $\P_{\Lambda \cap \Zd, p}$. The definition of $\P_{\Lambda,p}$ (and in particular, the assumption that $\P_{\Lambda,p} ( \{ r_x = 1 \} ) = 1$ for $x \in \Lambda \setminus \Zd$) implies that the measure $\nu$ is stochastically dominated by $\P_{\Lambda,p}$.
\end{proof}

\section{Wells' inequality} \label{sectinoWellsineq}

This section is devoted to the proof of Wells' inequality for the $XY$ model. The inequality was originally established by Wells~\cite{Wellsthesis}, the proof of the result can be found in~\cite[Appendix]{bricmont1981periodic} in the case of the spin-1/2 Ising model. The proof below is an adaptation of the argument of~\cite{bricmont1981periodic} to the case of the $XY$ model (as mentioned in~\cite[Appendix]{bricmont1981periodic}). In the following proposition, we use the notation introduced in Definition~\ref{def.finite-volHamiltonian}.

\begin{proposition}[Wells' inequality for rotator models~\cite{Wellsthesis, bricmont1981periodic}] \label{prop.Wells}
    Let $\Lambda \subseteq \Zd$ be a finite set, $(\kappa_x)_{x \in \Lambda} \in \mathcal{P}(\R)^\Lambda$ be a collection of probability measures compactly supported in $[0, \infty)$ and $(a_x)_{x \in \Lambda} \in [0, \infty)^{\Lambda}$ be a collection of real numbers satisfying, for any pair of integers $m , n \in \N$,
    \begin{equation} \label{eq:143909}
        \int_{0}^\infty  \left( r + a_x \right)^m  \left( r - a_x \right)^n  d \kappa_x(r) \geq 0.      
    \end{equation}
    Then, for any pair of functions $A , m \in \mathbb{Z}^\Lambda$,
    \begin{equation} \label{eq:Wellsineq}
         \left\langle r_A \cos ( m \theta ) \right\rangle_{\mu_{\Lambda, J, \delta_{a}}} \leq \left\langle r_A \cos ( m \theta ) \right\rangle_{\mu_{\Lambda, J, \kappa}},
    \end{equation}
    where the notation $\delta_a$ on the left-hand side refers to the collection of measures $\left(\delta_{a_x}\right)_{x \in \Lambda}.$
\end{proposition}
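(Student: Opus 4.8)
The plan is to mimic the duplicated-variable proof of the Ginibre inequality. Set $f := r_A\cos(m\theta)$. Comparing the two normalised expectations and clearing denominators,
\[
  \big(\langle f\rangle_{\mu_{\Lambda,J,\kappa}}-\langle f\rangle_{\mu_{\Lambda,J,\delta_a}}\big)\,Z_{\Lambda,J,\kappa}\,Z_{\Lambda,J,\delta_a}=D,
\]
where, introducing a second (primed) copy of the spins whose conductances are frozen at the value $a$,
\[
  D:=\int\big(r_A\cos(m\theta)-a_A\cos(m\theta')\big)\,e^{H(\theta,r)+H(\theta',a)}\prod_{x\in\Lambda}d\kappa_x(r_x)\,d\theta_x\,d\theta'_x,
\]
and $H(\theta,r)=\sum_{m',A'}J(m',A')\,r_{A'}\cos(m'\theta)$. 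Since $Z_{\Lambda,J,\kappa},Z_{\Lambda,J,\delta_a}>0$, the proposition reduces to establishing $D\ge 0$ (I take $A\in\mathbb{N}^\Lambda$, the case relevant here, so that $r_A$ is a genuine monomial).

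Next I would pass to the site variables $s_x:=r_x+a_x$ and $t_x:=r_x-a_x$, and to the symmetric/antisymmetric cosines $C^{\pm}_{m'}:=\cos(m'\theta)\pm\cos(m'\theta')$. These are tailored to the identities
\[
  r_{A'}\cos(m'\theta)+a_{A'}\cos(m'\theta')=\tfrac12(r_{A'}+a_{A'})\,C^+_{m'}+\tfrac12(r_{A'}-a_{A'})\,C^-_{m'},
\]
\[
  r_A\cos(m\theta)-a_A\cos(m\theta')=\tfrac12(r_A+a_A)\,C^-_m+\tfrac12(r_A-a_A)\,C^+_m .
\]
Expanding every factor of $e^{H(\theta,r)+H(\theta',a)}=\prod_{m',A'}\exp(\cdots)$ into its power series (absolutely convergent, as $\Lambda$ is finite, the $\kappa_x$ are compactly supported and the cosines are bounded, which justifies integrating term by term) and multiplying by the split observable, $D$ becomes a sum, with nonnegative scalar coefficients, of product-type contributions
\[
  \Big[\int \Pi(r)\,\prod_{x\in\Lambda}d\kappa_x(r_x)\Big]\cdot\Big[\int\prod_i\big(\cos(m_i\theta)\pm\cos(m_i\theta')\big)\prod_{x\in\Lambda}d\theta_x\,d\theta'_x\Big],
\]
where $\Pi(r)$ is a product of factors of the form $(r_{A'}\pm a_{A'})$ (one per use of a coupling) times a single factor $(r_A\pm a_A)$ from the observable.

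The angular integral is nonnegative for every choice of signs by the first inequality of Theorem~\ref{theoremGinibreineq}, so the heart of the matter — and the step I expect to be the main obstacle — is the nonnegativity of the conductance integral term by term. For this I would prove the elementary but crucial lemma that, for any finitely supported $e\in\mathbb{N}^\Lambda$, both of the quantities $\prod_x r_x^{e_x}\pm\prod_x a_x^{e_x}$ are polynomials in $(s_x,t_x)_x$ with nonnegative coefficients: substituting $r_x=\tfrac12(s_x+t_x)$ and $a_x=\tfrac12(s_x-t_x)$, the monomial $\prod_x s_x^{e_x-i_x}t_x^{i_x}$ appears with the factor $1\pm(-1)^{\sum_x i_x}\in\{0,2\}$. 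Since products of polynomials with nonnegative coefficients again have nonnegative coefficients, $\Pi(r)$ is a nonnegative combination of monomials $\prod_x(r_x+a_x)^{M_x}(r_x-a_x)^{N_x}$. Because $\kappa=\prod_x\kappa_x$ is a product measure, the conductance integral then factorises as $\prod_x\int_0^\infty(r+a_x)^{M_x}(r-a_x)^{N_x}\,d\kappa_x(r)$, and each factor is nonnegative precisely by the hypothesis~\eqref{eq:143909}.

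Combining the two nonnegativities, every term in the expansion of $D$ is a product of a nonnegative coefficient, a nonnegative conductance integral and a nonnegative angular integral; hence $D\ge 0$, which is exactly~\eqref{eq:Wellsineq}. The conceptual point that makes the argument work is that the $\pm$ signs in the two decompositions need not be correlated: the first Ginibre inequality already delivers nonnegativity of the angular integral for an arbitrary sign pattern, while the lemma delivers nonnegativity of the conductance integral term by term, so the two positivity mechanisms decouple cleanly.
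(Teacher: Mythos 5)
Your proof is correct and follows essentially the same route as the paper's: duplicated variables, term-by-term expansion of the exponential (with nonnegative coefficients thanks to $J \geq 0$), the two elementary splitting identities to separate angular factors $\cos(m_i\theta)\pm\cos(m_i\theta')$ from conductance factors $r_{A_i}\pm a_{A_i}$, Ginibre's first inequality for the angular integrals, and the hypothesis~\eqref{eq:143909} for the radial integrals. The only difference is that the last step, which the paper dispatches tersely by applying the identities ``a second time,'' you make explicit through the nonnegative-coefficients lemma in the variables $s_x = r_x + a_x$, $t_x = r_x - a_x$ together with the factorization over sites of the product measure $\prod_x \kappa_x$ — a correct (and arguably cleaner) rendering of the same mechanism, which also rightly notes that one should take $A \in \mathbb{N}^\Lambda$ for $r_A$ to be a genuine monomial, consistent with Definition~\ref{def.finite-volHamiltonian}.
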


\begin{remark} 
    Let us make a few remark about Wells' inequality:
    \begin{itemize}
        \item For the application to nearest-neighbour $XY$  model, we may relax the condition that $\kappa$ is compactly supported to the condition that $\kappa$ has sub-Gaussian tails, i.e. $\int_{\R_+} e^{\lambda t^2} d\kappa_x(t) <\infty$ for all $\lambda \geq 0$, $x \in \Lambda$. This observation has an interesting consequence (due to~\cite{dunlop1985correlation}) and it can be used to show the existence of a BKT phase transition for the two-component $\Phi^4$ model on $\Z^2$. We refer to Appendix~\ref{a.phi4} for more details in this direction.
    \end{itemize} 
\end{remark}

\begin{proof}
    We consider the measure $\mu_{\Lambda, J, \kappa}$ introduced in Definition~\ref{def.finite-volHamiltonian}, and, to simplify the notation, denote by
    \begin{equation*}
       H_{\Lambda, J}(r ,\theta) :=  -\sum_{\substack{m \in \mathcal{M} \\ A \in \mathcal{A}}} J(m , A) r_A \cos ( m \theta).
    \end{equation*}
    We follow the arguments of~\cite{Wellsthesis, bricmont1981periodic}, use duplicated variables and proceed by successive reductions. It is enough in order to prove the inequality~\eqref{eq:Wellsineq} to show that
    \begin{equation} \label{eq:duplicatedvariables}
        \int \left( r_A \cos ( m \theta ) -  r_A' \cos ( m \theta') \right) \exp \left( - H_{\Lambda, J}(r , \theta) - H_{\Lambda, J}(r' , \theta') \right) \prod_{x \in \Lambda} d\theta_x d\theta'_x d \kappa_x(r_x) d \delta_{a_x} (r'_x) \geq 0. 
    \end{equation}
    By expanding the exponential, the inequality~\eqref{eq:duplicatedvariables} can be obtained as a consequence of the following result: for any integer $k \in \N$ and any collections of functions $A_1, \ldots, A_k \in \Z^{\Lambda}$, $m_1, \ldots, m_k \in \Z^{\Lambda}$, and any sequence of plus and minus signs 
    \begin{equation} \label{ineq:1747}
        \int     \prod_{i = 1}^k \left( r_{A_i} \cos ( m_i \theta ) \pm  r_{A_i}' \cos ( m_i \theta') \right)  \prod_{x \in \Lambda} d\theta_x d\theta'_x d \kappa_x(r_x) d \delta_{a_x} (r'_x) \geq 0.
    \end{equation}       
    We next use the identities
    \begin{multline*} 
        \left( r_{A_i} \cos ( m_i \theta ) -  r_{A_i}' \cos ( m_i \theta') \right) \\ = \frac{1}{2} \left( r_{A_i} -  r_{A_i}' \right) \left(  \cos ( m_i \theta ) + \cos ( m_i \theta') \right)  + \frac{1}{2} \left( r_{A_i} +  r_{A_i}' \right) \left(  \cos ( m_i \theta )  - \cos ( m_i \theta') \right)
    \end{multline*}
    and
    \begin{multline*} 
        \left( r_{A_i} \cos ( m_i \theta ) +  r_{A_i}' \cos ( m_i \theta') \right) \\ = \frac{1}{2} \left( r_{A_i}  +  r_{A_i}' \right) \left(  \cos ( m_i \theta ) + \cos ( m_i \theta') \right)  + \frac{1}{2} \left( r_{A_i}  -  r_{A_i}' \right) \left(  \cos ( m_i \theta )  - \cos ( m_i \theta') \right).
    \end{multline*}
    From the two previous displays and the Ginibre inequality stated in Theorem~\ref{theoremGinibreineq}, we see that the inequality~\eqref{ineq:1747} can be obtained as a consequence of the following inequality: for any collection of functions $A_1, \ldots, A_k \in \Z^{\Lambda}$ and any sequence of plus and minus signs,
    \begin{equation*} 
        \int \prod_{i = 1}^k  \left( r_{A_i} \pm r_{A_i}' \right) \prod_{x \in \Lambda}  d \kappa_x(r_x) d \delta_{a_x} (r'_x) \geq 0.
    \end{equation*}
    Using (a second time) the identities, for any $a , b , c  ,d \in \R$,
    \begin{equation} \label{elementaryidentity}
        ab + cd = \frac{1}{2} (a + c ) (b + d) + \frac{1}{2} (a - c) (b - d) ~~\mbox{and}~~ab - cd = \frac{1}{2} (a - c ) (b + d) + \frac{1}{2} (a + c) (b - d),
    \end{equation}
    we see that the inequality is implied by the assumption~\eqref{eq:143909}.
    \end{proof}

    The following statement shows that the assumption~\eqref{eq:143909} is satisfied for a broad class of measures.

    \begin{proposition}[\cite{Wellsthesis, bricmont1981periodic}]
         For any probability measure $\kappa \in \mathcal{P}(\R)$ which is not equal to the Dirac $\delta_0$ and whose moments of all order are finite, there exists a constant $a(\kappa) > 0$ such that the assumption~\eqref{eq:143909} is satisfied. In the specific case $\kappa_{\bar p} := (1-\bar p) \delta_0 + \bar p \delta_1$ for $\bar p \in (0,1]$, we may choose the value $a(\kappa_{\bar p})= \min \left( \bar p , \frac 12 \right)$.
    \end{proposition}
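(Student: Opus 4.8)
The plan is to treat the condition~\eqref{eq:143909} separately according to the parity of $n$, since the only nontrivial constraints come from odd $n$. When $n$ is even the integrand $(r+a)^m(r-a)^n$ is a product of an even power and a power of the nonnegative quantity $r+a$, hence nonnegative on $[0,\infty)$, so the inequality holds for every $a \geq 0$; the case $n=0$ is identical. It therefore suffices to find $a>0$ making $I_{m,n} := \int_0^\infty (r+a)^m(r-a)^n\,d\kappa(r) \geq 0$ for all $m \geq 0$ and all odd $n \geq 1$.

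For odd $n$ I would split the measure at the threshold $a$. On $[a,\infty)$ both factors are nonnegative, so that part is a nonnegative \emph{gain}; on $[0,a)$ the factor $(r-a)^n$ is negative, producing a \emph{loss} of magnitude $\int_{[0,a)}(r+a)^m(a-r)^n\,d\kappa$. The loss is bounded above by $(2a)^m a^n \leq 2^m a^{m+n}$, using $r+a<2a$ and $a-r\le a$ on $[0,a)$ together with $\kappa([0,a))\le 1$. For the gain I would restrict further to $[2a,\infty)$, where $r+a\ge r$ and $r-a\ge r/2$, giving the lower bound $2^{-n}\int_{[2a,\infty)} r^{m+n}\,d\kappa$. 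Comparing the two, it is enough to arrange $\int_{[2a,\infty)} r^{s}\,d\kappa \geq (2a)^{s}$ for every $s=m+n\ge 1$.

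To secure this last inequality I would use that $\kappa\neq\delta_0$: choose $b>0$ with $\delta:=\kappa([b,\infty))>0$, and then pick $a$ small enough that $2a\le b$. On $[2a,\infty)\supseteq[b,\infty)$ one has $\int_{[2a,\infty)} r^s\,d\kappa \ge b^s\delta$, so the requirement reduces to $(2a/b)^s\le\delta$ for all $s\ge 1$; since $s\mapsto \delta^{1/s}$ is increasing on $s\ge1$, the binding case is $s=1$, and it suffices to take $a:=b\delta/2$. Finiteness of all moments is used only to guarantee that each $I_{m,n}$ is a well-defined finite number, so that~\eqref{eq:143909} is meaningful.

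Finally, for the explicit law $\kappa_{\bar p}=(1-\bar p)\delta_0+\bar p\,\delta_1$ I would compute directly $I_{m,n}=(1-\bar p)(-1)^n a^{m+n}+\bar p(1+a)^m(1-a)^n$. For odd $n$ and $0<a<1$ this is nonnegative iff $\big(\tfrac{a}{1+a}\big)^m\big(\tfrac{a}{1-a}\big)^n\le \tfrac{\bar p}{1-\bar p}$. When $a\le \tfrac12$ both bases are at most $1$, so the supremum over $m\ge0$ and odd $n\ge1$ is attained at $m=0,n=1$, with value $\tfrac{a}{1-a}$ (equal to $1$ in the boundary case $a=\tfrac12$); monotonicity of $x\mapsto x/(1-x)$ then shows the constraint is exactly $a\le\bar p$ when $a<\tfrac12$, and $\bar p\ge\tfrac12$ when $a=\tfrac12$. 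Choosing $a=\bar p$ if $\bar p<\tfrac12$ and $a=\tfrac12$ if $\bar p\ge\tfrac12$ yields $a(\kappa_{\bar p})=\min(\bar p,\tfrac12)$. The main obstacle is entirely the odd-$n$ case: one must control a family of signed integrals uniformly in the two unbounded indices $m,n$, and the key point is that the crude split at $a$ already closes, once $a$ is taken proportional to the mass $\delta$ that $\kappa$ places away from the origin.
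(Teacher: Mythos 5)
Your proof is correct and follows essentially the same route as the paper's: for odd $n$ you bound the negative contribution near the origin by $2^m a^{m+n}$ (exactly the paper's bound) and beat it with the mass $\kappa$ places away from the origin, choosing $a$ proportional to a threshold times that mass — the paper does the same thing, splitting at its threshold $\delta$ and using monotonicity of $r \mapsto (r+a)^m(r-a)^n$ on $[\delta,\infty)$ where you instead use the pointwise bound $(r+a)^m(r-a)^n \geq 2^{-n} r^{m+n}$ on $[2a,\infty)$ and a one-parameter moment inequality. Your Bernoulli computation likewise matches the paper's, the only difference being that you identify the extremal case $(m,n)=(0,1)$ and obtain the sharp constraint $a \leq \bar p$ (for $a < \tfrac12$), whereas the paper simply verifies the required inequality via $1+a \geq a$, $1-a \geq a$ and $\bar p(1-a) \geq (1-\bar p)a$.
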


    \begin{proof}
    Without loss of generality, we may assume that $n$ is odd. Since the probability measure $\kappa$ is not equal to the Dirac mass $\delta_0$, then there exist $\delta, \varepsilon \in (0,1)$ such that
    \begin{equation*}
        \kappa \left( [\delta , \infty) \right) > \varepsilon.
    \end{equation*}
    Selecting $a = a(\kappa) := \varepsilon \delta / (\varepsilon + 1) $, and noting that the function $r \mapsto (r+a)^m (r-a)^n$ is always larger than $- 2^m a^{m+n}$ and that it is increasing on the interval $[\delta , \infty)$ (since $a\leq\delta$), we can write
    \begin{align} \label{eq:19091905}
        \int_{0}^\infty  \left( r + a \right)^m  \left( r - a \right)^n  d \kappa(r) 
        & = \int_{0}^{\delta} \left( r + a \right)^m  \left( r - a \right)^n d \kappa(r)  + \int_\delta^\infty \left( r + a \right)^m  \left( r - a \right)^n d \kappa(r) \\
        & \geq -  2^m a^{m+n} + (\delta + a)^m (\delta - a)^n  \kappa \left( [\delta , \infty] \right)  \notag \\
        & \geq - 2^m a^{m+n} + (\delta + a)^m (\delta - a)^n  \varepsilon. \notag
    \end{align}
    Using the definition of $\delta$, we see that $\delta - a = \frac{a}{\varepsilon}$ and $\delta + a \geq 2 a$. We thus obtain (using the assumption that $n$ is odd)
    \begin{equation*}
        \int_{0}^\infty  \left( r + a \right)^m  \left( r - a \right)^n  d \kappa(r) \geq - 2^m a^{m+n} + 2^m a^{m+n}  \varepsilon^{1-n} \geq 0.
    \end{equation*}
    
    We finally show that, in the case of the measure $\kappa=(1-\bar p) \delta_0 + \bar p \delta_1$ with $\bar p \in (0,1]$, we may choose the value
 \begin{align}\label{e.KEY}
a = \min \left( \bar p , \frac 12 \right).
\end{align}
 The computation~\eqref{eq:19091905} becomes
    \begin{equation*}
        \int_{0}^\infty  \left( r + a \right)^m  \left( r - a \right)^n  d \kappa(r) = (-1)^n (1-\bar p) a^{m+n} + \bar p (1 + a)^m (1 - a)^n.
    \end{equation*}
    It is thus sufficient to verify that, with the value $a = \min \left( \bar p , \frac 12 \right)$, for any pair of integers $m , n \in \N$ with $n$ odd,
    \begin{equation*} 
         \bar p (1 + a)^m (1 - a)^n \geq  (1-\bar p) a^{m+n}.
    \end{equation*}
    This is a consequence of the inequalities $1 + a \geq a$, $1 - a \geq a$ and $\bar p(1-a) \geq (1-\bar p) a$.
\end{proof}

\section{Phase transitions for the $XY$ model on a high density Bernoulli site percolation cluster} \label{Section4.1}

This section is devoted to the proof of Theorem~\ref{mainthm}.
It is structured as follows. In Section~\ref{subsection4.1}, we make use of Wells' inequality, the stochastic domination criterion stated in Lemma~\ref{Lemma1.5} and Ginibre inequality to prove an inequality involving the two-point function of the $XY$ model on a highly supercritical percolation cluster and the two-point function of the $XY$ model with annealed disorder (see Proposition~\ref{prop4.1}). Section~\ref{Section5} contains the proof of Theorem~\ref{mainthm}
(making use of Proposition~\ref{prop4.1}).

\subsection{Domination for the $XY$ model on a percolation cluster} \label{subsection4.1}

In this section, we recall the definitions of the measures $\mu_{\Lambda, \beta, r}$ introduced in~\eqref{eq:18221905} as well as the ones of the Bernoulli site percolation measures $\P_p$ and $\P_{\Lambda,p}$ introduced in Section~\ref{subsecpercolation}. For any $\bar p\in (0,1]$, we consider the collection of probability measures $(\kappa_x)_{x \in \Z^d}$ defined by $\kappa_x  = (1-\bar p) \delta_0 + \bar p \delta_1$ for all $x \in \Zd$ and use Wells' inequality to relate the two-point function (or more generally the expectation of the random variable $\cos (m \theta)$ with $m \in \Z^\Lambda$) under the measure 
\begin{equation*}
        \mu_{\Lambda, \beta, \kappa(\bar p)}(dr d\theta) := \frac{1}{Z_{\Lambda, \beta, \kappa(\bar p)}}\exp \left( \beta \sum_{x \sim y} r_x r_y \cos (\theta_x - \theta_y) \right) \P_{\Lambda,\bar p}( d r ) \prod_{x \in \Lambda}d \theta_x,
\end{equation*}
to the one of the $XY$ model in a random environment given by a Bernoulli site percolation. Note that, in the display above, we have used the identity $\P_{\Lambda,\bar p} = \prod_{x \in \Lambda} \kappa_x$, which holds because all the measures $\kappa_x$ are equal to the measure $(1-\bar p) \delta_0 + \bar p \delta_1$. 

\begin{proposition} \label{prop4.1}
Fix $\beta \in (0 , \infty)$, $\bar p \in (0 , 1)$ and set $p_0 =p_0(\bar p,\beta) := \frac{\bar p}{\bar p + (1-\bar p)  \exp \left( - 2d \beta \right)} \in (0 , 1)$. Then, for any finite subset $\Lambda \subseteq \Zd$ and any function $m \in \Z^\Lambda$,
\begin{equation*}
       \left\langle \cos(m \theta) \right\rangle_{\mu_{\Lambda, \beta, \kappa(\bar p)}} \leq \mathbb{E}_{p_0} \left[ \left\langle \cos(m \theta) \right\rangle_{\mu_{\Lambda , \beta, r}} \right].
\end{equation*}
\end{proposition}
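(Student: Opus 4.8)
The plan is to recognize the left-hand side as the quenched two-point function averaged against the $r$-marginal of the joint annealed measure, and then to stochastically dominate that marginal by the product measure $\P_{\Lambda,p_0}$. First I would disintegrate $\mu_{\Lambda,\beta,\kappa(\bar p)}$ with respect to $r$. Since the joint density is proportional to $\exp\big(\beta\sum_{x\sim y}r_x r_y\cos(\theta_x-\theta_y)\big)$ against $\P_{\Lambda,\bar p}(dr)\prod_x d\theta_x$, the conditional law of $\theta$ given $r$ is precisely the quenched measure $\mu_{\Lambda,\beta,r}$, and the $r$-marginal is the tilted measure
\[
    \nu(r) \;=\; \frac{Z_{\Lambda,\beta,r}}{Z_{\Lambda,\beta,\kappa(\bar p)}}\,\P_{\Lambda,\bar p}(r).
\]
Integrating out $\theta$ then yields $\langle\cos(m\theta)\rangle_{\mu_{\Lambda,\beta,\kappa(\bar p)}}=\E_\nu[g]$ with $g(r):=\langle\cos(m\theta)\rangle_{\mu_{\Lambda,\beta,r}}$. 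By the Ginibre inequality (Theorem~\ref{theoremGinibreineq} and the monotonicity remark following it) the function $g$ is nonnegative and increasing in $r$, so it would suffice to show that $\P_{\Lambda,p_0}$ stochastically dominates $\nu$: the claimed inequality is then exactly $\E_\nu[g]\leq\E_{p_0}[g]$.

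To establish the domination I would invoke Lemma~\ref{Lemma1.5}, whose hypothesis is a uniform bound on the conditional probability $\nu(r_x=1\mid r=r_1\text{ on }\Lambda\setminus\{x\})$. Writing $r^{(1)},r^{(0)}$ for the two completions of $r_1$ at $x$ and using that $\P_{\Lambda,\bar p}$ is a product measure, this conditional probability equals
\[
    \frac{\bar p\,Z_{\Lambda,\beta,r^{(1)}}}{\bar p\,Z_{\Lambda,\beta,r^{(1)}}+(1-\bar p)\,Z_{\Lambda,\beta,r^{(0)}}},
\]
which is increasing in the ratio $\rho:=Z_{\Lambda,\beta,r^{(1)}}/Z_{\Lambda,\beta,r^{(0)}}$. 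The heart of the computation is therefore to bound $\rho$. Letting $H^{(i)}(\theta):=\beta\sum_{u\sim v}r^{(i)}_u r^{(i)}_v\cos(\theta_u-\theta_v)$ denote the exponents, the configurations $r^{(1)}$ and $r^{(0)}$ agree away from $x$, so only the edges incident to $x$ survive in the difference and, pointwise in $\theta$,
\[
    H^{(1)}(\theta)-H^{(0)}(\theta) \;=\; \beta\sum_{y\sim x} r_y\cos(\theta_x-\theta_y)\;\leq\;2d\beta,
\]
since $x$ has at most $2d$ neighbours and $r_y,\cos\leq 1$. Integrating the resulting bound $e^{H^{(1)}}\leq e^{2d\beta}e^{H^{(0)}}$ over $\theta$ gives $\rho\leq e^{2d\beta}$, so the conditional probability is at most $\bar p\,e^{2d\beta}/(\bar p\,e^{2d\beta}+(1-\bar p))$, which is exactly $p_0$ after multiplying the numerator and denominator in the definition of $p_0$ by $e^{2d\beta}$.

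It then remains to check the strict-positivity hypothesis of Lemma~\ref{Lemma1.5}, which holds because $Z_{\Lambda,\beta,r}>0$ for every $r$ and $\P_{\Lambda,\bar p}(r)>0$ for $\bar p\in(0,1)$; the lemma delivers the stochastic domination, and combining it with the nonnegativity and monotonicity of $g$ completes the proof. I expect the only genuinely quantitative point to be the ratio bound $\rho\leq e^{2d\beta}$: it is elementary, but it is exactly where the coordination number $2d$ of $\Zd$ enters, and hence where the explicit threshold $p_0(\bar p,\beta)$ is pinned down. Everything else is routine bookkeeping with the disintegration identity and the two monotonicity facts furnished by Ginibre.
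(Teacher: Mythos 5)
Your proposal is correct and follows essentially the same route as the paper: the same disintegration into the tilted marginal $\nu(r)\propto Z_{\Lambda,\beta,r}\,\P_{\Lambda,\bar p}(r)$, the same appeal to Ginibre for nonnegativity and monotonicity of $r\mapsto\langle\cos(m\theta)\rangle_{\mu_{\Lambda,\beta,r}}$, and the same verification of the Liggett-type criterion (Lemma~\ref{Lemma1.5}) via the partition-function ratio bound $Z_{\Lambda,\beta,r^{(1)}}\leq e^{2d\beta}Z_{\Lambda,\beta,r^{(0)}}$ coming from the at most $2d$ edges incident to $x$. The only cosmetic difference is that you phrase the key estimate as a monotone function of the ratio $\rho$ rather than manipulating the two partition functions directly, which is equivalent.
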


\begin{proof}
    We first recall the notation for the partition function of the $XY$ model in a Bernoulli site percolation: for any $r \in \{0 , 1\}^\Lambda$,
    \begin{equation} \label{def.Zlambdabetar}
        Z_{\Lambda, \beta, r} := \int \exp \left( \beta \sum_{x \sim y} r_x r_y \cos (\theta_x - \theta_y) \right) \prod_{x \in \Lambda}d \theta_x.
    \end{equation}
    We decompose the expectation $\left\langle \cos(m \theta) \right\rangle_{\mu_{\Lambda , \beta, \kappa(\bar p)}}$ according to the following computation
    \begin{align*}
        \left\langle \cos(m \theta) \right\rangle_{\mu_{\Lambda , \beta, \kappa(\bar p)}} & = \frac{1}{Z_{\Lambda, \beta, \kappa(\bar p)}} \sum_{r \in \{0 , 1 \}^{\Lambda}} \P_{\Lambda,\bar p}( \{ r \}) \int \cos(m \theta) \exp \left(  \beta \sum_{x \sim y} r_x r_y \cos (\theta_x - \theta_y)  \right) \prod_{x \in \Lambda}d \theta_x \\
        & = \frac{1}{Z_{\Lambda, \beta, \kappa(\bar p)}} \sum_{r \in \{0 , 1 \}^{\Lambda}} \P_{\Lambda,\bar p}(\{ r \}) Z_{\Lambda, \beta, r}  \left\langle \cos(m \theta) \right\rangle_{\mu_{\Lambda, \beta,r}}.
    \end{align*}
    We then let $\nu'_{\Lambda, \beta, \bar p}$ be the probability measure defined on the space of site percolation configurations~$ \{0 , 1\}^\Lambda$ according to the formula, for any $r \in \{ 0 ,1\}^{\Lambda}$,
    \begin{equation} \label{measurenu.eq}
        \nu'_{\Lambda, \beta, \bar p} (r) := \frac{ \P_{\Lambda,\bar p}( \{ r \})Z_{\Lambda, \beta, r}}{Z_{\Lambda, \beta, \kappa(\bar p)}}.
    \end{equation}
    Using this notation, we may write
    \begin{equation*}
        \left\langle \cos(m \theta) \right\rangle_{\mu_{\Lambda , \beta, \kappa(\bar p)}} = \sum_{r \in \{0 , 1 \}^{\Lambda}}  \left\langle \cos(m \theta) \right\rangle_{\mu_{\Lambda, \beta, r}} \nu'_{\Lambda, \beta,\bar p} ( \{ r \}) = \mathbb{E}_{\nu'_{\Lambda, \beta, \bar p}} \left[ \left\langle \cos(m \theta) \right\rangle_{\Lambda, \beta, r} \right].
    \end{equation*}
    We next claim that, with the value of $p_0$ introduced in the statement of the proposition, one has the stochastic domination
    \begin{equation} \label{ineq:stocdominqtion}
        \nu'_{\Lambda, \beta, \bar p} \preceq \P_{\Lambda, p_0}.
    \end{equation}
    The inequality~\eqref{ineq:stocdominqtion} is sufficient to complete the proof of Proposition~\ref{prop4.1}: indeed the Ginibre inequality (in the form of the inequality~\eqref{eq:Ginibreineq}) implies that the function $r \mapsto \left\langle \cos(m \theta) \right\rangle_{\mu_{\Lambda, \beta , r}}$ is increasing. Combining this observation with the stochastic domination~\eqref{ineq:stocdominqtion} implies that
    \begin{equation*}
        \left\langle \cos(m \theta) \right\rangle_{\mu_{\Lambda , \beta, \kappa(\bar p)}} =   \mathbb{E}_{\nu'_{\Lambda, \beta,\bar p}} \left[ \left\langle \cos(m \theta) \right\rangle_{\mu_{\Lambda, \beta, r}} \right] \leq \mathbb{E}_{p_0} \left[ \left\langle \cos(m \theta) \right\rangle_{\mu_{\Lambda , \beta, r }} \right].
    \end{equation*}
    There remains to prove~\eqref{ineq:stocdominqtion}. Using Lemma~\ref{Lemma1.5} (and noting that the two measures $\nu'_{\Lambda, \beta,\bar p}$ and $\P_{\Lambda, p_0}$ are strictly positive), it is sufficient to show that, for any vertex $x \in \Lambda$ and any percolation configuration $r_1 \in \{0,1\}^{\Lambda \setminus \{ x \}}$,
    \begin{equation} \label{eqnunu'DC}
        \nu'_{\Lambda, \beta,\bar p} \left( r_x = 1 \, | \, r = r_1 ~ \mbox{in} ~ \Lambda \setminus \{ x \} \right) \leq p_0.
    \end{equation}
    To prove the inequality~\eqref{eqnunu'DC}, we fix $r_1 \in \{0,1\}^{\Lambda \setminus \{ x \}}$ and let $r_1^+ \in \left\{ 0,1 \right\}^{\Lambda}$ be the percolation configuration defined by the identities $r_1^+ = r_1$ in $\Lambda \setminus \{ x \}$ and $r_{1,x}^+ = 1$. Similarly, we define the percolation configuration $r_1^- \in \left\{ 0,1 \right\}^{\Lambda}$ by the identities $r_1^- = r_1$ in $\Lambda \setminus \{ x \}$ and $r_{1,x}^- = 0$. Using the definition of the measure $\nu'_{\Lambda, \beta,\bar p} $, we have the identity
    \begin{equation} \label{eq:1635}
        \nu'_{\Lambda, \beta,\bar p} \left( r_x = 1 \, | \, r = r_1 ~ \mbox{in} ~ \Lambda \setminus \{ x \} \right) = \frac{ \P_{\Lambda,\bar p}( \{ r_1^+ \} )Z_{\Lambda, \beta, r_1^+}}{ \P_{\Lambda,\bar p}(\{ r_1^+ \})Z_{\Lambda, \beta, r_1^+} + \P_{\Lambda,\bar p}( \{ r_1^- \})Z_{\Lambda, \beta, r_1^-} }.
    \end{equation}
    Using that $\P_{\Lambda,\bar p}$ is the i.i.d. Bernoulli site percolation measure, we have the identity 
    \begin{equation} \label{eq:1636}
    \P_{\Lambda,\bar p}( \{ r_1^- \} ) = \frac{1-\bar p}{\bar p}  \P_{\Lambda,\bar p}( \{ r_1^+ \}).
    \end{equation}
    Using the definition of the partition function $Z_{\Lambda, \beta , r }$ introduced in~\eqref{def.Zlambdabetar}, we see that
    \begin{align} \label{eq:1637}
         Z_{\Lambda, \beta, r_1^+} & = \int \exp \left( \beta \sum_{y \sim y'} r_{1,y}^+ r_{1,y'}^+ \cos (\theta_y - \theta_{y'}) \right) \prod_{y \in \Lambda}d \theta_y \\
         & = \int \exp \left( \beta \sum_{\substack{x' \in \Lambda \\ x' \sim x}} r_{1, x'} \cos (\theta_x - \theta_{x'}) \right) \exp \left( \beta \sum_{y \sim y'} r_{1,y}^- r_{1,y'}^- \cos (\theta_y - \theta_{y'}) \right)\prod_{y \in \Lambda}d \theta_y  \notag \\
         & \leq \exp \left( 2d \beta \right)  \int \exp \left( \beta \sum_{y \sim y'} r_{1,y}^- r_{1,y'}^- \cos (\theta_y - \theta_y) \right)\prod_{y \in \Lambda}d \theta_y \notag \\
         & = \exp \left( 2d \beta \right) Z_{\Lambda, \beta, r_1^-}. \notag
    \end{align}
    Combining the inequalities and identities~\eqref{eq:1635},~\eqref{eq:1636} and~\eqref{eq:1637}, we obtain
    \begin{equation*}
        \nu'_{\Lambda, \beta,\bar p} \left( r_x = 1 \, | \, r = r_1 ~ \mbox{in} ~ \Lambda \setminus \{ x \} \right) \leq \frac{1}{1 + \frac{1-\bar p}{\bar p}  \exp \left( - 2d \beta \right)} = p_0.
    \end{equation*}
   The proof of the inequality~\eqref{eqnunu'DC} is complete.
\end{proof}

\subsection{Stochastic domination and  Ginibre inequality} \label{Section5}

This section is devoted to the proof of Theorem~\ref{mainthm} based on the result of Proposition~\ref{prop4.1} and the Ginibre inequality.

\begin{proof}[Proof of Theorem~\ref{mainthm}]
    In dimension $d = 2$, fix $\beta = 4 \beta_{BKT}$, $\bar p:=\frac 1 2$ and $p_0 = p_0(\tfrac 1 2, 4 \beta_{BKT}) =  \frac 1 {1+ e^{-16 \beta_{BKT}} }$. 
    Using Wells' inequality with the choice~\eqref{e.KEY}, i.e.  $a := \min \left( \frac12 , \bar p \right) = \tfrac12$, we have
    \begin{equation} \label{ineqKT}
        a^2 \beta =  \beta_{BKT}.
    \end{equation}
    We fix a (large) finite set $\Lambda \subseteq \Z^2$, two vertices $x , y \in \Lambda$ and apply Proposition~\ref{prop4.1} with the function $m = \indc_{x} - \indc_{y}$ to obtain the inequality,
    \begin{equation*}
        \left\langle \cos(\theta_x - \theta_y) \right\rangle_{\mu_{\Lambda , \beta, \kappa(1/2)}} \leq \mathbb{E}_{p_0} \left[ \left\langle \cos(\theta_x - \theta_y) \right\rangle_{\mu_{\Lambda , \beta, r }} \right].
    \end{equation*}
    Applying Wells' inequality with the function $m = \indc_{x} - \indc_y$ , we deduce that
    \begin{equation*}
        \left\langle \cos(\theta_x - \theta_y) \right\rangle_{\mu_{\Lambda, a^2 \beta}} \leq \mathbb{E}_{p_0} \left[ \left\langle \cos(\theta_x - \theta_y) \right\rangle_{\mu_{\Lambda , \beta, r} } \right].
    \end{equation*}
    Using Ginibre inequality, we may take the limit in the previous inequality along an increasing sequence of finite subsets $\Lambda \uparrow \Zd$ (using the monotone convergence theorem for the right-hand side). We obtain
    \begin{equation*}
        \left\langle \cos(\theta_x - \theta_y) \right\rangle_{\mu_{a^2 \beta}} \leq \mathbb{E}_{p_0} \left[ \left\langle \cos(\theta_x - \theta_y) \right\rangle_{\mu_{\beta, r }} \right].
    \end{equation*}
    Using that the two-point function of the $XY$ model at the critical inverse temperature $\beta_{KT}$ decays polynomially fast (see~\cite[Theorem 1]{van2023elementary}), the identity~\eqref{ineqKT} implies that the two-point function $x \mapsto \mathbb{E}_{p_0} [ \left\langle \cos(\theta_0 - \theta_x) \right\rangle_{\mu_{\beta, r}} ]$ decays at most polynomially fast in $|x|$. 

    In order to derive an upper bound on the two-point function (and show that it decays at least polynomially fast), we first use the Ginibre correlation inequality to show that, for any percolation configuration $r \in \{0 , 1\}^{\Zd}$ (i.e., we increase all the values of $r$ to $1$),
    \begin{equation} \label{ineq:upperboundquenched}
        \left\langle \cos(\theta_x - \theta_y) \right\rangle_{\mu_{\beta, r }}  \leq \left\langle \cos(\theta_x - \theta_y) \right\rangle_{\mu_{\beta}}.
    \end{equation}
    Note that, since this inequality holds for any percolation configuration, it also holds when an expectation is taken on the left-hand side. We then apply the result of McBryan and Spencer~\cite{mcbryan1977decay} which shows that, in two dimensions and for any inverse temperature $\beta$, the right-hand side decays at least polynomially fast. 
    
    Combining the results of the two previous paragraphs shows the polynomial decay of the expected two-point function when $\beta=4\beta_{BKT}$ and $p=p_0=\frac 1 {1+e^{-16 \beta_{BKT}}}$, and thus completes the proof of Theorem~\ref{mainthm} in this case. Now, using stochastic monotonicity together with Ginibre inequality (and the results of~\cite{mcbryan1977decay} for the upper bound), this is clearly extended to all $\beta\geq 4 \beta_{BKT}$ and $p\geq p_0$.

    The same argument can be applied in dimension $3$ and higher by replacing the value $\beta_{BKT}$ by the critical inverse temperature $\beta_c(d)$. The only difference in this case is that it is expected that long-range order does not hold at the critical point $\beta_c(d)$, this is why strict inequalities are used when $d\geq 3$ in Theorem~\ref{mainthm}. We argue as follows: for any $\eps>0$, consider $\beta = 4(\beta_c(d) +\eps)$. Using the same analysis as in the case $d=2$, together with Wells' inequality, we obtain long-range order for the $XY$ model on site $p$-percolation, with $p=p_0(\tfrac 1 2, \beta) = \frac 1 {1+e^{-8d \beta_c - 8\eps}}$. Using the same monotonies, and letting $\eps\to 0$, we thus obtain long-range order for any $\beta>4 \beta_c(d)$ and any $p>  \frac 1 {1+ e^{-8d \beta_c(d)}}$. 
\end{proof}

\begin{remark}\label{}
The reader may wonder why we do not optimise further our choice of $\bar p$ as a function of $\beta \geq 4 \beta_{BKT}$ in order to obtain better bounds on the density $p$ as a function of $\beta$. Indeed, the present proof is only using $\bar p:= \tfrac 1 2$ which seems  rather arbitrary. It turns out, somewhat surprisingly, that for any given $\beta>4\beta_{BKT}$, if instead of using stochastic domination and Ginibre, one optimises over a suitable choice of $\bar p \leq \tfrac 12$, then the bounds one obtains on the percolation densities $p$ are not better and even degenerate as $\beta \to \infty$. 
\end{remark}

\section{Phase transitions for the $XY$ model on a supercritical percolation cluster} \label{section4.2}

This section is devoted to the proof of Theorem~\ref{proofKTsitegeneral}. We will only prove lower bounds as a polynomial upper bound on the expectation of the two-point function in two dimensions is obtained by combining the inequality~\eqref{ineq:upperboundquenched} together with the result of~\cite{mcbryan1977decay} (exactly as in the proof of Theorem~\ref{mainthm}). The proof is based on Wells' inequality, the Ginibre inequality and a renormalization argument. We first prove, in Section~\ref{section5.1}, a more general version of the stochastic domination inequality of Section~\ref{Section4.1} involving the extended lattice $\Zd_n$ and the $XY$ model with heterogeneous temperatures introduced in Definition~\ref{def.heterogeneousXY}. The result is stated in Proposition~\ref{prop4.1extended} below, and its proof is essentially identical to the one presented in Section~\ref{Section4.1} (but more technical due to the nature of the objects involved). 
In Section~\ref{subsection5.2}, we combine the results of Section~\ref{section5.1} with a renormalization argument for the supercritical percolation cluster (using the results collected in Section~\ref{sectionrenormalization}), the Ginibre inequality and Proposition~\ref{prop.phasetransitionextended} (establishing the existence of phase transitions for the $XY$ model with heterogeneous temperatures on the extended lattice) and complete the proof of Theorem~\ref{proofKTsitegeneral}. Finally, Section~\ref{subsection5.3} is devoted to the proof of Corollary~\ref{c.AlmostSure}

\subsection{Domination for the $XY$ model on a percolation cluster of the extended lattice $\Zd_n$} \label{section5.1}

In this section, we recall the definition of the extended lattice $\Zd_n$ stated in Definition~\ref{def.Zdn}, fix an integer $n \in \mathbb{N}$, a probability $\bar p \in (0 , 1)$, and define the collection of probability measures $\kappa= \kappa(\bar p) = (\kappa_x)_{x \in \Z^d_n}$ according to the formula (using the notation of Remark~\ref{remark2.7} to embed the vertices of $\Zd$ in $\mathbb{Z}^d_n$)
\begin{equation}\label{def.measurekappa}
    \kappa_x : = 
    \left\{ \begin{aligned}
        (1-\bar p) \delta_0 + \bar p \delta_1 & ~\mbox{if}~ x \in \Zd, \\
         \delta_1 & ~\mbox{if}~ x \in \Z^d_n \setminus \Zd.
    \end{aligned} \right.
\end{equation}

We next introduce the versions of the $XY$ model with heterogenous temperatures on a percolation cluster and with annealed disorder which will be used in the statement of Proposition~\ref{prop4.1extended} below.
Given a finite set $\Lambda \subseteq \Z^d_n$ and two inverse temperatures $\beta_1 , \beta_2 > 0$, we define the Hamiltonian: for $r \in  \{ 0 , 1 \}^\Lambda$ and $\theta \in [0 , \infty)^\Lambda$,
\begin{equation*}
    H_{\Lambda, \beta_1 , \beta_2}(r , \theta) := - \beta_1 \sum_{\substack{x \sim_n y \\x , y \in \Lambda \\ \{ x , y \} \cap \Zd \neq \emptyset}} r_x r_y \cos (\theta_x - \theta_y) - \beta_2 \sum_{\substack{x \sim_n y \\ x,y \in \Lambda \\ \{ x , y \} \cap \Zd = \emptyset}} r_x r_y \cos (\theta_x - \theta_y) 
\end{equation*}
as well as the two probability measures:
\begin{itemize}
    \item The $XY$ model with heterogenous temperatures on a percolation cluster: for a fixed percolation configuration $r \in \{ 0 , 1 \}^\Lambda$,
\begin{equation} \label{mubeta1bata2ndef}
         \mu_{\Lambda, \beta_1 , \beta_2 , r}(d\theta) := \frac{1}{Z_{\Lambda, \beta_1 , \beta_2 , r}}\exp \left( - H_{\Lambda, \beta_1 , \beta_2}(r , \theta)  \right) \prod_{x \in \Lambda} d \theta_x.
\end{equation}
For later purposes, we note that, as it was for various models introduced before, compactness arguments and the Ginibre correlation inequality imply that, for any percolation configuration $r \in \{ 0 , 1 \}^{\Zd}$, the sequence of measures $\mu_{\Lambda, \beta_1 , \beta_2, r}$ converges as $\Lambda \uparrow \Zd_n$ to an infinite-volume measure which will be denoted by $\mu_{\beta_1 , \beta_2, r}.$
    \item The $XY$ model with heterogenous temperatures and annealed disorder:
    \begin{equation*}
        \mu_{\Lambda, \beta_1 , \beta_2 , \kappa}(dr d\theta) := \frac{1}{Z_{\Lambda, \beta_1 , \beta_2 , \kappa}}\exp \left( - H_{\Lambda, \beta_1 , \beta_2}(r , \theta)  \right) \prod_{x \in \Lambda} d \kappa_x(r_x) d \theta_x.
\end{equation*}
\end{itemize}
The following proposition shows that the two-point function under the measure $\mu_{\Lambda, \beta, \kappa}$ is smaller than the one of an $XY$ model in a sufficiently supercritical Bernoulli site percolation (using the measure $\P_{\Lambda,p}$ on the extended lattice $\Z^d_n$ introduced in Definition~\ref{DEf.bernoullipercextend}).

\begin{proposition} \label{prop4.1extended}
Fix $\beta_1 , \beta_2 \in (0 , \infty)$, $\bar p \in (0 , 1)$ and set $p_0 := \frac{\bar p}{\bar p + (1-\bar p)  \exp \left( - 2d \beta_1 \right)} \in (0 , 1)$. Then, for any finite subset $\Lambda \subseteq \Zd_n$ and any function $m \in \Z^\Lambda$,
\begin{equation*}
       \left\langle \cos(m \theta) \right\rangle_{\mu_{\Lambda, \beta_1 , \beta_2 , \kappa}} \leq \mathbb{E}_{p_0} \left[ \left\langle \cos(m \theta) \right\rangle_{\mu_{\Lambda , \beta_1, \beta_2, r}} \right]\,,
\end{equation*}
where $\kappa=\kappa(\bar p)$ is the measure defined in~\eqref{def.measurekappa}.
\end{proposition}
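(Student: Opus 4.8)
**

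The plan is to follow the exact same structure as the proof of Proposition~\ref{prop4.1}, which is the analogous statement on the ordinary lattice $\Zd$, and to adapt each step to the extended lattice $\Zd_n$ equipped with the heterogeneous-temperature Hamiltonian and the measure $\kappa(\bar p)$ defined in~\eqref{def.measurekappa}. First I would write out the partition function
\begin{equation*}
    Z_{\Lambda, \beta_1, \beta_2, r} := \int \exp\left( - H_{\Lambda, \beta_1, \beta_2}(r, \theta) \right) \prod_{x \in \Lambda} d\theta_x,
\end{equation*}
and decompose the annealed expectation $\left\langle \cos(m\theta) \right\rangle_{\mu_{\Lambda, \beta_1, \beta_2, \kappa}}$ by integrating out the $r$ variables. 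Because $\kappa_x = \delta_1$ for $x \in \Zd_n \setminus \Zd$, the product $\prod_{x \in \Lambda} d\kappa_x(r_x)$ forces every added vertex to be open, so the annealed measure on the $r$-variables reduces to an i.i.d. Bernoulli$(\bar p)$ measure on $\Lambda \cap \Zd$ (with all other sites open). This yields, exactly as in the unextended case,
\begin{equation*}
    \left\langle \cos(m\theta) \right\rangle_{\mu_{\Lambda, \beta_1, \beta_2, \kappa}} = \mathbb{E}_{\nu'} \left[ \left\langle \cos(m\theta) \right\rangle_{\mu_{\Lambda, \beta_1, \beta_2, r}} \right],
\end{equation*}
where $\nu'$ is the measure on $\{0,1\}^\Lambda$ defined by reweighting $\prod_x \kappa_x$ by the partition function $Z_{\Lambda, \beta_1, \beta_2, r}$.

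The heart of the argument is the stochastic domination $\nu' \preceq \P_{\Lambda, p_0}$, which combined with the Ginibre inequality (monotonicity of $r \mapsto \left\langle \cos(m\theta) \right\rangle_{\mu_{\Lambda, \beta_1, \beta_2, r}}$) gives the desired inequality. Here I would invoke Lemma~\ref{Lemma1.5ext}, the extended-lattice version of the domination criterion, rather than Lemma~\ref{Lemma1.5}: its hypotheses are tailored precisely to the situation where $\nu'$ charges only configurations that are identically $1$ on $\Zd_n \setminus \Zd$. Assumptions (i) and (ii) are immediate from the structure of $\kappa$; the content is assumption (iii), namely that for any $x \in \Lambda \cap \Zd$ and any configuration $r_1$ that equals $1$ on $\Lambda \setminus \Zd$,
\begin{equation*}
    \nu'\left( r_x = 1 \mid r = r_1 \text{ in } \Lambda \setminus \{x\} \right) \leq p_0.
\end{equation*}
As in~\eqref{eq:1635}, this conditional probability equals a ratio involving $Z_{\Lambda, \beta_1, \beta_2, r_1^+}$ and $Z_{\Lambda, \beta_1, \beta_2, r_1^-}$, and the Bernoulli structure contributes the factor $\frac{1-\bar p}{\bar p}$ exactly as in~\eqref{eq:1636}.

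The key estimate is then to bound the ratio $Z_{\Lambda, \beta_1, \beta_2, r_1^+} / Z_{\Lambda, \beta_1, \beta_2, r_1^-}$ from above. Opening $r_{1,x}^+ = 1$ rather than $0$ activates precisely the edges of $\Zd_n$ incident to $x$. The crucial observation, and the point where the extended geometry must be used carefully, is that every vertex $x \in \Zd$ has exactly $2d$ neighbours in $\Zd_n$ (the first added vertex along each of the $2d$ incident edges of $\Zd$), and each such edge carries the temperature $\beta_1$ since it meets $\Zd$. Hence the extra factor in the integrand is $\exp\left( \beta_1 \sum_{x' \sim_n x} r_{1,x'} \cos(\theta_x - \theta_{x'}) \right) \leq \exp(2d\beta_1)$, mirroring~\eqref{eq:1637} with $\beta$ replaced by $\beta_1$. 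I expect this to be the main (though still mild) obstacle: one must verify that the degree of $x$ in $\Zd_n$ is still $2d$ and that all incident edges are of $\beta_1$-type, so that $\beta_2$ plays no role in the bound and $p_0$ depends only on $\beta_1$. Combining this with the Bernoulli factor gives
\begin{equation*}
    \nu'\left( r_x = 1 \mid r = r_1 \text{ in } \Lambda \setminus \{x\} \right) \leq \frac{1}{1 + \frac{1-\bar p}{\bar p} \exp(-2d\beta_1)} = p_0,
\end{equation*}
which is hypothesis (iii) of Lemma~\ref{Lemma1.5ext}. Applying that lemma yields $\nu' \preceq \P_{\Lambda, p_0}$, and the Ginibre monotonicity then completes the proof.
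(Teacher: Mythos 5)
Your proposal is correct and follows essentially the same route as the paper's own proof: the same decomposition of the annealed expectation into the reweighted measure $\nu'$, the same stochastic domination $\nu' \preceq \P_{\Lambda, p_0}$ verified through Lemma~\ref{Lemma1.5ext}, the same partition-function ratio bound $Z_{\Lambda,\beta_1,\beta_2,r_1^+} \leq e^{2d\beta_1} Z_{\Lambda,\beta_1,\beta_2,r_1^-}$ (which the paper phrases as the Hamiltonian inequality $H_{\Lambda,\beta_1,\beta_2}(r_1^+,\theta) \geq H_{\Lambda,\beta_1,\beta_2}(r_1^-,\theta) - 2d\beta_1$), and the same final application of Ginibre monotonicity. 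Your explicit remark that every $x \in \Zd$ has degree $2d$ in $\Zd_n$ with all incident edges of $\beta_1$-type is exactly the geometric fact underlying the paper's bound, and correctly explains why $p_0$ depends only on $\beta_1$.
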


\begin{remark}
    A crucial feature of this proposition is that the probability $p_0$ depends only on the inverse temperature $\beta_1$. Together with Remark~\ref{remark2.24}, this property plays a fundamental role in the proof of Theorem~\ref{proofKTsitegeneral}
and is in fact the reason to introduce the $XY$ model with heterogenous temperatures.
\end{remark}

\begin{proof}
    We first recall the notation for the partition function of the $XY$ model in a Bernoulli site percolation configuration: for any $r \in \{0 , 1\}^\Lambda$,
    \begin{equation} \label{def.Zlambdabetarbis}
        Z_{\Lambda, \beta_1 , \beta_2, r} := \int \exp \left(  - H_{\Lambda, \beta_1 , \beta_2}(r , \theta) \right) \prod_{x \in \Lambda}d \theta_x.
    \end{equation}
    Decomposing the expectation $\left\langle \cos(m \theta) \right\rangle_{\mu_{\Lambda , \beta_1 , \beta_2, \kappa}}$ as in the proof of Proposition~\ref{prop4.1}, we have the identity
    \begin{equation*}
        \left\langle \cos(m \theta) \right\rangle_{\mu_{\Lambda , \beta_1 , \beta_2, \kappa}} = \frac{1}{Z_{\Lambda, \beta_1 , \beta_2, \kappa}} \sum_{r \in \{0 , 1 \}^{\Lambda}} \P_{\Lambda, \bar p}(\{ r \}) Z_{\Lambda, \beta_1 , \beta_2, r}  \left\langle \cos(m \theta) \right\rangle_{\mu_{\Lambda, \beta_1 , \beta_2,r}}.
    \end{equation*}
    We then let $\nu'_{\Lambda,\beta_1 , \beta_2, \bar p}$ be the probability measure defined on the space of site percolation configurations $ \{0 , 1\}^\Lambda$ according to the formula, for any $r \in \{ 0 ,1\}^{\Lambda}$,
    \begin{equation*}
        \nu'_{\Lambda, \beta_1 , \beta_2 , \bar p} (\{ r \}) := \frac{ \P_{\Lambda, \bar p}(\{ r \})Z_{\Lambda, \beta_1 , \beta_2, \kappa}}{Z_{\Lambda, \nu}},
    \end{equation*}
    so that 
    \begin{equation*}
        \left\langle \cos(m \theta) \right\rangle_{\mu_{\Lambda , \beta_1 , \beta_2, \kappa}} = \sum_{r \in \{0 , 1 \}^{\Lambda}}  \left\langle \cos(m \theta) \right\rangle_{\mu_{\Lambda, \beta_1 , \beta_2, r}} \nu'_{\Lambda, \beta_1 , \beta_2, \bar p} (\{ r \}) = \mathbb{E}_{\nu'_{\Lambda, \beta_1 , \beta_2, \bar p}} \left[ \left\langle \cos(m \theta) \right\rangle_{\Lambda, \beta_1 , \beta_2, r} \right].
    \end{equation*}
    We next claim that the following stochastic domination holds
    \begin{equation} \label{ineq:stocdominqtionext}
        \nu'_{\Lambda, \beta_1 , \beta_2, \bar p} \preceq \P_{\Lambda, p_0}.
    \end{equation}
    This inequality is enough to conclude. Indeed, combining the Ginibre inequality (in the form of the inequality~\eqref{eq:Ginibreineq}) with the stochastic domination~\eqref{ineq:stocdominqtionext}, we obtain the inequality
    \begin{equation*}
        \left\langle \cos(m \theta) \right\rangle_{\mu_{\Lambda ,\beta_1 , \beta_2, \kappa}} =   \mathbb{E}_{\nu'_{\Lambda, \beta_1 , \beta_2, \bar p}} \left[ \left\langle \cos(m \theta) \right\rangle_{\mu_{\Lambda, \beta_1 , \beta_2, r}} \right] \leq \mathbb{E}_{p_0} \left[ \left\langle \cos(m \theta) \right\rangle_{\mu_{\Lambda , \beta_1 , \beta_2, r }} \right],
    \end{equation*}
    which completes the proof of Proposition~\ref{prop4.1extended}.  

     There remains to prove~\eqref{ineq:stocdominqtionext}. Using Lemma~\ref{Lemma1.5ext} (and noting that the two measures $\nu'_{\Lambda, \beta_1 , \beta_2, \bar p}$ and $\P_{\Lambda, p_0}$ satisfy the first two assumptions of the lemma), it is sufficient to show that, for any vertex $x \in \Lambda \cap \Zd$ and any percolation configuration $r_1 \in \{0,1\}^{\Lambda \setminus \{ x \}}$ such that $r_1 = 1$ on the set $\Lambda \setminus \Zd$,
    \begin{equation} \label{eqnunu'DCbis}
        \nu'_{\Lambda, \beta_1 , \beta_2 ,\bar p} \left( r_x = 1 \, | \, r = r_1 ~ \mbox{in} ~ \Lambda \setminus \{ x \} \right) \leq p_0.
    \end{equation}
    To prove the inequality~\eqref{eqnunu'DCbis}, let $r_1^+ \in \left\{ 0,1 \right\}^{\Lambda}$ be the percolation configuration defined by the identities $r_1^+ = r_1$ in $\Lambda \setminus \{ x \}$ and $r_{1,x}^+ = 1$. Similarly, we define the percolation configuration $r_1^- \in \left\{ 0,1 \right\}^{\Lambda}$ by the identities $r_1^- = r_1$ in $\Lambda \setminus \{ x \}$ and $r_{1,x}^- = 0$. Using the definition of the measure $\nu'_{\Lambda, \beta_1 , \beta_2,\bar p} $, we have the identity
    \begin{equation} \label{eq:1635bis}
        \nu'_{\Lambda, \beta_1 , \beta_2,\bar p} \left( r_x = 1 \, | \, r = r_1 ~ \mbox{in} ~ \Lambda \setminus \{ x \} \right) = \frac{ \P_{\Lambda,\bar p}( \{ r_1^+ \})Z_{\Lambda, \beta_1 , \beta_2, r_1^+}}{ \P_{\Lambda, \bar p}( \{ r_1^+ \})Z_{\Lambda, \beta_1 , \beta_2, r_1^+} + \P_{\Lambda,\bar p}( \{ r_1^- \})Z_{\Lambda, \beta_1 , \beta_2, r_1^-} }.
    \end{equation}
    Using that $\P_{\Lambda,\bar p}$ is the i.i.d. Bernoulli site percolation measure, we have the identity 
    \begin{equation} \label{eq:1636bis}
    \P_{\Lambda,\bar p}( \{ r_1^- \}) = \frac{1-\bar p}{\bar p}  \P_{\Lambda,\bar p}( \{ r_1^+ \}).
    \end{equation}
    Using that the percolation configuration $r_1^+$ and $r_1^-$ are only different at the vertex $x \in \Lambda \cap \Zd$, we have the upper bound, for any spin configuration $\theta \in [0, 2 \pi)^{\Lambda}$,
    \begin{equation*}
        H_{\Lambda, \beta_1 , \beta_2}(r_1^+ , \theta) \geq H_{\Lambda, \beta_1 , \beta_2}(r_1^- , \theta) - 2 d \beta_1.
    \end{equation*}
    Using the definition of the partition function $Z_{\Lambda, \beta_1 , \beta_2 , r }$ introduced in~\eqref{def.Zlambdabetarbis} and the previous upper bound, we obtain
    \begin{align*}
         Z_{\Lambda, \beta_1, \beta_2 , r_1^+} & = \int \exp \left(  - H_{\Lambda, \beta_1 , \beta_2}(r_1^+ , \theta) \right) \prod_{x \in \Lambda}d \theta_x \\
         & \leq \exp \left( 2d \beta_1 \right) \int \exp \left(  - H_{\Lambda, \beta_1 , \beta_2}(r_1^- , \theta) \right) \prod_{x \in \Lambda}d \theta_x \\
         & =  \exp \left( 2d \beta_1 \right) Z_{\Lambda, \beta_1 , \beta_2, r_1^-}. 
    \end{align*}
    Combining the previous inequality with the identities~\eqref{eq:1635bis},~\eqref{eq:1636bis}, we obtain
    \begin{equation*}
        \nu'_{\Lambda, \beta_1 , \beta_2 ,\bar p} \left( r_x = 1 \, | \, r = r_1 ~ \mbox{in} ~ \Lambda \setminus \{ x \} \right) \leq \frac{\bar p}{\bar p + (1-\bar p)  \exp \left( - 2d \beta_1 \right)} = p_0.
    \end{equation*}
   The proof of the inequality~\eqref{eqnunu'DCbis} is complete.
\end{proof}

\subsection{Proof of Theorem~\ref{proofKTsitegeneral}.} \label{subsection5.2}

This section is devoted to the proof of Theorem~\ref{proofKTsitegeneral}.
We split it into three subsections. In Section~\ref{subsec5.2.1}, we set up the renormalization argument by introducing a series of preliminary definitions and notations. Section~\ref{subsec5.2.2} is the core of the argument, we make use of the Ginibre correlation inequality to show that the two-point function of the $XY$ model on a supercritical percolation cluster is bounded from below by the one of a $XY$ model on the extended lattice on a \emph{highly} supercritical percolation cluster. We then lower bound this two-point function in Section~\ref{subsec5.2.3}. The proof of this latter inequality is similar to the one of Theorem~\ref{mainthm} 
(in Section~\ref{Section4.1}) and makes use of the stochastic domination estimate established in Section~\ref{section5.1}, Wells' inequality and Proposition~\ref{prop.phasetransitionextended}.

We shall only give the details for the case of site percolation below.  Indeed the proof for {\em edge} percolation is almost identical: the only difference is in Section~\ref{subsec5.2.1} below where, in order to set up the renormalization argument, we need to use Proposition~\ref{proppre-good} (and the inequality~\eqref{eqpdecaygoodbox}) for edge percolation instead of site percolation. We thus focus only on site percolation and leave the details to the reader.

\subsubsection{Preliminary definitions: the site percolation on the renormalized graph} \label{subsec5.2.1}

We fix a dimension $d \geq 2$ and a probability $p > p_{c , \mathrm{site}}(d)$, a vertex $x \in \Zd$ and introduce a few parameters and definitions. We recall the statement of Proposition~\ref{prop.phasetransitionextended} and set $\beta_1 := 2^{2d + 2} \beta_{1 , BKT} = 64 \beta_{1 , BKT}$ in dimension $d = 2$ and $\beta_1 := 2^{2d + 2} \beta_{1 , c}(d)$ in dimension $d \geq 3$. We then set
    \begin{equation} \label{def.p0thm1.3}
            p_0 := \frac{1}{1 + \exp \left( - 2 d \beta_{1} \right)}.
    \end{equation}
    (This is because as in Section \ref{Section4.1}, we will choose $\bar p:=\tfrac 1 2$).
    For each integer $L \in \N$, we partition $\Zd$ into boxes of sidelength $L$ according to the identity
    \begin{equation*}
        \Zd := \bigcup_{z \in (2L +1) \Zd} (z + \Lambda_L).
    \end{equation*}
    Using this partition, we may define a renormalized graph as follows: we denote by $\mathcal{G}_L$ the graph whose vertices are the boxes of the form $(z + \Lambda_L)$ with $z \in (2L +1) \Zd$, and the edges are the pairs of boxes $\{ z + \Lambda_L , z' + \Lambda_L \}$ which have adjacent faces, or equivalently such that $|z - z'|_1 = 2L+1$ (see Figure~\ref{figure.partition}). We note that there exists an isomorphism of graph between $\mathcal{G}_L$ and $\Zd$.

    For each integer $L \in \N$, we select two vertices $x_{0,L}, x_{1 , L}$ such that
    \begin{equation} \label{requirementx0x1}
        x_{0,L} \in \Lambda_{L/2}, ~~x_{1 , L} \in \bigcup_{z \in (2L +1) \Zd} (z + \Lambda_{L/2}) ~~\mbox{and}~~x = x_{1 , L} - x_{0 , L}.
    \end{equation}
    Using the translation invariance of Bernoulli site percolation, we have the identity
    \begin{equation} \label{eq:identitytransinv}
        \mathbb{E}_p [  \left\langle \cos(\theta_0 - \theta_x) \right\rangle_{\mu_{\beta, r}}] = \mathbb{E}_p [  \left\langle \cos(\theta_{x_{0 , L}} - \theta_{x_{1 , L}}) \right\rangle_{\mu_{\beta, r}}].
    \end{equation}
    We will from now on study the two-point function on the right-hand side: it enjoys the convenient property that, contrary to the vertex $x$, none of the points $x_{0 , L}$ and $x_{1 , L}$ can be close to the boundary of one of the boxes $(z + \Lambda_L)$ with $z \in (2L +1) \Zd$.

    \begin{figure}
\begin{center}
\includegraphics[width=7cm]{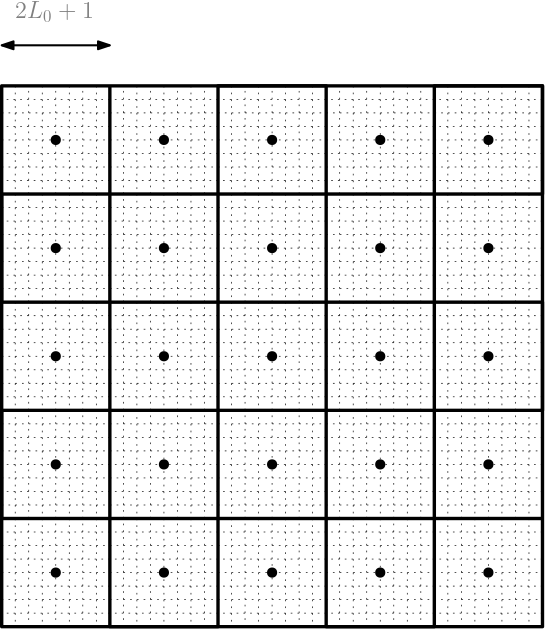} \hspace{10mm}
\includegraphics[width=7cm]{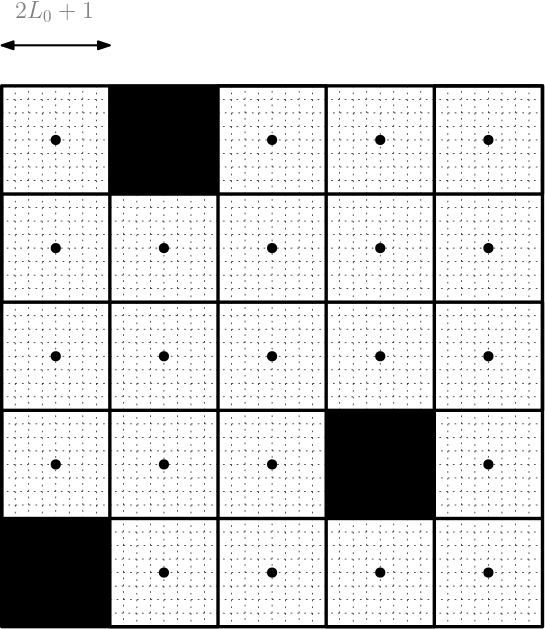}
\caption{The partition of $\Zd$ into boxes of the form $(z + \Lambda_{L_0})$ with $z \in (2 L_0 + 1) \Zd$ is drawn on the left. A realization of site percolation on the renormalized graph $\mathcal{G}_L$ is depicted on the right: the black cells are the closed sites of the percolation configuration, the other cells are the open sites.} \label{figure.partition}
\end{center}
\end{figure}

Equipped with these definitions, we define a translation invariant, $1$-dependent site percolation on the renormalized graph $\mathcal{G}_L$ as follows. We consider a realization of an i.i.d. Bernoulli site percolation of probability $p$ on the lattice $\Zd$ conditioned on the event
    \begin{multline*}
        E_{L, x} := \left\{ x_{0 , L} \mbox{ is connected to the boundary of the box } x_{0 , L} + \Lambda_{L/2} \right\} \\ 
        \bigcap \left\{ x_{1 , L} \mbox{ is connected to the boundary of the box } x_{1 , L} + \Lambda_{L/2} \right\}.
    \end{multline*}
    Note that, by the FKG inequality, the probability of the event $E_{L,x}$ is larger than $\theta(p)^2$, where $\theta(p) \in (0,1]$ denotes the density of the infinite cluster. We then declare a box $(z + \Lambda_L)$ open if and only if it is good in the sense of Definition~\ref{def.goodbox}. This procedure provides a realization of a site percolation on the graph $\mathcal{G}_L$ (see Figure~\ref{figure.partition}). We collect below two properties of this site percolation which are a consequence of the results stated in Remark~\ref{remark2.17}:
    \begin{itemize}
        \item The site percolation on the renormalized graph $\mathcal{G}_L$ is $1$-dependent. This is a consequence of the two following observations: the event ``the box $(z + \Lambda_L)$ is good" is $\mathcal{F}(z + \Lambda_{2L})$-measurable, and the event $E_{L, x}$ is the intersection of two local events which are respectively $\mathcal{F}(x_{0 , L} + \Lambda_{L/2})$-measurable and $\mathcal{F}(x_{1,L} + \Lambda_{L/2})$-measurable.
        \item Conditionally on the event $E_{L,x}$, the probability for a box of the form $(z + \Lambda_L)$ to be good is larger than $1- C \exp(- c L)$ (for some constants $C < \infty$ and $c > 0$ depending only on the probability $p$ and the dimension~$d$). This is a consequence of the inequality~\eqref{eqpdecaygoodbox} and the lower bound $\P_p [E_{L,x}] \geq \theta(p)^2$.
    \end{itemize}

    By combining the two previous observations with Theorem~\ref{th.Liggett} (due to \cite{liggett1997domination}) we deduce that there exists an integer $L_0= L_0(p, d) \in \N$ depending only on the probability $p$ and the dimension $d$ such that the site percolation defined on the renormalized graph $\mathcal{G}_{L_0}$ stochastically dominates an i.i.d. site percolation of parameter $p_0$ (defined in~\eqref{def.p0thm1.3}). We fix this integer $L_0$ for the rest of the proof and introduce three additional notations:
    \begin{itemize}
        \item We set $x_0 := x_{0,L_0}$ and $x_1 := x_{1,L_0}$.
        \item For each $y \in \Zd$, we denote by $[y]$ the unique vertex of $\Zd$ such that $y \in (2L_0 + 1) [y] + \Lambda_{L_0}$. We note that $[x_0] = 0$.
        \item We set $n := 2 \left| \Lambda_{L_0} \right| = 2 (2L_0+1)^d.$
        \item Using Proposition~\ref{prop.phasetransitionextended}, we consider the inverse temperature $\beta_2 := 2^{2d} \beta_{2, BKT}(n) = 16 \beta_{2, BKT}(n)$ in dimension $d = 2$ and $\beta_2 := 2^{2d} \beta_{2, c}(n, d)$ in dimension $d \geq 3$. We then set 
    \begin{equation} \label{def.parameterbeta}
        \beta := \max  \left( \beta_{1}, \beta_{2} \right).
    \end{equation}
    \end{itemize}
\begin{figure}
\begin{center}
\includegraphics[width=7cm]{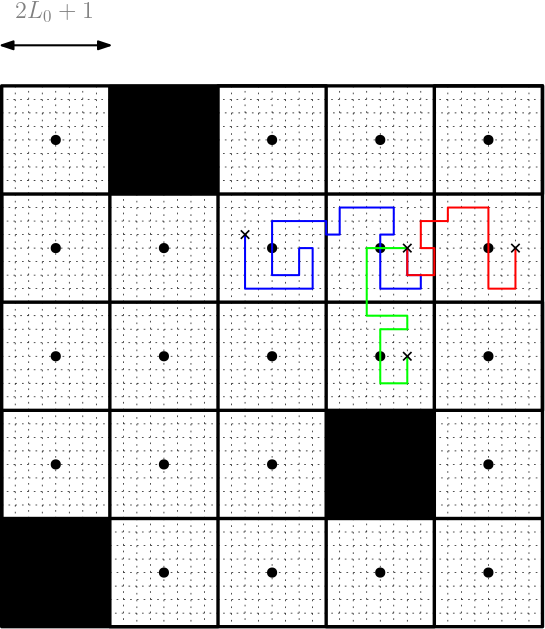}
\caption{In each good box of the form $(z + \Lambda_{L_0})$, a vertex $c_z$ is selected. Four of these vertices are represented by the crosses in the picture. Paths are then selected to connect these vertices: three of these paths are drawn in the picture using different colors.} \label{figure.paths}
\end{center}
\end{figure}

\subsubsection{Renormalization argument for the two-point function} \label{subsec5.2.2}
This section is the core of the proof of Theorem~\ref{proofKTsitegeneral}. Equipped with the definitions and notations of the previous section, we will prove the following inequality
    \begin{equation} \label{eq:1143}
        \E_{p_0} \left[ \langle \cos( \theta_0 - \theta_{[x_1]} ) \rangle_{\mu_{\beta/2^{2d} , \beta /2^{2d} , r}} \right] \leq \E_{p} \left[ \langle \cos( \theta_{x_0} - \theta_{x_1} ) \rangle_{\mu_{\beta , r}} \, | \, E_{L_0,x} \right],
    \end{equation}
    where, on the right-hand side, we used the definition of the $XY$ model with heterogeneous temperatures on a percolation cluster on the extended lattice $\Zd_n$ introduced in~\eqref{mubeta1bata2ndef}. (N.B. Recall the values $p_0$, $n := 2 (2L_0+1)^d$ and $L_0=L_0(p , d)$ are defined in the previous section).

    The main feature of the inequality~\eqref{eq:1143} is that, while the probability $p$ on the right-hand side is only assumed to be larger than the critical probability $p_{c, \mathrm{site}}(d)$, the probability $p_0$ on the left-hand side is very close to $1$. In particular $p_0$ as defined in~\eqref{def.p0thm1.3} is sufficiently close to $1$ that we can apply the stochastic domination inequality of Section~\ref{section5.1} and Wells' inequality to the term on the left-hand side. This will be the purpose of Section~\ref{subsec5.2.3} and we for now focus on the proof of~\eqref{eq:1143}.

\begin{figure}
\begin{center}
\includegraphics[width=6cm]{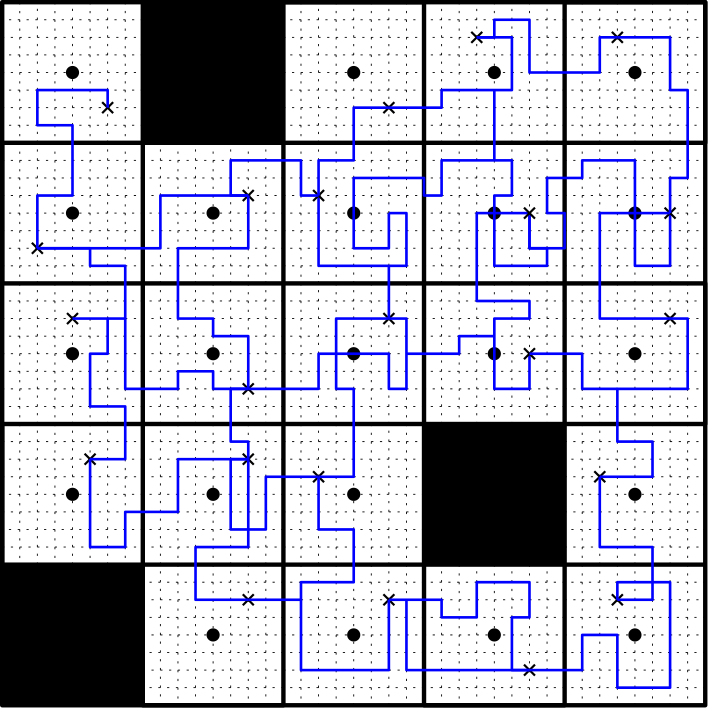} \hspace{15mm}
\includegraphics[width=8cm]{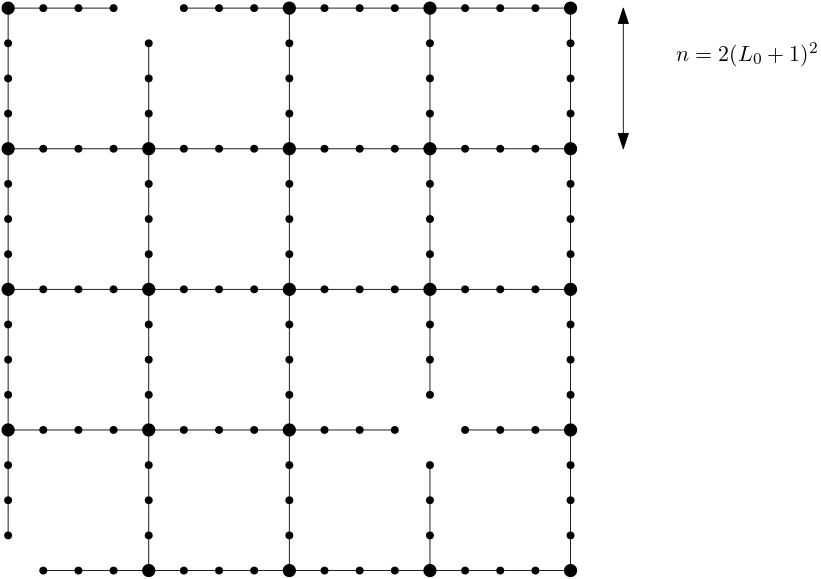}
\caption{The picture on the left represents the percolation configuration $r_1$ constructed by taking the union $\mathcal{L}$ of the paths connecting the selected vertices (represented by a cross) in each of the good boxes. The picture on the right represents the percolation configuration $r_2$ on the extended lattice $\Zd_n$.} \label{figure.constructionL}
\end{center}
\end{figure}

\medskip

To prove the inequality~\eqref{eq:1143}, we will work in finite volume. To this end, we fix a (large) integer $R \in \N$ and denote by $\Lambda := \Lambda_{(2L_0 + 1)R} \subseteq \Zd$ and $\Lambda^n  := \Lambda_{R}^n \subseteq \Zd_n$. These two boxes are represented in Figures~\ref{figure.partition} and~\ref{figure.constructionL} with the value $R = 2$. We choose $R$ sufficiently large so that $x ,x_0 , x_1 \in \Lambda$. We will prove the finite-volume inequality
\begin{equation} \label{eq:1143finitevol}
        \E_{p_0} \left[ \langle \cos( \theta_0 - \theta_{[x_1]} ) \rangle_{\mu_{\Lambda^n,  \beta/2^{2d} , \beta /2^{2d} , r}} \right] \leq \E_{p} \left[ \langle \cos( \theta_{x_0} - \theta_{x_1} ) \rangle_{\mu_{\Lambda , \beta , r}} | E_{L_0 , x} \right],
\end{equation}
The inequality~\eqref{eq:1143} is then deduced from~\eqref{eq:1143finitevol} by taking the limit $R \to \infty$ (and applying the monotone convergence theorem) in both sides of the inequality~\eqref{eq:1143finitevol}.

\medskip

To prove~\eqref{eq:1143finitevol}, we consider a realization $(r_y)_{y \in \Zd}$ of an i.i.d. site percolation percolation of probability $p$ conditioned on the event $E_{L,x}$ in the box $\Lambda$. In each box of the form $(z + \Lambda_{L_0})$ with $z \in (2L_0 + 1) \Zd \cap \Lambda$ which is good, we select a vertex $c_z$ such that $c_z \in \mathcal{C}(z + \Lambda_{L_0})$. We impose two restrictions on this selection: if $\Lambda_{L_0}$ is a good box then $c_0 = x_0$ and if $[x_1] + \Lambda_{L_0}$ is a good box, then $c_{[x_1]} = x_1$. The other vertices may be selected arbitrarily. Note that the definition of the event $E_{L,x}$ and of a good box together with the properties $x_0 \in \Lambda_{L_0/2}$ and $x_1 \in [x_1] + \Lambda_{L_0/2}$ ensure that $x_0 \in \mathcal{C}(\Lambda_{L_0})$ and $x_1 \in \mathcal{C}([x_1] + \Lambda_{L_0})$.\footnote{At this stage, the condition~\eqref{requirementx0x1} is used as this property could fail if $x_0$ or $x_1$ were close to the boundary of the boxes $\Lambda_{L_0}$ and $[x_1] + \Lambda_{L_0}.$}

\medskip

If two boxes of the form $(z + \Lambda_{L_0})$ and $(z' + \Lambda_{L_0})$ are both good and neighbours in $\mathcal{G}_L$ (i.e., $|z - z'|_1 = 2L_0 + 1$), we then consider an open path $\ell_{zz'}$ which connects the two vertices $c_z$ and $c_{z'}$  and lies in the union $(z + \Lambda_{L_0}) \cup (z' + \Lambda_{L_0})$ (see Figure~\ref{figure.paths}). The existence of the path $\ell_{zz'}$ is guaranteed by the definition of good boxes and Remark~\ref{remark2.17}. Its length is upper bounded by the value $n = 2 \left| \Lambda_{L_0} \right|$ (that is, by the number of vertices in the union $(z + \Lambda_{L_0}) \cup (z' + \Lambda_{L_0})$). We denote by $\mathcal{L}$ the collection of all the paths of the form $\ell_{zz'}$, i.e.,
    \begin{multline*}
        \mathcal{L} := \left\{ y \in \Zd \, : \, \exists z , z' \in (2L_0+1) \Zd, ~\mbox{with} \, |z - z'|_1 = (2L_0+1) \mbox{ such that} \right. \\ \left.  \mbox{the boxes}~(z+ \Lambda_{L_0}) ~\mbox{and}~ (z' + \Lambda_{L_0}) ~\mbox{are good and } y \in \ell_{zz'}  \right\}.
    \end{multline*}
The set $\mathcal{L}$ is drawn in blue on the left-hand side of Figure~\ref{figure.constructionL}. We next define two percolation configurations, one on $\Zd$ and one on the extended lattice $\Zd_n$, as follows:
    \begin{itemize}
        \item The percolation configuration $r_{1 } \in \{ 0 , 1\}^{\Zd}$ defined by the identity $r_{1,y} = 1$ if $y \in \mathcal{L}$ and $r_{1,y} = 0$ otherwise.
        \item The percolation configuration $r_{2 } \in \{ 0 , 1\}^{\Zd_n}$ on the extended lattice $\Zd_n$ defined such that $r_{2,y} = 1$ if $y \in \Zd_n \setminus \Zd$, and $r_{2,y} = 1$ if $y \in \Zd$ and the box $(2L_0 + 1) y + \Lambda_{L_0}$ is good.
    \end{itemize}
    We refer to Figure~\ref{figure.constructionL} for a visual description of the percolation configurations $r_1$ and $r_2$. We next claim that the following inequality holds: for any realization of the percolation configuration $r \in E_{L_0,x}$,
    \begin{equation} \label{eq:1655}
        \langle \cos( \theta_0 - \theta_{[x_1]} ) \rangle_{\mu_{\Lambda^n, \beta /2^{2d}, \beta /2^{2d} , r_2}} \leq \langle \cos( \theta_{x_0} - \theta_{x_1} ) \rangle_{\mu_{\Lambda, \beta , r_1}}.
    \end{equation}
    \begin{figure}
    \centering
    \includegraphics[width=9cm]{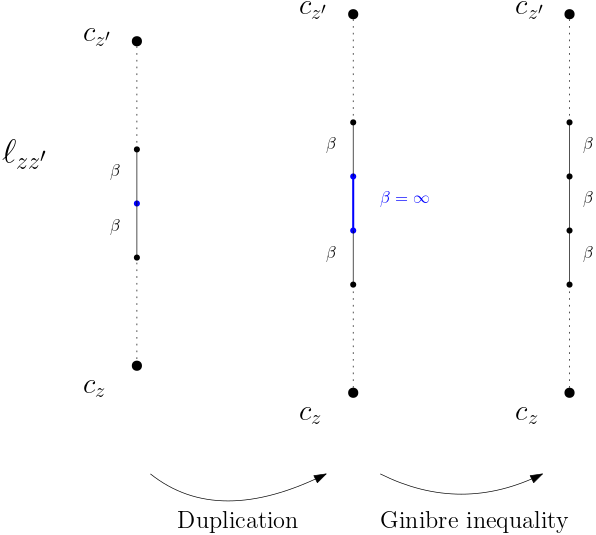}
    \caption{Extending the length of a path using the Ginibre inequality. The vertex in blue on the left is duplicated and an edge with inverse temperature equal to infinity is added. The inverse temperature is then reduced to the value $\beta$.} \label{fig.guidance1}
\end{figure}
    To prove the inequality~\eqref{eq:1655}, we transform the percolation configuration $r_1$ into (a rescaled version of) the percolation configuration $r_2$ by performing successive operations relying on the Ginibre inequality. We refer to Figures~\ref{fig.guidance1},~\ref{fig.guidance2} and~\ref{fig.guidance3} for guidance. We first fix three vertices $z , z', z'' \in (2L_0 + 1) \Zd \cap \Lambda$ such that $|z - z'|_1 = |z - z''|_1 = 2L_0 + 1$, fix a realization of the percolation configuration $r_1$, and perform the following three operations:
    \begin{itemize}
        \item \textit{Increasing the length of a path:} If the boxes $(z + \Lambda_{L_0})$ and $(z' + \Lambda_{L_0})$ are good, we consider the path $\ell_{zz'}$. By construction, the length of this path is smaller than $n$. If it is strictly smaller than $n$, then we increase it as follows: we consider a vertex of the path $\ell_{zz'}$, duplicate it and add an edge with an inverse temperature equal to infinity between the two vertices. This does not change the distribution of the spins. We then reduce the inverse temperature on this new edge from infinity to $\beta$. This operation is represented in Figure~\ref{fig.guidance1}. It increases the length of the path $\ell_{zz'}$ by $1$ and, by the Ginibre inequality, reduces the value of the two-point function. We next iterate this procedure until the path $\ell_{zz'}$ has length $n$.
        \item \textit{Separating two intersecting paths:} If the boxes $(z + \Lambda_{L_0})$, $(z' + \Lambda_{L_0})$ and $(z'' + \Lambda_{L_0})$ are good and the paths $\ell_{zz'}$ and $\ell_{zz''}$ have a nonempty intersection, we split the two paths as follows: we duplicate each vertex and edge in the intersection, divide the inverse temperature by $2$, and add new edges whose inverse temperatures are set to infinity between the vertices and the duplicated vertices (see Figure~\ref{fig.guidance2}). This does not affect the distribution of the spins. We then reduce the inverse temperature of the new edges from infinity to $0$. This operation is represented in Figure~\ref{fig.guidance2}. It reduces the value of the two-point function (by the Ginibre inequality), and splits the paths $\ell_{zz'}$ and $\ell_{zz''}$ into disjoint paths.
        \item \textit{Adding a line of spins next to bad boxes:} If the box $(z + \Lambda_{L_0})$ is good and the box $(z' + \Lambda_{L_0})$ is bad, then we add a line of spins as follows: we split the vertex $c_z$ into a line of $n$ vertices and set the inverse temperature of each edge of the line to infinity. We then reduce the inverse temperature on the new edges from infinity to $\beta$ (see Figure~\ref{fig.guidance3}). We remark that this operation does not affect the law of the original spins, it is added here to match with the formalism, and in particular, with the definition of percolation configurations on the extended graph.
    \end{itemize}
    \begin{figure}
    \centering
    \includegraphics[width=12cm]{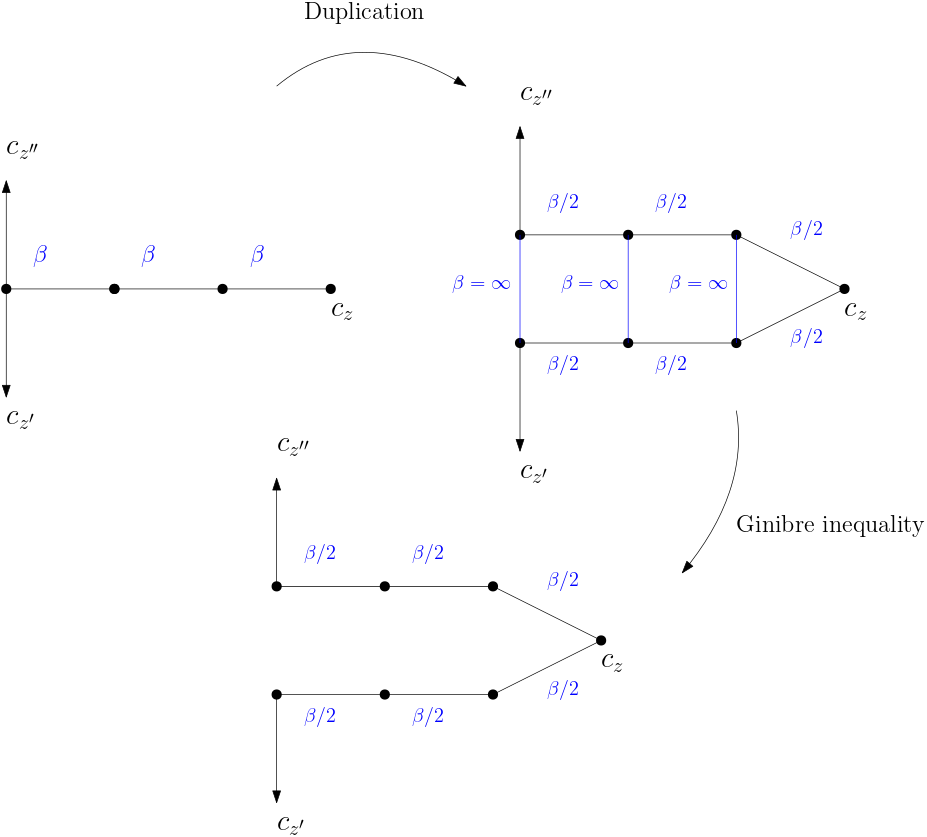}
    \caption{Splitting two intersecting paths using the Ginibre inequality. If two paths $\ell_{zz'}$ and $\ell_{zz''}$ have non empty intersection, we duplicate the vertices in the intersection and add new edges with inverse temperature equal to infinity (so as not to modify the distribution of the spins). We then reduce the value of the inverse temperature from infinity to $0$ to make the two paths disjoint.} \label{fig.guidance2}
    \end{figure}
    \begin{figure}
    \centering
    \includegraphics[width=10cm]{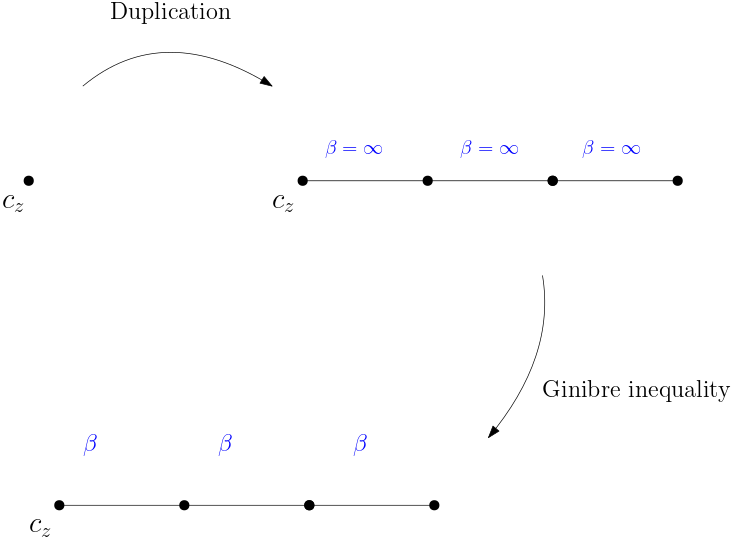}
    \caption{Adding a line of vertices using the Ginibre inequality. If a good box has a bad neighbour, we add a line of spins using the Ginibre inequality.} \label{fig.guidance3}
    \end{figure}
    Applying these three operations to all the triplets of neighbouring vertices $z , z', z'' \in (2L_0 + 1) \Zd$ transforms the $XY$ model defined on percolation configuration $r_1$ into an $XY$ model defined on the percolation configuration $r_2$: the vertex $c_z$ in the percolation configuration $r_1$ is mapped to the vertex $[z]  \in \Zd \subseteq \Zd_n$ in the percolation configuration $r_2$, and the path $\ell_{zz'}$ connecting the two vertices $c_z$ and $c_{z'}$ is mapped to the straight line connecting the vertices $[z]$ and $[z']$ in the percolation configuration $r_2$. 

    \medskip
    
    Additionally, the value of the two-point function is reduced by performing these operations (by the Ginibre inequality) and the inverse temperatures on the edges of the $XY$ model on the percolation configuration $r_2$ are always larger than $\beta /2^{2d}$ (as the second operation cannot be applied more than $2d$ times on the same path). The inequality~\eqref{eq:1655} follows from these two observations and the Ginibre inequality.

    \medskip

    We then complete the proof of the inequality~\eqref{eq:1143finitevol} using the inequality~\eqref{eq:1655}. By the definition of the percolation configuration $r_1$, we see that the inequality $r_{1 , y} \leq r_y$ (trivially) holds for any $y \in \Zd$. Combining this observation with the Ginibre inequality yields the bound
    \begin{equation*}
        \langle \cos( \theta_{x_0} - \theta_{x_1}  ) \rangle_{\mu_{\Lambda, \beta , r_1}} \leq \langle \cos( \theta_{x_0} - \theta_{x_1} ) \rangle_{\mu_{\Lambda, \beta , r}}.
    \end{equation*}
    Taking the conditional expectation on the event $E_{L_0, x}$ implies
    \begin{equation*}
        \E_{p} \left[ \langle \cos( \theta_{x_0} - \theta_{x_1}  ) \rangle_{\mu_{\Lambda,\beta , r_1}} \, | \, E_{L_0, x} \right] \leq \E_{p} \left[  \langle \cos( \theta_{x_0} - \theta_{x_1}  ) \rangle_{\mu_{\Lambda,\beta , r}} \, | \, E_{L_0, x} \right].
    \end{equation*}
    We finally observe that, since the law of the site percolation on the graph $\mathcal{G}_L$ stochastically dominates an i.i.d. site percolation of parameter $p_0$ (as established in Section~\ref{subsec5.2.1}), the law of the percolation configuration $r_{2}$ stochastically dominates the law of $\P_{p_0}$ (on the extended lattice $\Zd_n$). Combining this observation with the Ginibre inequality yields the bound
    \begin{equation*}
        \E_{p_0} \left[ \langle \cos( \theta_0 - \theta_{[x_1]} ) \rangle_{\mu_{\Lambda^n,\beta /2^{2d} , \beta /2^{2d} , r}} \right] \leq \E_{p} \left[ \langle \cos( \theta_0 - \theta_{[x_1]} ) \rangle_{\mu_{\Lambda^n,\beta /2^{2d} , \beta /2^{2d} , r_2}} \, | \, E_{L_0, x} \right].
    \end{equation*}
    A combination of the two previous displays with~\eqref{eq:1655} completes the proof of the inequality~\eqref{eq:1143finitevol}.

\subsubsection{Completing the proof of Theorem~\ref{proofKTsitegeneral}} \label{subsec5.2.3}

    In this final section, we show how to obtain Theorem~\ref{proofKTsitegeneral} from the inequality~\eqref{eq:1143}. We first simplify the conditional expectation on the right-hand side of~\eqref{eq:1143}. Using the inequality~\eqref{eq:Ginibreineq}, which implies that the two-point function $\langle \cos( \theta_{x_0} - \theta_{x_1} ) \rangle_{\mu_{\beta , r}}$ is always nonnegative, and the lower bound $\P \left[ E_{L_0,x} \right] \geq \theta(p)^2$, we have that 
    \begin{equation} \label{eq:4.17.5}
        \E_{p} \left[ \langle \cos( \theta_{x_0} - \theta_{x_1} ) \rangle_{\mu_{\beta , r}} \, | \, E_{L_0,x} \right] \leq  \theta(p)^{-2} \E_{p} \left[ \langle \cos( \theta_{x_0} - \theta_{x_1} ) \rangle_{\mu_{\beta , r}} \right].
    \end{equation}
    We then fix a large, finite domain $\Lambda \subseteq \Zd_n$ use the definition of the inverse temperature $\beta$ (which implies $\beta \geq \beta_1$ and $\beta \geq \beta_2$) and the Ginibre inequality to obtain
    \begin{equation*}
        \E_{p_0} \left[ \langle \cos( \theta_0 - \theta_{[x_1]} ) \rangle_{\mu_{\Lambda, \beta_1/2^{2d} , \beta_2 /2^{2d} , r}} \right] \leq \E_{p_0} \left[ \langle \cos( \theta_0 - \theta_{[x_1]} ) \rangle_{\mu_{\Lambda,\beta/2^{2d} , \beta /2^{2d} , r}} \right].
    \end{equation*}
    We apply Proposition~\ref{prop4.1extended} to obtain the bound
    \begin{equation*}
         \left\langle \cos( \theta_0 - \theta_{[x_1]} ) \right\rangle_{\mu_{\Lambda, \beta_{1}/2^{2d} , \beta_{2} /2^{2d}  , \kappa}} \leq \E_{p_0} \left[ \langle \cos( \theta_0 - \theta_{[x_1]} ) \rangle_{\mu_{\Lambda, \beta_{1}/2^{2d} , \beta_{2} /2^{2d} , r}} \right] ,
    \end{equation*}
    where the collection of measures $\kappa$ refers to the one introduced in~\eqref{def.measurekappa} with the value $\bar p = 1/2$. Applying Wells' inequality (Proposition~\ref{prop.Wells}) with the value $a = 1/2 (= \min(1/2, \bar p))$, we obtain
    \begin{equation*}
         \left\langle \cos( \theta_0 - \theta_{[x_1]} ) \right\rangle_{\mu_{\Lambda,  \beta_{1}/2^{2d + 2} , \beta_{2} /2^{2d}}} \leq \left\langle \cos( \theta_0 - \theta_{[x_1]} ) \right\rangle_{\mu_{\Lambda, \beta_{1}/2^{2d} , \beta_{2} /2^{2d} , \kappa}}.
    \end{equation*}
    Combining the two previous inequalities, we obtain that, for any finite set $\Lambda \subseteq \Zd_n$,
    \begin{equation} \label{eq:13599}
        \left\langle \cos( \theta_0 - \theta_{[x_1]} ) \right\rangle_{\mu_{\Lambda,  \beta_{1}/2^{2d + 2} , \beta_{2} / 2^{2d}}} \leq \E_{p_0} \left[ \langle \cos( \theta_0 - \theta_{[x_1]} ) \rangle_{\mu_{\Lambda, \beta_1/2^{2d}, \beta_2/2^{2d}, r}} \right].
    \end{equation}
    As both sides of the inequality~\eqref{eq:13599} are increasing in the set $\Lambda$, we may take the limit $\Lambda \uparrow \Zd_n$ (and apply the monotone convergence theorem for term on the right-hand side) to obtain
    \begin{equation*} \label{eq:1359}
        \left\langle \cos( \theta_0 - \theta_{[x_1]} ) \right\rangle_{\mu_{\beta_{1}/2^{2d + 2} , \beta_{2}/2^{2d} }} \leq \E_{p_0} \left[ \langle \cos( \theta_0 - \theta_{[x_1]} ) \rangle_{\mu_{\beta_1/2^{2d}, \beta_2/2^{2d}, r}} \right].
    \end{equation*}
    Combining~\eqref{eq:1143},~\eqref{eq:4.17.5} and~\eqref{eq:13599} with the identity~\eqref{eq:identitytransinv}, we have obtained
    \begin{align*}
        \left\langle \cos( \theta_0 - \theta_{[x_1]} ) \right\rangle_{\mu_{\beta_{1}/2^{2d + 2} , \beta_{2}/2^{2d}}} & \leq \theta(p)^{-2} \E_{p} \left[ \langle \cos( \theta_{x_0} - \theta_{x_1} ) \rangle_{\mu_{\beta , r}} \right] \\
        & = \theta(p)^{-2} \E_{p} \left[ \langle \cos( \theta_{0} - \theta_{x} ) \rangle_{\mu_{\beta , r}} \right].
    \end{align*}
    Since, by the definitions of the inverse temperatures $\beta_1, \beta_2$ and Proposition~\ref{prop.phasetransitionextended}, the term on the right-hand side decays polynomially fast in $|[x_1]|$ in dimension $d = 2$ and remains bounded away from $0$ in dimension $d \geq 3$, the same properties hold for the term on the right-hand side. This completes the proof of Theorem~\ref{proofKTsitegeneral}.

\subsection{Proof of Corollary~\ref{c.AlmostSure}} \label{subsection5.3}
This section is devoted to the proof of Corollary~\ref{c.AlmostSure}, which is obtained by combining the result of Theorem~\ref{proofKTsitegeneral} with the pointwise ergodic theorem. In order to apply the ergodic theorem, we will need to translate percolation configurations and we thus introduce the following notation: for any pair of vertices $x , y \in \Zd$ and any percolation configuration $r \in \{ 0 , 1 \}^{\Zd}$, we define $(\tau_x r)_y = r_{y + x}.$

\begin{proof}[Proof of Corollary~\ref{c.AlmostSure}]
    Fix an integer $m \geq 1$. For a percolation configuration $r \in \{ 0 , 1 \}^{\Zd}$, we define the random variable
    \begin{equation*}
        X(r)  := \sum_{y\in \Lambda_m}  \langle \cos(\theta_0 - \theta_y) \rangle_{\mu_{\beta, r}}.
    \end{equation*}
    We note that the following identity holds: for any $x \in \Zd$,
    \begin{equation*}
        X( \tau_x r)  := \sum_{y\in x + \Lambda_m}  \langle \cos(\theta_x - \theta_y) \rangle_{\mu_{\beta, r}}.
    \end{equation*}
    Using the ergodicity of Bernoulli percolation and applying the ergodic theorem (see the multiparameter version of~\cite[Theorem VIII.6.9]{dunford1988linear}), we have, for any $p \in [0 , 1]$,
    \begin{equation*}
        \lim_{R \to \infty} \frac{1}{\left| \Lambda_R \right|}\sum_{x \in \Lambda_R}  X( \tau_x r) = \E_p\left[ X \right] ~~\P_p-\mbox{almost-surely}.
    \end{equation*}
    A combination of the two previous displays implies the identity
    \begin{equation*}
            \lim_{R \to \infty}  \frac{1}{\left| \Lambda_R \right|}\sum_{x \in \Lambda_R}   \sum_{y\in x + \Lambda_m}  \langle \cos(\theta_x - \theta_y) \rangle_{\mu_{\beta, r}} = \E_p\left[ X \right] ~~\P_p-\mbox{almost-surely}.
    \end{equation*}
    Theorem~\ref{proofKTsitegeneral} and the first item of Remark~\ref{remark1.4} provide a lower bound on the expectation of $\E_p\left[ X \right]$ for any $p > p_{c , \mathrm{site}}(d)$. Combining this observation with the previous display (and taking an intersection of events of probability $1$ over all the integers $m \geq 1$) completes the proof of Corollary~\ref{c.AlmostSure}.
\end{proof}

\section{Phase transitions for the $XY$ model on Poisson-Voronoi}

The purpose of this section is to prove Theorem \ref{th.Vor}.
The proof will follow closely our proof of Theorem~\ref{proofKTsitegeneral} in the above Section \ref{section4.2}. In particular, it will rely on the same renormalization argument. 

\smallskip

One standard difficulty one faces when dealing with statistical physics on a Poisson-Voronoi graph is the fact the shape of the  Voronoi cell $\Vor(x)$ of a point $x$ in the Poisson Point Process $P$, may potentially depend on the behaviour of the Poisson Point process $P$ at very remote distance from $x$. (Recall the definitions of the Voronoi cell $\Vor(x)$ from~\eqref{e.Vorx}).  For example if in $d=2$,  $P\cap B(0,R) = (\Z\times \{0\} ) \cap B(0,R)$, then the Voronoi cell attached to the origin will be a long vertical rectangle until it will eventually feel other points of $P$.

We will need the following very useful Lemma which quantifies the fact that the shape of Voronoi cells in a region $A$ only depends locally on the Poisson Point process $P$ when $P$ is sufficiently {\em dense} around $A$. 
Such a Lemma first appeared in  Lemma 3.2. in \cite{bollobas2006critical}. We rely below an a version highly inspired from Lemma 1.1 in \cite{tassion2016crossing}.  In what follows, we will denote with a slight abuse of notation $\Lambda_R:=[-R,R]^d \subset \R^d$ (the slight abuse of notations here is that in the previous sections, the same notations was referring to boxes in $\Z^d$). 

\begin{lemma}[See similar statements in \cite{bollobas2006critical,tassion2016crossing}]\label{l.Poisson}
Let $P$ be a Poisson point process in $\R^d$ (with intensity 1, say).
For any $R>0$ and any $z\in \R^d$, let $\calE_R(z)$ be the event that $z+\Lambda_{5R} \subset \bigcup_{x\in P} B(x,\frac R {10})$. (In other words, any point in $z+\Lambda_{5R}$ has at least one point in $P$ at distance less than $R/10$.
Notice also that the event $\calE_R(z)$ is measurable w.r.t $P\cap (z+\Lambda_{6R})$). 

In what follows, we will consider the case $z=0$ and call the event $\calE_R(0)$ simply by $\calE_R$ (for other points, the same applies by translation invariance). 
If the event $\calE_R$ is satisfied then the following hold:
\bi
\item[i)] The shape of the Voronoi cell $\Vor(x,P)$ of each point $x\in P\cap \Lambda_{4R}$ is measurable w.r.t $P\cap \Lambda_{6R}$. 
\item[ii)] For any pair of points $x\neq y$ in  $P \cap \Lambda_{3R}$ (whose cardinality is larger than $\Omega(R^d)$ since we assume that the event $\calE_R$ holds), we can find a finite  path $l_{x,y}:= \{ x_0=x,x_1,x_2,\ldots, x_M=y \}$ so that for any $i\leq M$, $x_i\in P \cap \Lambda_{4R}$ and for each $i<M$, $x_i \sim x_{i+1}$ in the Poisson-Voronoi graph, i.e. 
\begin{align*}\label{}
\Vor(x_i,P) \cap \Vor(x_{i+1},P) \neq \emptyset\,.
\end{align*}
If several such paths exist, we may fix one among the shortest possible such paths according to any deterministic prescribed rule and in a way which is measurable w.r.t $P \cap \Lambda_{6R}$.

By construction, the length $M=M_{x,y}$ of such paths is bounded above by $\#\{ P \cap \Lambda_{4R} \}$. 
\ei
\end{lemma}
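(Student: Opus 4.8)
The plan is to prove the two items in order, first establishing the \emph{locality} of the Voronoi cells (item (i)) and then deducing the connectivity statement (item (ii)) by a standard ``segment crossing'' argument, reading off the required measurability directly from the locality.

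For item (i), fix $x \in P \cap \Lambda_{4R}$; I would first show that on $\calE_R$ the cell $\Vor(x,P)$ is contained in the ball $\overline{B}(x, R/5)$. Indeed, for any unit vector $u$ the point $m := x + \tfrac R5 u$ lies in $\Lambda_{5R}$ (since $x \in \Lambda_{4R}$ and $\tfrac R5 < R$), so by $\calE_R$ there is $q \in P$ with $|m - q| \le \tfrac R{10} < \tfrac R5 = |m - x|$; hence $m \notin \Vor(x,P)$. Since $\Vor(x,P)$ is convex and contains $x$, this forces $\Vor(x,P) \cap \{x + tu : t \ge 0\} \subseteq \{x + tu : 0 \le t < R/5\}$ for every $u$, i.e.\ $\Vor(x,P) \subseteq \overline{B}(x, R/5)$. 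Next, writing $\Vor(x,P) = \bigcap_{q \in P} \{ w : |w - x| \le |w - q| \}$, I observe that for $q$ with $|q - x| \ge 2R/5$ and $w \in \overline{B}(x,R/5)$ one has $|w - q| \ge |q - x| - |w - x| \ge R/5 \ge |w - x|$, so such $q$ impose no constraint on the ball; consequently $\Vor(x,P) = \overline{B}(x,R/5) \cap \bigcap_{q \in P \cap B(x, 2R/5)} \{ w : |w - x| \le |w - q| \}$. As $B(x, 2R/5) \subseteq \Lambda_{5R} \subseteq \Lambda_{6R}$, the cell is a deterministic function of $P \cap \Lambda_{6R}$, which is item (i).

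For item (ii), I would take $x \neq y$ in $P \cap \Lambda_{3R}$ and travel along the segment $[x,y] \subseteq \Lambda_{3R}$. For $w \in [x,y]$ let $\sigma(w) \in P$ be a nearest point of $P$ to $w$; by $\calE_R$ we have $|w - \sigma(w)| \le R/10$, so $\sigma(w) \in \Lambda_{3R + R/10} \subseteq \Lambda_{4R}$, and $\sigma(x) = x$, $\sigma(y) = y$. Since only the finitely many generators within distance $R/10$ of the segment are relevant and each cell meets the line in a closed subinterval, the nearest-point map is piecewise constant along $[x,y]$ with finitely many values $q_0 = x, q_1, \dots, q_M = y$ listed in order of appearance. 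At each transition the crossing point $p$ lies, by closedness of the cells, in $\Vor(q_i,P) \cap \Vor(q_{i+1},P)$; hence $q_i \sim q_{i+1}$ in the Voronoi graph. Discarding repetitions yields a self-avoiding path $l_{x,y}$ inside $P \cap \Lambda_{4R}$, whose length is therefore at most $\#\{P \cap \Lambda_{4R}\}$. Finally, by item (i) the cells of all generators in $\Lambda_{4R}$, and hence all their pairwise adjacencies, are determined by $P \cap \Lambda_{6R}$ on $\calE_R$; so the finite adjacency graph on $P \cap \Lambda_{4R}$ is $\sigma(P \cap \Lambda_{6R})$-measurable, and any deterministic rule selecting a shortest path (with, for instance, lexicographic tie-breaking) produces a choice measurable with respect to $P \cap \Lambda_{6R}$.

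The main obstacle is the geometric locality estimate of item (i): the crux is the convexity-plus-density argument confining the cell inside a ball of radius comparable to the density scale $R/10$, together with the bookkeeping of constants ensuring that every relevant point stays inside $\Lambda_{6R}$. The only other delicate point is the treatment of degenerate crossings in item (ii), where $[x,y]$ passes through a Voronoi vertex equidistant to three or more generators; but closedness of the cells makes the ``shared crossing point'' argument go through unchanged, since the transition point still belongs to both consecutive cells.
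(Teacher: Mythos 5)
Your proposal is correct and follows the same two-step strategy as the paper: confine each Voronoi cell of a generator in $\Lambda_{4R}$ to a small ball using the density event $\calE_R$ plus convexity, then build the path in item (ii) by walking along the segment $[x,y]$ and recording the cells crossed. The differences are worth noting, and both are in your favour on the side of rigor. In item (i) the paper bounds the cell radius by $R/10$ and argues pointwise ("$z \in \Vor(x,P)$ is measurable w.r.t.\ $P \cap \bar B(z,|z-x|)$"), whereas you get the slightly weaker radius $R/5$ but compensate with an explicit half-space identity $\Vor(x,P) = \overline{B}(x,R/5) \cap \bigcap_{q \in P \cap B(x,2R/5)} \{w : |w-x| \le |w-q|\}$, which makes the measurability claim completely transparent; the constants still fit inside $\Lambda_{6R}$, so nothing is lost. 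In item (ii) the paper handles transition points by a probabilistic genericity argument (almost surely the segment avoids Voronoi corners and meets cell boundaries in isolated points, so each crossing lies in exactly two cells, invoking a Slivnyak-type reasoning since the segment's endpoints are themselves points of $P$), while your argument is purely deterministic: consecutive cells share the transition point simply by closedness of the cells, degenerate crossings included, and loop-erasure yields a self-avoiding path. This buys you a statement that holds on all of $\calE_R$ rather than almost surely on $\calE_R$, which in fact matches the deterministic phrasing of the lemma more faithfully than the paper's own sketch; the paper's genericity in exchange gives the cleaner picture of an ordered, automatically self-avoiding sequence of cells without any erasure step.
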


\ni
{\em Sketch of proof.} (We refer to the proof of Lemma 1.1 in \cite{tassion2016crossing}, where the setting is very close to ours for the details). 

For item $i)$, we assume $\calE_R$ holds. Notice first that for each $x\in P\cap \Lambda_{4R}$, the radius of the Voronoi cell $\Vor(x,P)$ measured from $x$ is necessarily smaller than $R/10$. Indeed suppose we find a point $\tilde z\in \Vor(x,P)$ at (Euclidean) distance larger than $R/10$ from $x$. Since $\Vor(x,P)$ is convex, we can find a point $z\in \Vor(x,P)$ whose Euclidean distance to $x$ is in the open interval $(\frac R {10}, R)$. 
In particular this point $z$ is inside $\Lambda_{5R}$. Since the event $\calE_R$ is assumed to hold, this means there exists $y\in P$ at distance less than $R/10$ from $z$ which gives a contradiction. 

Therefore to compute the shape of $\Vor(x,P)$, it is enough to compute for any specific point $z$ in the $R/10$ neighbourhood of $x$, whether this point $z$ is in $\Vor(x,P)$ or not. Notice for this that the event $z\in \Vor(x,P)$ is measurable w.r.t $P\cap \bar B(z,|z-x|)$ (the closed Euclidean ball of radius $|z-x|$).

This shows that when $\calE_R$ is satisfied, the shape of $\Vor(x,P)$ is measurable w.r.t $P\cap \bar B(x, \frac {R} 5)$ and this ends the proof if $i)$ since $\calE_R$ is itself measurable w.r.t $P\cap \Lambda_{6R}$. 
\medskip

For item $ii)$. Pick any pair of such points $x\neq y \in P \cap \Lambda_{3R}$ (which do exist on the event $\calE_R$). Let $L_{x,y}$ be the segment $\{(1-t)x +t y, t\in [0,1]\}$ joining $x$ and $y$. For any $1\leq t \leq 1$, we will denote by $u_t$ the point $(1-t) x + t y$. By convexity, this segment stays inside $\Lambda_{3R}$. We will find a connecting path $x_0,\ldots, x_M$ by moving continuously along $L_{x,y}$. Let $x_0:=x$ as follows. Define $t_1:=\sup \{t \in [0,1], u_t = (1-t)x+ ty \in \Vor(x,P)$. This means $u_{t_1}$ it on $\p \Vor(x,P)$. It is an easy and classical fact that the probability that such a line $L_{x,y}$ intersects a "corner" of the Voronoi tiling is zero. Similarly the intersection 
\begin{align*}\label{}
L_{x,y} \cap \bigcup_{w\in P} \p \Vor(w,P) 
\end{align*}
is a.s. a finite set of isolated points. 
This implies that a.s. $t_1>0$ and $u_{t_1}$ belongs to exactly two Voronoi cells. We let $x_1\in P$ be the center of the newly visited Voronoi cell and define $t_2:= \sup\{ t\in[t_1,1], u_t \in \Vor(x_1,P) \}$. We have again that a.s. $t_2>t_1$ and $u_{t_2}$ belongs to exactly two Voronoi cells. This defines our third point $x_3\in P$ and so on and so forth. This algorithm terminates in finite time and produces a connecting path $x=x_0\sim x_1 \ldots \sim x_M=y$. 
Exactly as in item $i)$ each cell $\Vor(x_i,P)$ is of diameter less than $R/10$ and intersects the line $L_{x,y}\subset \Lambda_{3R}$. This ensures that the centers $x_i$ belong to $P\cap \Lambda_{4R}$ as desired. \qed

\medskip

In order to control the $XY$ model on the renormalized graph, we will also need to have a good control on the length of the minimal path $l_{x,y}$ connecting $x$ to $y$. As stated in the above Lemma, this path is of length smaller than $\#\{ P \cap \Lambda_{4R}\}$. This is why we introduce for any $z\in \R^d$ the event:
\begin{align}\label{e.eventF}
\calF_{R,H}(z):= \{ \# \{P \cap (z+ \Lambda_R)\} \leq H \,  R^d  \}\,.
\end{align}
 
 Since $P$ is a Poisson Point process of intensity 1 and since $|\Lambda_R|=(2R)^d$, there exists $H$ sufficiently large so that for sufficiently large $R$,  
 \begin{align*}\label{}
\Pb{\calF_{R,H}(z)^c}& \leq  e^{- R^d}\,.
\end{align*}

Furthermore, exactly as in Lemma 1.1 from \cite{tassion2016crossing}, using a suitable finite covering of $\Lambda_{4R}$ by open balls of radius $R/10$, we have that for some constant $0<c<1$, 
\begin{align*}\label{}
\Pb{\calE_R^c} \leq e^{-c R^d}
\end{align*}

As such, if $R$ is chosen large enough, we have for any $z\in \R^d$
\begin{align}\label{e.good}
\Pb{\calE_{R}(z) \cap \calF_{R,H}(z)} \geq 1 - 2 e^{-c R^d}\,.
\end{align}

Finally, when the strength of the interaction between $i,j \in P$ depends on the shapes of $\Vor(i,P)$ and $\Vor(j,P)$ (as it is the case for the interaction strengths $F_2$ and $F_3$ in~\eqref{e.choices}), we need to get some control on the lack of uniform ellipticity of such an interaction. To achieve this, for any $z\in \R^d$ and any $R>0$, let us introduce the following random variable:
\begin{align*}\label{}
 K(z,R):= \inf_{\substack{i\neq j \in P\cap \Lambda_{4R}(z) \\ \Vor(i,P) \cap \Vor(j,P) \neq \emptyset}} \min \Big\{ 
f(|\Vor(i,P)|) f(|\Vor(j,P)|),  \lambda^{d-1}(\Vor(i,P) \cap  \Vor(j,P))
\Big\}\,, 
\end{align*}
where $f: \R_+ \to \R_+$ is the strictly increasing function in the definition of $F_2$ in~\eqref{e.choices}.

We make the following observations:
\bnum
\item On the event $\calE_R(z)$, the random variable $K(z,R)$ is measurable w.r.t $P \cap \Lambda(z,6R)$. 
\item Conditioned on the event $\calE_R(z)$, the probability that $K(z,R)=0$ is zero. This is because $f(v)>0$ for any $v>0$ and because generically when two Voronoi cells intersect, they intersect along a non-degenerate face of co-dimension 1 (otherwise, this would involve finding $d+2$ points in $P$ which would belong to a same sphere in $\R^d$). 
\enum

Using this observation, we have that for any choice of strictly increasing $f: \R_+ \to \R_+$, and for any scale $R>0$ large enough, there exists a small enough $\delta_R= \delta_R(d)>0$ s.t. 
\begin{align}\label{e.good2}
\Pb{\calE_{R}(z) \cap \calF_{R,H}(z) \cap \{K(z,R) \geq \delta_R(d) \}} \geq 1 - 4 e^{-c R^d}\,,
\end{align}
where $c>0$ is the same as in~\eqref{e.good}. 

From the above estimate, we shall also need some fixed large enough scale $R_0 \geq 10$ so that 
\begin{align}\label{e.good3}
\Pb{\calE_{R_0}(z) \cap \calF_{R_0,H}(z) \cap \{K(z,R_0) \geq \delta_{R_0}(d) \}} \geq \tfrac 9 {10}\,. 
\end{align}
(This fixed scale $R_0$ will be used below to have a control on the inverse temperature $\beta_1$ which shall not depend on the way to tune our renormalization argument, a key issue in the previous Section \ref{section4.2}).

\smallskip

We now have all the necessary ingredients to run a renormalization argument similar to our analysis for supercritical percolation clusters for any $p>p_{c, \mathrm{site}}(d)$ in Section \ref{section4.2}. 
\medskip

\ni
{\em Proof of Theorem \ref{th.Vor}.}

We will follow the same strategy as for the proof of Theorem \ref{proofKTsitegeneral} in Section \ref{section4.2}
and shall only highlight the main modifications. 
\bnum
\item First, we introduce a tiling of $\R^d$ very similar to the tiling used on $\Z^d$ in Section \ref{section4.2}. We shall use here (using the same notations with a slight abuse of notations) the graph $\mathcal{G}_L$ made of the boxes $z+\Lambda_L$, with 
\begin{align*}\label{}
z\in (2L) \Z^d\,.
\end{align*}

\item As in Section \ref{section4.2}, we set \footnote{For a technical reason (explained below), we do not choose $A_d := 2d$ as in Section~\ref{section4.2} but replace it by a larger constant which depends only on the dimension $d$.}
\begin{align}\label{e.beta1R0}
\begin{cases}
& \beta_1  := 2^{A_d+1}\delta_{R_0}^{-1}(2)\,  \beta_{1, BKT}  \text{   in dimension $d = 2$} \\
&  \beta_1 := 2^{A_d+1} \,\delta_{R_0}^{-1}(d)\, \beta_{1, c}(d)  \text{ in dimension $d \geq 3$}
\end{cases}
\end{align}
We also set  
    \begin{equation} \label{def.p0twice}
            p_0 := \max\left( \frac{1}{1 + \exp \left( - 2 d \beta_{1} \right)} , \tfrac 9 {10} \right).
    \end{equation}

\item We say that the box $z+\Lambda_L$ is ``good'' if the following four events are satisfied, first we ask $\calE_L(z)$, $\calF_{L,H}(z)$ and $\{K(z,L) \geq \delta_L \}$ to be satisfied. (N.B. in the case of the strength 1 nearest-neighbour interaction, i.e. $F_1$ in~\eqref{e.choices}, we only need the first two events). 
The last event we need is the following one:
\begin{align}\label{e.Jr}
J(z,L):= \{ \exists w \in z+ \Lambda_{L/4}, \text{ s.t. }  \calE_{R_0}(w) \cap \calF_{R_0,H}(w) \cap \{K(w,R_0) \geq \delta_{R_0}(d) \}  \text{  holds } \} \,.
\end{align}
Since by~\eqref{e.good3}, the probability of $ \calE_{R_0}(w) \cap \calF_{R_0,H}(w) \cap \{K(w,R_0) \geq \delta_{R_0}(d) \}$ is greater than $9/10$ and since two points $w,w'$ at distance $12R_0$ from each other, these events are independent, we readily obtain the existence of a $\tilde c>0$ so that for any $z\in \R^d$, 
\begin{align*}\label{}
\Pb{J(z,L)} \geq 1 - e^{-\tilde c L^d}\,.
\end{align*}
Combining~\eqref{e.good2} with the above estimate, we get for any $z\in \R^d$,
\begin{align}\label{e.verygood}
\Pb{z+\Lambda_L \text{ is ``good'' }} = \Pb{\calE_L(z)\cap \calF_{L,H}(z) \cap \{K(z,L) \geq \delta_L \} \cap J(z,L) } \geq  1 - 4 e^{-c L^d} - e^{-\tilde c L^d}\,.
\end{align}

\item For any $x\in \R^d$ at distance, say at least $L$ from the origin, choose as in Section \ref{section4.2} points $x_{0,L}$ and $x_{1,L}$ in $\R^d$ such that
   \begin{equation} \label{requirement2}
        x_{0,L} \in \Lambda_{L/2}, ~~x_{1 , L} \in \bigcup_{z \in (2L) \Zd} (z + \Lambda_{L/2}) ~~\mbox{and}~~x = x_{1 , L} - x_{0 , L}.
    \end{equation}

Similarly as in Section \ref{section4.2}, let  
\begin{align*}\label{}
E_{L,x} := 
\{K(x_{0,L},R_0) \geq \delta_{R_0}(d) \}  \cap \{K(x_{1,L},R_0) \geq \delta_{R_0}(d) \}\,.
\end{align*}
Since both of these events have probability greater than $\tfrac 9 {10}$, this event is of probability greater than~$8/10$. (N.B. $8/10$ plays role of $\theta(p)^2$ in the proof of Section \ref{section4.2}). 
The event $E_{L,x}$ ensures that the closest points in $P$ of $x_{0,L}$ and $x_{1,L}$, i.e. $\hat{x_{0,L}}$ and $\hat{x_{1,L}}$, are at distance of order $R_0$ from $x_{0,L}$ and $x_{1,L}$ and also that the geometry around these points is not too degenerate (the coupling constants around $\hat{x_{0,L}}$ and $\hat{x_{1,L}}$ are lower bounded by $\delta_{R_0}$). 
If we denote by $z_{1,L}$ the point in $(2L) \Z^d$ such that $x_{1,L}\in z_{1,L}+\Lambda_{L/2}$, we further stress that the probability of the event 
\begin{align*}\label{}
F_{L,x}:= E_{L,x}  \cap \{ \Lambda_L \text{ is ``good'' } \} \cap \{ z_{1,L} + \Lambda_L \text{ is ``good'' } \}
\end{align*}
is greater than $\frac 7 {10}$ if $L$ is chosen large enough.

\item Conditioned on the event $F_{L,x}$, by construction the field of good boxes in the covering graph $\mathcal{G}_L$ is a $10$-dependent (inhomogeneous) random field.  

Notice further that using the estimate~\eqref{e.verygood} together with the fact that the event $F_{L,x}$ is local with $\Pb{F_{L,x}}\geq \tfrac 7 {10}$, we obtain that for $L$ sufficiently large, conditioned on $F_{L,x}$, the probability of each box of the grid $\calG_L$ to be good is at least $1- e^{-\hat c L^d}$ for some $\hat c>0$. 
Therefore, recalling our choice of $p_0$ from~\eqref{def.p0twice} and using the stochastic domination bound in Theorem \ref{th.Liggett} (due to~\cite{liggett1997domination}), we may pick a sufficientlly large scale $L_0\in \N$ so that, conditioned on $F_{x,L_0}$,  the field of good boxes on the renormalized graph $\mathcal{G}_{L_0}$ stochastically dominates an i.i.d site percolation of parameter $p_0$.

\item For the good boxes $\Lambda_{L_0}$ and $z_{1,L_0}+ \Lambda_{L_0}$, we choose the points $\hat{x_{0,L_0}}$ and $\hat{x_{1,L_0}}$ in $P$ as centers. For any other good boxes $z+\Lambda_{L_0}$ in $\mathcal{G}_{L_0}$, we choose in an arbitrary way a center $\hat z$ in $z+\Lambda_{\frac 1 4 L_0}$ which is such that the event 
\begin{align*}\label{}
\calE_{R_0}(z) \cap \calF_{R_0,H}(z) \cap \{K(z,R_0) \geq \delta_{R_0}(d) \}  \,\,\, \text{  holds.}
\end{align*}
(N.B. These centers exist in good boxes thanks to the event $J(z,L_0)$ from~\eqref{e.Jr}). 
\item Using the definition of the event $\calF(R,H)$ in~\eqref{e.eventF} as well as Lemma \ref{l.Poisson}, we may connect the centers $\hat z$ and $\hat z'$ of two neighbouring good boxes with a path of Voronoi cells touching each other inside $(z + \Lambda_{4L_0}) \cup (z' + \Lambda_{4L_0})$ using less than $2H L_0^d$ cells.\footnote{This property is the reason for the introduction of the constant $A_d$. In the percolation setup, a path connecting two neighbouring good boxes lies in the union $(z + \Lambda_L) \cup (z' + \Lambda_L)$, which is a smaller set.}   

We thus set $n :=2\, H \, L_0^d$ as in the case of $XY$ model on top of supercritical percolation. 

Also thanks to the events $J(z,L_0)$ and $J(z',L_0)$, the coupling constants around each of the above centers are lower bounded by $\delta_{R_0}(d)$. 

\item The rest of the proof is identical except that when using Proposition~\ref{prop.phasetransitionextended}, we need to consider instead the following inverse temperatures (recall our choices of $\beta_1$ have already been fixed in~\eqref{e.beta1R0}). 
\begin{align*}\label{}
\begin{cases}
& \beta_2  := 2^{A_d} \, \delta_{L_0}^{-1}(d)\, \beta_{2, BKT}(n) \text{   in dimension $d = 2$} \\
&  \beta_2 := 2^{A_d} \, \delta_{L_0}^{-1}(d)\,  \beta_{2, c}(n, d)  \text{ in dimension $d \geq 3$}.
\end{cases}
\end{align*}

\enum

\qed

\appendix

\section{Phase transitions for the $XY$ model on the extended lattice $\mathbb{Z}^d_n$} \label{AppendixA}

In this section, we prove Proposition~\ref{propApp.phasetransitionextended} which establishes the existence of a BKT phase transition in two dimensions and long-range order in dimensions three and higher for the $XY$ model on the extended lattice. The proofs are adaptations of the corresponding proofs for the $XY$ model, and especially of the article of van Engelenburg and Lis~\cite{van2023elementary} for the BKT phase transition in two dimensions and of the article of the second author and Spencer~\cite{garban2022continuous} for the long-range order in dimensions three and higher.

\subsection{The BKT phase transition on the extended lattice $\Zd_n$}

In this section, we establish the existence of a BKT phase transition for the $XY$ model on the extended lattice. The statement is recalled below. The proof is a minor adaptation of the argument of~\cite{van2023elementary}.

\begin{proposition}[BKT phase transition for the $XY$ model on the extended lattice $\Z^2_n$] \label{propApp.phasetransitionextended}
In dimension $d = 2$, there exists an inverse temperature $\beta_{1, BKT} < \infty$ such that, for every $n \in \N$, there exists an inverse temperature $\beta_{2, BKT}(n) < \infty$ such that, for any $\beta_1 \geq \beta_{1, BKT}$ and any $\beta_2 \geq \beta_{2, BKT}(n)$,
    \begin{equation*}
        \left\langle \cos(\theta_0 - \theta_x) \right\rangle_{\mu_{\beta_1 , \beta_2 }} ~\mbox{decays polynomially fast as } |x| \to \infty. 
    \end{equation*}
\end{proposition}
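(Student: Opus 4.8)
The plan is to run the argument of van Engelenburg and Lis~\cite{van2023elementary} directly on the planar graph $\Z^2_n$, while keeping careful track of the two coupling strengths $\beta_1$ and $\beta_2$. The starting observation is that $\Z^2_n$ is obtained from $\Z^2$ by placing degree-two vertices on each edge, an operation which does not alter the face structure: the faces of $\Z^2_n$ are in bijection with those of $\Z^2$, and each subdivided primal edge is crossed by $n+1$ parallel dual edges separating the \emph{same} pair of faces. Consequently the integer-valued height function dual to the $XY$ model again lives on $(\Z^2)^\ast \cong \Z^2$, and the only effect of the subdivision is that the single dual interaction between two neighbouring faces is replaced by the one generated by a chain of $n+1$ dual edges: two of them (crossing the edges incident to $\Z^2$) carry inverse temperature $\beta_1$, and the remaining $n-1$ carry $\beta_2$. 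The duality and the planar delocalisation inputs underlying~\cite{van2023elementary} are insensitive to this cosmetic modification of the graph, so the first step is to check that each ingredient transfers verbatim to $\Z^2_n$.

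The heart of the matter is the following quantitative bookkeeping. In the height-function representation the $n+1$ dual sub-edges crossing a subdivided primal edge act \emph{in series} for the primal field (equivalently \emph{in parallel} for the dual height function), so the effective strength of the resulting ``super-edge'' is governed by the harmonic-type combination
\begin{equation*}
\beta_{\mathrm{eff}}(\beta_1,\beta_2,n) \;\asymp\; \left( \frac{2}{\beta_1} + \frac{n-1}{\beta_2} \right)^{-1},
\end{equation*}
which converges to $\beta_1/2$ as $\beta_2 \to \infty$, \emph{uniformly in $n$}. This is the whole point. The van Engelenburg–Lis criterion for quasi-long-range order asks, roughly, that the stiffness of each super-edge exceed a fixed threshold $T$ depending only on the planar $XY$ model; writing the requirement as $2/\beta_1 + (n-1)/\beta_2 < 1/T$, one first fixes $\beta_1$ so large (depending only on $T$) that $2/\beta_1 < 1/(2T)$, and only then chooses $\beta_2 = \beta_2(n)$ so large that $(n-1)/\beta_2 < 1/(2T)$. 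This produces the claimed thresholds $\beta_{1,BKT}$ (free of $n$) and $\beta_{2,BKT}(n)$, and, once the criterion holds, the resulting delocalisation of the dual height function translates back into the polynomial lower bound $\langle \cos(\theta_0-\theta_x)\rangle_{\mu_{\beta_1,\beta_2}} \gtrsim c(n)/|x|$ claimed in Proposition~\ref{propApp.phasetransitionextended} and in Remark~\ref{remark2.24}.

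The main obstacle is to make the heuristic ``series/parallel composition of couplings'' rigorous for the genuine $XY$ interaction rather than for its Gaussian (Villain) caricature. Integrating out the $n$ interior spins of a subdivided edge does \emph{not} produce an $XY$ interaction: a single such integration replaces $e^{\beta\cos(\theta_a-\theta_w)+\beta\cos(\theta_w-\theta_b)}$ by a multiple of $I_0\big(2\beta\cos(\tfrac{\theta_a-\theta_b}{2})\big)$, so one cannot simply reduce to the standard model on $\Z^2$ and quote~\cite[Theorem 1]{van2023elementary}. Moreover the monotonicity in $\beta_2$ furnished by the Ginibre inequality points in the \emph{wrong} direction for a lower bound — sending $\beta_2 \to \infty$ only increases correlations, giving an upper comparison — which is exactly why the quantitative estimate must be carried out at finite $\beta_2$ on $\Z^2_n$ itself. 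The way around this is to run the van Engelenburg–Lis estimates at the level of the dual integer-valued currents, where the super-edge weight is the exact convolution of the $n+1$ sub-edge weights and where the interior $\beta_2$-edges contribute a factor that can be made negligible uniformly in $n$ through correlation-inequality bounds. Verifying that the delocalisation criterion of~\cite{van2023elementary} survives this replacement, with the $\beta_1$-threshold kept independent of $n$, is the crux and the only place where their argument must be revisited in detail.
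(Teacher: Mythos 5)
Your quantitative bookkeeping is essentially the right one — in the dual height-function picture the $n+1$ dual edges crossing a subdivided bond all see the same height increment, so their potentials simply add, giving a super-edge potential of the form $V(k) = -2\ln I_k(\beta_1) - (n-1)\ln I_k(\beta_2)$; the relevant condition then splits into an $n$-independent constraint on $\beta_1$ and an $n$-dependent one on $\beta_2$, and your harmonic-sum heuristic is exactly the large-$\beta$ asymptotics $\ln\bigl(I_0(\beta)/I_1(\beta)\bigr)\approx 1/(2\beta)$ of this. This matches the structure of the paper's proof. But there is a genuine gap, and it sits precisely in the sentence you wave through: ``the planar delocalisation inputs underlying~\cite{van2023elementary} are insensitive to this cosmetic modification of the graph.'' They are not. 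The delocalization criterion that powers van Engelenburg--Lis is Lammers' theorem~\cite{lammers2022height}, which applies to integer-valued height functions on \emph{trivalent (cubic)} planar graphs, equivalently requires the primal $XY$ graph to be a triangulation. The dual of $\Z^2_n$ is the multigraph on the faces of $\Z^2$ in which every vertex has degree $4(n+1)$; it is not trivalent, and no $\ln 2$-type criterion is available there. So the crux you defer to ``revisiting their argument in detail'' is not a technical verification — as set up, it cannot be carried out with the known delocalization input, for exactly the same reason that van Engelenburg--Lis cannot treat the square lattice directly and must first pass to a triangulation.

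This is what the paper's proof supplies and your proposal omits: a Ginibre vertex/edge-duplication argument (Lemma~\ref{LemmaA6}) showing
\begin{equation*}
\langle \cos(\theta_x-\theta_y)\rangle_{\mu_{\beta_1,\beta_2}} \;\geq\; \langle \cos(\theta_x-\theta_y)\rangle_{\mu_{\Gamma_n,\,\beta_1/2,\,\beta_2/2}},
\end{equation*}
where $\Gamma_n$ is the extended triangulation obtained by adding (subdivided) diagonals. Only after this step is the dual graph $\Gamma_n^*$ trivalent, so that Lammers' condition $V_{uv}(\pm 1)\leq V_{uv}(0)+\ln 2$ can be checked on the Bessel potentials — which is where your $n$-independent-$\beta_1$ / $n$-dependent-$\beta_2$ split reappears, via the sufficient conditions $I_0(2\beta_1)/I_1(2\beta_1)\le 2^{1/8}$ and $I_0(2\beta_2)/I_1(2\beta_2)\le e^{1/(4(n-1))}$. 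Two further ingredients you gloss over are also needed to close the argument: the absolute-value-FKG property for the dual potentials (so that Lammers' and Lammers--Ott's results apply), and the dichotomy of Proposition~\ref{prop.dichotomy} (built from the Lieb--Rivasseau and Messager--Miracle-Sole inequalities), which is what converts ``the sum of correlations diverges'' — all that delocalization gives you — into the pointwise polynomial lower bound $c/|x|$ claimed in the statement.
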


\begin{remark}
As in~\cite{van2023elementary}, the argument gives the lower bound, for some constant $c := c(n) > 0$,
\begin{equation*}
    \langle \cos \left( \theta_0 - \theta_x \right) \rangle_{\mu_{\beta_1, \beta_2}} \geq \frac{c}{|x|}.
\end{equation*}

\end{remark}

We follow closely the proof of~\cite{van2023elementary} and break the argument into several steps. 
We first collect in Proposition~\ref{prop.corrineq} some inequalities for the $XY$ model.
Following~\cite[Section 6]{van2023elementary}, we observe that they imply the following dichotomy regarding the behaviour of the two-point function: it decays either exponentially or polynomially fast (see Proposition~\ref{prop.dichotomy}).

We then prove that it is possible to choose the inverse temperatures $\beta_1$ and $\beta_2$ such that the two-point function does not decay exponentially fast. To this end, we follow the strategy of van Engelenburg and Lis~\cite{van2023elementary} who proceeded in two steps: they first showed that the two-point function of the $XY$ model is larger than the two-point function of a $XY$ model defined on a certain triangulation, they then showed that it is possible to choose the inverse temperature sufficiently large so that the two-point function of the $XY$ model on the triangulation is not summable. A combination of these two results implies that the two-point function cannot decay exponentially fast.

Section~\ref{sectionineqXY} collects the inequalities for the $XY$ model used in the proof as well as the dichotomy result for the decay of the two-point function. The first step of the proof of~\cite{van2023elementary} is then reproduced in Section~\ref{sectiontriangulation}, the second step can be found in Section~\ref{sectionheightfunction}.

\subsubsection{Inequalities for the two-point function of the $XY$ model} \label{sectionineqXY}

We first collect some well-known inequalities about the two-point function of the $XY$ model. In the following statement, we consider the $XY$ model introduced in Definition~\ref{def.triangheterogeneousXY}. We refer to~\cite[Appendix A]{van2023elementary} and the references below for a proof. These inequalities are not restricted to the two-dimensional case and are stated for subsets of $\Zd_n$ in arbitrary dimension. 

\begin{proposition}[Inequalities for the two-point function of the $XY$ model] \label{prop.corrineq}
For any dimension $d \geq 2$, any integer $n \in \N$ and any finite subset $\Lambda \subseteq \Zd_n$, the following inequalities hold:
\begin{itemize}
    \item For any $x , y , z \in \Lambda$,
        \begin{equation} \label{eq:twopointbound}
            \langle \cos( \theta_x - \theta_y) \rangle_{\mu_{\Lambda, \beta_1 , \beta_2}} \geq \langle \cos( \theta_x - \theta_z) \rangle_{\mu_{\Lambda, \beta_1 , \beta_2}}  \langle \cos( \theta_z - \theta_y) \rangle_{\mu_{\Lambda, \beta_1 , \beta_2}} 
        \end{equation}
    \item Lieb-Rivasseau inequality~\cite{lieb1980refinement, rivasseau1980lieb}: Let $x , y \in \Lambda$ be two distinct vertices and let $H \subseteq \Lambda$ be a finite subset of $\Lambda$ containing $x$ and not containing $y$. Then
        \begin{equation*}
            \langle \cos( \theta_x - \theta_y) \rangle_{\mu_{\Lambda, \beta_1 , \beta_2}} \leq \sum_{z \in \partial H} \langle \cos( \theta_x - \theta_z) \rangle_{\mu_{H, \beta_1 , \beta_2}}  \langle \cos( \theta_z - \theta_y) \rangle_{\mu_{\Lambda, \beta_1 , \beta_2}} 
        \end{equation*}
    \item Messager-Miracle-Sole inequality~\cite{messager1977correlation}: For a given hyperplane $P \subseteq \R^d$ and a vertex $x \in \Z^d$, we denote $P(x)$ the reflection of $x$ across the plane. Assume that $\Lambda \subseteq \Zd_n$ is symmetric under reflections across a plane $P$. Let $x , y \in \Lambda$ be two vertices which are on the same side of the plane $P$. Then
        \begin{equation*}
            \langle \cos( \theta_x - \theta_y) \rangle_{\mu_{\Lambda, \beta_1 , \beta_2}} \geq \langle \cos( \theta_x - \theta_{P(y)}) \rangle_{\mu_{\Lambda, \beta_1 , \beta_2}}.
        \end{equation*}
\end{itemize}
\end{proposition}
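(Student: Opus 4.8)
The plan is to treat the three inequalities separately, in each case reducing the statement to a known property of ferromagnetic plane-rotator models and then checking that the heterogeneous-temperature model on $\Zd_n$ falls within that class. The crucial structural observation is that the Hamiltonian of Definition~\ref{def.heterogeneousXY} is of the form~\eqref{eq:17131905} with only nonnegative coupling constants (namely $\beta_1$ on the edges meeting $\Zd$ and $\beta_2$ on the interior edges), so the Ginibre inequality of Theorem~\ref{theoremGinibreineq} and the full suite of its duplicated-variable consequences apply verbatim. The proofs in \cite{van2023elementary,lieb1980refinement,rivasseau1980lieb,messager1977correlation} use only this ferromagnetic structure together with the reflection symmetries of the graph; hence the main work is to verify that these ingredients survive on the extended lattice.

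For the first inequality~\eqref{eq:twopointbound} I would argue directly from Ginibre. Writing $\alpha = \theta_x - \theta_z$ and $\gamma = \theta_y - \theta_z$ and using the identity
\begin{equation*}
\cos(\theta_x - \theta_y) = \cos\alpha\,\cos\gamma + \sin\alpha\,\sin\gamma,
\end{equation*}
taking expectations gives
\begin{equation*}
\left\langle \cos(\theta_x - \theta_y)\right\rangle_{\mu_{\Lambda,\beta_1,\beta_2}} = \left\langle \cos\alpha\,\cos\gamma\right\rangle_{\mu_{\Lambda,\beta_1,\beta_2}} + \left\langle \sin\alpha\,\sin\gamma\right\rangle_{\mu_{\Lambda,\beta_1,\beta_2}}.
\end{equation*}
The second Ginibre inequality, applied to the cone generators $m = \indc_x - \indc_z$ and $m' = \indc_y - \indc_z$, yields $\langle\cos\alpha\cos\gamma\rangle \geq \langle\cos\alpha\rangle\langle\cos\gamma\rangle = \langle\cos(\theta_x-\theta_z)\rangle\langle\cos(\theta_z-\theta_y)\rangle$, while the companion positivity for the transverse (sine) correlations gives $\langle\sin\alpha\sin\gamma\rangle \geq 0$. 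Both are standard consequences of the same duplicated-variable positivity recorded in Theorem~\ref{theoremGinibreineq} and already exploited in the proof of Wells' inequality (Proposition~\ref{prop.Wells}), all coupling constants being nonnegative. Combining the two bounds gives~\eqref{eq:twopointbound}.

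The Lieb--Rivasseau inequality is a Simon--Lieb-type bound, and I would reproduce the argument of \cite{lieb1980refinement,rivasseau1980lieb}, which decomposes the correlation across the boundary $\partial H$ and controls it using Ginibre monotonicity (enlarging the couplings outside $H$ to pass from the $H$-measure to the $\Lambda$-measure). Since this uses only nonnegativity of the couplings, it transfers without change. The Messager--Miracle--Sole inequality is proved by a reflection argument: denoting by $R$ the reflection across $P$, one uses that $\mu_{\Lambda,\beta_1,\beta_2}$ is $R$-invariant and a correlation inequality in the symmetric/antisymmetric variables $\tfrac12(\theta_u \pm \theta_{Ru})$ to show $\langle\cos(\theta_x-\theta_y)\rangle \geq \langle\cos(\theta_x-\theta_{Ry})\rangle$, exactly as in \cite{messager1977correlation}.

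The step requiring the most care --- and the only genuine novelty relative to the classical statements --- is the $R$-invariance of the measure in the Messager--Miracle--Sole inequality. On $\Zd_n$ the reflection $R$ must not merely preserve $\Lambda$ but also the partition of edges into the $\beta_1$-class (edges incident to a vertex of $\Zd$) and the $\beta_2$-class (interior edges). I would check that for every reflection plane that is a symmetry axis of the extended lattice --- that is, a coordinate hyperplane through an original vertex or through the midpoint of an original edge --- the map $R$ sends $\Zd$-vertices to $\Zd$-vertices and added vertices to added vertices, hence preserves both edge classes and therefore the Hamiltonian, whereas reflections not of this type fail to leave the model invariant and are excluded. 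Once this compatibility is established the heterogeneous constants play no further role, and the remaining obstacle is purely bookkeeping: confirming that each duplicated-variable expansion underlying the Ginibre consequences keeps manifestly nonnegative terms, which holds precisely because $\beta_1,\beta_2 \geq 0$.
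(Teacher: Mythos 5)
Your proposal is correct and follows essentially the same route as the paper: the paper gives no self-contained proof of Proposition~\ref{prop.corrineq}, but instead cites \cite[Appendix A]{van2023elementary} (and the original references) together with Remark~\ref{remarkA4}, which notes that those arguments use only the ferromagnetic (Ginibre) structure and the lattice reflection symmetries, and hence extend verbatim to the heterogeneous couplings $\beta_1, \beta_2 \geq 0$ on $\Zd_n$. Your explicit Ginibre derivation of~\eqref{eq:twopointbound} and your check that the admissible reflections preserve the two edge classes are precisely the verifications the paper leaves implicit.
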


\begin{remark} \label{remarkA4}
    Let us make two remarks about the previous definitions:
        \begin{itemize}
            \item By taking the limit $\Lambda \uparrow \Zd_n$, all these inequalities hold with the measure $\mu_{\beta_1 , \beta_2}$ instead of $\mu_{\Lambda, \beta_1 , \beta_2}$
            \item Successive applications of the Messager-Miracle-Sole inequality using hyperplanes passing through vertices or mid-edges and ``diagonal" hyperplanes yield the following useful inequality (for which we refer to~\cite[Lemma 21]{van2023duality} for the $XY$ model in two dimensions and to~\cite[Section 5.1]{aizenman2021marginal} in the case of the Ising model): there exists a constant $C := C(d) < \infty$ such that for any pair of vertices $x , y \in \Zd \subseteq \Zd_n$ with $|y| \geq C |x|$, 
            \begin{equation} \label{MMSimpliescomparison}
                \langle \cos( \theta_0 - \theta_y) \rangle_{\mu_{ \beta_1 , \beta_2}} \leq \langle \cos( \theta_0 - \theta_x) \rangle_{\mu_{\beta_1 , \beta_2}}.
            \end{equation}
            \item These inequalities are in fact valid in a much more general framework: they are valid for the $XY$ model on any graph $G$ and for any collection of ferromagnetic coupling constants (with some suitable symmetry assumptions for the Messager-Miracle-Sole inequality). They are stated and proved in~\cite[Appendix A]{van2023elementary} in the case of general graph when all the coupling constants are equal to an inverse temperature $\beta$ (except for the Messager-Miracle-Sole inequality which is stated for subgraphs of $\Z^2$), but their proof could be extended to more general collections of coupling constants with the same arguments.
        \end{itemize}    
\end{remark}

Following~\cite[Section 6]{van2023elementary}, these inequalities lead to the following dichotomy for the two-point function under the measure $\mu_{\beta_1, \beta_2}$

\begin{proposition}[Dichotomy for the decay of the two-point function] \label{prop.dichotomy}
    For any integer $n \in \N$ and any pair of inverse temperatures $\beta_1 , \beta_2$, there exists a constant $c := c(n, \beta_1 , \beta_2) > 0$ such that:
    \begin{itemize}
        \item[(i)] Either $\langle \cos \left( \theta_0 - \theta_x \right) \rangle_{\mu_{\beta_1, \beta_2}} \leq e^{-c |x|}$ for each $x \in \Z^2_n$,
        \item[(ii)] Or $\langle \cos \left( \theta_0 - \theta_x \right) \rangle_{\mu_{\beta_1, \beta_2}} \geq \frac{c}{|x|}$ for each $x \in \Z^2_n \setminus \{ 0 \}$.
    \end{itemize}
\end{proposition}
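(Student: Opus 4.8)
The plan is to follow \cite[Section 6]{van2023elementary} verbatim in spirit: the three inequalities of Proposition~\ref{prop.corrineq} already pass to infinite volume (Remark~\ref{remarkA4}, first bullet), and the whole dichotomy is decided by the behaviour of a single finite-volume boundary sum. Write $G(x,y) := \langle \cos(\theta_x - \theta_y)\rangle_{\mu_{\beta_1,\beta_2}}$ and $G(x) := G(0,x)$, and for each $L$ introduce the boundary susceptibility
\[
  s_L := \sum_{z \in \partial \Lambda_L^n} \langle \cos(\theta_0 - \theta_z)\rangle_{\mu_{\Lambda_L^n, \beta_1, \beta_2}},
\]
where $\Lambda_L^n$ is centred at $0$ and inherits from $\Z^2_n$ the reflection symmetries across the coordinate and diagonal hyperplanes. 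The argument then splits according to whether $\inf_L s_L < 1$ or $s_L \ge 1$ for every $L$, and I will show that the first alternative forces exponential decay (case (i)) while the second forces the polynomial lower bound (case (ii)).

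First I would treat the case $s_{L_0} < 1$ for some $L_0$, where the Lieb--Rivasseau inequality plays the role of the Simon--Lieb inequality. Applying it with $H$ a translate of $\Lambda_{L_0}^n$ centred at $x$, and bounding the finite-volume factors by their infinite-volume counterparts using the Ginibre monotonicity $G_{\Lambda_L^n} \le G$, yields $G(x,y) \le s_{L_0}\,\max_{z \in \partial H} G(z,y)$ with each such $z$ at distance $\ge L_0$ from $x$. Iterating roughly $|y|/L_0$ times produces a factor $s_{L_0}^{\,|y|/L_0}$, hence $G(0,y) \le e^{-c|y|}$ with $c := -\log(s_{L_0})/L_0 > 0$, which is alternative (i). The role of the multiplicative inequality~\eqref{eq:twopointbound} here is to ensure this exponential rate is genuine and uniform: it makes $-\log G$ subadditive along lattice directions, so that a good bound at the single scale $L_0$ propagates to all scales.

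Next I would treat the case $s_L \ge 1$ for all $L$ and extract the lower bound. The key tool is the Messager--Miracle--Sole comparison~\eqref{MMSimpliescomparison}. The boundary $\partial \Lambda_L^n$ has only $O(nL)$ vertices, all at Euclidean distance between $L$ and $\sqrt 2\,L$ from the origin, so each summand is dominated by $G$ evaluated at a single axial vertex $z_L \in \Z^2$ at distance $\asymp L$; combining with Ginibre gives $1 \le s_L \le C n L\, G(z_L)$, whence $G(z_L) \ge 1/(CnL) \ge c/|z_L|$. Finally, for an arbitrary $x \in \Z^2_n \setminus \{0\}$ I would pick $L \asymp |x|$ so that the corresponding axial vertex satisfies $|z_L| \ge C|x|$, and apply~\eqref{MMSimpliescomparison} once more in the form $G(x) \ge G(z_L)$ to conclude $G(x) \ge c/|z_L| \ge c'/|x|$, which is alternative (ii).

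The main obstacle is the bookkeeping in the second case rather than any new idea: one must check that the comparison~\eqref{MMSimpliescomparison}, stated for vertices of $\Z^2 \subseteq \Z^2_n$, applies both to bound the boundary sum (whose maximiser lies on $\Z^2$) and to transfer the lower bound from axial vertices to \emph{every} $x \in \Z^2_n$, including the mid-edge vertices of $\Z^2_n \setminus \Z^2$. This forces one to verify that the boxes $\Lambda_L^n$ carry the reflection symmetries required by Messager--Miracle--Sole and that the constant $C(d)$ in~\eqref{MMSimpliescomparison} can be taken uniform in $n$. Granting these points (as in \cite{van2023elementary, van2023duality}), the constant $c(n,\beta_1,\beta_2)$ absorbs the remaining $n$-dependence and the dichotomy follows.
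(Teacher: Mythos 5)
Your proposal follows the paper's own proof route almost verbatim: the dichotomy is run on the same boundary quantity (the paper's $\varphi_R$ is your $s_L$), case (i) is handled by iterating the Lieb--Rivasseau inequality, and case (ii) follows van Engelenburg--Lis via Messager--Miracle--Sole and symmetry. One cosmetic remark on case (i): your appeal to Ginibre monotonicity "$G_{\Lambda_L^n}\le G$" is both backwards and unnecessary there. Lieb--Rivasseau already produces \emph{finite-volume} factors $\langle\cos(\theta_x-\theta_z)\rangle_{\mu_{H,\beta_1,\beta_2}}$, and by invariance of $\Z^2_n$ under $\Z^2$-translations their sum over $z\in\partial H$ is exactly $s_{L_0}$; replacing them by infinite-volume correlations would only \emph{increase} the sum and destroy the bound. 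Keeping them finite-volume (as the paper does) the iteration goes through, and your invocation of \eqref{eq:twopointbound} for "subadditivity" in this case is not needed either.

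The substantive issue is the transfer of the lower bound from vertices of $\Z^2$ to all $x\in\Z^2_n\setminus\{0\}$. You propose to apply \eqref{MMSimpliescomparison} "once more in the form $G(x)\ge G(z_L)$" with $x$ a subdivision (mid-edge) vertex, and you yourself concede this must be "granted". That concession hides real work: \eqref{MMSimpliescomparison} is stated, and derived by reflections, only for pairs of vertices of $\Z^2\subseteq\Z^2_n$, and the reflection hyperplanes that are symmetries of $\Z^2_n$ are only those inherited from $\Z^2$, so extending the folding argument to subdivision vertices (with constants uniform in $n$) is a genuine additional argument, not bookkeeping. The paper avoids it entirely with a one-line step using tools you already cite: writing $(x)\in\Z^2$ for the corner vertex nearest to $x$, inequality \eqref{eq:twopointbound} gives
\begin{equation*}
\langle\cos(\theta_0-\theta_x)\rangle_{\mu_{\beta_1,\beta_2}}\;\ge\;\langle\cos(\theta_0-\theta_{(x)})\rangle_{\mu_{\beta_1,\beta_2}}\,\langle\cos(\theta_{(x)}-\theta_x)\rangle_{\mu_{\beta_1,\beta_2}};
\end{equation*}
the first factor is $\ge c/|x|$ by the part of the argument you carried out on $\Z^2$, while by translation invariance the second factor equals $\langle\cos(\theta_0-\theta_{x-(x)})\rangle_{\mu_{\beta_1,\beta_2}}$, which is bounded below by a positive constant depending only on $n,\beta_1,\beta_2$ because $x-(x)$ is within graph distance $n$ of the origin. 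Substituting this for your final application of \eqref{MMSimpliescomparison} closes the gap and makes your proof coincide with the paper's.
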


\begin{proof}
Since the argument is essentially a rewrite of~\cite[Section 6]{van2023elementary}, we only present a detailed sketch of the proof. For any integer $R \in \N$, we introduce the quantity (recalling the definition of the box $\Lambda_R^n \subseteq \Z^2_n$ introduced in Section~\ref{sec:generaldefinition})
\begin{equation*}
    \varphi_{R} := \sum_{y \in \partial \Lambda_R^n} \langle \cos \left( \theta_0 - \theta_y \right) \rangle_{\mu_{\Lambda_R^n , \beta_1 , \beta_2}}.
\end{equation*}
Using this definition, we may formulate the following dichotomy: either $ \inf_{R \in \N} \varphi_{R} < 1$ or $ \inf_{R \in \N} \varphi_R \geq 1$. We will show that the first case corresponds to situation (i) and that the second case corresponds to the situation (ii).

Let us first assume that there exists an integer $R \in \N$ such that $\varphi_{R} < 1$. Applying the Lieb-Rivasseau inequality with the set $H = \Lambda_R^n$, we have, for any $x \in \Z^2_n \setminus \Lambda_R^n$,
\begin{align*}
    \langle \cos( \theta_0 - \theta_x) \rangle_{\mu_{\beta_1 , \beta_2}} & \leq \sum_{y \in \partial \Lambda_R^n} \langle \cos( \theta_0 - \theta_y) \rangle_{\mu_{\Lambda_R^n , \beta_1 , \beta_2}}  \langle \cos( \theta_y - \theta_x) \rangle_{\mu_{\beta_1 , \beta_2}} \\
    & \leq \varphi_{R} \times \sup_{y \in \partial \Lambda_R^n} \langle \cos( \theta_y - \theta_x) \rangle_{\mu_{\beta_1 , \beta_2}} \\
    & \leq \varphi_{R} \times \sup_{y \in  \Lambda_R^n \cap \Z} \langle \cos( \theta_y - \theta_x) \rangle_{\mu_{\beta_1 , \beta_2}},
 \end{align*}
 where we used in the third inequality that the vertices of $\partial \Lambda_R^n$ are included in $\Z^2 \subseteq \Z^2_n$ (and thus $\partial \Lambda_R^n \subseteq \Lambda_R^n \cap \Z^2$). 
This inequality may the be iterated so as to obtain (N.B. we use here that the graph $\Z^2_n$ is invariant under $\Z^2$ translations), for any integer $k \in \N$ and any $x \in \Z^2_n \setminus \Lambda_{kR}^n$,
\begin{equation*}
    \langle \cos( \theta_0 - \theta_x) \rangle_{\mu_{\beta_1 , \beta_2}} \leq \varphi_{R}^k  \sup_{y \in  \Lambda_{kR}^n \cap \Z} \langle \cos( \theta_y - \theta_x) \rangle_{\mu_{\beta_1 , \beta_2}}.
\end{equation*}
Since the two-point function is always smaller than $1$, the supremum on the right-hand side is smaller than $1$ and we deduce that, for any integer $k \in \N$ and any $x \in \Z^2_n \setminus \Lambda_{kR}^n$,
\begin{equation*}
    \langle \cos( \theta_0 - \theta_x) \rangle_{\mu_{\beta_1 , \beta_2}} \leq \varphi_{R}^k.
\end{equation*}
Using that $\varphi_{R} < 1$, we obtain the exponential decay of the two-point function. We are thus in the situation (i).

If $ \inf_{R \in \N} \varphi_R \geq 1$, then we can use the Messager-Miracle-Sole inequality as well as the rotation symmetry of the system to obtain the lower bound, for any $x \in \Z^2$,
\begin{equation} \label{eq:16.30}
    \langle \cos(\theta_0 - \theta_x) \rangle_{\mu_{\beta_1 , \beta_2}} \geq \frac{c}{|x|}.
\end{equation}
The argument is identical to the one written in~\cite[Section 6]{van2023elementary}, and we refer to this article for the details.

We finally prove that the lower bound holds for all $x \in \Z^2_n$ (and not only $x \in \Z^2$). To this end, for each $x \in \Z^2_n \setminus \Z^2$, we denote by $(x)$ the vertex in $\Z^2$ which is the closest to $x$ for the graph distance on $\Z^2_n$ (breaking ties using the lexicographical order). Applying the inequality~\eqref{eq:twopointbound}, we deduce that
\begin{equation*}
     \langle \cos(\theta_0 - \theta_x) \rangle_{\mu_{\beta_1 , \beta_2}} \geq \langle \cos(\theta_0 - \theta_{(x)}) \rangle_{\mu_{\beta_1 , \beta_2}} \langle \cos(\theta_{(x)} - \theta_x) \rangle_{\mu_{\beta_1 , \beta_2}}
\end{equation*}
The first term is bounded from below by $c/|(x)|$ and thus by $c'/|x|$ for some possibly smaller constant $c'$. Using translation invariance, we have $\langle \cos(\theta_{ (x)} - \theta_{x}) \rangle_{\mu_{\beta_1 , \beta_2}} = \langle \cos(\theta_{0} - \theta_{x - (x)}) \rangle_{\mu_{\beta_1 , \beta_2}}$. Since $x - (x) \in \Z^2_n$ is at distance less than $n$ from $0$ in the graph $\Z^2_n$ (for the graph distance), this second term can be easily bounded from below by a positive real number depending only on $n, \beta_1$ and $\beta_2$.
\end{proof}

\subsubsection{Transforming the graph into a triangulation} \label{sectiontriangulation}

We start by defining the triangulation and the $XY$ model on the triangulation which will be used in the rest of the argument.

\begin{definition}[Extended triangulation $\Gamma_n$]
Given an integer $n \in \N$, we define the extended triangulation $\Gamma_n$ to be the extended graph $\Z_n^2$ to which the diagonals (going from top left to bottom right of each square) have been added. We then add $(2n +1)$ vertices on each diagonal. We refer to Figure~\ref{extendedtriang} for a representation of the triangulation $\Gamma_n$. We will write $x \sim_n y$ to mean that $x , y \in \Gamma_n$ are neighbours (since this graph is an extension of the graph $\Z^2_n$, the notation is consistent with Remark~\ref{remark2.7}).
\end{definition}

\begin{figure} 
\begin{center}
\includegraphics[width=10cm]{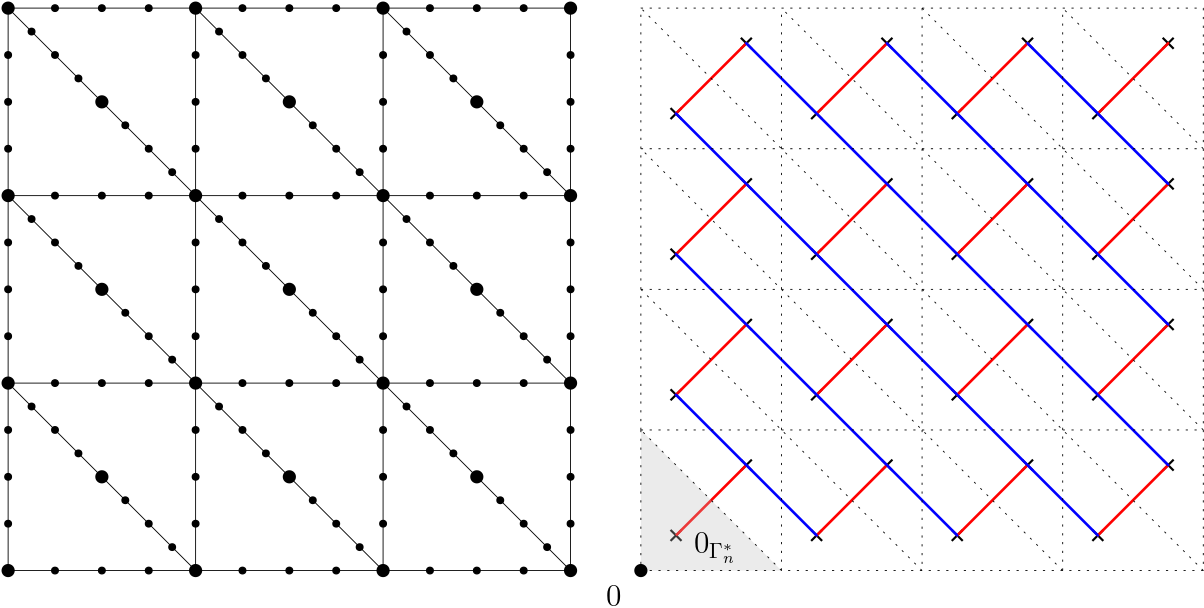}
\caption{The extended triangulation $\Gamma_n$ with $n = 3$ (left) and the dual graph $\Gamma_n^*$ (right). In the picture on the right, we represented the triangulation $\Gamma_n$ using dotted lines. The vertices of the dual graph are the faces $\Gamma_n$ and crosses are used to represent the starting and end points of the edges of $\Gamma_n^*$. The vertex $0 \in \Gamma_n$ is drawn with a black dot and the face $0_{\Gamma_n^*}$ is drawn in grey. The type 1 edges are drawn in blue and the type 2 edges are drawn in red.} \label{extendedtriang}
\end{center}
\end{figure}

We then define the $XY$ model with heterogeneous temperatures on the extended triangulation that will be used in the proof.

\begin{definition}[$XY$ model on the extended triangulation $\Gamma_n$ with heterogeneous temperature] \label{def.triangheterogeneousXY}
    Fix an integer $n \in \N$. For any finite set $\Lambda \subseteq \Gamma_n$ and any pair of inverse temperatures $\beta_1 , \beta_2 > 0$, we define the Hamiltonian
    \begin{equation*}
    H_{\Lambda, \beta_1 , \beta_2}(\theta) := - \beta_1 \sum_{\substack{x \sim_n y \\ \{ x , y \} \cap (\Zd \cup (\frac12, \frac12) + \Zd) \neq \emptyset}} \cos (\theta_x - \theta_y) - \beta_2 \sum_{\substack{x \sim_n y \\ \{ x , y \} \cap (\Zd \cup (\frac12, \frac12) + \Zd) = \emptyset}} \cos (\theta_x - \theta_y),
\end{equation*}
as well as the probability measure 
\begin{equation*}
        \mu_{\Lambda, \beta_1 , \beta_2}(d\theta) := \frac{1}{Z_{\Lambda, \beta_1 , \beta_2 }}\exp \left( - H_{\Lambda, \beta_1 , \beta_2}(\theta)  \right) \prod_{x \in \Lambda} d \theta_x.
\end{equation*}
We refer to Figure~\ref{XYtriang} for a representation of the model.
\end{definition}

\begin{figure} 
\begin{center}
\includegraphics[width=6.5cm]{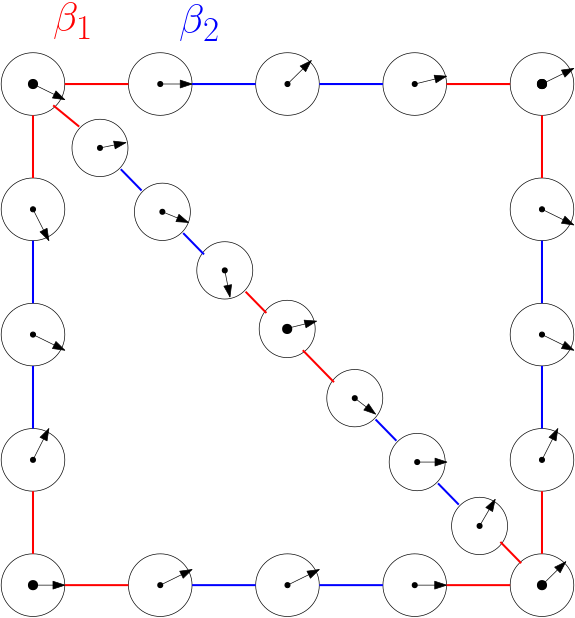}
\caption{A realization of the $XY$ model with heterogeneous temperatures on the extended triangulation with $n = 3$.} \label{XYtriang}
\end{center}
\end{figure}

We extend the notation and convention of Section~\ref{sec2.1.3.def} to the triangulation $\Gamma_n$.

\begin{remark} 
The Ginibre correlation inequality applies to this model. Combining this observation with a compactness argument, we obtain that the sequence of measures $\mu_{\Lambda, \beta_1 , \beta_2}$ converges as $\Lambda \uparrow \Gamma_n$ to an infinite-volume measure which will be denoted by $\mu_{\Gamma_n, \beta_1 , \beta_2}.$
\end{remark}

The following lemma shows that the value of the two-point function under the measure $\mu_{\beta_1 , \beta_2}$ is larger than the one under the measure $\mu_{\Gamma_n, \beta_1 /2 , \beta_2/2}$.

\begin{lemma} \label{LemmaA6}
    For any pair of inverse temperatures $\beta_1 , \beta_2$ and any pair of vertices $x , y \in \Z^2_n$, one has the inequality
    \begin{equation} \label{ineqaulityextedtoestendtriang}
        \langle \cos( \theta_x - \theta_y) \rangle_{\mu_{\beta_1 , \beta_2}} \geq \langle \cos( \theta_x - \theta_y) \rangle_{\mu_{ \Gamma_n, \beta_1 /2 , \beta_2 /2}}
    \end{equation}
\end{lemma}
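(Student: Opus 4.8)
The plan is to prove the inequality \eqref{ineqaulityextedtoestendtriang} by an entirely elementary Ginibre-based graph-surgery argument, reusing the three correlation-decreasing operations already introduced in Section~\ref{subsection5.2} (illustrated in Figures~\ref{fig.guidance1},~\ref{fig.guidance2} and~\ref{fig.guidance3}). The factor $\tfrac12$ on the temperatures will come from a single exact identity: for the $XY$ interaction, a pair of parallel edges between the same vertices at couplings $J/2$ and $J/2$ is \emph{identical} to one edge at coupling $J$, since $\tfrac{J}{2}\cos(\theta_x-\theta_y)+\tfrac{J}{2}\cos(\theta_x-\theta_y)=J\cos(\theta_x-\theta_y)$. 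Thus the left-hand model $\mu_{\beta_1,\beta_2}$ on $\Z^2_n$ is, without any inequality, the $XY$ model on the multigraph obtained from $\Z^2_n$ by doubling every edge into two parallel copies, each carrying half of the corresponding inverse temperature ($\beta_1/2$ on edges touching $\Z^2$, $\beta_2/2$ on interior subdivision edges). Since all models involved are infinite-volume limits of finite-volume Gibbs measures (Ginibre monotonicity, as recalled after Definition~\ref{def.heterogeneousXY} and Definition~\ref{def.triangheterogeneousXY}), I would first prove the inequality in finite volume for exhausting boxes and then pass to the limit by monotone convergence.

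The core step is to morph this doubled, half-temperature multigraph into the triangulation $\Gamma_n$ at $(\beta_1/2,\beta_2/2)$ using only operations that decrease the two-point function. I would keep the \emph{first} copy of each edge to build the $\Z^2_n$-part of $\Gamma_n$ (already carrying exactly $\beta_1/2$ and $\beta_2/2$), and use the \emph{second} copy of each edge to build the diagonals. Concretely, assign to each square $S$ the second copies of its bottom and left edges; these form a length-two path $B_S\!-\!A_S\!-\!D_S$ through the bottom-left corner $A_S$, joining the two diagonal endpoints $B_S$ (bottom-right) and $D_S$ (top-left). With this rule each interior edge is claimed by exactly one square, so the assignment is a perfect matching (boundary edges left unused are simply turned off, i.e. their coupling is lowered to $0$, which decreases correlations by \eqref{eq:Ginibreineq}). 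I would then transform each such corner-path into the subdivided diagonal of $\Gamma_n$ by the three Ginibre moves of Section~\ref{subsection5.2}: split the shared corner vertex $A_S$ off into a fresh degree-two vertex so that the diagonal path becomes vertex-disjoint from the $\Z^2_n$-part (Figure~\ref{fig.guidance2}), lengthen this path until it carries the correct number $2n+1$ of internal vertices (Figure~\ref{fig.guidance1}), and finally lower the couplings on the newly created internal edges down to $\beta_2/2$ and keep the two endpoint edges at $\beta_1/2$ (Figure~\ref{fig.guidance3}), matching exactly the type-1/type-2 convention of Definition~\ref{def.triangheterogeneousXY}. Since the $XY$ model depends only on the abstract weighted graph, the resulting object is graph-isomorphic to $\Gamma_n$ with the required coupling pattern, and every move decreased $\langle\cos(\theta_x-\theta_y)\rangle$, yielding \eqref{ineqaulityextedtoestendtriang}.

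The delicate point, and where I would spend most care, is the bookkeeping rather than any single inequality. One must check that the edge-to-diagonal matching is globally consistent on the whole (finite) box, that the two inverse-temperature types are tracked correctly through the doubling and through each surgery so that the halving is applied uniformly and no edge is over- or under-counted, and that the lengthening move reaches precisely the $2n+1$ subdivision vertices of $\Gamma_n$ (here it is convenient that lowering a coupling and inserting a degree-two vertex both only help, so one never needs a sharp count, only an inequality in the right direction). A secondary technical issue is the treatment of boundary squares and of the diagonal endpoints lying in $\Z^2$ versus the subdivision vertices, which I would handle by taking the finite domain large enough that $x,y$ and all relevant squares sit well inside it, and then invoking Ginibre monotonicity to pass to $\mu_{\beta_1,\beta_2}$ and $\mu_{\Gamma_n,\beta_1/2,\beta_2/2}$ as $\Lambda\uparrow\Z^2_n$. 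No reflection positivity or multiscale input is needed; the whole argument is a direct adaptation of the toolkit already developed in the main text.
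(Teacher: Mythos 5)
Your overall strategy --- realize $\mu_{\beta_1,\beta_2}$ exactly as a doubled-edge model at halved couplings, then morph the second copies into the diagonals by Ginibre-monotone surgery --- is precisely the route the paper takes (it duplicates the bottom and left segments of each square at half temperature, ties the duplicates to the originals by edges with infinite coupling, and then cuts the ties). However, the bookkeeping, which you yourself flag as the delicate point, goes wrong in a way that breaks the argument as written. First, doubling edges does not double vertices: the second copies of the bottom and left segments of a square $S$ still run through the $n+n$ subdivision vertices shared with the first copies, so splitting off the single corner $A_S$ does \emph{not} make the would-be diagonal vertex-disjoint from the $\Z^2_n$-part. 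You must duplicate \emph{every} vertex along the two segments (exactly what the paper does: ``duplicate each vertex and each edge on these two lines''). Second, once you do that, the corner path has exactly $n+1+n=2n+1$ interior vertices --- the number $2n+1$ in the definition of $\Gamma_n$ is chosen precisely to match --- so your lengthening step is not only unnecessary but symptomatic of treating $B_S\!-\!A_S\!-\!D_S$ as a two-edge path, which it is not in $\Z^2_n$.

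Third, and most consequentially, your final coupling assignment does not produce the measure $\mu_{\Gamma_n,\beta_1/2,\beta_2/2}$. In Definition~\ref{def.triangheterogeneousXY} the coupling $\beta_1$ is carried by all edges meeting $\Z^2\cup\bigl((\tfrac12,\tfrac12)+\Z^2\bigr)$; on each diagonal this means \emph{four} edges at $\beta_1$ --- the two endpoint edges \emph{and} the two edges at the midpoint, which sits at the square's center --- and only $2n-2$ edges at $\beta_2$ (this is also visible in the dual potentials of Definition~\ref{dualheightfct}, where a type 2 edge contributes $-4\ln I_k(2\beta_1)-(2n-2)\ln I_k(2\beta_2)$). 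Your prescription puts $\beta_2/2$ on all $2n$ interior diagonal edges, including the two mid-edges. The measure you end up with dominates $\mu_{\Gamma_n,\beta_1/2,\beta_2/2}$ via Ginibre only when $\beta_2\geq\beta_1$, so the lemma, which must hold for every pair $(\beta_1,\beta_2)$, does not follow. In the correct surgery this issue never arises: the two mid-edges are halved copies of the two $\beta_1$-edges of $\Z^2_n$ incident to the corner $A_S$, so they carry $\beta_1/2$ automatically.
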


\begin{proof}
The proof is based on successive applications of the Ginibre inequality and we essentially replicate the argument of~\cite[Proof of Theorem 15 for the square lattice]{van2023elementary}. We refer to Figure~\ref{transformintotrinag} for guidance. For each square of the extended lattice, we consider the left and bottom segments. We then duplicate each vertex and  each edge on these two lines, divide the temperature by $2$ and add an edge with a coupling constant equal to infinity between the vertices and the duplicated vertices. This does not affect the distribution of the spins. We then reduce the inverse temperature on these new edges from infinity to $0$.

This operation maps the extended graph $\Z^2_n$ to the triangulation $\Gamma_n$ and the measure $\mu_{\beta_1 , \beta_2}$ to the measure $\mu_{\Gamma_n, \beta_1 , \beta_2}$. The Ginibre inequality then implies the inequality~\eqref{ineqaulityextedtoestendtriang}.

\begin{figure} 
\begin{center}
\includegraphics[width=10cm]{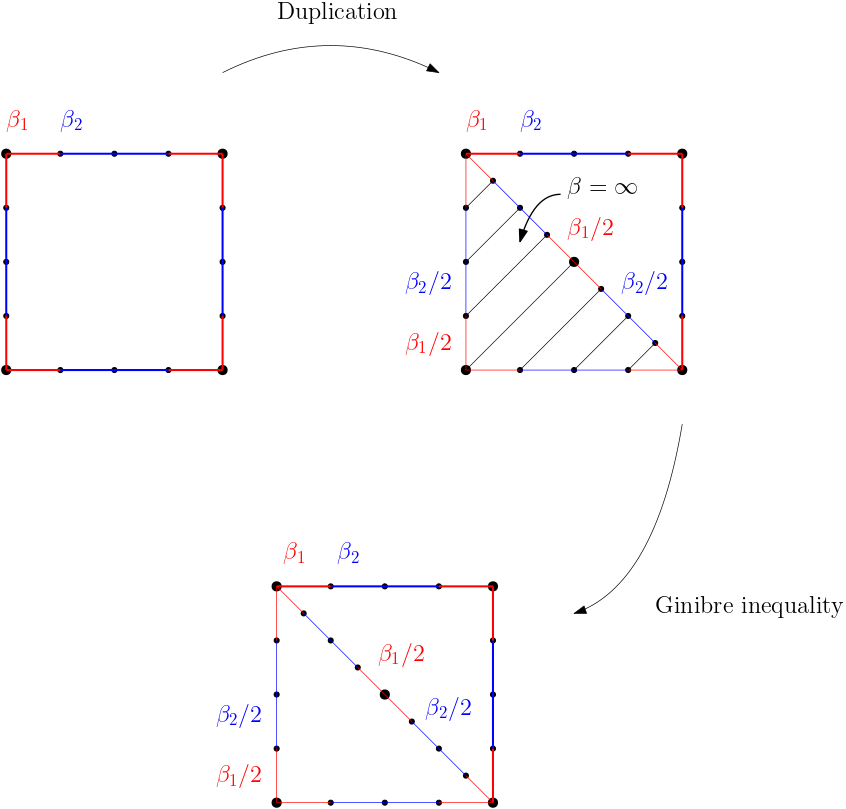}
\caption{Transformation into a triangulation. For each square of the graph $\Z^2_n$, we duplicate each vertex of the bottom and left side of the square, we then reduce the value of the inverse temperature from infinity to $0$. These two operations are then applied to each square of $\Z^2_n$ to obtain the triangulation $\Gamma_n$.} \label{transformintotrinag}
\end{center}
\end{figure}
\end{proof}

\subsubsection{Dual height function and delocalization} \label{sectionheightfunction}

In this section, we establish that the two-point function of the $XY$ model on the extended triangulation $\Gamma_n$ defined in the previous section is not summable. The proof follows the one of van Engelenburg and Lis~\cite{van2023elementary} which proceeds via a duality argument. They first define the dual graph to the triangulation $\Gamma_n$ as well as a dual height function on this graph, and show that, if this height function is delocalized (as formally defined in Proposition~\ref{prop.A11}) then the two-point function of the $XY$ model (on the extended triangulation $\Gamma_n$) is not summable. They then apply the results of Lammers~\cite{lammers2022height} and Lammers-Ott~\cite{lammers2023delocalisation} to show that if the inverse temperature is chosen sufficiently large then the dual height function is delocalized. Their argument is reproduced in this section with minor modifications.

\begin{definition}[Dual graph]
We denote by $\Gamma_n^*$ the dual graph of $\Gamma_n$ whose vertices are the faces of the graph $\Gamma_n$. Two faces of the graph $\Gamma_n^*$ are connected by an edge if they share at least one (in fact either $(n+1)$ or $(2n+2)$) edge in the graph $\Gamma_n$ (see Figure~\ref{extendedtriang}). We let $0_{\Gamma_n^*} \in \Gamma_n^*$ be the top right face incident to the vertex $0 \in \Gamma_n$ (see Figure~\ref{extendedtriang}).

We classify the edges of the dual graph $\Gamma_n^*$ in two categories:
    \begin{itemize}
        \item An edge $\{ u,v\}$ of the dual graph $\Gamma_n^*$ (with $u , v \in \Gamma_n^*$) is called a \emph{type 1 edge} if the two faces $u$ and $v$ of $\Gamma_n$ share $(n+1)$ edges (or equivalently the side of a square of $\Z^2_n$). These edges are drawn in blue on the right-hand side of Figure~\ref{extendedtriang}.
        \item An edge $\{ u,v\}$ of the dual graph $\Gamma_n^*$ (with $u , v \in \Gamma_n^*$) is called a \emph{type 2 edge} if the two faces $u$ and $v$ of $\Gamma_n$ share $(2n+2)$ edges (or equivalently the diagonal of a square of $\Z^2_n$). These edges are drawn in red on the right-hand side of Figure~\ref{extendedtriang}.
    \end{itemize}
\end{definition}

We next define the height function which is dual to the $XY$ model introduced in Definition~\ref{def.triangheterogeneousXY}.

\begin{definition}[Dual height-function] \label{dualheightfct}
    For any finite subset $G \subseteq \Gamma_n^*$, we define the set of height functions according to the formula $\Omega_G := \left\{ h : G \to \Z \, : \, h = 0 ~\mbox{on}~ \partial G \right\}$. For any pair of inverse temperatures $\beta_1 , \beta_2 > 0$, we equip the space of height functions $\Omega_G$ with the probability distribution
    \begin{equation*}
        \mathbb{P}_{G, \beta_1 , \beta_2} := \frac{1}{Z_{G,  \beta_1 , \beta_2}} \exp \left( - \sum_{\{ u, v\} \in  E \left( \Gamma_n^*\right)} V_{uv}^{\beta_1 , \beta_2}(h(u) - h(v)) \right),
    \end{equation*}
    where the potentials $V_{uv}^{\beta_1 , \beta_2} : \Z \to \R$ are defined as follows:
    \begin{itemize}
        \item For any type 1 edge $\{ u, v\} \in  E \left( \Gamma_n^*\right)$ and any integer $k \in \Z$,
    \begin{align*}
            V_{uv}^{\beta_1 , \beta_2}(k) & := - 2 \ln \left( \sum_{i=0}^\infty \frac{1}{i! (i + |k|)!} \beta_1^{2i + |k|} \right)  - (n-1) \ln \left( \sum_{i=0}^\infty \frac{1}{i! (i + |k|)!} \beta_2^{2i + |k|} \right) \\
            & = - 2 \ln \left( I_k(2\beta_1) \right) - (n-1) \ln \left( I_k(2\beta_2) \right),
    \end{align*}
    where $I_k$ is the modified Bessel function.
        \item  For any type 2 edge $\{ u, v\} \in E \left( \Gamma_n^*\right)$ and any integer $k \in \Z$,
    \begin{align*}
            V_{uv}^{\beta_1 , \beta_2}(k) & := -4  \ln \left( \sum_{i=0}^\infty \frac{1}{i! (i + |k|)!} \beta_1^{2i + |k|} \right)  - (2n-2) \ln \left( \sum_{i=0}^\infty \frac{1}{i! (i + |k|)!} \beta_2^{2i + |k|} \right) \\
            & = - 4 \ln \left( I_k(2\beta_1) \right) - (2n-2) \ln \left( I_k(2\beta_2) \right).
    \end{align*}
    \end{itemize}
\end{definition}

It is known that the potentials $V_{uv}^{\beta_1 , \beta_2}$ (either type 1 or type 2) are convex (see~\cite{thiruvenkatachar1951inequalities} and~\cite[Section 2]{van2023elementary}), and thus the model defined above falls into the well-studied framework of integer-valued height functions with convex interaction potential. We next collect two results from~\cite{van2023elementary}. The first one relates the behaviour of height function to the decay the two-point function of the $XY$ model on the extended triangulation $\Gamma_n$. The second one establishes the delocalization of the height function and is based on the results of Lammers~\cite{lammers2022height} and Lammers-Ott~\cite{lammers2023delocalisation}.

\begin{proposition}[Delocalization for the height function and lower bound for the sum of correlations along the $x$-axis] \label{prop.A11}
    In the setup above, for every integer $n \in \N$ and every pair of inverse temperatures $\beta_1, \beta_2 > 0$, the following properties hold:
    \begin{itemize}
        \item Delocalization for the dual height function~\cite{lammers2022height, lammers2023delocalisation} and~\cite[Theorem 3]{van2023elementary}: if the inverse temperatures $\beta_1$ and $\beta_2$ are such that, for any edge $\{u , v\} \in E\left( \Gamma_n^* \right)$ (either type 1 or type 2), Lammers' condition is satisfied
        \begin{equation} \label{Lammerscondition}
            V_{uv}^{\beta_1 , \beta_2}( \pm 1) \leq V_{uv}^{\beta_1 , \beta_2}( 0) + \ln 2,
        \end{equation}
        then for any increasing sequence of finite sets $G_n \subseteq \Gamma_n^*$ such that $G_{n} \subseteq G_{n+1}$ and $\cup_{n \in \N} G_n = \Gamma_n^*$, and any vertex $u \in G_0$
        \begin{equation} \label{delocarandomsurface}
            \lim_{n \to \infty} \mathbb{E}_{G_n, \beta_1 , \beta_2} \left[ |h(u)| \right] = \infty.
        \end{equation}
        \item Lower bound for the sum of correlations~\cite[Proposition 9]{van2023elementary}: there exists a constant $C := C(\beta_1, \beta_2, n) < \infty$ such that 
        \begin{equation} \label{sumcorrelation}
            \sup_{\substack{G \subseteq \Gamma_n^* \\ G \, \mathrm{finite} \\ 0_{\Gamma_n^*}\in G} }\mathbb{E}_{G , \beta_1 , \beta_2} \left[ |h(0_{\Gamma_n^*})| \right] \leq C \sum_{k , k' = 0}^\infty \left\langle  \cos \left( \theta_{(-k/(n+1), 0)} - \theta_{(k'/(n+1), 0)} \right) \right\rangle_{\mu_{\Gamma_n , \beta_1 , \beta_2}},
        \end{equation}
        where the supremum is considered over all the finite subgraphs $G \subseteq \Gamma_n^*$ containing $0_{\Gamma_n^*}$.
    \end{itemize}
\end{proposition}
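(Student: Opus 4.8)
The plan is to follow \cite[Sections 3--5]{van2023elementary} and reduce both bullet points to established results on integer-valued height functions, the only genuinely new point being that the heterogeneity of the temperatures and the subdivision of the edges enter solely through the explicit single-edge weights. First I would make the duality between the $XY$ model on $\Gamma_n$ of Definition~\ref{def.triangheterogeneousXY} and the height function of Definition~\ref{dualheightfct} explicit. Expanding each Boltzmann weight via the Fourier--Bessel identity $e^{\beta \cos t} = \sum_{k \in \Z} I_k(\beta) e^{ikt}$ and integrating out the angles forces a divergence-free integer current on the edges of $\Gamma_n$; by planar duality such a current is the discrete gradient of an integer height function on the faces $\Gamma_n^*$. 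Grouping the edges of $\Gamma_n$ separating two adjacent faces $u,v$ --- namely the $(n+1)$ collinear segments of a square side, of which exactly $2$ carry inverse temperature $\beta_1$ and $n-1$ carry $\beta_2$, for a type~1 edge, and the $(2n+2)$ segments of a diagonal, of which $4$ carry $\beta_1$ and $2n-2$ carry $\beta_2$, for a type~2 edge --- reproduces exactly the potentials $V_{uv}^{\beta_1,\beta_2}$ of Definition~\ref{dualheightfct}. The convexity of these potentials, which places the model in Lammers' framework, follows from the log-concavity and ratio monotonicity of the modified Bessel functions $I_k$ recorded in \cite{thiruvenkatachar1951inequalities} (see also \cite[Section 2]{van2023elementary}).

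Next, for the delocalisation statement \eqref{delocarandomsurface}, I would invoke Lammers' dichotomy \cite{lammers2022height} together with its extension by Lammers and Ott \cite{lammers2023delocalisation}: for integer-valued height functions with convex nearest-neighbour potentials on a planar graph with sufficient symmetry, Lammers' condition \eqref{Lammerscondition} forces delocalisation. Since \eqref{Lammerscondition} is an edge-local inequality, it suffices to check it separately on the two edge types, which is precisely the hypothesis imposed in the statement. The only remaining verification is that $\Gamma_n^*$ meets the structural requirements of those theorems (planarity, local finiteness, and invariance under the $\Z^2$-action inherited from the periodicity of $\Gamma_n$); this is immediate from the construction, so this part is a direct black-box application.

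For the correlation lower bound \eqref{sumcorrelation} I would reproduce the argument of \cite[Proposition 9]{van2023elementary}. In the current representation, $|h(0_{\Gamma_n^*})|$ counts the signed number of dual level lines separating the face $0_{\Gamma_n^*}$ from $\partial G$, and each level line crossing a prescribed dual edge is an event whose probability is controlled by the corresponding $XY$ two-point function, through the standard identity expressing an expected current as a derivative of the free energy under a defect insertion. Summing these crossings along the horizontal line of faces emanating from $0_{\Gamma_n^*}$ --- whose incident vertices on the $x$-axis sit precisely at the subdivided positions $j/(n+1)$ --- yields the right-hand side of \eqref{sumcorrelation}, up to the multiplicative constant $C(\beta_1,\beta_2,n)$.

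The main obstacle I anticipate is bookkeeping rather than analysis: one must check that the insertion of a spin pair $\cos(\theta_x-\theta_y)$ corresponds, under the duality, to a unit mismatch of the dual height whose expectation is exactly the two-point function, and that the grouping of subdivided $\beta_1$- and $\beta_2$-segments reproduces the stated potentials with the correct multiplicities (the counts $2,n-1$ and $4,2n-2$ above). Once this combinatorial dictionary is fixed, the heterogeneity enters only through $I_k(2\beta_1)$ and $I_k(2\beta_2)$, no new analytic input beyond convexity is required, and the theorems of \cite{lammers2022height, lammers2023delocalisation, van2023elementary} apply essentially verbatim.
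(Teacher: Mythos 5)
Your proposal follows essentially the same route as the paper: translate the $XY$ model on $\Gamma_n$ into the dual height function of Definition~\ref{dualheightfct} (your multiplicity counts --- $2$ segments at $\beta_1$ and $n-1$ at $\beta_2$ for a type 1 edge, $4$ and $2n-2$ for a type 2 edge --- are exactly right), observe convexity of the potentials via the Bessel-function facts, and then reduce both bullets to \cite{lammers2022height, lammers2023delocalisation} and \cite[Theorem 3 and Proposition 9]{van2023elementary}. Your treatment of the second bullet matches the paper's, which applies \cite[Proposition 9]{van2023elementary} with $\Gamma = \Gamma_n^*$, $\epsilon = 1$ and the cut given by the horizontal line of subdivided vertices $\{ (k/(n+1),0) : k \in \Z\}$, noting that only a notational modification of their argument is needed to accommodate heterogeneous coupling constants.

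The gap is in the first bullet: you never address the \emph{absolute-value-FKG} property of the dual height function, and without it the black-box application you describe does not go through. Lammers' theorem \cite{lammers2022height} (whose hypotheses also include that the graph be trivalent --- the very reason the triangulation was introduced, since $\Gamma_n^*$ is cubic) yields the non-existence of shift-invariant Gibbs measures, whereas the conclusion~\eqref{delocarandomsurface} is a statement about divergence of finite-volume expectations along an arbitrary exhaustion. The bridge between these two statements is \cite[Theorem 3]{van2023elementary}, and that theorem takes absolute-value-FKG as a hypothesis; it is also what underlies the monotonicity of $\mathbb{E}_{G, \beta_1 , \beta_2}\left[ |h(u)| \right]$ in the domain $G$, without which the limit in~\eqref{delocarandomsurface} is not even guaranteed to exist. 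Absolute-value-FKG is not a formal consequence of convexity that can be cited off the shelf for these potentials: van Engelenburg and Lis prove it by hand for their specific Bessel-type potential (their Lemmas 5 and 6), and since the potentials of Definition~\ref{dualheightfct} are different (products of Bessel weights at two temperatures, with multiplicities depending on $n$ and on the edge type), the property must be re-verified. This verification --- which the paper handles by observing that the proofs of \cite[Lemma 5 and Lemma 6]{van2023elementary} go through for the modified potentials --- is precisely the one step of the argument that is not a pure citation, and it is absent from your proposal.
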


\begin{proof}
The first statement is a consequence of~\cite[Theorem 3]{van2023elementary} combined with the result of Lammers~\cite{lammers2022height} regarding the non-existence of translation invariant Gibbs measure for height functions. The only minor difference with~\cite[Theorem 3]{van2023elementary} is that we need to verify the absolute-value FKG inequality for the dual height function with the potentials introduced in Definition~\ref{dualheightfct} (which are slightly different from the ones used in~\cite{van2023elementary}). Nevertheless, the proof in our case is a direct consequence of the proof of~\cite[Lemma 5 and Lemma 6]{van2023elementary}, we thus omit the technical details.

The second statement is essentially a less general version~\cite[Proposition 9]{van2023elementary}. It can be deduced from their result by choosing $\Gamma$ to be the graph $\Gamma_n^*$, $\epsilon = 1$ and the cut $L$ to be the horizontal line $\{ (k/(n+1) , 0) \, : \, k \in \Z\} \subseteq \Gamma_n.$ We note that there is a minor difference between~\cite[Proposition 9]{van2023elementary} and the situation of Proposition~\ref{prop.A11} as~\cite[Proposition 9]{van2023elementary} assumes that all the edges have the same inverse temperature $\beta$. Nevertheless, a notational modification of the arguments of~\cite[Proposition 9]{van2023elementary} would yield the result for a collection of arbitrary non-negative coupling constants. We thus omit the technical details.
\end{proof}

We have now collected all the necessary ingredients to complete the proof of Proposition~\ref{propApp.phasetransitionextended}.

\begin{proof}[Proof of Proposition~\ref{propApp.phasetransitionextended}]
    We essentially rewrite the argument of~\cite[Proof of Theorem 15 for the square lattice]{van2023elementary}. We first show that we may choose two inverse temperatures $\beta_1$ and $\beta_2$ such that Lammers' condition~\eqref{Lammerscondition} is satisfied. Using Definition~\ref{dualheightfct}, we see that it is sufficient to select $\beta_1$ and $\beta_2$ such that
    \begin{equation} \label{conditionbeta1beta2}
        \frac{I_0(2 \beta_1)}{I_1(2 \beta_1)} \leq 2^{\frac{1}{8}} ~~\mbox{and}~~ \frac{I_0(2 \beta_2)}{I_1(2 \beta_2)} \leq e^{\frac{1}{4(n-1)}}.
    \end{equation}
    Using that the ratio $I_0(\beta)/I_1(\beta)$ converges to $1$ as $\beta$ tends to infinity, we may select a (universal) inverse temperature $\beta_1 < \infty$ and an inverse temperature $\beta_2 < \infty$ (depending on the integer $n$) such that the conditions~\eqref{conditionbeta1beta2} are satisfied.

    Once the inverse temperatures $\beta_1$ and $\beta_2$ are selected so that the inequalities~\eqref{Lammerscondition} hold, we may combine~\eqref{delocarandomsurface} with the inequality~\eqref{sumcorrelation} to see that the sum of correlations on the right-hand side of~\eqref{sumcorrelation} must be infinite. Combining this result with the dichotomy stated in Proposition~\ref{prop.dichotomy} and Lemma~\ref{LemmaA6}, we see that the two-point function cannot decay exponentially fast (otherwise the right-hand side of~\eqref{sumcorrelation} would be finite), and thus the two-point function $ x \mapsto \langle \cos \left( \theta_0 - \theta_x \right) \rangle_{\mu_{\beta_1, \beta_2}}$ decays polynomially fast. The proof of Proposition~\ref{propApp.phasetransitionextended} is complete.
\end{proof}

\subsection{Long-range order on the extended lattice $\Zd_n$}

In this section, we establish the existence of a order/disorder phase transition for the $XY$ model on the extended lattice. The statement is recalled below. The proof is a minor adaptation of the recent proof of the long range-order for the $XY$ model of the second author and Spencer~\cite{garban2022continuous}. We first establish in Proposition~\ref{propA16} a form of long-range order for a special class of disordered version of the $XY$ model described by the \emph{Nishimori disorder} (which was originally introduced by Nishimori~\cite{nishimori1981internal} in the setting of the Ising model). We then show that it implies the result for the non-disordered $XY$ model using a correlation inequality of Messager, Miracle-Sole, and Pfister~\cite{MMP} (see Proposition~\ref{propMMSPineq}), and the Messager-Miracle-Sole inequality~\cite{messager1977correlation} (stated in Proposition~\ref{prop.corrineq}). 

\begin{proposition}[Long-range order for the $XY$ model on the extended lattice $\Zd_n$] \label{propAppLRO.phasetransitionextended}
In dimension $d \geq 3$, there exists an inverse temperature $\beta_{1, c}(d) < \infty$ such that, for every $n \in \N$, there exists an inverse temperature $\beta_{2, c}(d , n) < \infty$ such that, for any $\beta_1 \geq \beta_{1, c}(d)$ and any $\beta_2 \geq \beta_{2, c}(n , d)$
    \begin{equation*}
        \left\langle \cos(\theta_0 - \theta_x) \right\rangle_{\mu_{\beta_1 , \beta_2 }} ~\mbox{remains bounded away from } 0 \mbox{ as } |x| \to \infty.
    \end{equation*}
\end{proposition}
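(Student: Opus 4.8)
The plan is to adapt the random-walk/Nishimori-line proof of long-range order of \cite{garban2022continuous} to the extended lattice $\Zd_n$ equipped with the two coupling constants $\beta_1, \beta_2$ of Definition~\ref{def.heterogeneousXY}. As indicated in the preamble to this subsection, I would split the argument into two stages. First I would establish long-range order for an auxiliary \emph{Nishimori-disordered} version of the heterogeneous $XY$ model on $\Zd_n$ (the content of Proposition~\ref{propA16}); second, I would transfer this to the genuine non-disordered model by means of the correlation inequality of Messager--Miracle-Sole--Pfister~\cite{MMP} (Proposition~\ref{propMMSPineq}) together with the Messager--Miracle-Sole inequality already recorded in Proposition~\ref{prop.corrineq}.

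For the first stage I would use the integer-flow/random-walk representation of the $XY$ two-point function obtained by expanding each Boltzmann weight as $e^{\beta\cos(\theta_u-\theta_v)}=\sum_k I_k(\beta)e^{ik(\theta_u-\theta_v)}$. Along the Nishimori line this representation linearises, and the two-point function $\langle\cos(\theta_0-\theta_x)\rangle$ is bounded below by a quantity governed by a single random walk on $\Zd_n$ whose edge weights are dictated by the Bessel ratios, hence by $\beta_1$ on the backbone sub-edges and by $\beta_2$ on the interior sub-edges. The quantitative input is then the Exponential Intersection Tail (EIT) property invoked in Remark~\ref{r.EIT}, which controls the probability that two independent such walks meet more than $k$ times. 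The key observation is that, up to a time change on the sub-edges, $\Zd_n$ is a subdivision of $\Zd$: any walk on $\Zd_n$ projects to a walk on $\Zd$, and the interior subdivision vertices only insert geometric waiting times along each macroscopic step without altering the large-scale range. Thus the EIT property valid on $\Zd$ for $d\geq 3$ is inherited by $\Zd_n$, and the Garban--Spencer argument yields $\inf_x\langle\cos(\theta_0-\theta_x)\rangle>0$ once both temperatures are large enough.

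Quantitatively, I would track the effective conductance along a macroscopic edge: two backbone sub-edges of weight of order $\beta_1$ in series with $n-1$ interior sub-edges of weight of order $\beta_2$ give a series resistance of order $1/\beta_1+n/\beta_2$, which is precisely the expression in the lower bound $1-C\sqrt{1/\beta_1+n/\beta_2}$ announced in Remark~\ref{remark2.24}. The decisive structural point to extract is that the threshold $\beta_{1,c}(d)$ may be chosen \emph{independent of $n$}: the EIT constants and the local geometry seen by the walk at each $\Zd$-vertex depend only on $\beta_1$ and the degree $2d$, while $\beta_2$ need only be taken large (depending on $n$) to keep the effective conductance of each subdivided edge bounded away from $0$. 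This $n$-uniformity of $\beta_1$ is exactly what the renormalization in the proof of Theorem~\ref{proofKTsitegeneral} requires.

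For the second stage, the MMP inequality of \cite{MMP} (Proposition~\ref{propMMSPineq}) transfers the lower bound of the first stage from the Nishimori-disordered model to the non-disordered heterogeneous $XY$ model, and a final application of the Messager--Miracle-Sole inequality (Proposition~\ref{prop.corrineq}, in the form~\eqref{MMSimpliescomparison}) promotes the bound, first obtained along coordinate directions, to a uniform lower bound over all $x$. I expect the main obstacle to lie in verifying that the EIT property genuinely survives the subdivision with heterogeneous weights and, more delicately, in decoupling the two temperature thresholds so that $\beta_{1,c}(d)$ does not degenerate as $n\to\infty$: controlling the additional short-range self-intersections that a walk can accumulate while traversing the long subdivided edges is the technical heart of the matter.
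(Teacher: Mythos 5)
Your proposal is correct and follows essentially the same route as the paper: long-range order is first established for the Nishimori-disordered heterogeneous model (Proposition~\ref{propA16}) via the Garban--Spencer gauge identity and the exponential intersection tail property of increasing paths, yielding the bound $1-C\sqrt{1/\beta_1+n/\beta_2}$ with $\beta_{1,c}(d)$ independent of $n$, and this is then transferred to the non-disordered model by the Messager--Miracle-Sole--Pfister inequality (Proposition~\ref{propMMSPineq}) and extended to all $x\in\Zd_n$ by the Messager--Miracle-Sole inequality. The only discrepancy is cosmetic: the paper needs neither a Bessel-series expansion nor an EIT property on $\Zd_n$ itself (nor any control of short-range self-intersections inside subdivided edges, which you single out as the technical heart) --- it simply lifts the increasing paths of $\Zd$ to $\Zd_n$, so that each shared macroscopic edge contributes one factor $\lambda_n(\beta_1,\beta_2)^{-1}$ in the second-moment computation and the EIT bound on $\Zd$ applies verbatim once $1/\beta_1+n/\beta_2$ is small.
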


\begin{remark}
The following quantitative estimate can be deduced from the argument: there exists a constant $C := C(d) < \infty$ such that, for any $x \in \Zd_n$,
\begin{equation*}
    \langle \cos \left( \theta_0 - \theta_x \right) \rangle_{\mu_{\beta_1, \beta_2}} \geq 1 - C \sqrt{ \frac{1}{\beta_1} + \frac{n}{\beta_2}}.
\end{equation*}

\end{remark}

\subsubsection{The extended $XY$ model in a Nishimori disorder}

Following~\cite{garban2022continuous}, we introduce the Nishimori disorder, and then introduce the $XY$ model in a Nishimori disorder. 

\begin{definition}[Nishimori disorder]
Fix $n \in \N$ and $\Lambda \subseteq \Z^d_n$ and a pair of inverse temperatures $\beta_1 , \beta_2 >0$. We let $\omega := \left( \omega_{xy} \right)_{(x,y) \in \vec{E} \left( \Lambda \right)} \in [0, 2\pi)^{\vec{E} \left( \Lambda \right)}$ be a collection of random variables satisfying the following properties:
\begin{enumerate}
    \item[(i)] For any $(x,y) \in \vec{E} \left( \Lambda \right)$, $\omega_{xy} = - \omega_{yx}$.
    \item[(ii)] For any pair of distinct oriented edges $(x,y), (z,t) \in \vec{E} \left( \Lambda \right)$, the random variables $\omega_{xy}$ and $\omega_{zt}$ are independent.
    \item[(iii)] For any $(x,y) \in \vec{E} \left( \Lambda \right)$ such that $\{x , y \} \cap \Z \neq \emptyset$ (i.e., an edge colored in red in Figure~\ref{XYextended}), the law of $\omega_{xy}$ is supported on $[0 , 2\pi]$, and its density is given by the distribution:
    \begin{equation} \label{densityrhobeta1}
        \rho_{\beta_1}(\omega) := \frac{1}{Z_{\beta_1}} e^{\beta_1 \cos (\omega)} ~~\mbox{with}~~ Z_{\beta_1} := \int_{0}^{2\pi} e^{\beta_1 \cos (\omega)} d \omega.
    \end{equation}
    Similarly, for any $(x,y) \in \vec{E} \left( \Lambda \right)$ such that $\{x , y \} \cap \Z = \emptyset$ (i.e., an edge colored in blue in Figure~\ref{XYextended}), the law of $\omega_{xy}$ is supported on $[0 , 2\pi]$, and its density is given by the distribution:
    \begin{equation} \label{densityrhobeta2}
        \rho_{\beta_2}(\omega) := \frac{1}{Z_{\beta_2}} e^{\beta_2 \cos (\omega)} ~~\mbox{with}~~ Z_{\beta_2} := \int_{0}^{2\pi} e^{\beta_2 \cos (\omega)} d \omega.
    \end{equation}
    We denote by $\mathbb{P}^{\mathrm{Nish}}_{\beta_1, \beta_2}$ and $\mathbb{E}^{\mathrm{Nish}}_{\beta_1, \beta_2}$ the probability and expectation with respect to this quenched disorder.
\end{enumerate}
\end{definition}

We next defined the disordered version of the $XY$ model which will be used in the proof.

\begin{definition}[Disordered $XY$ model on the extended lattice] \label{def.disorderedXYmodel}
    Let $\Lambda \subseteq \Zd_n$ be a finite subset. For any pair of inverse temperatures $\beta_1 , \beta_2 > 0$ and any disorder $\omega \in (0, 2\pi)^{\vec{E}(\Lambda)}$, we define the Hamiltonian
    \begin{equation*}
    H_{\Lambda, \beta_1 , \beta_2, \omega}(\theta) := - \beta_1 \sum_{\substack{x \sim_n y \\ x,y \in \Lambda \\ \{ x , y \} \cap \Zd \neq \emptyset}} \cos (\theta_x - \theta_y - \omega_{xy}) - \beta_2 \sum_{\substack{x \sim_n y \\ x,y \in \Lambda \\ \{ x , y \} \cap \Zd = \emptyset}} \cos (\theta_x - \theta_y - \omega_{xy}),
\end{equation*}
as well as the probability measure 
\begin{equation*}
        \mu_{\Lambda, \beta_1 , \beta_2, \omega}(d\theta) := \frac{1}{Z_{\Lambda, \beta_1 , \beta_2 }}\exp \left( - H_{\Lambda, \beta_1 , \beta_2, \omega}(\theta)  \right) \prod_{x \in \Lambda} d \theta_x.
\end{equation*}
\end{definition}

We now state the main result of this section which establishes the long-range order for the $XY$ model in a Nishimori disorder. The result can be compared to~\cite[Theorem 1.3]{garban2022continuous} (and the proof is an adaptation of this result). In the following statement and below, given an integer $k \in \N$, we denote by $\vec{\mathbf{k}} := (k , \ldots, k) \in \Zd$ (the vertex of $\Zd$ whose coordinates are all equal to $k$).

\begin{proposition} \label{propA16}
    Fix a dimension $d \geq 3$, an integer $n \in \N$, an integer $k \in \N$ and a finite box $\Lambda \subseteq \Zd_n$ such that $0 \in \Lambda$ and $\vec{\mathbf{k}} \in \Lambda$. There exists a constant $C := C(d) < \infty$ such that, for any pair of inverse temperatures $\beta_1 , \beta_2 \in (0, \infty)$,
    \begin{equation} \label{ineq:1637}
        \mathbb{E}^{\mathrm{Nish}}_{\Lambda, \beta_1,\beta_2} \left[ \left\langle \cos \left( \theta_0 - \theta_{\vec{\mathbf{k}}} \right) \right\rangle_{\mu_{\Lambda, \beta_1 , \beta_2, \omega}} \right] \geq 1 - C \sqrt{ \frac{1}{\beta_1} + \frac{n}{\beta_2}}.
    \end{equation}
\end{proposition}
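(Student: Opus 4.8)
The plan is to adapt the random-walk representation of~\cite{garban2022continuous} to the disordered model on the extended lattice, exploiting the self-consistent structure of the Nishimori disorder. I would start from the elementary identity $1 - \cos(\theta_0 - \theta_{\vec{\mathbf{k}}}) = \tfrac12 \bigl| e^{i\theta_0} - e^{i\theta_{\vec{\mathbf{k}}}} \bigr|^2$, so that proving~\eqref{ineq:1637} amounts to showing that the quenched-averaged correlation satisfies a bound of the form $1 - \mathbb{E}^{\mathrm{Nish}}_{\Lambda, \beta_1,\beta_2}\bigl[\langle \cos(\theta_0 - \theta_{\vec{\mathbf{k}}})\rangle_{\mu_{\Lambda, \beta_1, \beta_2, \omega}}\bigr] \leq C\sqrt{R_{\mathrm{eff}}(0,\vec{\mathbf{k}})}$, where $R_{\mathrm{eff}}$ is the effective resistance of an associated electrical network. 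As in~\cite{garban2022continuous}, the square root is intrinsic to this (reflection-positivity-free) estimate and arises from a Cauchy-Schwarz step relating the spin increment to the associated random walk; the remaining task is then to bound $R_{\mathrm{eff}}(0,\vec{\mathbf{k}})$ by $C(1/\beta_1 + n/\beta_2)$.

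The core of the argument is a probabilistic representation of the Nishimori-averaged correlations. Expanding each weight via $e^{\beta \cos \alpha} = \sum_{m \in \Z} I_m(\beta)\, e^{i m \alpha}$ rewrites both the partition function $Z_{\Lambda, \beta_1, \beta_2, \omega}$ and the numerator of the two-point function as sums over integer-valued divergence-free currents $\{m_e\}$ on the edges of $\Lambda$ (with a unit source at $0$ and sink at $\vec{\mathbf{k}}$ in the numerator). The Nishimori condition then enters decisively and twice. First, integrating the oscillatory factor $e^{-i m_e \omega_e}$ against the \emph{matching} disorder density $\rho_{\beta_1}$ (resp. $\rho_{\beta_2}$) contributes exactly one further Bessel factor $I_{m_e}(\beta_i)/I_0(\beta_i)$ per edge, so that the disorder average of a current configuration is again an explicit product of Bessel ratios. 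Second, I would invoke the gauge identity for the Nishimori measure: under the joint law $\mathbb{P}^{\mathrm{Nish}}_{\beta_1, \beta_2}(d\omega)\,\mu_{\Lambda, \beta_1, \beta_2, \omega}(d\theta)$, the gauged field $\eta_{xy} := \omega_{xy} - (\theta_x - \theta_y)$ is itself distributed according to $\mathbb{P}^{\mathrm{Nish}}_{\beta_1, \beta_2}$. This holds because $Z_{\Lambda, \beta_1, \beta_2, \omega}$ is invariant under adding gradients to $\omega$, hence depends on $\omega$ only through its plaquette curl, which is unchanged by subtracting the gradient $(\theta_x - \theta_y)$; consequently the $\omega$-dependent normalisation $Z_{\Lambda, \beta_1, \beta_2, \omega}^{-1}$ cancels after the disorder average and leaves a genuine probability measure on currents.

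With this representation in hand I would read the resulting object as (the real part of) a weighted sum over random walks from $0$ to $\vec{\mathbf{k}}$ on $\Zd_n$, whose steps are governed by the ratios $I_{\pm 1}(\beta_i)/I_0(\beta_i)$, and control the decoherence of the phase along the walk by the energy of a unit flow between $0$ and $\vec{\mathbf{k}}$, that is by $R_{\mathrm{eff}}(0,\vec{\mathbf{k}})$ in the network where each edge incident to $\Zd$ carries conductance proportional to $\beta_1$ and each interior edge conductance proportional to $\beta_2$. Since $d \geq 3$, the associated walk is transient and, more precisely, the exponential-intersection-tail property of~\cite{benjamini1998unpredictable} used in~\cite{garban2022continuous} applies; hence $R_{\mathrm{eff}}(0,\vec{\mathbf{k}})$ is bounded, uniformly in $k$ and in $\Lambda$, by a constant multiple of the resistance of a single ``super-edge'' of $\Zd_n$. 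By the series law this super-edge resistance is $2/\beta_1 + (n-1)/\beta_2 \leq C(1/\beta_1 + n/\beta_2)$, which combined with the square-root estimate above yields~\eqref{ineq:1637}.

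The hard part will be the second step: rigorously cancelling the $\omega$-dependent partition function and turning the quenched, $Z_{\Lambda, \beta_1, \beta_2, \omega}^{-1}$-weighted average into a clean, nonnegative random-walk measure. This reorganisation of the current expansions of numerator and denominator is exactly where the gauge symmetry of the Nishimori line is indispensable, and where the analogous step in~\cite{garban2022continuous} requires the most care, since one must track the signs $e^{i m_e \omega_e}$ through the disorder integration to verify that the surviving weights are genuinely those of a probability measure. A secondary point, specific to the extended lattice, is to check that the heterogeneous conductances $(\beta_1,\beta_2)$ do not destroy transience on $\Zd_n$ and that the effective-resistance bound is uniform in $n$ up to the stated factor $n/\beta_2$; this is precisely where the two-temperature structure of $\Zd_n$ and the series law for the $n-1$ interior resistors of each super-edge are used, and it is what makes $\beta_1$ (unlike $\beta_2$) choosable independently of $n$.
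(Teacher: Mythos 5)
Your proposal correctly identifies several of the right ingredients — the gauge identity of the Nishimori line (this is exactly Lemma~\ref{lemmaA17} of the paper), the Bessel ratios, a Cauchy--Schwarz step producing the square root, and the series-law count $2/\beta_1+(n-1)/\beta_2$ per super-edge of $\Zd_n$ — but the argument as structured has a genuine gap, and it sits precisely at the step you yourself flag as ``the hard part''. The gauge identity applies only to \emph{gauge-invariant} observables, i.e.\ to products of functions of the combinations $\theta_x-\theta_y+\omega_{xy}$; the bare two-point function $\cos(\theta_0-\theta_{\vec{\mathbf{k}}})$ is not of this form, so the identity cannot by itself cancel the $\omega$-dependent normalisation $Z_{\Lambda,\beta_1,\beta_2,\omega}^{-1}$ in the quenched average of your current expansion. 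The paper's resolution (following \cite{garban2022continuous}) is to \emph{dress} the two-point function with disorder phases along a fixed increasing path $\gamma$ from $0$ to $\vec{\mathbf{k}}$: the observable $e^{i(\theta_{\vec{\mathbf{k}}}-\theta_0)}\prod_{(x,y)\in\gamma}e^{i\omega_{xy}}$ factors into gauge-invariant per-edge terms, and the gauge identity then yields the \emph{exact} evaluation $\mathbb{E}^{\mathrm{Nish}}_{\Lambda,\beta_1,\beta_2}\bigl[\langle e^{i(\theta_{\vec{\mathbf{k}}}-\theta_0)}\rangle_{\mu_{\Lambda,\beta_1,\beta_2,\omega}}\prod_{(x,y)\in\gamma}e^{i\omega_{xy}}\bigr]=\lambda_n(\beta_1,\beta_2)^{dk}$, where $\lambda_n:=\E_{\rho_{\beta_1}}[\cos\omega]^2\,\E_{\rho_{\beta_2}}[\cos\omega]^{n-1}$. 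Your plan never introduces this dressing; instead you assert the intermediate inequality $1-\mathbb{E}^{\mathrm{Nish}}[\langle\cos(\theta_0-\theta_{\vec{\mathbf{k}}})\rangle]\leq C\sqrt{R_{\mathrm{eff}}(0,\vec{\mathbf{k}})}$ with no mechanism behind it. Since on the extended lattice $R_{\mathrm{eff}}(0,\vec{\mathbf{k}})\asymp_d 1/\beta_1+n/\beta_2$ uniformly in $k$, that inequality \emph{is} the proposition up to constants, so the plan is circular at the decisive point; such spin-wave-type lower bounds are not available by a soft Cauchy--Schwarz argument.

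A second, related confusion concerns the role of the exponential intersection tail. Bounding $R_{\mathrm{eff}}$ uniformly in $k$ requires only transience of $\Zd$, $d\geq 3$, and has nothing to do with EIT; conversely, EIT does not follow from transience (two independent simple random walks, or uniform increasing paths, in $\Z^3$ fail it — this is why the construction of \cite{benjamini1998unpredictable} is needed at all). In the actual proof, one averages the dressed identity over paths sampled from the measure $\mathfrak{M}_k$ of Theorem~\ref{thmunpredic}, obtaining $\mathbb{E}^{\mathrm{Nish}}\bigl[\langle e^{i(\theta_{\vec{\mathbf{k}}}-\theta_0)}\rangle_{\mu_{\Lambda,\beta_1,\beta_2,\omega}}\,\mathbf{R}(\omega)\bigr]=1$ with $\mathbf{R}(\omega):=\lambda_n^{-dk}\,\E_{\mathfrak{M}_k}\bigl[\prod_{(x,y)\in\gamma}e^{i\omega_{xy}}\bigr]$, and then applies Cauchy--Schwarz against $\mathbb{E}^{\mathrm{Nish}}[|\mathbf{R}(\omega)-1|^2]$. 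Computing this second moment produces $\E_{\mathfrak{M}_k\otimes\mathfrak{M}_k}\bigl[\lambda_n^{-|\gamma_1\cap\gamma_2|}\bigr]$, an \emph{exponential} moment of the intersection number of two independent paths: this is where EIT is indispensable, and it is what makes the variance of order $1/\beta_1+n/\beta_2$ rather than growing with $k$. So while your final numerology agrees with the paper's answer (indeed $-\ln\lambda_n\asymp 1/\beta_1+n/\beta_2$ is, up to a constant, your super-edge resistance), the proof as proposed does not go through: the path dressing and the second-moment/EIT structure are not optional refinements of your sketch but the mechanism that replaces the missing step.
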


The proof of Proposition~\ref{propA16} relies on the two following lemmas. The first one is an identity which relies on the specific structure of the Nishimori disorder. Its proof is based on a gauge transform and is essentially identical to the proof written in~\cite[Lemma 2.1]{garban2022continuous}, we thus omit the demonstration and refer the interested reader to~\cite{garban2022continuous}. 

The second result is the so-called \emph{unpredictable path} of Benjamini, Pemantle and Peres~\cite{benjamini1998unpredictable}. In order to state the result, we consider the lattice $\Z^3$ in dimension $d=3$ and define an \emph{increasing} paths of $\Z^3$ to be an infinite path starting from $0$ and formed by sums of $(1 , 0 , 0)$, $(0 , 1 ,0 )$ and $(0,0,1)$ (see Figure~\ref{unpredictablepath}). The main result of~\cite{benjamini1998unpredictable} (stated below) asserts that it is possible to find a probability distribution $\mathfrak{M}$ on the set of increasing path such that two paths sampled independently according to $\mathfrak{M}$ have typically small intersection. We will need here a finite volume version of the result whose proof can be found in~\cite{abbe2018group}.

\begin{figure} 
\begin{center}
\includegraphics[width=10cm]{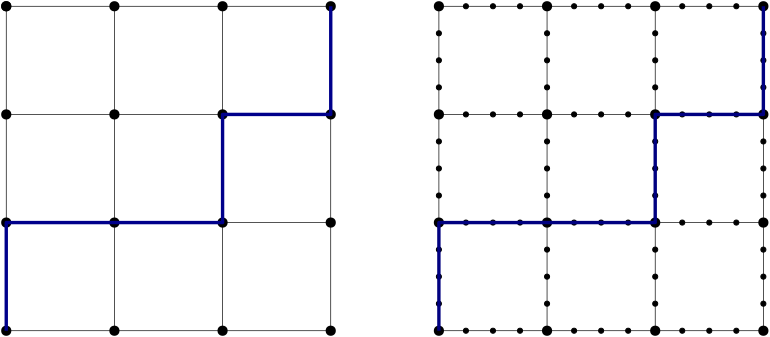}
\caption{An increasing path between $0$ and $\vec{\mathbf{k}}$ is depicted in blue on the lattice $\Zd$ on the left with the value $k = 3$ (and in dimension $d = 2$). The corresponding path on the extended lattice $\Zd_n$ is drawn on the right with the value $n=3$. On the extended lattice, the path visits exactly $(n+1) d k$ edges, $2dk$ of these edges are of the form $e = (x , y)$ with $\{x , y\}\cap  \Zd \neq \emptyset$ and $(n-1) d k$ of them are of the form $e = (x , y)$ with $\{x , y\}\cap  \Zd = \emptyset$.} \label{unpredictablepath}
\end{center}
\end{figure}

\begin{lemma} \label{lemmaA17}
    For any finite domain $\Lambda \subseteq \Zd_n$, any pair of inverse temperatures $\beta_1 , \beta_2 >0$, and any collection of smooth, periodic functions $(f_{xy})_{(x,y) \in \vec{E}(\Lambda)}$, one has the identity
    \begin{equation*}
        \mathbb{E}^{\mathrm{Nish}}_{\Lambda, \beta_1,\beta_2}  \left[ \left\langle \prod_{\substack{x , y \in \Lambda \\x \sim_n y}} f_{xy}(\theta_x - \theta_y + \omega_{xy}) \right\rangle_{\mu_{\Lambda, \beta_1,\beta_2, \omega}} \right] = \mathbb{E}^{\mathrm{Nish}}_{\Lambda, \beta_1,\beta_2}  \left[ \prod_{\substack{x , y \in \Lambda \\x \sim_n y}} f_{xy}( \omega_{xy}) \right].
    \end{equation*}
\end{lemma}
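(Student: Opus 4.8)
The plan is to establish the identity by a single \emph{gauge} change of variables performed simultaneously on the disorder $\omega$ and the spins $\theta$, exploiting the defining feature of the Nishimori disorder: by~\eqref{densityrhobeta1}--\eqref{densityrhobeta2}, the density $\rho_{\beta_e}(\omega_e)\propto e^{\beta_e\cos\omega_e}$ of each coupling has the \emph{same} functional form as the corresponding factor $e^{\beta_e\cos(\theta_x-\theta_y-\omega_e)}$ of the Gibbs weight in Definition~\ref{def.disorderedXYmodel}. Throughout I write $e=\{x,y\}$ for an oriented edge, $\beta_e\in\{\beta_1,\beta_2\}$ for its inverse temperature, and $Z(\omega)$ for the partition function of $\mu_{\Lambda,\beta_1,\beta_2,\omega}$ (making the $\omega$-dependence explicit). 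The left-hand side is the expectation, under the joint law $\mathbb{E}^{\mathrm{Nish}}_{\Lambda,\beta_1,\beta_2}(d\omega)\,\mu_{\Lambda,\beta_1,\beta_2,\omega}(d\theta)$, of the product over edges of $f_e$ evaluated at the Gibbs frustration $\theta_x-\theta_y-\omega_e$ entering the Hamiltonian of Definition~\ref{def.disorderedXYmodel}; its density with respect to the product of uniform measures on the tori is
\[
p(\theta,\omega)=\Big(\prod_e \tfrac{1}{Z_{\beta_e}}\,e^{\beta_e\cos\omega_e}\Big)\,\frac{1}{Z(\omega)}\prod_e e^{\beta_e\cos(\theta_x-\theta_y-\omega_e)}.
\]

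Next I would perform the change of variables. For $\theta$ fixed, set $\eta_e:=\theta_x-\theta_y-\omega_e$ on each oriented edge, equivalently $\omega_e=\theta_x-\theta_y-\eta_e$; this is an affine reflection of each circle coordinate, hence measure preserving ($d\omega_e=d\eta_e$), and it respects the antisymmetry since $\eta_{yx}=\theta_y-\theta_x-\omega_{yx}=-\eta_{xy}$. Under it the Gibbs factor becomes $e^{\beta_e\cos\eta_e}$, the disorder factor becomes $e^{\beta_e\cos(\theta_x-\theta_y-\eta_e)}$, and the observable becomes $\prod_e f_e(\eta_e)$, so that
\[
\widetilde p(\theta,\eta)=\Big(\prod_e \tfrac{1}{Z_{\beta_e}}\,e^{\beta_e\cos\eta_e}\Big)\,\frac{1}{Z(\theta-\eta)}\prod_e e^{\beta_e\cos(\theta_x-\theta_y-\eta_e)},
\]
where $Z(\theta-\eta)$ is $Z$ evaluated at $\omega_e=\theta_x-\theta_y-\eta_e$. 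The crux is that integrating out $\theta$ reconstitutes the partition function: once one knows $Z(\theta-\eta)=Z(\eta)$ is independent of $\theta$, the $\theta$-integral of $\frac{1}{Z(\theta-\eta)}\prod_e e^{\beta_e\cos(\theta_x-\theta_y-\eta_e)}$ equals $\frac{1}{Z(\eta)}\int\prod_e e^{\beta_e\cos(\theta_x-\theta_y-\eta_e)}\prod_x d\theta_x=1$ by the very definition of $Z(\eta)$. Hence the $\eta$-marginal of $\widetilde p$ is exactly $\prod_e\rho_{\beta_e}(\eta_e)$, i.e. the Nishimori law, and the left-hand side equals $\int\prod_e\rho_{\beta_e}(\eta_e)\,f_e(\eta_e)\,d\eta=\mathbb{E}^{\mathrm{Nish}}_{\Lambda,\beta_1,\beta_2}\big[\prod_e f_e(\omega_e)\big]$, which is the claim.

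The one genuine subtlety — and the step I expect to be the main obstacle — is the gauge invariance $Z(\theta-\eta)=Z(\eta)$. Writing $Z(\omega)=\int\prod_e e^{\beta_e\cos((d\theta')_e-\omega_e)}\prod_x d\theta'_x$ with $(d\theta')_e=\theta'_x-\theta'_y$, the shift $\theta'\mapsto\theta'+c$ shows $Z(\omega)=Z(\omega+dc)$ for every $c$, so that $Z$ depends on $\omega$ only through its fluxes around the cycles of $\Lambda\subseteq\Zd_n$; and $\theta'\mapsto-\theta'$ shows $Z(\omega)=Z(-\omega)$. Since $(\theta-\eta)_e=(d\theta)_e+(-\eta)_e$ differs from $-\eta$ by the exact form $d\theta$, these two invariances yield $Z(\theta-\eta)=Z(-\eta)=Z(\eta)$, as needed. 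On a tree this is trivial because $Z$ is constant in $\omega$, but it is precisely the flux argument that makes the computation valid on the extended lattice $\Zd_n$, which contains many cycles. Everything else is bookkeeping of the product over edges and the independence of the couplings; the argument is insensitive to the two-temperature structure of $H_{\Lambda,\beta_1,\beta_2}$, each edge being handled according to its own $\beta_e$. This reproduces the gauge computation of~\cite[Lemma 2.1]{garban2022continuous}.
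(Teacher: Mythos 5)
Your proof is correct, and it follows the same route as the paper, which omits the argument and points to \cite[Lemma 2.1]{garban2022continuous}: writing the joint density, performing the measure-preserving change of variables $\omega_e\mapsto\eta_e=\theta_x-\theta_y-\omega_e$ at fixed $\theta$, invoking the gauge invariance $Z(\omega+dc)=Z(\omega)$ together with $Z(-\omega)=Z(\omega)$, and reconstituting $Z(\eta)$ upon integrating out $\theta$ are exactly the ingredients of that gauge computation. Your handling of the only delicate point --- the gauge invariance of $Z$ on a graph with cycles such as $\Zd_n$ --- is correct, as is the bookkeeping (one integration variable per unoriented edge, the antisymmetry $\eta_{yx}=-\eta_{xy}$, and the irrelevance of the two-temperature structure).

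One thing you should make explicit rather than do silently: you prove the identity with the observable evaluated at the frustration $\theta_x-\theta_y-\omega_{xy}$ of Definition~\ref{def.disorderedXYmodel}, whereas Lemma~\ref{lemmaA17} as printed has $\theta_x-\theta_y+\omega_{xy}$. Your reading is the right one, because with the Hamiltonian convention of Definition~\ref{def.disorderedXYmodel} the identity with the $+$ sign is false already for a single edge: taking $\Lambda=\{x,y\}$ with one edge at inverse temperature $\beta$, $f_{xy}(u)=e^{iu}$ and all other functions equal to $1$, one computes $\left\langle e^{i(\theta_x-\theta_y+\omega_{xy})}\right\rangle_{\mu_{\Lambda,\beta_1,\beta_2,\omega}}=e^{2i\omega_{xy}}\,I_1(\beta)/I_0(\beta)$, so the left-hand side equals $\bigl(I_2(\beta)/I_0(\beta)\bigr)\cdot\bigl(I_1(\beta)/I_0(\beta)\bigr)$ while the right-hand side equals $I_1(\beta)/I_0(\beta)$, and these differ for every finite $\beta$. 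In other words, the $+$ in the statement is a sign typo relative to the Hamiltonian: the argument of $f_{xy}$ must coincide (up to a global sign) with the expression appearing in the Gibbs weight, otherwise the change of variables cannot factor the observable, and each edge acquires a spurious factor such as $\mathbb{E}^{\mathrm{Nish}}_{\Lambda,\beta_1,\beta_2}[e^{2i\omega_e}]<1$. The sign-matched version you prove is precisely what the proof of Proposition~\ref{propA16} requires: there the correction merely turns $\prod_{(x,y)\in\gamma}e^{i\omega_{xy}}$ into $\prod_{(x,y)\in\gamma}e^{-i\omega_{xy}}$, which is immaterial since $\omega_{yx}=-\omega_{xy}$ and the densities \eqref{densityrhobeta1}--\eqref{densityrhobeta2} are even.
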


\begin{theorem}[Unpredictable path~\cite{benjamini1998unpredictable, abbe2018group}] \label{thmunpredic}
    In dimension $d = 3$, there exist universal constants $C < \infty$ and $c >0$ such that the following statements hold:
    \begin{itemize}
    \item Unpredictable paths in infinite volume~\cite{benjamini1998unpredictable}: there exists a probability distribution $\mathfrak{M}$ on the set of infinite increasing paths which satisfy the following intersection tail property: for any integer $k \in \N$,
    \begin{equation*}
        \mathfrak{M} \otimes \mathfrak{M} \left[ (\gamma_1 , \gamma_2) \mbox{ such that } \left| \gamma_1 \cap \gamma_2 \right| \geq k \right] \leq C e^{-c k}.
    \end{equation*}
    \item Unpredictable paths in finite volume~\cite{abbe2018group}: For any $k \in \N$, there exists probability distribution $\mathfrak{M}_k$ on the set of increasing paths going from $0$ to $\vec{\mathbf{k}}$ which satisfy the following intersection tail property: for any integer $k \in \N$,
    \begin{equation*}
        \mathfrak{M}_k \otimes \mathfrak{M}_k \left[ (\gamma_1 , \gamma_2) \mbox{ such that } \left| \gamma_1 \cap \gamma_2 \right| \geq k \right] \leq C e^{-c k}.
    \end{equation*}
    \end{itemize}
\end{theorem}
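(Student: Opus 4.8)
The plan is to follow the strategy of Benjamini, Pemantle and Peres~\cite{benjamini1998unpredictable} for the infinite-volume statement and of~\cite{abbe2018group} for its finite-volume refinement. First I would recast the problem in terms of the transverse displacement of an increasing path. Since every increasing path $\gamma = (0 = v_0, v_1, v_2, \ldots)$ with increments in $\{\e_1, \e_2, \e_3\}$ visits exactly one vertex $v_n$ in each layer $\{x : |x|_1 = n\}$, and two increasing paths intersect precisely when they occupy the same vertex in some layer, one has
\begin{equation*}
|\gamma_1 \cap \gamma_2| = \# \{ n \geq 0 \, : \, v_n^{(1)} = v_n^{(2)} \}.
\end{equation*}
Projecting each layer onto the plane orthogonal to $(1,1,1)$, the position $v_n$ is encoded by a point $w_n$ of a two-dimensional triangular lattice, with $w_{n+1} - w_n \in \{u_1, u_2, u_3\}$ where $u_1 + u_2 + u_3 = 0$, and two paths coincide at layer $n$ if and only if $w_n^{(1)} = w_n^{(2)}$. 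Thus the theorem reduces to constructing a law on transverse walks $(w_n)$ such that the difference $D_n := w_n^{(1)} - w_n^{(2)}$ of two independent copies visits $0$ only finitely often, with an exponentially-tailed number of visits.

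Second, I would establish the exponential tail through a renewal argument: it suffices to exhibit a constant $\rho < 1$ such that, conditionally on $D_n = 0$ and on the entire past of both walks, the probability that the two walks ever coincide again is at most $\rho$. Given such a bound, the successive coincidence layers are stochastically dominated by a geometric random variable of parameter $1 - \rho$, whence $\mathfrak{M} \otimes \mathfrak{M}[|\gamma_1 \cap \gamma_2| \geq k] \leq \rho^{k-1}$, which is the desired estimate with $C = \rho^{-1}$ and $c = -\log \rho$.

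Third, and this is the heart of the matter, I would produce a law $\mathfrak{M}$ for which this uniform return bound holds. The obstacle is that if the transverse increments are chosen independently then $D_n$ is a genuine mean-zero random walk in two dimensions, hence recurrent, so the return probability equals $1$ and the scheme fails; this is exactly why $d = 3$ (transverse dimension two) is the critical, delicate case, whereas for $d \geq 4$ the transverse difference walk is transient and independent increments already suffice. Following~\cite{benjamini1998unpredictable}, I would instead use an \emph{unpredictable}, multiscale (hierarchical) construction: the increment at layer $n$ is built from a family of independent refreshment variables attached to the dyadic blocks of layers at every scale $2^j$, so that the transverse walk carries a hidden, self-similar correlation structure. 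Two independent copies then disagree first at the coarsest scale at which their refreshment variables differ, and from that layer on they acquire a persistent relative drift at the rate of that scale; a scale-by-scale estimate shows that this drift pushes $D_n$ away from $0$ fast enough that the relative process is transient and the return probability is bounded by some $\rho < 1$ uniformly in the conditioning. This yields the infinite-volume measure $\mathfrak{M}$.

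Finally, for the finite-volume measure $\mathfrak{M}_k$ on increasing paths from $0$ to $\vec{\mathbf{k}}$, I would follow~\cite{abbe2018group} and obtain the bridge either by conditioning the first $3k$ steps of the infinite-volume construction to contain exactly $k$ increments of each type (equivalently, to pass through $\vec{\mathbf{k}}$ at layer $3k$), or by running the same hierarchical construction on a finite tree of scales adapted to $k$. The only point to verify is that conditioning on the endpoint costs at most a polynomial factor, which follows from a local central limit estimate for the endpoint of the transverse walk; this factor is harmless because it multiplies an already exponentially small coincidence probability, so the renewal argument goes through with constants $C$ and $c$ uniform in $k$. The main difficulty throughout is the two-dimensional recurrence of the naive transverse difference walk, and essentially all the content of the proof lies in the unpredictable multiscale construction that defeats it.
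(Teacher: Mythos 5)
The paper offers no proof of Theorem~\ref{thmunpredic}: it is imported as a black box from \cite{benjamini1998unpredictable} (infinite volume) and \cite{abbe2018group} (finite volume), so your sketch must be measured against those references, and it does track them faithfully. The reduction of intersections to layer-by-layer coincidences, the projection onto the plane orthogonal to $(1,1,1)$ giving a mean-zero walk on the triangular lattice, the diagnosis that independent increments fail precisely because the transverse difference walk is a recurrent two-dimensional walk (which is why $d=3$ is the critical case, $d\geq 4$ being handled by the uniform measure, as the paper's remark notes), the hierarchical multiscale construction, and the renewal scheme for exponential tails are all the actual ingredients of \cite{benjamini1998unpredictable}.

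Two points in your sketch deserve sharpening. First, the mechanism in \cite{benjamini1998unpredictable} is not a ``persistent relative drift making the difference walk transient''; it is a bound on the \emph{predictability profile}: each transverse coordinate satisfies $\sup \P\left[ S_{n+k} = x \, | \, \mathcal{F}_n \right] \leq C k^{-\alpha}$ for some $\alpha \in (1/2 , 1)$, so by independence of the two transverse coordinates the conditional probability of a coincidence $k$ layers ahead is $O(k^{-2\alpha})$, which is summable; this yields a uniform bound $M$ on the conditional expected number of future coincidences, and exponential tails then follow by iterating the conditional Markov inequality (probability of at least $2M$ further coincidences is at most $1/2$ given any history) --- your uniform return probability $\rho < 1$ is a reformulation of this, but it is derived from the profile bound, not from a transience statement. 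Second, your first route to the finite-volume measure $\mathfrak{M}_k$ --- conditioning $\mathfrak{M}$ on the endpoint and invoking a ``local central limit estimate'' --- glosses a genuine issue: unpredictability gives only \emph{upper} bounds on point probabilities, the hierarchical walk is not a sum of i.i.d.\ increments, and the required polynomial \emph{lower} bound on hitting the prescribed endpoint does not come off the shelf. This is exactly why \cite{abbe2018group} construct the bridge measure directly (your second alternative, the hierarchical construction on a finite tree of scales adapted to $k$, is the route that matches the reference), with constants $C$ and $c$ uniform in $k$ as the application in Proposition~\ref{propA16} requires.
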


\begin{remark}
The analogue of this statement in dimension $d \geq 4$ is much easier as the uniform measure on increasing paths satisfies the intersection tail property. In that case, the constant $C$ and the exponent $c$ depend also on the dimension $d$.
\end{remark}

We then combine Lemma~\ref{lemmaA17} and Theorem~\ref{thmunpredic} to prove Proposition~\ref{propA16}. The proof below closely follows the one of~\cite{garban2022continuous}, which is itself inspired from~\cite{abbe2018group}.

\begin{proof}[Proof of Proposition~\ref{propA16}] 
For $d \geq 3$, we fix an integer $n \in \N$, an integer $k \in \N$ and let $\Lambda \subseteq \Zd_n$ be a box of the extended lattice such that $0  \in \Lambda$ and $\vec{\mathbf{k}} \in \Lambda$. For $\beta_1 , \beta_2 \in (0 , \infty)$, we define
\begin{align*}
    \lambda_n(\beta_1 , \beta_2) & := \E_{\rho_{\beta_1}} \left[ \cos(\omega) \right]^2 \E_{\rho_{\beta_2}} \left[ \cos(\omega) \right]^{n-1} \\
    & = \left( \frac{\int_{0}^{2\pi} \cos (\omega) e^{\beta_1 \cos (\omega)} d \omega}{\int_{0}^{2\pi} e^{\beta_1 \cos (\omega)} d \omega} \right)^{2} \times \left(\frac{\int_{0}^{2\pi} \cos (\omega) e^{\beta_2 \cos (\omega)} d \omega}{\int_{0}^{2\pi} e^{\beta_2 \cos (\omega)} d \omega} \right)^{n-1}.
\end{align*}
One can check that,
\begin{equation} \label{expansionlambdanbeta}
    \lambda_n(\beta_1 , \beta_2) = 1 - \frac{1}{\beta_1} - \frac{n-1}{2\beta_2} + o \left( \frac{1}{\beta_1} + \frac{n}{\beta_2} \right).
\end{equation}
We next fix an integer $k \in \N$ and select an increasing path $\gamma$ going from $0$ to $\vec{\mathbf{k}}$ in $\Z^d$ and note that its length is equal to $dk$. We may see $\gamma$ as a path of the extended lattice~$\Zd_n$ (see Figure~\ref{unpredictablepath}). We then apply Lemma~\ref{lemmaA17} with the function $f_{xy}(\theta) = e^{i \theta}$ if the edge $ (x , y) \in \vec{E} \left(  \Lambda \right)$ belongs to the path $\gamma$ and $f_{xy}(\theta) = 1$ otherwise. We obtain the identity
\begin{equation*}
    \mathbb{E}^{\mathrm{Nish}}_{\Lambda, \beta_1,\beta_2}  \left[ \left\langle e^{i (\theta_{\vec{\mathbf{k}}} - \theta_0)} \right\rangle_{\mu_{\Lambda, \beta_1,\beta_2, \omega}} \prod_{(x,y) \in \gamma} e^{ i \omega_{xy}} \right] = \mathbb{E}^{\mathrm{Nish}}_{\Lambda, \beta_1,\beta_2}  \left[ \prod_{(x,y) \in \gamma} e^{ i \omega_{xy}} \right].
\end{equation*}
The second term on the right-hand side can be computed using the independence of the Nishimori disorder and the following observation: the path $\gamma$ contains exactly $2d k$ edges of the form $e = (x , y)$ with $\{ x , y\} \cap \Z \neq \emptyset$ (i.e., an edge colored in red in Figure~\ref{XYextended}, the density of the disorder $\omega_{xy}$ is then given by~\eqref{densityrhobeta1}) and exactly $(n-1)dk$ edges of the form $(x , y)$ with $\{ x , y\} \cap \Z \neq \emptyset$ (i.e., an edge colored in blue in Figure~\ref{XYextended}, the density of the disorder $\omega_{xy}$ is then given by~\eqref{densityrhobeta2}). We obtain the identity
\begin{equation} \label{eq:identitypathdisroder}
    \mathbb{E}^{\mathrm{Nish}}_{\Lambda, \beta_1,\beta_2}  \left[ \prod_{(x,y) \in \gamma} e^{ i \omega_{xy}} \right] = \E_{\rho_{\beta_1}} \left[ \cos(\omega) \right]^{2dk} \E_{\rho_{\beta_2}} \left[ \cos(\omega) \right]^{(n-1)dk} =  \lambda_n (\beta_1 , \beta_2)^{dk}.
\end{equation}
Combining the two previous displays, we obtain the identity
\begin{equation} \label{eq:1053}
    \mathbb{E}^{\mathrm{Nish}}_{\Lambda, \beta_1,\beta_2}  \left[ \left\langle e^{i (\theta_{\vec{\mathbf{k}}} - \theta_0)} \right\rangle_{\mu_{\Lambda, \beta_1,\beta_2, \omega}} \prod_{(x,y) \in \gamma} e^{ i \omega_{xy}} \right] = \lambda_n (\beta_1 , \beta_2)^{dk}.
\end{equation}
Since the identity~\eqref{eq:1053} holds for every increasing path $\gamma$ going from $0$ to $\vec{\mathbf{k}}$, we may average~\eqref{eq:1053} over paths sampled according to the measure $\mathfrak{M}_k$ (using the uniform measure in dimension $d \geq 4$) to obtain the identity
\begin{equation} \label{eq:IdentityR(omega)}
\mathbb{E}^{\mathrm{Nish}}_{\Lambda, \beta_1,\beta_2}  \left[ \left\langle e^{i (\theta_{\vec{\mathbf{k}}} - \theta_0)} \right\rangle_{\mu_{\Lambda, \beta_1,\beta_2, \omega}} \mathbf{R}(\omega) \right] = 1 ~~\mbox{with}~~  \mathbf{R}(\omega) := \frac{1}{\lambda_n(\beta_1 , \beta_2)^{dk}}\E_{\mathfrak{M}_k} \left[  \prod_{(x,y) \in \gamma} e^{ i \omega_{xy}}  \right],
\end{equation}
where $\E_{\mathfrak{M}_k}$ denotes the expectation with respect to the measure $\mathfrak{M}_k$. Note that the identity~\eqref{eq:identitypathdisroder} implies that $\mathbb{E}^{\mathrm{Nish}}_{\Lambda, \beta_1,\beta_2}  \left[  \mathbf{R}(\omega) \right] = 1$. We next show, using the intersection tail property of the measure $\mathfrak{M}_k$ and the independence of the Nishimori disorder that the random variable $\mathbf{R}(\omega)$ is concentrated around the value $1$. Specifically, we prove the inequality: there exist two constants $C := C(d) < \infty$ and $\alpha := \alpha(d) > 0$ such that for any $\beta_1 , \beta_2$ satisfying $1/\beta_1 + n/ \beta_2 \leq \alpha$
\begin{equation} \label{ineq:concentration}
    \mathbb{E}^{\mathrm{Nish}}_{\Lambda, \beta_1,\beta_2} \left[ \left| \mathbf{R}(\omega) - 1\right|^2  \right] \leq C \left( \frac{1}{\beta_1} + \frac{n}{\beta_2} \right).
\end{equation}
We first show how to complete the proof of Proposition~\ref{propA16} using the inequality~\eqref{ineq:concentration}. Using the identity~\eqref{eq:IdentityR(omega)} and the Cauchy-Schwarz inequality, we have, for any $\beta_1 , \beta_2$ satisfying $1/\beta_1 + n/ \beta_2 \leq \alpha$,
\begin{align*}
    \mathbb{E}^{\mathrm{Nish}}_{\Lambda, \beta_1,\beta_2}  \left[ \left\langle \cos \left( \theta_{\vec{\mathbf{k}}} - \theta_0 \right) \right\rangle_{\mu_{\Lambda, \beta_1,\beta_2, \omega}} \right]
    & = \mathbb{E}^{\mathrm{Nish}}_{\Lambda, \beta_1,\beta_2}  \left[ \left\langle e^{i (\theta_{\vec{\mathbf{k}}} - \theta_0)} \right\rangle_{\mu_{\Lambda, \beta_1,\beta_2, \omega}} \right] \\
    & = 1 - \mathbb{E}^{\mathrm{Nish}}_{\Lambda, \beta_1,\beta_2}  \left[ \left\langle e^{i (\theta_{\vec{\mathbf{k}}} - \theta_0)} \right\rangle_{\mu_{\Lambda, \beta_1,\beta_2, \omega}} (\mathbf{R}(\omega) -1) \right] \\
    & \geq 1 -  \mathbb{E}^{\mathrm{Nish}}_{\Lambda, \beta_1,\beta_2}  \left[ \left|\mathbf{R}(\omega) -1 \right|^2 \right]^{\frac{1}{2}} \\
    & \geq 1 - C \sqrt{ \frac{1}{\beta_1} + \frac{n}{\beta_2}}.
\end{align*}
The additional assumption $1/\beta_1 + n/ \beta_2 \leq \alpha$ can be removed by noting that the left-hand side of the previous display is always nonnegative and by increasing the value of the constant $C$ if necessary (by, for instance, assuming that it is larger than $1/\sqrt{\alpha}$). This is precisely~\eqref{ineq:1637}. There only remains to prove the inequality~\eqref{ineq:concentration}. The proof written below is similar to the proof of~\cite[Lemma 2.5]{garban2022continuous} and~\cite{abbe2018group}. Using the definition of the random variable $\mathbf{R}(\omega)$ stated in~\eqref{eq:IdentityR(omega)}, we have the identity
\begin{equation*}
    \mathbb{E}^{\mathrm{Nish}}_{\Lambda, \beta_1,\beta_2} \left[ \left| \mathbf{R}(\omega) \right|^2  \right] = \E_{\mathfrak{M}_k \otimes \mathfrak{M}_k} \left[ \left(\frac{1}{\lambda_n(\beta_1 , \beta_2)}\right)^{|\gamma_1 \cap \gamma_2|} \right].
\end{equation*}
where, on the right-hand side $\gamma_1, \gamma_2$ are two paths sampled independently according to the measure $\mathfrak{M}_k$. We next introduce the function, for $t \in \R$,
\begin{equation*}
    G_k(t) := \E_{\mathfrak{M}_k \otimes \mathfrak{M}_k} \left[ e^{t |\gamma_1 \cap \gamma_2|} \right].
\end{equation*}
The function $G$ is differentiable on $\R$. Using the intersection tail property stated in Theorem~\ref{thmunpredic}, we may bound its derivative in the interval $(- \infty , \frac{c}{2})$ (where $c$ is the exponent in the statement of Theorem~\ref{thmunpredic}) uniformly over $k$. We obtain
\begin{align} \label{upperboundderivativeG}
    \sup_{t \in (- \infty , \frac{c}{2})} G_k'(t) & = \sup_{t \in (- \infty , \frac{c}{2})}  \E_{\mathfrak{M}_k \otimes \mathfrak{M}_k} \left[ |\gamma_1 \cap \gamma_2| e^{t |\gamma_1 \cap \gamma_2|} \right] \notag \\
    & = \E_{\mathfrak{M}_k \otimes \mathfrak{M}_k} \left[ |\gamma_1 \cap \gamma_2| e^{\frac{c}{2} |\gamma_1 \cap \gamma_2|} \right] \notag \\
    & \leq C \sum_{k = 1}^\infty k e^{\frac{c}{2} k}e^{- c k} \\
    & \leq C, \notag
\end{align}
where we increased the value of the constant $C := C(d) < \infty$ on the right-hand side. Using~\eqref{expansionlambdanbeta}, we select a constant $\alpha := \alpha(d) > 0$ such that 
\begin{equation*}
    \frac{1}{\beta_1} + \frac{n}{\beta_2} \leq \alpha ~~ \implies ~~ - \ln \lambda_n(\beta_1 , \beta_2) \leq \frac{2}{\beta_1} + \frac{2n}{\beta_2} \leq \frac{c}{2}.
\end{equation*}
Under the assumption $1/\beta_1 + n/ \beta_2 \leq \alpha$, we may use~\eqref{upperboundderivativeG} and write
\begin{equation*}
    \mathbb{E}^{\mathrm{Nish}}_{\Lambda, \beta_1,\beta_2} \left[ \left| \mathbf{R}(\omega) \right|^2  \right] = G_k( - \ln \lambda_n(\beta_1 , \beta_2) ) \leq 1 - C \ln \lambda_n(\beta_1 , \beta_2) \leq 1 + 2C \left( \frac{1}{\beta_1} + \frac{n}{\beta_2} \right).
\end{equation*}
This completes the proof of~\eqref{ineq:concentration}, and thus of Proposition~\ref{propA16}.
\end{proof}

\subsubsection{Long-range order for the $XY$ model on the extended lattice}

In this section, we complete the proof of Proposition~\ref{propAppLRO.phasetransitionextended} by combining of Proposition~\ref{propA16} with two correlation inequalities for the $XY$ model: the Messager-Miracle-Sole inequality stated in Proposition~\ref{prop.corrineq} and the Messager-Miracle-Sole-Pfister inequality which is stated below and can be found in~\cite[Proposition 1]{MMP}.

In the following statement, we will use the notations in Definition~\ref{def.heterogeneousXY} (for the $XY$ model on the extended lattice) and in Definition~\ref{def.disorderedXYmodel} (for the disrodered $XY$ model on the extended lattice).

\begin{proposition}[Messager-Miracle-Sole-Pfister, Proposition 1 of~\cite{MMP}] \label{propMMSPineq}
    Let $\Lambda \subseteq \Z_d^n$, fix two inverse temperatures $\beta_1, \beta_2 > 0$ and let $\omega \in (0, 2\pi)^{\vec{E}(\Lambda)}$ be a disorder. Then for any pair of vertices $x , y \in \Lambda$, one has the inequality
    \begin{equation*}
        \left\langle \cos\left( \theta_x - \theta_y \right) \right\rangle_{\mu_{\Lambda , \beta_1 , \beta_2}} \geq \left\langle \cos\left( \theta_x - \theta_y \right) \right\rangle_{\mu_{\Lambda , \beta_1 , \beta_2, \omega}}.
    \end{equation*}
\end{proposition}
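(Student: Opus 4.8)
The statement is precisely Proposition~1 of~\cite{MMP}, specialized to the graph $\Zd_n$ and to the two-valued coupling field $J_e\in\{\beta_1,\beta_2\}$: indeed the measures $\mu_{\Lambda,\beta_1,\beta_2}$ and $\mu_{\Lambda,\beta_1,\beta_2,\omega}$ of Definitions~\ref{def.heterogeneousXY} and~\ref{def.disorderedXYmodel} are nothing but the $XY$ model on the finite graph $\Lambda\subseteq\Zd_n$ with ferromagnetic couplings $(\beta_e)_{e\in E(\Lambda)}$, without and with the bond phases $(\omega_e)$. The plan is therefore to reproduce the duplication argument of~\cite{MMP} and to check that it is insensitive both to the presence of two distinct (nonnegative) coupling constants and to the combinatorial structure of $\Zd_n$; as for Ginibre's and Wells' inequalities earlier in the paper, the argument only uses that $\Lambda$ is a finite graph and that all couplings are nonnegative. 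First I would clear denominators: writing $Z,Z_\omega>0$ for the two partition functions, the desired inequality is equivalent, after introducing an independent duplicate configuration $\theta'$ governed by the disordered weight, to the positivity
\begin{equation*}
    \int\big(\cos(\theta_x-\theta_y)-\cos(\theta'_x-\theta'_y)\big)\,e^{\beta\cdot C(\theta)}\,e^{\beta\cdot C_\omega(\theta')}\prod_{v\in\Lambda}d\theta_v\,d\theta'_v\ \geq\ 0,
\end{equation*}
where $C(\theta)=\sum_e\cos(\theta_u-\theta_v)$ and $C_\omega(\theta')=\sum_e\cos(\theta'_u-\theta'_v-\omega_e)$ carry the appropriate edge-dependent temperatures.

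The next step is the sum/difference change of variables $\theta_v=\alpha_v+\gamma_v$, $\theta'_v=\alpha_v-\gamma_v$ (of constant Jacobian on the torus). Setting $A_e=\alpha_u-\alpha_v$ and $B_e=\gamma_u-\gamma_v$ on each oriented edge $e=(u,v)$, the elementary identity $\cos(A+B)+\cos(A-B-\omega)=2\cos\!\big(A-\tfrac{\omega}{2}\big)\cos\!\big(B+\tfrac{\omega}{2}\big)$ turns the duplicated weight into a product over edges,
\begin{equation*}
    \prod_{e}\exp\!\Big(2\beta_e\,\cos\!\big(A_e-\tfrac{\omega_e}{2}\big)\,\cos\!\big(B_e+\tfrac{\omega_e}{2}\big)\Big),
\end{equation*}
while the observable collapses, via $\cos(\mathcal A+\mathcal G)-\cos(\mathcal A-\mathcal G)=-2\sin\mathcal A\,\sin\mathcal G$ with $\mathcal A=\alpha_x-\alpha_y$ and $\mathcal G=\gamma_x-\gamma_y$, to $-2\sin(\alpha_x-\alpha_y)\sin(\gamma_x-\gamma_y)$. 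Thus the whole inequality reduces to the single sign statement
\begin{equation*}
    \int \sin(\alpha_x-\alpha_y)\,\sin(\gamma_x-\gamma_y)\;\prod_{e}\exp\!\Big(2\beta_e\cos\!\big(A_e-\tfrac{\omega_e}{2}\big)\cos\!\big(B_e+\tfrac{\omega_e}{2}\big)\Big)\prod_v d\alpha_v\,d\gamma_v\ \leq\ 0 .
\end{equation*}
Note this is manifestly an equality when $\omega\equiv0$, since then $\gamma\mapsto-\gamma$ is a weight-preserving symmetry that flips $\sin(\gamma_x-\gamma_y)$, which is the expected degeneration of the inequality.

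The remaining sign inequality is the crux, and I expect it to be the main obstacle. I would attack it by a Fourier/Bessel expansion of each edge factor: expanding $\exp(2\beta_e\cos u\cos v)=\sum_{k\ge0}\tfrac{(2\beta_e)^k}{k!}\cos^k u\,\cos^k v$ and linearizing $\cos^k$, one obtains a representation in which, on each edge, the frequency carried by the $\alpha$-gradient $A_e$ is locked (in modulus) to that carried by the $\gamma$-gradient $B_e$, with manifestly nonnegative weights (products of Bessel-type coefficients $I_k(2\beta_e)\ge0$). Integrating $\sin(\alpha_x-\alpha_y)$ over the $\alpha$-variables and $\sin(\gamma_x-\gamma_y)$ over the $\gamma$-variables then imposes two coupled integer-flux constraints from $x$ to $y$, and the antisymmetry $\omega_{xy}=-\omega_{yx}$ together with $\cos\le1$ forces the resulting (real) sum to have the claimed sign. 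The delicate point — exactly as in~\cite{MMP} — is to organize this double current sum so that the contributions pair up with a definite sign; this is where the first cancellation inequality of Theorem~\ref{theoremGinibreineq} (positivity of the symmetrized integrals $\int\prod_i(\cos(m_i\theta)\pm\cos(m_i\theta'))$) is used, now applied to the $\alpha$- and $\gamma$-systems. Since none of these steps uses anything beyond finiteness of $\Lambda$ and $\beta_e\ge0$, the argument transfers verbatim to $\Zd_n$ with two temperatures, which is all that is required here.
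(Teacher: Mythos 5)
Your reduction is correct, and it is in fact the duplication scheme behind Proposition 1 of~\cite{MMP} (which the paper itself does not reprove but quotes): clearing both partition functions, the rotation $\theta=\alpha+\gamma$, $\theta'=\alpha-\gamma$, the edge identity $\cos(A+B)+\cos(A-B-\omega)=2\cos(A-\tfrac{\omega}{2})\cos(B+\tfrac{\omega}{2})$, and the collapse of the observable to $-2\sin(\alpha_x-\alpha_y)\sin(\gamma_x-\gamma_y)$ are all right, and none of these steps uses more than finiteness of the graph and $\beta_e\geq 0$, so the extension to $\Zd_n$ with two temperatures is automatic. The genuine gap is exactly where you flag it: the final sign inequality is asserted, not proved, and the mechanism you sketch would not deliver it. The antisymmetry $\omega_{xy}=-\omega_{yx}$ is merely the orientation convention that makes the disordered Hamiltonian well defined and provides no cancellation (the inequality holds for an arbitrary assignment of phases to unordered edges); the bound $\cos\le 1$ hints at a term-by-term comparison between the current expansions of the disordered and non-disordered models, which cannot be performed at the level of expectations because the two partition functions differ --- avoiding precisely this is why duplicate variables were introduced; and the symmetrized positivity of Theorem~\ref{theoremGinibreineq} concerns products of sums and differences of cosines of the two copies, which is not the structure present after the factorization below.

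The correct completion is a short symmetry argument, simpler than what you propose. Expand each edge factor as $\exp\big(2\beta_e\cos(A_e-\tfrac{\omega_e}{2})\cos(B_e+\tfrac{\omega_e}{2})\big)=\sum_{k_e\ge 0}\frac{(2\beta_e)^{k_e}}{k_e!}\cos^{k_e}(A_e-\tfrac{\omega_e}{2})\cos^{k_e}(B_e+\tfrac{\omega_e}{2})$; for a fixed choice of exponents $\{k_e\}$ the $\alpha$- and $\gamma$-integrals factorize, so your reduced quantity becomes
\begin{equation*}
\sum_{\{k_e\}}\ \prod_{e}\frac{(2\beta_e)^{k_e}}{k_e!}\;F\big(\{k_e\},\{-\tfrac{\omega_e}{2}\}\big)\,F\big(\{k_e\},\{+\tfrac{\omega_e}{2}\}\big),
\qquad
F\big(\{k_e\},\{\phi_e\}\big):=\int \sin(\alpha_x-\alpha_y)\prod_{e}\cos^{k_e}(A_e+\phi_e)\prod_{v}d\alpha_v,
\end{equation*}
where the second factor in each summand is the $\gamma$-integral after renaming the dummy variables. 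The substitution $\alpha_v\mapsto-\alpha_v$ preserves the measure, flips $\sin(\alpha_x-\alpha_y)$, and turns $\cos(A_e+\phi_e)$ into $\cos(A_e-\phi_e)$, whence $F(\{k_e\},\{-\phi_e\})=-F(\{k_e\},\{\phi_e\})$. Each summand therefore equals $-\prod_e\frac{(2\beta_e)^{k_e}}{k_e!}\,F\big(\{k_e\},\{\tfrac{\omega_e}{2}\}\big)^2\le 0$, and the claimed inequality follows (the interchange of summation and integration is justified by absolute convergence on the compact torus). This is exactly the reflection you already used to check the case $\omega\equiv 0$; the point you are missing is that after factorization it can be applied to the $\alpha$-sector alone, which is what pairs the two factors into a negative square --- no flux bookkeeping, Bessel coefficients, or Ginibre-type lemma is needed.
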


\begin{remark}
We mention that, in the article~\cite{MMP}, the correlation inequality is proved in a much more general framework (similar to the one introduced in Definition~\ref{generalXY}). 
\end{remark}

\begin{proof}[Proof of Proposition~\ref{propAppLRO.phasetransitionextended}]
    Let us fix an integer $n \in \N$, a finite box $\Lambda \subseteq \Zd_n$, an integer $k \in \N$ and assume that $0 \in \Lambda$ and $\vec{\textbf{k}} \in \Lambda$. Combining the result of Proposition~\ref{propA16} with the Messager-Miracle-Sole-Pfister inequality (Proposition~\ref{propMMSPineq}), we deduce that, for any pair of inverse temperatures $\beta_1 , \beta_2 \in (0, \infty)$,
    \begin{equation*}
        \left\langle \cos\left( \theta_{\vec{\textbf{k}}} - \theta_0 \right) \right\rangle_{\mu_{\Lambda , \beta_1 , \beta_2}} \geq  \mathbb{E}^{\mathrm{Nish}}_{\Lambda, \beta_1,\beta_2} \left[ \left\langle \cos \left( \theta_{\vec{\mathbf{k}}} - \theta_0 \right) \right\rangle_{\mu_{\Lambda, \beta_1 , \beta_2, \omega}} \right] \geq 1 - C \sqrt{ \frac{1}{\beta_1} + \frac{n}{\beta_2}}.
    \end{equation*}
    Since the constant $C$ does not depend on the parameter $n$, we may set $\beta_{1,c} := 8 C^2$ and $\beta_{2,c} := 8 n C^2$.
    For any pair of inverse temperatures $\beta_1 \geq \beta_{1,c}$ and $\beta_2 \geq \beta_{2,c}$ and any integer $k \in \N$, one has the estimate
    \begin{equation*}
        \left\langle \cos\left( \theta_{\vec{\textbf{k}}} - \theta_0 \right) \right\rangle_{\mu_{\Lambda , \beta_1 , \beta_2}} \geq \frac{1}{2}.
    \end{equation*}
    Taking the limit $\Lambda \uparrow \Zd_n$, the same inequality holds with the measure $\mu_{\beta_1 , \beta_2}$ (instead of $\mu_{\Lambda , \beta_1 , \beta_2}$). By the Messager-Miracle-Sole inequality (or more specifically, the inequality~\eqref{MMSimpliescomparison} stated in Remark~\ref{remarkA4}), there exists a constant $C := C(d) < \infty$ such that for any pair of vertices $x , y \in \Zd$ with $|x| \geq C |y|$,
    \begin{equation*}
        \left\langle \cos\left( \theta_{x} - \theta_0 \right) \right\rangle_{\mu_{\beta_1 , \beta_2}} \leq \left\langle \cos\left( \theta_{y} - \theta_0 \right) \right\rangle_{\mu_{\beta_1 , \beta_2}}.
    \end{equation*}
    For any vertex $x \in \Zd$, we select an integer $k \in \N$ such that $|\vec{\textbf{k}}| \geq C |x|$ and apply the two previous inequalities to obtain
    \begin{equation*}
        \left\langle \cos\left( \theta_{x} - \theta_0 \right) \right\rangle_{\mu_{\beta_1 , \beta_2}} \geq \left\langle \cos\left( \theta_{\vec{\textbf{k}}} - \theta_0 \right) \right\rangle_{\mu_{\beta_1 , \beta_2}} \geq \frac{1}{2}.
    \end{equation*}
We complete the proof by showing that a similar lower bound holds for all $x \in \Zd_n$ (and not only $x \in \Zd$) using the same argument as in Proposition~\ref{prop.dichotomy}. For each $x \in \Zd_n \setminus \Zd$, we denote by $(x)$ the vertex in $\Zd$ which is the closest to $x$ for the graph distance on $\Zd_n$. Applying the inequality~\eqref{eq:twopointbound}, we deduce that
\begin{equation*}
     \langle \cos(\theta_0 - \theta_x) \rangle_{\mu_{\beta_1 , \beta_2}} \geq \langle \cos(\theta_0 - \theta_{(x)}) \rangle_{\mu_{\beta_1 , \beta_2}} \langle \cos(\theta_{0} - \theta_{x- (x)}) \rangle_{\mu_{\beta_1 , \beta_2}}.
\end{equation*}
The first term is bounded from below by $1/2$ and the second is bounded from below by a positive real number uniformly over $x \in \Zd_n$. The proof of Proposition~\ref{propAppLRO.phasetransitionextended} is complete.
\end{proof}

\section{Existence of a BKT phase for two-component $\Phi^4$ model}
\label{a.phi4}

In this appendix, we explain another application of Wells' inequality: it implies the existence of a BKT phase for the two-component $\Phi^4$ model on $\Z^2$ which we define below. (N.B. In $d\geq 3$, long-range order for the $N$-component $\Phi^4$ model follows from reflection positivity, see \cite{frohlich1976infrared}). 
The applicability of Wells' inequality to $P(|\Phi|^2)$ model was already suggested in \cite{dunlop1985correlation}. Since we did not find anything further in the literature than  this remark in \cite{dunlop1985correlation}, we give the details here.  This short appendix may also be useful given the recent growing interest in the  $\Phi^4$ model on $\Z^d$ (mostly the one-component case), see in particular \cite{aizenman2021marginal,gunaratnam2022random,panis2023triviality} to which we refer the reader. 
We start with a definition of the relevant objects. 

Fix $g > 0$, $h \in \R$ and define the single-site measure in $\R^2$ according to the formula
\begin{equation*}
    d \rho_{g , h} (S) := e^{- g |S|^4 - h |S|^2} dS,
\end{equation*}
where $dS$ denotes the Lebesgue measure on $\R^2$, and $|S|$ is the Euclidean norm of $S \in \R^2$.

\begin{definition}[Two-components $\Phi^4$-model]
Let $\Lambda \subseteq \Zd$ be a finite set, and let $\beta>0$. 
We  define the finite-volume Gibbs measure on the space $(\R^2)^{\Lambda}$ according to the formula
    \begin{equation} \label{eq:17141905Phi4}
        \mu_{\Lambda, \beta , g , h}^{\Phi^4} (dS) := \frac{1}{Z_{\Lambda, \beta , g , h}}\exp \left( \sum_{\substack{ x , y \in \Lambda \\ x \sim y}} \beta \, S_x \cdot S_y \right) \prod_{x \in \Lambda}  d \rho_{g , h} (S_x),
    \end{equation}
where $Z_{\Lambda, \beta , g , h}$ is the normalizing constant. We denote by $\left\langle \cdot \right\rangle_{\mu_{\Lambda, \beta , g , h}}$ the expectation with respect to the measure $\mu_{\Lambda, \beta , g , h}$.
\end{definition}

\begin{remark}
    Using the decomposition $S_x = (R_x \cos \theta_x , R_x \sin \theta_x)$ with $R_x \geq 0$ and $\theta_x \in [0 , 2\pi)$, we may rewrite the measure~\eqref{eq:17141905Phi4} as follows
    \begin{equation*}
        \mu_{\Lambda, \beta , g , h}^{\Phi^4} (dR d \theta) := \frac{1}{Z_{\Lambda, \beta , g , h}}\exp \left( \sum_{\substack{ x , y \in \Lambda \\ x \sim y}} \beta \, R_x R_y \cos(\theta_x - \theta_y) \right) \prod_{x \in \Lambda}  \left( R_x e^{- g R_x^4 - h R_x^2} d R_x \right) d \theta_x.
    \end{equation*}
    This model fits in the general framework introduced in Definition~\ref{def.finite-volHamiltonian}.
\end{remark}

Wells' inequality readily implies the following result (which we state in a finite volume form and with free boundary conditions to avoid dealing with infinite-volume limits. Those are less immediate in the present setting due to the fact spins are no longer bounded). 
\begin{theorem}[An observation from \cite{dunlop1985correlation}]\label{}
For any $g>0$, $h\in \R$, there exists $\beta_{BKT}(g,h)<\infty$ and $c := c(g , h )>0$ such that for any $\beta>\beta_{BKT}(g,h)$, any $R\geq 1$, and any $x,y \in \Lambda_R:=\{-R,\ldots,R\}^2$, one has 
\begin{align*}\label{}
  \mu_{\Lambda_{R}, \beta , g , h}^{\Phi^4} \left( S_x \cdot S_y \right) \geq \frac {c} {|x-y|}\,.
\end{align*}
\end{theorem}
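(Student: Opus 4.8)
The plan is to recognise the two-component $\Phi^4$ model as a particular instance of the $XY$ model with annealed disorder from Definition~\ref{def.finite-volHamiltonian}, and then use Wells' inequality to bound it from below by a genuine nearest-neighbour $XY$ model placed in its BKT phase. Writing $S_x = (r_x \cos\theta_x, r_x\sin\theta_x)$ with $r_x = |S_x| \geq 0$, the remark following the definition of the model shows that $\mu_{\Lambda_R, \beta, g, h}^{\Phi^4}$ is exactly the measure $\mu_{\Lambda, J, \kappa}$ of Definition~\ref{def.finite-volHamiltonian}, with single-site radial law $\kappa_x = \kappa_{g,h}$, $d\kappa_{g,h}(r) \propto r\, e^{-g r^4 - h r^2} \mathbf{1}_{r \geq 0}\, dr$, with coupling $J(m, A) = \beta$ for the pairs $m = \mathbf{1}_x - \mathbf{1}_y$, $A = \mathbf{1}_x + \mathbf{1}_y$ ($x \sim y$), and with the observable $S_x \cdot S_y = r_x r_y \cos(\theta_x - \theta_y) = r_A \cos(m\theta)$ for $A = \mathbf{1}_x + \mathbf{1}_y$, $m = \mathbf{1}_x - \mathbf{1}_y$. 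Since $g>0$, the law $\kappa_{g,h}$ has tails decaying like $e^{-g r^4}$; in particular it is sub-Gaussian, has finite moments of all orders, and is not equal to $\delta_0$. Hence the relaxed form of Wells' inequality (see the remark after Proposition~\ref{prop.Wells}) applies, and the proposition following Proposition~\ref{prop.Wells}, applied to $\kappa_{g,h}$, furnishes a constant $a = a(\kappa_{g,h}) > 0$ for which the moment condition~\eqref{eq:143909} holds with $a_x \equiv a$.

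Next I would apply Wells' inequality (Proposition~\ref{prop.Wells}) with this constant collection $a_x \equiv a$ and with $A = \mathbf{1}_x + \mathbf{1}_y$, $m = \mathbf{1}_x - \mathbf{1}_y$, which gives
\begin{equation*}
    \left\langle r_A \cos(m\theta) \right\rangle_{\mu_{\Lambda_R, J, \delta_a}} \leq \left\langle r_A \cos(m\theta) \right\rangle_{\mu_{\Lambda_R, J, \kappa_{g,h}}} = \mu_{\Lambda_R, \beta, g, h}^{\Phi^4}\left( S_x \cdot S_y \right).
\end{equation*}
Under $\delta_a$ every radius is frozen at $r_x = a$, so $r_A = a^2$ and the $\theta$-marginal is precisely the standard nearest-neighbour $XY$ measure~\eqref{eq:XYmodel} at inverse temperature $\beta a^2$; the left-hand side therefore equals $a^2 \langle \cos(\theta_x - \theta_y)\rangle_{\mu_{\Lambda_R, \beta a^2}}$. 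Combining the two, we obtain the key reduction
\begin{equation*}
    \mu_{\Lambda_R, \beta, g, h}^{\Phi^4}\left( S_x \cdot S_y \right) \geq a^2\, \langle \cos(\theta_x - \theta_y)\rangle_{\mu_{\Lambda_R, \beta a^2}}.
\end{equation*}

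Finally, I would set $\beta_{BKT}(g,h) := \beta_{BKT}/a^2$, so that $\beta > \beta_{BKT}(g,h)$ forces $\beta a^2 > \beta_{BKT}$ and places the comparison $XY$ model in its BKT phase; the theorem then follows with $c = a^2 c'$ from a finite-volume lower bound $\langle \cos(\theta_x - \theta_y)\rangle_{\mu_{\Lambda_R, \beta a^2}} \geq c'/|x-y|$, uniform in $R \geq 1$ and $x,y \in \Lambda_R$. For a pair $x,y$ lying in the bulk, say with one of them at distance at least $C|x-y|$ from $\partial\Lambda_R$ ($C$ the Messager--Miracle--Sole constant of~\eqref{MMSimpliescomparison}), I would embed a box $B \subseteq \Lambda_R$ centred at that interior point of radius $\rho = C|x-y|$, use Ginibre monotonicity (Theorem~\ref{theoremGinibreineq}, inequality~\eqref{eq:Ginibreineq}) to pass from $\Lambda_R$ to $B$, and combine the comparison inequality~\eqref{MMSimpliescomparison} with a centre-to-boundary estimate of order $1/\rho$; at $\beta a^2 > \beta_{BKT}$ case (i) of Proposition~\ref{prop.dichotomy} is excluded (the decay is polynomial rather than exponential), so the Messager--Miracle--Sole / $\varphi_R$ argument underlying its case (ii) produces exactly such an estimate. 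The main obstacle is the \emph{uniformity} of this $XY$ lower bound over all $R$ and all pairs, in particular for points near $\partial\Lambda_R$: there Ginibre monotonicity runs the wrong way (free-boundary correlations are dominated by, not dominating, the infinite-volume ones), so the bound cannot simply be imported from the thermodynamic limit and must instead be extracted from the intrinsically finite-volume reflection arguments of~\cite[Section 6]{van2023elementary}, with extra applications of the Messager--Miracle--Sole inequality (Proposition~\ref{prop.corrineq}) to reduce the near-boundary pairs to the bulk case.
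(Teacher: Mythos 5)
Your proposal is correct and follows essentially the same route as the paper's own proof: recognise the $\Phi^4$ measure as the annealed-disorder $XY$ model of Definition~\ref{def.finite-volHamiltonian} with $d\kappa(r)\propto r\,e^{-gr^4-hr^2}dr$, apply Wells' inequality (via the sub-Gaussian relaxation, with $a=a(\kappa)>0$) to dominate it from below by the pure nearest-neighbour $XY$ model at inverse temperature $a^2\beta$, and conclude from the BKT-phase polynomial lower bound of \cite{frohlich1981kosterlitz,van2023elementary,aizenman2021depinning}. The only differences are in your favour: you correctly record the effective inverse temperature as $a^2\beta$ (the paper's display has $a\beta$, evidently a typo), and you explicitly flag and sketch how to handle the uniformity of the finite-volume $XY$ lower bound near $\partial\Lambda_R$, a point the paper disposes of by citation alone.
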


\begin{proof}
Notice that the Gibbs measure~\eqref{eq:17141905Phi4} is exactly of the form of an $XY$ model with annealed disorder introduced in Definition \ref{def.finite-volHamiltonian} with the choices:
\bi
\item $J(\textbf{1}_x - \textbf{1}_y , \textbf{1}_x + \textbf{1}_y)=\beta$ if $x \sim y$ and $J(m , A) = 0$ otherwise.
\item $d \kappa(r) =  e^{-g r^4  - h r^2} 2\pi r dr $.  
\ei
In particular, Wells' inequality  (Proposition \ref{prop.Wells}) applies with a constant $a=a(\kappa)>0$ which thus depends on $g,h$ the parameters of the measure $\kappa$. (See the proof of Wells' inequality in Section \ref{sectinoWellsineq} for the positivity of $a(\kappa)$).  Also since $g>0$, notice that $\kappa$ has sub-Gaussian tails. 

This implies that for any $x,y \in \Lambda \subset \Z^2$, 
\begin{align*}\label{}
   \mu_{\Lambda, \beta , g , h}^{\Phi^4} \big(S_x \cdot S_y\big) & =     \left\langle r_x r_y \cos (\theta_x - \theta_y) \right\rangle_{\mu_{\Lambda, \beta, \kappa}} \\
 & \geq 
   \left\langle r_x r_y \cos (\theta_x - \theta_y ) \right\rangle_{\mu_{\Lambda, \beta, \delta_{a}}}  \\
 & =   a^2 
 \left\langle  \cos (\theta_x - \theta_y ) \right\rangle_{\mu_{\Lambda, a \beta}}\,,
\end{align*}
which decays as a power-law when $a \beta \geq \beta_{BKT}$ by \cite{frohlich1981kosterlitz,van2023elementary,aizenman2021depinning}. 
\end{proof}

\medskip
\noindent
\textit{Acknowledgments.}

We wish to thank Diederik van Engelenburg, Avelio Sep\'{u}lveda and Hugo Vanneuville for useful discussions. C.G. also wishes to thank Yuval Peres for a fruitful discussion regarding the EIT property. The research of C.G. is supported by the Institut Universitaire de France (IUF), the ERC grant VORTEX 101043450 and the French ANR grant ANR-21-CE40-0003.

\medskip

\subsubsection*{Data availability} No data has been used in the research described here.

\bigskip

\noindent
\textbf{Declarations:}

\bigskip

\subsubsection*{Funding:} This work was supported in part the Institut Universitaire de France (IUF), the ERC grant VORTEX 101043450 and the French ANR grant ANR-21-CE40-0003.

\medskip

\subsubsection*{Conflict of interest:} The authors have no competing interests to declare that are relevant to the content of this article.

\medskip

\subsubsection*{Financial or non-financial interests:} All authors certify that they have no affiliations with or involvement in any organization or entity with any financial interest or non-financial interest in the subject matter or materials discussed in this manuscript.

\medskip

\subsubsection*{Financial or proprietary interests:} The authors have no financial or proprietary interests to disclose.

\bibliographystyle{alpha}
\bibliography{bibliography}

\end{document}